\newcommand{\mynote}[3]{
  \fbox{\bfseries\sffamily\scriptsize#1}
  {\small$\blacktriangleright$\textsf{\emph{\color{#3}{#2}}}$\blacktriangleleft$}}}
\newcommand{\mynote}[3]{}}
\definecolor{asparagus}{rgb}{0.53, 0.66, 0.42}
\definecolor{phase1}{HTML}{377EB8}
\definecolor{phase2}{HTML}{FF7F00}
\definecolor{phase3}{HTML}{4DAF4A}
  \theoremstyle{plain}
  \newtheorem{theorem}{Theorem}%[section]
  \newtheorem*{theorem*}{Theorem}
  \newtheorem{proposition}{Proposition}
  \newtheorem{corollary}{Corollary}
  \newtheorem{lemma}{Lemma}
  \newtheorem{assumption}{Assumption}
  \theoremstyle{definition}
  \newtheorem{definition}{Definition}
  \newtheorem{rem}{Remark}
  \def \N{\mathbb{N}}
  \def \R{\mathbb{R}}
  \def \smedian{\left(m_\mu^\ell - \frac{1}{P}\mathbbold{1}\right)}
  \newcommand {\sign} {\mathop \mathrm{sign}} 
  \newcommand {\inj} {\mathop \textup{inj}(M)}
  \newcommand {\supp} {\mathop \textup{supp}}
 \renewcommand {\div} {\mathop \textup{div}}
 \newcommand {\divrho} {\textup{div}_\mu}
 \newcommand {\dist} {\textup{dist}}
 \newcommand {\Hess} {\mathop \textup{Hess}}
 \newcommand {\diag} {\textup{diag}}
  \newcommand {\eps} {\varepsilon}
\newcommand{\normxi}[1]{\left\lVert#1\right\rVert}
 \DeclareMathOperator*{\argmax}{\mathop \textup{arg\,max}}
 \DeclareMathOperator*{\Area}{\mathop \textup{Area}}
\DeclareMathOperator*{\argmin}{\mathop \textup{arg\,min}}
\DeclareMathOperator*{\Lip}{Lip}
\DeclareMathOperator*{\Vol}{Vol}
\DeclareMathAlphabet{\mathbbold}{U}{bbold}{m}{n}
 \def\namedlabel#1#2{\begingroup
    #2%
    \def\@currentlabel{#2}%
    \phantomsection\label{#1}\endgroup
}
\title[Efficient volume-preserving MBO for clustering and classification]{An efficient volume-preserving MBO scheme for data clustering and classification}
\author{Fabius Kr{\"a}mer}
\author{Tim Laux}
\address{Fakult{\"a}t f{\"ur} Mathematik, Universit{\"a}t Regensburg, Universit{\"a}tsstra{\ss}e 31, 93053 Regensburg, Germany (\texttt{{fabius.kraemer,tim.laux}@ur.de})}
    \date{\today}
\begin{document}

    \begin{abstract}
	
We propose and study a novel efficient algorithm for clustering and classification tasks based on the famous MBO scheme. On the one hand, inspired by Jacobs et al.~[J.\ Comp.\ Phys.\ 2018], we introduce constraints on the size of clusters leading to a linear integer problem. We prove that the solution to this problem is induced by a novel order statistic. This viewpoint allows us to develop exact and highly efficient algorithms to solve such constrained integer problems. 
On the other hand, we prove an estimate of the computational complexity of our scheme, which is better than any available provable bounds for the state of the art. This rigorous analysis is based on a variational viewpoint that connects this scheme to volume-preserving mean curvature flow in the big data and small time-step limit.

	\medskip
    
  \noindent \textbf{Keywords:} Clustering, Classification, semi-supervised learning, mean curvature flow, gradient flows.

  \medskip

\noindent \textbf{Mathematical Subject Classification (MSC2020)}:
68Q25; %Analysis of algorithms and problem complexity
90C10; %Integer programming
53E10 (Primary); % Flows related to mean curvature
58J35; % Heat and other parabolic equation methods for PDEs on manifolds
53Z50; % Applications of diﬀerential geometry to data and computer science
49Q20; % Variational problems in a geometric measure-theoretic setting
49Q05 (Secondary). %Minimal surfaces and optimization
  \end{abstract}
\maketitle

%\tableofcontents

%%%%%%%%%%%%%%%%%%%%%%%%%%%%%%%%%%%%%%%%%%%%%%%%%%%%%%%%%%%%%%%%%%%%%%%%%%%%%%%%%%%%%%%%%%%%%%%%%%%%%%%%%%%%%%%%
\tableofcontents

\section{Introduction}

\subsection{Motivation}
Many modern machine learning methods, such as classification via neural networks, rely on large amounts of labeled data. 
However, for many applications, only little information is known about the data: 
Only a few data are labeled, or only average information, such as the rough size of each class, is known.
Therefore, it is crucial to develop efficient methods that do not require many or any labeled data at all. 
In this semi- or unsupervised learning regime, it is paramount to understand and exploit the geometry of the whole data cloud.
A successful avenue for this is graph-based learning, which equips the dataset with a graph structure by connecting data points according to some similarity measure.
Then, one aims to partition this similarity graph into clusters so that only few edges lead from one cluster to another while many edges connect points within each cluster~\cite{MR1265105}.

\medskip

Finding these minimal cuts (with hard or soft constraints on the sizes of clusters) in large graphs gives rise to NP-hard problems~\cite{MR1265105}. 
Therefore, efficient algorithms are needed that solve these problems approximately. 
One of the most efficient methods to do this is the MBO scheme, which translates the original scheme of Merriman, Bence, and Osher~\cite{MBO94, levelset} to the setting of graphs. 
It was proposed by Bertozzi et al.~\cite{MR3115457, 6714564, van2014mean} and has received continuous attention since then, see e.g.~\cite{jacobsvoronoi, jacobs2018auction}. 
The scheme uses the geometry of the dataset to improve an initial guess for a clustering or classification task (for example starting from random assignments or linear methods such as k-Means) by propagating label information from nearby data points. 
More precisely, it iteratively improves this guess by alternating between linear diffusion of the labels and pointwise thresholding. 
One drawback is that clusters may shrink or even disappear. 

\medskip

In the present paper, we propose a \emph{provably efficient} and \emph{exact} method to extend the MBO scheme to the case of volume constraints which in particular prevent this shrinkage.
The key algorithmic challenge is that the trivial thresholding step must be replaced by a linear integer program. 
Our main contributions are (a) the development of efficient algorithms for a class of such linear integer programs and (b) a novel efficiency analysis that rigorously proves the observed speed-up of these algorithms in the volume-constrained MBO scheme.
Our complexity estimate relies on the variational structure of the algorithm as a minimizing movements scheme for Volume-Preserving Mean Curvature Flow.

\subsection{Our New Algorithm}

The volume-preserving MBO scheme alternates between linear diffusion and pointwise thresholding, now at threshold values that are to be determined: Given an initial partition of the point cloud into clusters~$X_N= \Omega_1^0\cup \ldots \cup \Omega_P^0$, diffuse each cluster independently for a short time using the heat semi-group $e^{-t \Delta_N}$ associated to a graph Laplacian $\Delta_N$; then find suitable threshold values that lead to a partition of the same volumes. 
More precisely, given a time-step size $h>0$ for $\ell=1,2, \ldots$ 
\begin{enumerate}[1.]
\item  Diffusion: $ u^{\ell-1}_i := e^{-h\Delta_N} \chi_{\Omega_i^{\ell-1}} \quad (1\leq i \leq P)$;
\item Thresholding: 
$\Omega_i^\ell := \{ u^\ell_i - m^\ell_i > u^\ell_j - m^\ell_j \text{ for all } i \neq j\}$,
where the point $m^\ell=(m^\ell_i)_{1\leq i\leq P}$ needs to be chosen such that the number of points in each cluster $\# \Omega^\ell_i=V_i$ is preserved.
\end{enumerate} 
Following Esedo\u{g}lu--Otto~\cite{esedog2015threshold} and Jacobs--Merkurjev--Esedo\u{g}lu~\cite{jacobs2018auction}, the second step can be formulated as a linear integer program with constraints:
\begin{align}\label{alg:volumeMBO}
\chi^\ell \in \argmax_{\chi:X_N \rightarrow \{0,1\}^{P}} &\sum_{i=1}^P \sum_{x \in X_N} \chi_i(x) u_i(x)\\
\text{s.t. } &\sum_{i=1}^P \chi_i(x) = 1 \quad \hspace{3pt} \text{ for all } x \in X_N, \nonumber\\
\hspace{20.5pt}&\!\!\sum_{x \in X_N}\!\! \chi_i(x) = V_i \quad \text{ for all } i = 1,\dots,P, \nonumber
\end{align}
where $u = u^{\ell - 1}$ is given by Step 1. Jacobs--Merkurjev--Esedo\u{g}lu~\cite{jacobs2018auction} interpret this optimization problem as an auction in which each cluster bids for data points; then they apply a black-box auction algorithm by Bertsekas~\cite{bertsekas1979distributed} to solve this assignment problem approximately.

We propose a new algorithm for solving~\eqref{alg:volumeMBO} that is particularly suited for the parameter-regime and distribution of the values $u^{\ell-1}$ in the MBO scheme. 
Our new algorithm (Algorithm~\ref{alg:median} below) solves this problem~\eqref{alg:volumeMBO} exactly and so efficiently that it is faster than most approaches for the diffusion step (which is a mere matrix-vector multiplication), see Theorem~\ref{the:runningtime} and Theorem~\ref{the:improvedRunning} below. 
We present several approximations for computing the diffusion step and point out a family of sparse matrices for which the computational complexity of the matrix-vector multiplication matches the one of our algorithm for~\eqref{alg:volumeMBO}.

\medskip

Although our algorithm was designed for the specific situation of the MBO scheme, it also performs very well on arbitrary data distributions $u$.
In particular, the extension to inequality constraints 
\begin{align}\label{alg:inequalityMBO}
\chi^\ell \in \argmax_{\chi:X_N \rightarrow \{0,1\}^P} &\sum_{i=1}^P \sum_{x \in X_N} \chi_i(x) u_i(x)\\
\text{s.t. } &\sum_{i=1}^P \chi_i(x) = 1 \quad \hspace{3pt} \text{for all } x \in X_N, \nonumber\\
\hspace{20.5pt} L_i \leq & \!\!\sum_{x \in X_N} \!\!\chi_i(x) \leq U_i \quad \text{for all } i = 1,\dots,P. \nonumber
\end{align}
in Algorithm~\ref{alg:lower_upper} has a provable complexity of $O(N\log N)$ for any given $u$ while the state of the art needs $O(N^2)$ operations \cite{jacobs2018auction}.

\medskip

A first intersting insight already appears in the scalar case when the dataset is partitioned into only two clusters, say, of equal size: 
This is commonly done by sorting the points according to their diffused label value (e.g.\ using quick sort) at a cost of~$\mathcal{O}(N \log N)$ for~$N$ data points. 
After this sorting procedure, the first half of the data points are assigned to the first cluster and the second half to the second one.
We claim that this whole procedure can be replaced by one that only needs~$\mathcal{O}(N)$ operations. 
Indeed, it is sufficient to simply find the median of the diffused label values and then point-by-point threshold at this value. 
Both steps, the median finding and the thresholding now only take~$\mathcal{O}(N)$ operations. Of course, the vectorial case is much more delicate, but the geometric interpretation of~$m$ as a vectorial median turns out to be an effective guiding principle for this task. 

\medskip

To describe our new algorithm, let us first focus on the simplest nontrivial case of three clusters and exactly prescribed sizes of the clusters (as opposed to more intricate inequality constraints described later, see Algorithm~\ref{alg:median}) and that all clusters should be of equal size (which can easily be adapted to arbitrary assigned sizes). 
In this case it is instructive to picture the standard simplex~$\Sigma = \mathrm{conv} (e_1,e_2,e_3) $ in~$\mathbb{R}^3$ and think of each vertex representing one cluster. 
(Indeed, this is precisely the image of the vector field of indicator functions~$(\chi_1,\chi_2,\chi_3)$.)
During the diffusion, the labels $u(x,t)=(e^{-t\Delta}\chi_1(x),e^{-t\Delta}\chi_2(x),e^{-t\Delta}\chi_3(x))$ move away from the vertices towards the interior of the simplex, see Fig. \ref{fig:diffusion_over_time}. 
Now, after a short time $h$, we want to find a point~$m$ in~$\Sigma$ which is a vectorial median of the diffused vectorial labels~$\{u(x,h) \colon x\in X\}$ in the sense that the $Y$-shaped union of line segments through $m$ in Fig.~\ref{fig:3median} below divides the simplex $\Sigma$ into three parts, each containing the same amount of points.
This can be achieved by the following iterative combinatorial process based on an elementary geometric intuition.

Starting from an initial guess for~$m$ for which, say, the first cluster is too small, 
we want to move~$m$ in order to increase the size of this cluster. 
As~$m$ passes points of a cluster with too many points, we simply move~$m$ past those points so that these points are now counted towards the previously too-small first cluster. 
However, if~$m$ passes a point $u(x)$, say, in the second cluster and that cluster does not have too many points assigned to it, we do not want to reassign points between these two clusters as this may result in an endless loop of reassigning points. 
Instead, we stop~$m$ here and move it into a different direction that leaves this point exactly at the border between Clusters~1 and~2. 
We continue to move~$m$ until we hit a point of the third cluster. 
As we only have three clusters, we know that Cluster~3 has too many points and will gladly give the point at the border to either Cluster~1 or~2, whoever is on the other side of the border. 
In case this is Cluster~1, we achieved our goal: We increased the size of Cluster~1 by one data point without decreasing any other too-small cluster. 
In case Cluster~2 is on the other side of this data point, we make a three-way swap: Cluster~3 gives the data point on its border to Cluster~2 which in turn gives the data point on its border to Cluster~1. 
Iterating until all clusters have equal size gives us an~$m$ at which we can threshold and preserve the volume of each cluster. 
Just as in the scalar case, this point~$m$ can be viewed as some sort of vectorial median of the diffused labels of the data points.

\subsection{Estimating the Computational Complexity Via Gradient Flow Techniques}
\begin{wrapfigure}{r}{0.4\textwidth} %this figure will be at the right
    \centering
    \includegraphics[width=0.4\textwidth]{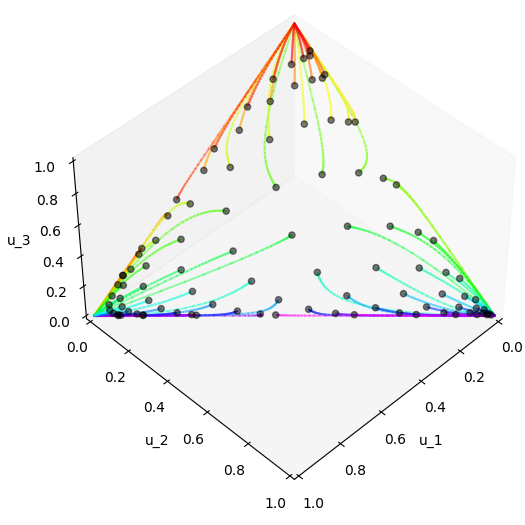}
    \caption{Diffusion of labels $e_1,e_2,e_3$ over time $h$.}
    \label{fig:diffusion_over_time}
\end{wrapfigure}
This new simple algorithm  is surprisingly efficient. One can easily see that it terminates in~$\mathcal{O}(N \log N)$ operations. Furthermore, we will prove that we only need~$\mathcal{O}(\sqrt{h} N \log N)$ operations to find the median $m$ when starting the median search in the MBO scheme from the center of the simplex or from the median of the previous step, in a typical step.
The idea is that in case of the MBO scheme, most label values will remain close to the pure cluster points~$e_1, e_2, e_3$ and only few data points lay close the decision boundary and need to be considered in the above loop (see Fig. \ref{fig:diffusion_over_time}). 
Here,~$\sqrt{h}$ corresponds to the width of the heat kernel. 
We show this rigorously for sufficiently large point clouds under the so-called manifold assumption stating that the data points are distributed according to some probability measure supported on some manifold. 
Under these assumptions, it is known that in the large-data limit the unconstrained MBO scheme behaves like its natural mean-field limit---the MBO scheme on a weighted manifold~\cite{laux2021large, laux2023large, ullrich2024medianfiltermethodmean}. 
We will see easily that this is also true in the case of our volume-constrained MBO scheme. 
Our rigorous proof of the speed-up by the factor $\sqrt{h}$ then consists of three steps. 
\begin{enumerate}[1.]
	\item\label{intro_L2} We prove that in a typical iteration, the vectorial median of the mean-field MBO scheme is~$\sqrt{h}$ close to the center of the simplex. This is the most technical part of our proof and requires a careful $L^2$ estimate of the Lagrange multiplier of the minimizing movements.
	\item\label{intro_median}	We show that the discrete medians converge to their mean-field counterparts in the large-data limit. This follows from a careful analysis of the variational interpretation of our vectorial median, see Section~\ref{sec:convergence}.
	\item \label{intro_speedup} Finally, we prove that since by~\ref{intro_L2}.\	 and~\ref{intro_median}., in a typical iteration, the discrete vectorial median is~$\sqrt{h}$-close to the center of the simplex, the running time of the median find algorithm is sped up by the factor~$\sqrt{h}$.
\end{enumerate}
 The analytically most challenging part is the $L^2$-estimate in Step \ref{intro_L2}.
 This result in particular generalizes previous work of Swartz and one of the authors~\cite{laux2017convergence} to~(a) the vectorial case and~(b) the case of (weighted) manifolds. 
 We are convinced that the techniques developed here will also lead to a convergence proof of the volume-preserving MBO scheme and the vectorial MBO scheme on weighted manifolds, which are both still open questions.

\medskip

The efficiency of our median find algorithm gives rise to the question of whether the diffusion step can be carried out more efficiently so that both complexities are balanced. 
The most efficient version we find is an approximation of the diffusion semi-group by a sparse matrix.  
In addition, we show that the difference between two consecutive time steps of the scheme is sparse. 
All in all, this turns the diffusion step into the multiplication of an~$N \log N$-sparse~$(N \times N)$-matrix with an~$\eps N$-sparse~$N$-vector (where $\eps$ is the length scale in constructing the similarity graph). This can be carried out at the cost of~$\mathcal{O}(\eps N \log N)$, matching the cost of our median find algorithm when using the classical parabolic scaling regime $\eps \sim \sqrt{h}$.
The~$\eps N$-sparsity of the increment is the result of an a-priori estimate and a simple interpolation, see Lemma~\ref{the:l1estimate} which is inspired by~\cite{MR3556529}.

\subsection{Context}\label{sec:intro_context}
The scheme and our results have connections to different seemingly disconnected communities. There is a lot of literature that connects the MBO scheme with graph learning and data science tasks. 
The first to introduce the MBO scheme on graphs were Bertozzi et al. \cite{MR3115457, 6714564, merkurjev2014diffuse} where they used the scheme also for imaging tasks. Afterwards there have been several papers that modified the scheme like Jacobs \cite{jacobsvoronoi} to a wide range of diffusion kernels, Jacobs-Merkurjev-Esedoglu \cite{jacobs2018auction} to volume constraints, and Calder-Cook-Thorpe-Slep{\v c}ev PoissonMBO \cite{calder2020poisson} making a connection to the Poisson equation and a different approach to impose volume constraints.
We also note that the influential work on diffusion maps by Coifman and Lafon et al.~\cite{MR2238665, MR2438821, coifman2005geometric} already introduced a one-step version of the MBO scheme for classification in which the thresholding step is interpreted as a maximum a posteriori estimator; our algorithm can also be used in their context to preserve the sizes of classes.

\medskip

Recently, there has also been a rising development in Big Data limits. This started with  Garc{\'i}a Trillos and Slep{\v c}ev who introduced the $TL^p$ metric as framework for comparing discrete with continuous schemes. $\Gamma$-convergence of the thresholding energies, via the weak $TL^2$ topology as first done by \cite{laux2021large}, is also one of the building blocks to show our mean field limit of the scheme.  To the best of our knowledge the present work is the first to use information on the Big Data limit to obtain a computational complexity estimate.

\medskip

We want to point out that our volume constrained MBO scheme also has a less obvious connection to the well established method of spectral clustering \cite{MR2409803}.
Indeed, writing the heat semigroup applied to some vector $\chi:X_N \rightarrow \{0,1\}$ by its well known spectral decomposition
\begin{equation}\label{eq:spectral_decomposition}
e^{-h \Delta_N} \chi = \sum_{i = 1}^N e^{-h \lambda_i} \langle \chi, \phi_i \rangle \phi_i
\end{equation}
where $\lambda_i$ and $\phi_i$ are the eigenvalues and eigenvectors of a graph Laplacian $\Delta_N$ we consider the limit $h \rightarrow \infty$. Assume for simplicity that the graph is connected and that all eigenvalues are simple. Then $0 = \lambda_1 < \dots < \lambda_N$ such that the prefactor $e^{-h \lambda_1} = 1$ is independent of $h$ and $e^{-h \lambda_2}$ is the biggest prefactor dependent on $h$. Thus one has
\begin{align*}
e^{-h \Delta_N} \chi = \langle \chi, \mathbbold{1} \rangle \mathbbold{1} + e^{-h \lambda_2} \langle \chi, \phi_2 \rangle \phi_2 + h.o.t.
\end{align*}
Take the MBO scheme without volume constraints for two phases then the next clustering is $\{\chi = 1\} = \{x \in X_N: \phi_2(x) > 0\}$ provided $ \langle \chi, \mathbbold{1} \rangle \mathbbold{1} = \frac{1}{2}$. This is also known under spectral clustering (see for example section 5.1 in \cite{MR2409803}). The second eigenvector is of interest for clustering as it solves the relaxed problem of minimizing the RatioCut \cite{MR2409803}.  However, generically $ \langle \chi, \mathbbold{1} \rangle \mathbbold{1} \neq~\frac{1}{2}$ and the $0$-th order term dominates for $h$ large enough such that every point will be assigned to the same phase, i.e. $\chi = \mathbbold{1}$ or $\chi = 0$; this is a bad clustering. This does not happen for our volume constrained scheme: In this case, the produced clustering for $h$ large is more meaningful. As the constant summand $\langle \chi, \mathbbold{1} \rangle \mathbbold{1}$ does not matter for the thresholding under volume constraints, the clustering will be given by the $V$ biggest elements of $\phi_2$. 

\medskip

For three phases with volumes $V_1, V_2$ and $V_3$, one gets for example the $V_1$ biggest, $V_3$ smallest and $V_2$ middle points in the respective phases if $\langle \chi_1, \phi_2 \rangle \geq \langle \chi_2, \phi_2 \rangle \geq \langle \chi_3, \phi_2 \rangle$ holds; and similarly for more phases. One observes on Fig. \ref{fig:four_images} that those are indeed good clusterings. In practice one would use for spectral clustering not only the second eigenvector but rather the first $k$ eigenvectors \cite{MR2409803} which corresponds to having a big but finite $h$. This is one reason why we use the in frequency cut-off matrix $\sum_{i = 1}^{\log N} e^{-h \lambda_i} \langle \chi, \phi_i \rangle \phi_i$ as one of the previously mentioned cheap approximations of the heat kernel. This matrix is not sparse but due to its low rank, the previous mentioned matrix-vector multiplication can still be computed in $O(N \log N)$ matching our worst case running time of the thresholding step.

\medskip

In the connection to spectral clustering above the limit $h \rightarrow \infty$ was easy to see. 
But to recover Volume-Preserving Mean Curvature Flow as the limit $h \rightarrow 0$ (while $N \rightarrow \infty$) is a more delicate procedure. 
Indeed, this is a purely geometric flow and therefore has geometric invariances that lead to degeneracies in the underlying partial differential equations.
Volume-Preserving Mean Curvature Flow first appeared in the differential geometry literature: 
Gage~\cite{MR848933} used it to continuously deform convex curves into circles enclosing the same area, and Huisken~\cite{MR921165} introduced the flow in arbitrary dimensions to flow convex surfaces into round spheres that enclose the same volume. 
While the convergence of our scheme to Volume-Preserving Mean Curvature Flow is not proven yet, the convergence in this joint limit to the standard (weighted) Mean Curvature Flow was shown in \cite{laux2023large} in the case of two clusters and no volume constraints using a geometric comparison principle. In the case of more than two clusters no such comparison principle is available. However when starting from the mean-field limit and assuming constant density and flat geometry, the convergence to weak solutions can be shown based on the gradient-flow structure of Mean Curvature Flow \cite{MR3556529, MR4385030, MR4056816, laux2023large}.

\medskip

We work under the aforementioned manifold assumption. Thus, our analysis has many natural connections to differential geometry that are not present in \cite{laux2017convergence, MR3556529, MR4385030, MR4056816, laux2023large}. Two examples that we want to highlight in this context are the following: (i) We adapt in Lemma \ref{lem:bound_by_grad} the Segment Inequality of Cheeger and Colding \cite{MR1320384} to the context of the heat kernel $p(h,x,y)$ on a (weighted) manifold. At the base of the proof the problem is that in contrast to the Euclidean space on a manifold one does not have translation invariance of geodesics. The idea of Cheeger and Colding is to use polar coordinates around a point to align the coordinate system with the geodesics. Therefore, one can shift the geodesics along the polar coordinate of a fixed angle. 
(ii) In our $L^2$-estimate of Proposition \ref{the:l2estimate} cancellation  effects of the gradient of the heat kernel play a crucial role. Again, denoting by $p(h,x,y)$ the heat kernel at time $h$, in the Euclidean space one has the crucial antisymmetry $\nabla_x p + \nabla_y p = 0$, which does not make sense geometrically as these two vectors live in different tangent spaces. On a manifold, denoting by $\tilde{p}(h,x,y) = \frac{1}{(4\pi h)^{d/2}} \exp(-\frac{\dist^2_M(x,y)}{4h})$ the approximate heat kernel at time $h$, the gradients do not cancel each other completely but one can still see a cancellation when testing with a vector field $\xi$:
\begin{equation*}
\left|\langle \xi(x), \nabla_x \tilde{p}(h,x,y)\rangle_x + \langle \xi(y), \nabla_y \tilde{p}(h,x,y)\rangle_y\right| \leq \frac{\dist^2_M(x,y)}{2h} \Lip(\xi) \tilde{p}(h,x,y).
\end{equation*}
\color{black}

\begin{figure}[h!]
    \centering
    \begin{subfigure}[b]{0.24\textwidth}
        \includegraphics[width=\textwidth]{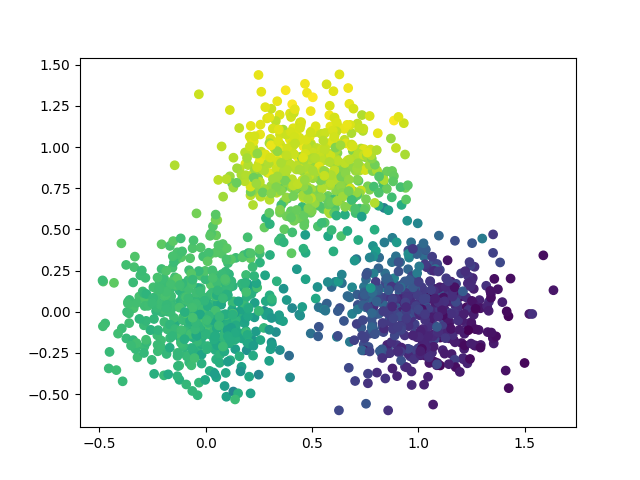}
        \caption{Second eigenvector of the graph Laplacian.}
        \label{fig:image1}
    \end{subfigure}
    \begin{subfigure}[b]{0.24\textwidth}
        \includegraphics[width=\textwidth]{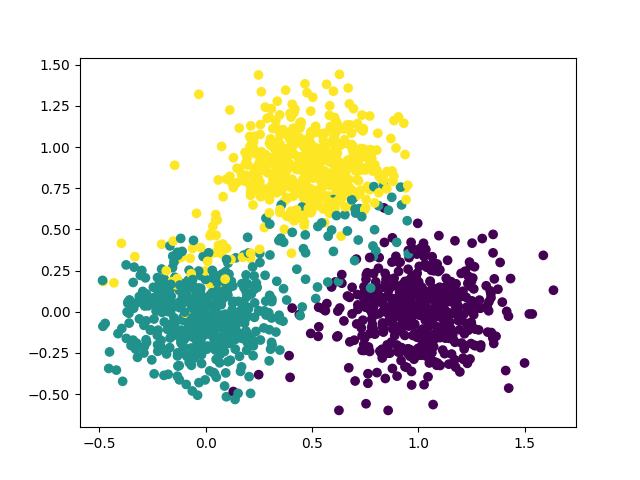}
        \caption{Clustering resulting from the eigenvector.}
        \label{fig:image2}
    \end{subfigure}
    \begin{subfigure}[b]{0.24\textwidth}
        \includegraphics[width=\textwidth]{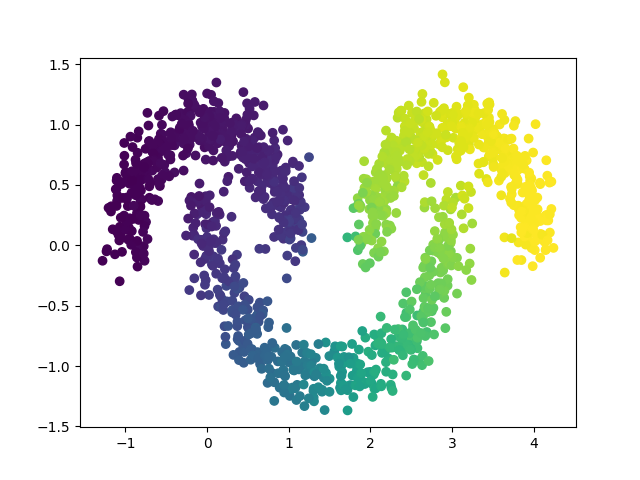}
        \caption{Second eigenvector of the graph Laplacian.}
        \label{fig:image3}
    \end{subfigure}
    \begin{subfigure}[b]{0.24\textwidth}
        \includegraphics[width=\textwidth]{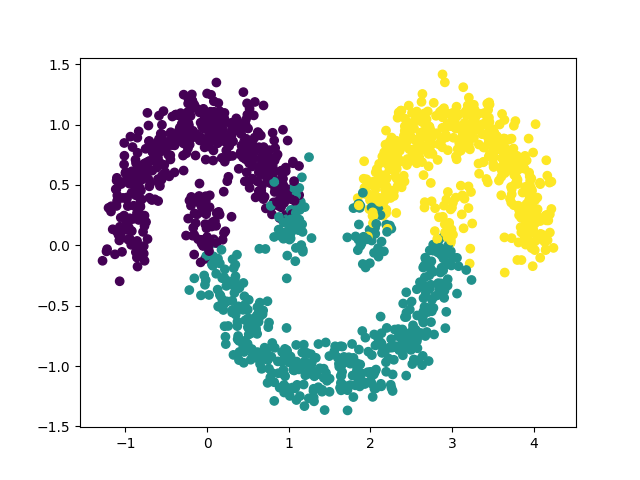}
        \caption{Clustering resulting from the eigenvector.}
        \label{fig:image4}
    \end{subfigure}
    \caption{Toy examples for the spectral clustering achieved as limit of the volume constrained MBO scheme.}
    \label{fig:four_images}
\end{figure}

\section{Main Results}

\subsection{The New Algorithm}\label{sec:new_alg}
The main idea behind the new algorithm is a geometric interpretation of the linear integer problem \eqref{alg:volumeMBO}. We claim that from a geometric viewpoint the solution is a vectorial version of the well known order statistic.

For a given finite set $U \subset \R$ the $k$-th order statistic $m$ is the point such that there are exactly $k$ of the numbers smaller than $m$, i.e. $\#\{u \in U| u < m\} = k$. Indeed our claim holds in the case of only two clusters $P=2$: Denote by $X$ the set of datapoints and by $\mathbbold{1} = (1,1)^\intercal$ the vectors consisting of ones.  The vectors $\{u(x)\}_{x \in X} \subset \R^2$ appearing in the volume constrained MBO scheme \eqref{alg:volumeMBO} lie on the line $u \cdot \mathbbold{1} = 1$ such that the optimal solution is induced by separating the line at the $V_1$-th point $m \in \R^2$ (see Fig. \ref{fig:2median}). Thus, the order statistic to $U = \{u_1(x)\}_{x \in X}$ is exactly $m_1$. The point $m$ agrees with the price vector used in \cite{jacobs2018auction}.

From an algorithmic point of view one should not compute the order statistic by sorting the points along the line and taking the $V_1$ smallest or biggest. But one should rather think of the dividing point as the median along the line for the equisized case $V_1 = V_2$ or in general of the $V_1$-th order statistic. The median or order statistic can be computed in $\mathcal{O}(N)$ while sorting takes $\mathcal{O}(N \log N)$ as for example shown in Theorem 17.3 in \cite{korte}.

It is still possible to give the price vector a geometric interpretation as a multivalued order statistic for $P>2$. This interpretation is visualized in the cases $P=3$ and $P=4$ in Fig. \ref{fig:3median}, Fig. \ref{fig:4median} and we explain in the following where it comes from.

\begin{figure}[!htb]
\begin{subfigure}{0.32\textwidth}
  \includegraphics[width=\linewidth]{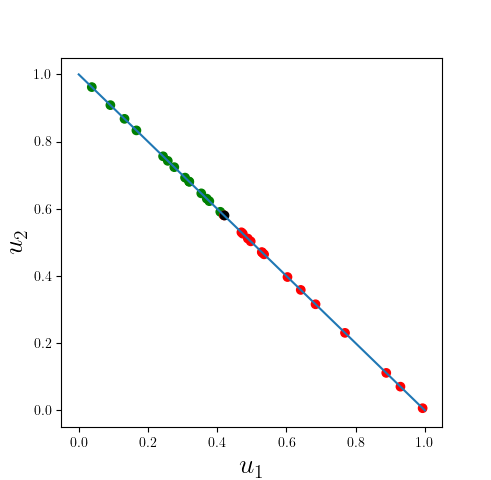}
  \caption{}\label{fig:2median}
\end{subfigure}\hfill
\begin{subfigure}{0.32\textwidth}
  \includegraphics[width=\linewidth]{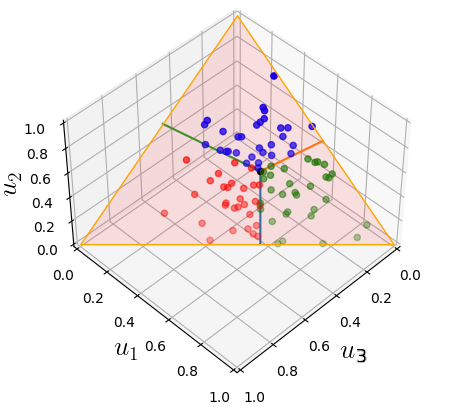}
    \caption{}\label{fig:3median}

\end{subfigure}\hfill
\begin{subfigure}{0.32\textwidth}%
  \includegraphics[width=\linewidth]{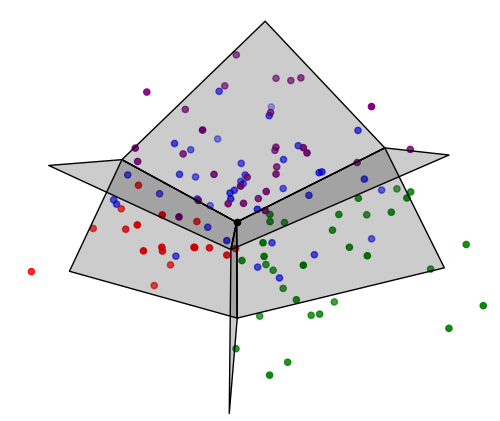}
    \caption{}\label{fig:4median}

\end{subfigure}
\caption{Order statistic (black point) for two (a), three (b) and four (c) clusters and the induced clustering into red, green, blue and purple points.}
\end{figure}

The geometric interpretation is closely connected to the equilibrium price vector. For a given $m \in \R^P$, we call the maximizers $\chi^m$ in the not volume constrained problem
\begin{align}\label{alg:m_induced_ILP}
\chi^\ell \in \argmax_{\chi:X_N \rightarrow \{0,1\}^P} &\sum_{i=1}^P \sum_{x \in X_N} \chi_i(x) (u^{\ell-1}_i(x) - m_i)\\
\text{s.t. } &\sum_{i=1}^P \chi_i(x) = 1 \quad \hspace{3pt} \quad \text{ for all } x \in X_N, \nonumber
\end{align}
an $m$-induced clustering. They are characterized by the following property: For every $x \in X_N$ there exists a phase 
\begin{align}\label{eq:price_vector}
i^*(x) \in \argmax_{1 \leq i \leq N} u_i(x)- m_i
\end{align}
such that with the Kronecker delta $\delta_{i,j}$ it holds
\begin{align}\label{eq:m_induced_cluster}
\chi^m_j(x) = \delta_{j,i^*(x)} \quad \text{ for all } j \in \{1, \dots, P\}.
\end{align}
In \cite{jacobs2018auction} an equilibrium price vector is then a vector $m^* \in \mathbb{R}^P$ such that there exists an $m^*$-induced clustering $\chi^{m^*}$ that is feasible in $\eqref{alg:volumeMBO}$, i.e. the volume constraints are satisfied by $\chi^{m^*}$. We give $m^*$ the new name $V$-order statistic that emphasizes its geometric properties; see Definition~\ref{def:median} for more information. The feasible solution $\chi^{m^*}$ can be shown to be optimal, see Corollary~\ref{cor:opt_order_statistic}. For $m \in \R^P$ denote by $\alpha$ the angle between the $e_i -e_j$ and $u(x) - m$. Reformulation \eqref{eq:price_vector} yields the geometric statement
\begin{align*}
\cos(\alpha) = \frac{(u(x) - m)\cdot (e_i - e_j)}{|u(x) - m||e_i - e_j|} \geq 0 \quad \text{ for all } j\neq i
\end{align*}
meaning that there exists a hyperplane $H_{ij}(m)$ going through $m$ with normal $(e_i - e_j)/\sqrt{2}$ that separates the points assigned to Cluster $i$ from the points assigned to Cluster $j$ for all $j \neq i$. Those hyperplanes are visualized in Fig.~\ref{fig:3median} and Fig.~\ref{fig:4median}.

To find the $V$-order statistic $m^*$ (or equibilibrium price vector) we propose Algorithm \ref{alg:median} which is designed to monotonously decrease the total error in the volume constraints of the $m$-induced clustering $\chi^m$ from \eqref{eq:m_induced_cluster}
\begin{equation}\label{eq:alg_energy}
E(m) := \sum_{i=1}^P \left|\sum_{x \in X_N} \chi^m_i(x) - V_i \right|.
\end{equation}
Note that $m$ is a $V$-order statistic if and only if $E(m) = 0$.

The monotonicity is ensured by carefully translating $m$. Imagine a cluster $i$ with volume less than $V_i$. Then, an algorithmically useful direction to translate $m$ to is 
\begin{equation}\label{eq:first_direction}
d_i := \frac{1}{P-1}\mathbbold{1} - \left(1 + \frac{1}{P-1}\right) e_i = \left(\frac{1}{P-1},\dots, \frac{1}{P-1},-1,\frac{1}{P-1}, \dots, \frac{1}{P-1}\right).
\end{equation} 
This direction is chosen as it increases the volume of Phase $i$ by moving the hyperplanes $H_{ij}(m)$ for all $j\neq i$ while the hyperplanes $H_{kj}(m)$ don't move for all $k,j\neq i$. Thus one reaches a point $u(x_{ij})$ that is on the hyperplane $H_{ij}(m)$ and belongs to Phase $j$, i.e. $\chi^m(x_{ij}) = e_j$. Assuming that the volume of Phase $j$ is greater than $V_j$ we can change the point $x_{ij}$ from Phase~$j$ to Phase $i$. This reduces $E(m)$ by 2.

Of course one can not always assume that Phase $j$ has volume larger than $V_j$. But one can iteratively adapt the directions such that in the end we find a path of $k$ points $x_{i_1, i_2}, x_{i_2, i_3}, \dots, x_{i_{k-1}, i_k}$ with the following properties: The image $u(x_{i_j, i_{j+1}})$ of such a point lays on the hyperplane $H_{i_j, i_{j+1}}(m)$ but belongs to Phase~$j+1$. One can also ensure that the first Phase $i_1$ does not have enough volume, the last Phase $i_k$ has too much volume and all intermediate phases $i_j$ have the correct volume for $j\in \{2, \dots, k-1\}$. In this way one can change the assignment of the points from Phase $j+1$ to Phase $j$ such that $i_1$ increases in volume, $i_k$ decreases in volume and all other phases keep their volume, yielding the desired monotonicity. For further details see Chapter \ref{sec:inner_workings}.

\begin{algorithm}
\begin{algorithmic}[1]
\Require Initial median guess $m$, number of clusters $P$ and points $U \subset \R^P$
\Ensure Computes the $V$-order statistic $m$ and the $m$-induced clustering $\chi^m$ solving \eqref{alg:volumeMBO}.
\State Find an $m$ induced clustering $\chi^m$ \label{alg:line1}\Comment{$\chi^m$ as in Definition \ref{def:median}}
\While {$\mathcal{I}_{:)} \neq \{1, \dots , P\}$} \label{alg:line2}\Comment{$\mathcal{I}_{:)}$ as in \eqref{eq:index_sets3}}
\State Choose a cluster $i^* \in \mathcal{I}_-$ \label{alg:line3}\Comment{$\mathcal{I}_{-}$ as in \eqref{eq:index_sets2}}
\State $\mathcal{T} \leftarrow \{i^*\}$  \label{alg:line4}\Comment{$\mathcal{T}$ is the tree of growing phases}
\State $d \leftarrow \frac{1}{P-1}\cdot\mathbbold{1}- (1+\frac{1}{P-1})e_{i^*}$ \label{alg:line5}\Comment{For motivation of this direction see \eqref{eq:first_direction}}
\While {$i^* \notin \mathcal{I}_+$} \label{alg:line6}\Comment{$\mathcal{I}_{+}$ as in \eqref{eq:index_sets}}
\State move $m$ in direction $d$ until $\exists  i \in \mathcal{T}, j \notin \mathcal{T}$ s.t. the hyperplane $H_{i j}(m)$ contains a \newline \makebox[50pt] \ point $u_{ij}$ of the cluster $j$. \label{alg:line7}
\State $p_{j} \leftarrow i$  \label{alg:line8}\Comment{$p_j$ is the predecessor of j}
\State $i^* \leftarrow j$ \label{alg:line9}
\State $\mathcal{T} \leftarrow \mathcal{T} \cup \{j\}$ \label{alg:line10}
\State $d \leftarrow d+ \frac{1}{P-1}\cdot\mathbbold{1}- (1+\frac{1}{P-1})e_{j}$\label{alg:line11}
\EndWhile \label{alg:line12}
\While {$i^* \notin \mathcal{I}_-$} \label{alg:line13}\Comment{$\mathcal{I}_{-}$ as in \eqref{eq:index_sets2}}
\State Change the clustering $\chi^m$ of $u_{p_{i^*}i^*}$ from $i^*$ to $p_{i^*}$\label{alg:line14} \Comment{i.e. set $\chi^m_j(u_{p_{i^*}i^*}) = \delta_{j p_{i^*}}$}
\State $i^* \leftarrow p_{i^*}$
\EndWhile\label{alg:line16}
\EndWhile
\end{algorithmic}
\caption{Algorithm to find a $V$-order statistic}
\label{alg:median}
\end{algorithm}

\subsection{Extension to Inequality Volume Constraints}
Additionally, we adapt Algorithm \ref{alg:median} to the case of inequality volume constraints \eqref{alg:inequalityMBO} which was originally proposed by Jacobs, Merkurjev and Esedo\={g}lu \cite{jacobs2018auction}. An optimal solution to the problem with inequality constraints is also an optimal solution to equality constraints given by the volumes of the optimal solution. Thus the solution is again induced by a vectorial order statistic denoted by \textit{$(L,U)$-order statistic}.

The problem of finding the $(L,U)$-order statistic is more complex as the feasible set now contains solutions with different volumes. So it is necessary to drive the dynamics by the objective function 
\begin{equation}\label{eq:objective_value}
\sum_{i=1}^P \sum_{x \in X_N} \chi_i(x) u_i(x)
\end{equation}
instead of solely by the energy $E(m)$ given in \eqref{eq:alg_energy}.

To monotonously increase the objective function \eqref{eq:objective_value} in our algorithm we only change the phase from Phase $i$ to Phase $j$ of a point $u(x)$ lying on a hyperplane $H_{ij}(m)$ if $m_i < m_j$. Here again, $m$ denotes the candidate for the $\{L_i, U_i\}$-order statistic that is translated during the algorithm. This guarantees an increase in the objective function by the following calculation:

As $u(x)$ is on the hyperplane $H_{ij}(m)$ there is a vector $v$ such that $u(x) = m + v$ and $v \cdot (e_j - e_i) = 0$. Thus changing the assignment of $x$ from $i$ to $j$ increases the objective function by 
\begin{equation}\label{eq:change_in_m}
u_j(x) - u_i(x) = (m + v) \cdot (e_j - e_i)= m_j - m_i > 0.
\end{equation}
A similar statement (see Lemma \ref{lem:mononotonicity}) holds true when the change is made along a path as described previously in case of Algorithm \ref{alg:median}.

When now adapting Algorithm \ref{alg:median} to only make such monotonous changes one naturally arrives at the following stopping and optimality criterion. It states that the biggest coordinate of the phases that can still take more points is smaller than the smallest coordinate of the phases that can spare points.
\begin{theorem}\label{the:opt_criterium}
Assume $m_{i_1} \geq \dots \geq m_{i_P}$ and denote by $\chi^m$ the by $m$ induced clustering according to \eqref{eq:m_induced_cluster}. Then $\chi^m$ is optimal for \eqref{alg:inequalityMBO} if there exist $b, w \in \{1, \dots, P\}$ with $m_{i_b} \leq m_{i_w}$ such that
\begin{align*}
\sum_{x \in X_N} \chi^m_{i_k}(x) &= U_{i_k} \quad \quad \text{ for all } k \in \{1,\dots,P\} \text{ with } m_{i_k} > m_{i_b},\\
\sum_{x \in X_N} \chi^m_{i_k}(x) &= L_{i_k} \quad \quad \text{ for all }  k \in \{1,\dots,P\} \text{ with } m_{i_k} < m_{i_w}.
\end{align*}
\end{theorem}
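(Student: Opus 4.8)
The plan is to prove optimality by a weak-duality / complementary-slackness argument, using the price vector $m$ shifted by a scalar. Conceptually, $c\mathbbold 1-m$ plays the role of the dual variables for the volume constraints (its positive part pairing with the upper bounds $U_{i_k}$, its negative part with the lower bounds $L_{i_k}$), and $x\mapsto \max_i(u_i(x)-m_i)$ provides the dual variables for the assignment constraints $\sum_i\chi_i(x)=1$; the two families of equalities in the hypothesis are precisely complementary slackness. I would, however, present the estimate directly rather than invoking LP duality machinery.

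First I would record the defining property of the $m$-induced clustering: by \eqref{eq:price_vector}--\eqref{eq:m_induced_cluster}, for every $x\in X_N$ the phase $i^*(x)$ to which $\chi^m$ assigns $x$ satisfies $u_{i^*(x)}(x)-m_{i^*(x)}\ge u_i(x)-m_i$ for all $i$. Let $\tilde\chi$ be any competitor feasible for \eqref{alg:inequalityMBO}. Multiplying the last inequality by $\tilde\chi_i(x)\ge 0$, summing over $i$, and using $\sum_i\tilde\chi_i(x)=1=\sum_i\chi^m_i(x)$ gives $\sum_i(\tilde\chi_i(x)-\chi^m_i(x))(u_i(x)-m_i)\le 0$ for each $x$; summing over $x$ and writing $V_i^m:=\sum_x\chi^m_i(x)$, $\tilde V_i:=\sum_x\tilde\chi_i(x)$ yields
\[
\sum_{i,x}\tilde\chi_i(x)u_i(x)-\sum_{i,x}\chi^m_i(x)u_i(x)\ \le\ \sum_{i=1}^P m_i\big(\tilde V_i-V_i^m\big).
\]
(The computation never used integrality of $\tilde\chi$, so one in fact obtains optimality in the LP relaxation as well.)

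Next, since $\sum_i\tilde V_i=\#X_N=\sum_i V_i^m$, I may replace $m_i$ by $m_i-c$ on the right-hand side for any constant $c$; I take $c:=m_{i_b}$ and split according to the sign of $m_i-c$. The terms with $m_i=c$ vanish. For $m_i>c=m_{i_b}$ the hypothesis gives $V_i^m=U_i$, while feasibility of $\tilde\chi$ gives $\tilde V_i\le U_i$, so $(m_i-c)(\tilde V_i-V_i^m)\le 0$. For $m_i<c=m_{i_b}$ one has, using the hypothesis $m_{i_b}\le m_{i_w}$, that $m_i<m_{i_w}$, hence $V_i^m=L_i$ and $\tilde V_i\ge L_i$, so again $(m_i-c)(\tilde V_i-V_i^m)\le 0$; this is the single point where $m_{i_b}\le m_{i_w}$ enters. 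Therefore $\sum_i m_i(\tilde V_i-V_i^m)\le 0$, and the displayed inequality shows $\chi^m$ is optimal. The only bookkeeping I expect to require care (rather than the estimate, which is short) is checking that $\chi^m$ is itself feasible, so that it is a legitimate competitor: phases with $m_i>m_{i_b}$ sit at $U_i\in[L_i,U_i]$ and phases with $m_i<m_{i_w}$ at $L_i\in[L_i,U_i]$, which already covers every phase when $m_{i_b}<m_{i_w}$; in the degenerate case $m_{i_b}=m_{i_w}$ one additionally needs that the phases with $m_i=m_{i_b}$ have admissible volume, which holds in the situations where the criterion is applied.
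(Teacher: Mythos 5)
Your proof is correct and follows essentially the same route as the paper's: reduce the comparison to showing $\sum_i m_i(\tilde V_i - V_i^m)\le 0$ via the $m$-induced optimality pointwise, shift $m$ by a constant using $\sum_i\tilde V_i=\sum_i V_i^m$, and split the sum around a pivot, using the $U$-equalities on one side and the $L$-equalities on the other. The only cosmetic difference is your choice of pivot $c=m_{i_b}$ where the paper uses $m_{i_w}$ (symmetric, no material difference); your extra remark on feasibility of $\chi^m$ in the degenerate case $m_{i_b}=m_{i_w}$ is a legitimate fine point that the paper leaves implicit, as there $\chi^m$'s feasibility is guaranteed by Algorithm \ref{alg:lower_upper} rather than by the stated hypotheses.
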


\begin{algorithm}
\begin{algorithmic}[1]
\Require initial order statistic guess $m$, number of clusters $P$ and points $\{u(x)\}_{x \in X_N}$
\Ensure Computes the $(L,U)$-order statistic $m$ and the $m$-induced clustering $\chi^m$ solving \eqref{alg:inequalityMBO}.
\State Find a feasible $m$ induced clustering $\chi^m$ with Algorithm \ref{alg:median}\Comment{$\chi^m$ is feasible in \eqref{alg:inequalityMBO}} \label{alg2:line1}
\While {$\exists b \in \mathcal{J}_-, w \in \mathcal{J}_+$ s.t. $m_b > m_w$} \Comment{$\mathcal{J}_-$ and  $\mathcal{J}_+$ as in \eqref{eq:def_jplus}}\label{alg2:line2}
\State $\mathcal{T} := \argmax_{p \in \mathcal{J}_-} m_p$\label{alg2:line3} \Comment{$\mathcal{T}$ is the tree of growing phases}
\State $d_\mathcal{T} := \frac{\# \mathcal{T}}{P-1}\cdot\mathbbold{1}- (1+\frac{1}{P-1})\sum_{i \in \mathcal{T}} e_i$ \label{alg2:line4}\Comment{For motivation of this direction see \eqref{eq:first_direction}}
\While {$\forall w \in \mathcal{T}\cap \mathcal{J}_+: m_w > \max_{p \in \mathcal{J}_-} m_p$}\label{alg2:line5}
\State move $m$ in direction $d_\mathcal{T}$ until $\exists  i \in \mathcal{T}, j \notin \mathcal{T}$ s.t. either
\begin{align*}
\begin{cases}
\text{a) the hyperplane } H_{i j}(m) \text{ contains a point } u_{ij} \text{ of the cluster } j,\\
\text{b) } j \in \argmax_{p \in \mathcal{J}_-} m_p.
\end{cases}
\end{align*}\label{alg2:line6}
\State In case a) set $p_j = i$. \Comment{$p_j$ is the predecessor of j}\label{alg2:line7}
\State $\mathcal{T} = \mathcal{T} \cup \{j\}$\label{alg2:line8}
\State $d_\mathcal{T} := \frac{\# \mathcal{T}}{P-1}\mathbbold{1}- (1+\frac{1}{P-1})\sum_{i \in \mathcal{T}} e_i$\label{alg2:line9}
\EndWhile\label{alg2:line10}
\State $i = \argmin_{p \in  \mathcal{T} \cap \mathcal{J}_+} m_p$\label{alg2:line11}
\While {$i \notin \argmax_{p \in \mathcal{J}_-} m_p$}\label{alg2:line12}
\State Change the clustering $\chi^m$ of $u_{p_{i}i}$ from $i$ to $p_{i}$\label{alg2:line13}
\State $i = p_{i}$\label{alg2:line14}
\EndWhile\label{alg2:line15}
\EndWhile
\end{algorithmic}
\caption{Algorithm to find a $(L,U)$-order statistic}
\label{alg:lower_upper}
\end{algorithm}

\subsection{Efficiency Analysis}\label{sec:main_efficiency}
For the running time analysis one observes that in every iteration (lines 2 to 17) of Algorithm \ref{alg:median} the energy \eqref{eq:alg_energy} reduces by two. Thereby the asymptotic running time is given by $\frac{E(m)}{2} \cdot \mathcal{O}((\log(N) +P)P^2)$ where $m$ is the initial guess for the order statistic. A priori one only has the worst case estimate $E(m) \leq N$. Our goal is to find a good initial guess that leads to a better estimate of $E(m)$.

So what is a good initial guess for $m$? This depends on the distribution of the data $\{u(x)\}_{x \in X_N}$. In our case the data is given by the MBO scheme, or more precisely by the convolution of the heat kernel $p(h,x,y)$ with the characteristic function $\chi$. At least heuristically, the heat kernel only changes the values on a scale $\mathcal{O}(\sqrt{h})$ around the interface between the phases given by $\chi$. Thus we expect that also the order statistic moves according to this length scale $\mathcal{O}(\sqrt{h})$ which we make rigorous by the following strategy. We interpret the order statistic as a Lagrange multiplier by observing that property \eqref{eq:m_induced_cluster} is equivalent to
\begin{align*}
\chi^\ell \in \argmin_{\chi \in \{0,1\}^P} &\sum_{x \in X_N} \sum_{i=1}^P \chi_i(x) (u_i(x) + m^*_i) \quad \text{ s.t. } \sum_{i = 1}^P\ \chi_i(x) = 1.
\end{align*}
By using $u = e^{-h \Delta_N}\chi^{\ell -1}$ and completing the square as in \cite{esedog2015threshold, laux2017convergence, jacobs2018auction} (see Lemma \ref{lem:lagrangeMulti}) it follows that this is equivalent to
\begin{align*}
\chi^\ell \in \argmin_{\chi} E_{h,N}(\chi) + \frac{1}{2h} d_{h,N}^2(\chi,\chi^{\ell-1}) - \frac{2}{\sqrt{h}} m^* \cdot \frac{1}{N} \sum_{x \in X_N} \chi(x),
\end{align*} 
where $E_{h,N}$ and $d_{h,N}$ are the thresholding energy \eqref{eq:discrete_thresholding_energy} and distance \eqref{eq:discrete_thresholding_distance}. Thus $2m^*/\sqrt{h}$ is the Lagrange multiplier to the minimizing movement interpretation of the MBO scheme. It is hard to estimate the Lagrange multiplier directly in our discrete setting. We therefore take a different approach: Starting with the paper of Garc{\'i}a Trillos and Slep{\v c}ev \cite{MR3458162} a lot of big data limits to continuous models have been proven recently.  We prove the convergence of the order statistic in this big data limit which yields together with the convergence of the MBO scheme by Lelmi and one of the authors \cite{laux2021large} the convergence of our volume constrained MBO scheme (see Proposition \ref{the:conv_os}). The clou is that one can use variational techniques that are only available in the continuous setting to prove good bounds for the limiting Lagrange multiplier. To be more precise we improve an $L^2$-estimate of the continuous Lagrange multiplier of Swartz and one of the authors \cite{laux2017convergence} to the multi-phase and spatially inhomogeneous setting (see Proposition \ref{the:l2estimate}). As we are anyways interested in the running time for big $N$ we thus can conclude from the convergence and the bound in the continuum that the Lagrange multiplier is also bounded in the discrete setting for big $N$.

To make this idea rigorous some assumptions on the distribution of the data $X_N$ are necessary. The well established manifold assumption states that $X_N$ are i.i.d. samples over a low-dimensional Riemannian manifold. This low-dimensionality is justified as data points might live in a high dimensional space but have within a cluster only few degrees of freedom such that they can actually be seen to be in a lower dimensional space. For example an image of a handwritten digit $1$ of the MNIST dataset is in $\R^{28\times 28}$ but can be roughly seen as a line with only the two parameters of length and angle. One also assumes that similarity between datapoints is expressed by the extrinsic Euclidean distance between points relative to some length scale $\eps > 0$. The exact assumptions are given in Assumption \ref{ass:manifold}. We can then answer the question of the choice of the initial guess $m^0$ by stating a running time improvement of the factor $\sqrt{h}$ if the initial guess is chosen as result of the previous iteration or as multiple of the vector $\mathbbold{1}$ with only $1$s as entries:

\begin{theorem*}[Informal version of Theorem \ref{the:improvedRunning}]
Under the manifold assumption there is a scaling regime
\begin{align*}
h \ll 1, \quad N \gg 1, \quad \eps \ll 1,
\end{align*}
such that in a typical iteration of the volume constrained MBO scheme, the computational complexity of Algorithm \ref{alg:median} is
\begin{align*}
O(\sqrt{h}\, N \log N)
\end{align*}
when starting from the center $m^0 = \frac{1}{P} \mathbbold{1}$ or the previous order statistic $m^0 = m_N^{\ell - 1}$.
\end{theorem*}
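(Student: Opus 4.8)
The plan is to combine the three ingredients advertised in the introduction, so let me first reduce the claim to a statement about the $V$-order statistic itself. By the running-time bookkeeping in Section~\ref{sec:main_efficiency}, the cost of Algorithm~\ref{alg:median} starting from an initial guess $m^0$ is $\tfrac12 E(m^0)\cdot O((\log N + P)P^2)$, and since $P$ is fixed this is $O(E(m^0)\log N)$. Hence it suffices to show that, in a typical iteration and in the stated scaling regime, $E(m^0) = O(\sqrt h\, N)$ both for $m^0 = \tfrac1P\mathbbold{1}$ and for $m^0 = m_N^{\ell-1}$, the order statistic produced in the previous step. Now $E(m^0) = \sum_i |\#\Omega_i^{m^0} - V_i|$, and $\#\Omega_i^{m^0} - V_i = \sum_{x\in X_N}(\chi_i^{m^0}(x) - \chi_i^{m^*}(x))$ where $m^* = m_N^\ell$ is the true $V$-order statistic of the current diffused data; so $E(m^0)$ is controlled by $N$ times the measure of the symmetric difference between the $m^0$-induced and the $m^*$-induced clusterings. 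The key geometric observation is that $\chi^{m^0}(x)$ and $\chi^{m^*}(x)$ can only differ when $u(x)$ lies in the slab of width $O(|m^0 - m^*|)$ around the union of hyperplanes $H_{ij}(m^*)$ — only near the decision boundary can a shift of the threshold vector flip an assignment.

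So the proof splits into three steps. \textbf{Step 1 (the order statistic is $\sqrt h$-small).} Using the Lagrange-multiplier interpretation recalled in Section~\ref{sec:main_efficiency}, I invoke the continuum $L^2$-estimate of Proposition~\ref{the:l2estimate}: the continuous Lagrange multiplier $\lambda^\ell$ of the mean-field minimizing movement satisfies an estimate of the form $\|\lambda^\ell\|_{L^2} \lesssim 1$ in a typical iteration (i.e. after summing over $\ell$ and discarding a small fraction of bad steps), which translates via $\lambda = 2m/\sqrt h$ into $|m^\ell_\infty| = O(\sqrt h)$ for the continuum order statistic $m^\ell_\infty$. Then Proposition~\ref{the:conv_os} (convergence of the discrete order statistic to its mean-field counterpart in the large-data/$TL^2$ limit) gives that for $N$ large the discrete order statistic $m_N^\ell$ is within $o(1)$ of $m^\ell_\infty$, hence $|m_N^\ell| = O(\sqrt h)$, and in particular $|m_N^\ell - \tfrac1P\mathbbold{1}| = O(\sqrt h)$ — note $\tfrac1P\mathbbold{1}$ is the center, which up to the irrelevant $\mathbbold{1}$-direction is the origin for the thresholding. \textbf{Step 2 (few points near the interface).} I estimate $\#\{x\in X_N : \dist(u(x), H_{ij}(m^*)) \le c\sqrt h\}$. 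Since $u = e^{-h\Delta_N}\chi^{\ell-1}$, heat-kernel regularity on the manifold (the estimates already used for Lemma~\ref{the:l1estimate}, inspired by~\cite{MR3556529}) shows that $u(x)$ is $O(\sqrt h)$-close to a simplex vertex $e_k$ unless $x$ is within geodesic distance $O(\sqrt h)$ of the interface $\partial\Omega^{\ell-1}$; the number of such $x$ is $O(\sqrt h\, N)$ by the manifold assumption (the $\eps$-neighbourhood of a $(d-1)$-dimensional interface has measure $O(\sqrt h)$ once $\eps \lesssim \sqrt h$, plus a concentration estimate for the i.i.d.\ sample). \textbf{Step 3 (combine).} For $x$ with $u(x)$ within $O(\sqrt h)$ of a vertex, since $|m^* - \tfrac1P\mathbbold{1}| = O(\sqrt h)$ and the vertices are at $O(1)$ distance from each other, $\chi^{m^*}(x) = \chi^{m^0}(x) = e_k$ for both choices of $m^0$ once $h$ is small enough. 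Therefore the symmetric difference is contained in the $O(\sqrt h)$-interface-neighbourhood from Step~2, giving $E(m^0) = O(\sqrt h\, N)$ and hence the running time $O(\sqrt h\, N\log N)$.

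The main obstacle is Step~1, and within it the passage from the continuum $L^2$-estimate of Proposition~\ref{the:l2estimate} to a genuinely pointwise bound $|m^\ell_\infty| = O(\sqrt h)$ for the order statistic in a \emph{typical} iteration. The $L^2$-estimate is a bound on $\sum_\ell h\|\lambda^\ell\|^2$ (an estimate in time as well as space), so it only controls the \emph{average} over iterations; one has to make precise what "typical iteration" means — e.g. all but an $o(1/h)$-fraction of the $O(1/h)$ iterations before a fixed time horizon satisfy $|m^\ell_\infty| \le C\sqrt h$ — and propagate this through the discretization in Proposition~\ref{the:conv_os} uniformly enough. A secondary subtlety is that Proposition~\ref{the:conv_os} presumably gives qualitative convergence $m_N^\ell \to m^\ell_\infty$ along the $TL^2$-limit but not a rate in $N$; this is why the statement is only for "$N \gg 1$" and one should phrase the conclusion as: for every $\delta>0$ there is $N_0$ so that for $N\ge N_0$ the complexity is $O((\sqrt h + \delta)N\log N)$, and then absorb $\delta$ into the scaling regime $h\ll 1$. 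The remaining steps are comparatively routine: Step~2 is a heat-kernel localization plus a Bernstein-type concentration bound for the number of sample points in a thin tube, and Step~3 is the elementary geometric observation about when shifting the threshold vector can change an assignment.
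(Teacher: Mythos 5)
Your Steps~1 and~3 match the paper's argument in spirit, and you correctly identify the two main delicacies: that the $L^2$-estimate of Proposition~\ref{the:l2estimate} only controls the order statistic in a time-averaged sense (the paper's proof resolves this precisely via the Chebyshev argument you sketch, bounding the number of ``bad'' iterations by $C/\delta^2$, a constant independent of $L$), and that Proposition~\ref{the:conv_os} gives only qualitative convergence in $N$ (the paper's conclusion is indeed stated as: for every $h$ small there exists $N_0(h)$ such that for $N>N_0(h)$ the bound holds). However, your Step~2 takes a genuinely different and problematic route. You propose to bound the number of points near the decision boundary by a geometric tubular-neighbourhood estimate: $u(x)$ is $O(\sqrt h)$-close to a vertex unless $x$ is within $O(\sqrt h)$ of the interface $\partial\Omega^{\ell-1}$, the Minkowski content of the interface at scale $\sqrt h$ is $O(\sqrt h)$, and then a Bernstein concentration estimate for the i.i.d.\ sample. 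This is exactly the heuristic the paper offers in the introduction, but it is not what the paper actually proves, and making it rigorous would require substantial geometric regularity that is not available: in the discrete setting $\Omega^{\ell-1}\subset X_N$ has no intrinsic $(d-1)$-dimensional interface, and even in the continuum the iterates $\chi^\ell$ are only of bounded perimeter, for which a Minkowski-content-type bound on the $\sqrt h$-tube does not follow automatically. The paper instead uses a short variational argument (Lemma~\ref{lem:boundonpoints}): any point with image $u(x)$ in the $\delta$-tube $Y_\delta$ around the hyperplanes through $\tfrac{1}{P}\mathbbold{1}$ contributes at least $(1-2\delta P)/P$ to $\sum_{i\neq j}\chi_i(x)\,u_j(x)$, so the count of such points is bounded by $\tfrac{NP}{1-2\delta P}\sqrt{h}\,E_{N,h}(\chi)$. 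Since $E_{N,h}(\chi^{\ell-1})$ is uniformly bounded by the initial thresholding energy (energy dissipation plus the well-preparedness assumption on $\chi^0$), this gives the $O(\sqrt h\,N)$ bound directly, sidestepping all of the interface regularity and sampling issues your Step~2 would need. You should replace your tubular-neighbourhood argument with this energy bound; everything else in your outline then goes through essentially as in the paper.
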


For the above theorem also an assumption on the well preparedness of the initial clusterings is necessary. Out assumption is minimal and states that the initial cuts converge, in the correct topology, to a continuous initial clustering, with finite cut area as the number of data points goes to infinity. This assumption is for example satisfied for the standard initialization with $k$-means or Voronoi cells.

\subsection{Empirical Tests and Comparison to State-of-the-Art}
In practice, the computation of the heat semigroup is expensive and thus not feasible for big datasets. We discuss and compare several possibilities to approximate the heat semigroup on graphs which leads to kernels that are cheaper to compute. One kernel will be motivated by Lelmi and one of the authors \cite{laux2023large} where they show that one only needs the first $\log^q N$ many summands for some scalar $q > 0$ in the spectral decomposition of the heat kernel \eqref{eq:spectral_decomposition} to get convergence of the MBO scheme to Mean Curvature Flow. The other type of approximations are connected to Taylor expansions of the exponential function. They have the advantage of being sparse which leads to fast computation.  One can even improve this with a neat little trick by computing for an arbitrary approximation $A$ of the heat semigroup the product with $\chi^\ell$ by
\begin{align*}
A \chi^\ell = A (\chi^\ell - \chi^{\ell - 1}) + A \chi^{\ell -1 }.
\end{align*}
This is of interest as $A \chi^{\ell -1 }$ can be stored from the previous iteration and $A (\chi^\ell - \chi^{\ell - 1})$ can be computed more efficiently as $\chi^\ell - \chi^{\ell -1}$ is sparse by our Lemma \ref{the:l1estimate}.

Our results on three common datasets for semi supervised learning show that there is not the one approximation that is overall preferable but they all have there pros and cons on the different datasets. Our results support the choice of \cite{jacobs2018auction} where they used the squared weight matrix as approximation. We also compare briefly the results to other graph learning methods which shows that our method is comparable in accuracy while being computationally very efficient.

\medskip

The rest of the paper is now structured as follows. Section \ref{sec:inner_workings} proves the correctness and running time of Algorithm \ref{alg:median} and Algorithm \ref{alg:lower_upper} which is done by showing that the algorithms find the respective order statistic. Section \ref{sec:improved} improves then the running time analysis of Algorithm \ref{alg:median} by making connections to the underlying gradient flow structure of the limiting geometric evolution equation. Section \ref{sec:empiric} introduces several approximations of the heat kernel and compares them empirically on three standard datasets.

\section{The Inner Workings of the New Algorithm}\label{sec:inner_workings}
The aim of this chapter is to prove the correctness and a first estimate on the running time of Algorithm \ref{alg:median} and Algorithm \ref{alg:lower_upper}. To this end, Subsection \ref{sec:exis_opt} introduces rigorously the already in the main results mentioned order statistic and proves Theorem \ref{the:opt_criterium} on the optimality of the by the order statistic induced clustering in the optimization problem \eqref{alg:volumeMBO}. Subsection \ref{sec:fast_alg} explains then how to algorithmically find the order statistic which leads to Theorem \ref{the:correctness} and Theorem \ref{the:runningtime} on the correctness and running time estimate of Algorithm \ref{alg:median}. The results will be even improved in Subsection \ref{sec:alg_inequality} to the more general setting of upper and lower volume constraints, see optimization problem \eqref{alg:inequalityMBO}.  This will be done by adapting Algorithm \ref{alg:median} to greedily optimize the objective function of \eqref{alg:inequalityMBO} which leads to Algorithm \ref{alg:lower_upper}. The proofs of correctness (Theorem \ref{the:correctness_inequality}) and running time estimate (Theorem \ref{the:running_time_inequality}) of Algorithm \ref{alg:lower_upper} are surprising because they show that this greedy procedure finds a global maximizer (which is a priori not clear for a greedy algorithm) and improve the state of the art worst case running time $\mathcal{O}(N^2)$ of \cite{jacobs2018auction} to $\mathcal{O}(N \log (N))$. 

\subsection{Properties of the $V$- and $(L, U)$-Order Statistic} \label{sec:exis_opt}
First we denote by 
\begin{equation}
\mathcal{C} := \left\{\chi: X_N \rightarrow \{0,1\}^P \left| \sum_{i = 1}^P \chi_i(x) = 1~ \text{ for all } x \in X_N \right. \right\}
\end{equation}
the set of clusters. Given $V = \{V_i\}_{i=1}^P$ and $L = \{L_i\}_{i = 1}^P, U = \{U_i\}_{i = 1}^P$ the set of admissible clusters $\mathcal{A}(V)$ and $\mathcal{A}(L, U)$ in the optimization problems \eqref{alg:volumeMBO} and \eqref{alg:inequalityMBO}, respectively, are given by 
\begin{align*}
\mathcal{A}(V) &:= \left\{\chi \in \mathcal{C} \left| \sum_{x \in X_N} \chi_i(x) = V_i~ \text{ for all } i \in \{1, \dots, P\}\right. \right\},\\
\mathcal{A}(L,U) &:= \left\{\chi \in \mathcal{C} \left| L_i \leq \sum_{x \in X_N} \chi_i(x) \leq U_i~ \text{ for all } i \in \{1, \dots, P\}\right. \right\}.
\end{align*} 
Now we define $m$-induced clusterings as well as the $V$- and $(L, U)$-order statistic:

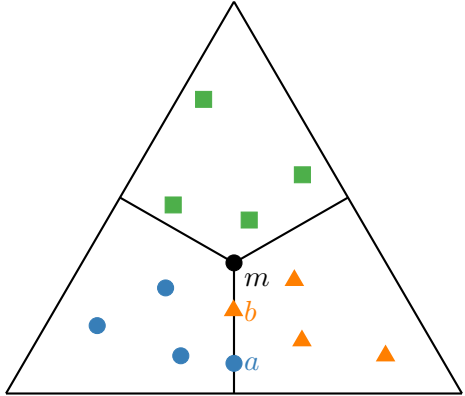
\begin{figure}

\begin{tikzpicture}
\draw[black, thick] (-3,0) -- (3,0);
\draw[black, thick] (3,0) -- (0,5.1961);
\draw[black, thick] (-3,0) -- (0,5.1961);
\filldraw[black] (0,1.7320) circle (3pt) node[anchor=north west]{$m$};
\draw[black, thick] (0,0) -- (0,1.7320);
\draw[black, thick] (-1.5,2.5980) -- (0,1.7320);
\draw[black, thick] (1.5,2.5980) -- (0,1.7320);

\filldraw[phase1] (0,0.4) circle (3pt) node[anchor=west]{$a$};
\node[fill=phase2,regular polygon, regular polygon sides=3,inner sep=1.5pt, anchor=west] at (-0.1,1.1) {};
\filldraw[phase2,anchor=west] (0,1.1) circle (0pt) node[anchor=west]{$b$};
%\node[ phase2, anchor=west] (0,1.1) {$b$};

\filldraw[phase1] (-0.9,1.4) circle (3pt) ;
\filldraw[phase1] (-1.8,0.9) circle (3pt) ;
\filldraw[phase1] (-0.7,0.5) circle (3pt) ;

\node[fill=phase2,regular polygon, regular polygon sides=3,inner sep=1.5pt, anchor=west] at (0.7,1.5) {};
\node[fill=phase2,regular polygon, regular polygon sides=3,inner sep=1.5pt, anchor=west] at (1.9,0.5)  {};
\node[fill=phase2,regular polygon, regular polygon sides=3,inner sep=1.5pt, anchor=west] at (0.8,0.7) {};

\filldraw[phase3] ([xshift=-3pt,yshift=-3pt]0.9,2.9) rectangle ++(6pt,6pt);
\filldraw[phase3] ([xshift=-3pt,yshift=-3pt]0.2,2.3) rectangle ++(6pt,6pt);
\filldraw[phase3] ([xshift=-3pt,yshift=-3pt]-0.4,3.9)  rectangle ++(6pt,6pt);
\filldraw[phase3] ([xshift=-3pt,yshift=-3pt]-0.8,2.5) rectangle ++(6pt,6pt);

\end{tikzpicture}
\caption{$\{4,4,4\}$-order statistic $m$ in \textbf{black}. The colors represent a $m$-induced clustering $\chi^m$. The points \color{phase1} $a$ \color{black} and \color{phase2} $b$ \color{black} on the hyperplane $H_{\;\color{phase1}{\leavevmode\put(0,2.4){\circle*{4.5}}}\, \color{phase2} \blacktriangle\color{black}}\!(m)$ could be either assigned to the \quad \color{phase1} \leavevmode\put(-7,3.4){\circle*{6.5}} \color{black} or \color{phase2}$\blacktriangle$ \color{black} phase.}
\end{figure}

\begin{definition}\label{def:median}
Given a point $m \in \mathbb{R}^P$, a clustering $\chi^m \in \mathcal{C}$ of $\{u(x)\}_{x \in X_N}$ is \textit{induced} by $m$ if for all $x \in X_N$ and $i = 1,\dots, P$
\begin{equation}
\chi_i(x) = 1 \Rightarrow (u(x_k)-m)\cdot(e_i-e_j) \geq 0 \quad \text{ for all } j \neq i.
\label{eq:sep_cond}
\end{equation}
Given volume constraints $V_i \in \N,\ i = 1, \dots, P$, we say $m$ is a \textit{$V$-order statistic} if there exists a by $m$ induced clustering $\chi^m$ such that $\chi^m \in \mathcal{A}(V)$.

If volume constraints $L_i, U_i,\ i = 1, \dots, P$, are given, we say $m$ is a \textit{$(L,U)$-order statistic} if there exists a by $m$ induced clustering $\chi^m$ with $\chi^m \in \mathcal{A}(L,U)$ such that when ordering the coordinates $m_{i_1} \geq \dots \geq m_{i_P}$ there exist $b, w \in \{1, \dots, P\}$ with $m_{i_b} \leq m_{i_w}$ and
\begin{align}
\sum_{x \in X_N} \chi^m_{i_k}(x) &= U_{i_k} \quad \quad \text{ for all } k \in \{1,\dots,P\} \text{ with } m_{i_k} > m_{i_b},\\
\sum_{x \in X_N} \chi^m_{i_k}(x) &= L_{i_k} \quad \quad \text{ for all }  k \in \{1,\dots,P\} \text{ with } m_{i_k} < m_{i_w}.
\end{align}
Note that this is precisely the condition of Theorem \ref{the:opt_criterium} and therefore any by an order statistic induced clustering is optimal in \eqref{alg:inequalityMBO}. 
\end{definition}
Furthermore we denote by 
\begin{equation}\label{eq:hyperplanes}
H_{ij}(m) := \left\{u \in \R^P\big|(u - m)\cdot (e_i - e_j) = 0\right\}
\end{equation}
the hyperplanes through an order statistic $m$ which separates the by $m$ induced Clusters $i$ and $j$. The first question to answer is whether there always exists such order statistics. The answer is yes and summarized in the following lemma.
\begin{lemma}
Let volume constraints $\{(L_i, U_i)\}_i$ be given with $\sum_{i=1}^P L_i \leq N \leq \sum_{i = 1}^P U_i$. Then there exists an $(L,U)$-order statistic. In particular the $V$-order statistic exists if $V$ is given with $\sum_{i=1}^P V_i = N$.
\end{lemma}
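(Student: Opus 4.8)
The plan is to reduce the whole statement to the $(L,U)$ case: taking $L_i=U_i=V_i$ makes the hypothesis $\sum_i L_i\le N\le\sum_i U_i$ hold with equality, makes $\mathcal{A}(L,U)=\mathcal{A}(V)$, and a $V$-order statistic is then just an $m$-induced clustering lying in $\mathcal{A}(V)$, which the $(L,U)$-construction produces on the nose (no ordering condition being required in that case). The tool for the $(L,U)$ case is linear programming duality applied to the transportation-type relaxation of \eqref{alg:inequalityMBO}. First I would dispatch feasibility: $\mathcal{A}(L,U)\neq\emptyset$ reduces to finding integers $c_1,\dots,c_P$ with $L_i\le c_i\le U_i$ and $\sum_i c_i=N$, which one gets by starting from $c_i=L_i$ and greedily raising the $c_i$ within their slacks $U_i-L_i$ until the total reaches $N$ — possible precisely because $\sum_i L_i\le N\le\sum_i U_i$ — and then distributing the points of $X_N$ into clusters of these sizes.

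Next I would relax $\chi_i(x)\in\{0,1\}$ to $\chi_i(x)\ge 0$ in \eqref{alg:inequalityMBO}. The relaxed feasible set is nonempty (by the previous step) and bounded (it lies in $[0,1]^{P\times N}$), so the LP has an optimal solution. Its constraint matrix is the vertex--edge incidence matrix of a bipartite graph (data points against clusters) and is therefore totally unimodular; since the right-hand sides $1,L_i,U_i$ are integral, every vertex of the polytope, in particular an optimal one $\chi^*$, is integral. Hence $\chi^*\in\mathcal{A}(L,U)$ and $\chi^*$ is optimal for \eqref{alg:inequalityMBO}. By strong duality there are multipliers $p_x\in\R$ for the equalities $\sum_i\chi_i(x)=1$ and $\mu_i,\nu_i\ge 0$ for $\sum_x\chi_i(x)\le U_i$ and $\sum_x\chi_i(x)\ge L_i$, with $p_x\ge u_i(x)-(\mu_i-\nu_i)$ for all $i,x$ and complementary slackness with $\chi^*$. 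Setting $m_i:=\mu_i-\nu_i$, one has $p_x\ge u_i(x)-m_i$ for every $i$, and $\chi^*_i(x)=1$ forces equality there, whence $u_i(x)-m_i\ge u_j(x)-m_j$ for all $j$; this is exactly \eqref{eq:sep_cond}, so $\chi^*$ is induced by $m$. Moreover $m_i>0$ gives $\mu_i\ge m_i>0$, so $\sum_x\chi^*_i(x)=U_i$ by complementary slackness, and symmetrically $m_i<0$ gives $\sum_x\chi^*_i(x)=L_i$.

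It then remains to produce the indices $b,w$ of Definition~\ref{def:median}. Order the coordinates $m_{i_1}\ge\dots\ge m_{i_P}$. If some coordinate vanishes, say $m_{i_j}=0$, take $b=w=j$: clusters with $m_{i_k}>0=m_{i_b}$ sit at their upper bound and those with $m_{i_k}<0=m_{i_w}$ at their lower bound. If no coordinate vanishes, let $w$ be the position of the smallest positive coordinate and $b$ that of the largest negative coordinate (with the obvious simplification if all coordinates have a common sign, where one may take $b=w$ to be the last, resp.\ first, index); then $m_{i_b}<0<m_{i_w}$, every cluster with $m_{i_k}>m_{i_b}$ is positive hence at its upper bound, and every cluster with $m_{i_k}<m_{i_w}$ is negative hence at its lower bound. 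In all cases $m$ is an $(L,U)$-order statistic, and specializing to $L_i=U_i=V_i$ yields the $V$-order statistic.

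The only genuinely non-routine ingredient is the LP-duality/total-unimodularity step: one must make sure the relaxed program is solved at an integral vertex, so that the resulting $\chi^*$ is an honest $\{0,1\}$-clustering rather than a fractional one, and one must keep track of signs so that the dual multipliers assemble into an $m$ compatible with the convention in \eqref{eq:sep_cond}. The remaining care — as opposed to machinery — is confined to the bookkeeping in the last step, namely the placement of $b$ and $w$ relative to the ordering and the treatment of coordinates equal to zero or all of one sign.
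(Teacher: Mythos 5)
Your proof is correct, but it takes a genuinely different route from the paper's. In the paper, the existence of the $(L,U)$-order statistic is obtained as a byproduct of the correctness of Algorithm~\ref{alg:lower_upper} (Theorem~\ref{the:correctness_inequality}); the proof is a one-line pointer to that theorem, plus the observation that the equality case $L=U=V$ specializes the definition (the ordering condition then being vacuous, which is what taking $b=P$, $w=1$ amounts to). Your argument is instead self-contained and non-algorithmic: you relax \eqref{alg:inequalityMBO} to an LP, observe that the constraint matrix is the bipartite incidence matrix together with the $\pm$-copies of the cluster rows (which preserves total unimodularity), conclude that there is an integral optimal vertex $\chi^*\in\mathcal{A}(L,U)$, and then read off the order statistic as $m_i=\mu_i-\nu_i$ from the dual multipliers, with complementary slackness supplying exactly the separation condition \eqref{eq:sep_cond} and the $b,w$-conditions of Definition~\ref{def:median} (the sign of $m_i$ detects whether $\mu_i$ or $\nu_i$ is active). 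The bookkeeping for $b$ and $w$ at the end is correct, including the edge cases of a zero coordinate or a single sign. Each approach has something to recommend it: the paper's is economical within the paper because the algorithm is analyzed anyway and the existence comes for free; yours makes the existence accessible from textbook combinatorial optimization without any reference to the algorithm, and it also identifies $m$ explicitly as a shifted dual price vector, which meshes nicely with the paper's description of $m$ as an equilibrium price vector in the spirit of~\cite{jacobs2018auction}.
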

\begin{proof}
The result follows from Theorem \ref{the:correctness} where we show that Algorithm \ref{alg:lower_upper} finds an $(L,U)$-order statistic. To see that the $V$-order statistic is a special case of the $(L,U)$-order statistic use $b= N$ and $w = 1$ in the definition of the $(L,U)$-order statistic.
\end{proof}
Note that in general neither the order statistics nor the solution to the problems \eqref{alg:volumeMBO} and \eqref{alg:inequalityMBO} are unique. To see the former, one slightly translates (if possible) the order statistic such that the induced clustering doesn't change. For the latter there could be for example cases with two points on the same hyperplane $H_{ij}(m)$ belonging to different clusters. Swapping their belonging to the clusters won't change the objective function \eqref{eq:objective_value}.

Although the order statistic is not unique we can show the optimality of the induced clustering. This was already claimed in Theorem \ref{the:opt_criterium}. We give the proof here:
\begin{proof}[Proof of Theorem \ref{the:opt_criterium}]
Let $m$ and $\chi^m$ be as in the theorem. We want to show that the target value of $\chi^m$ is maximal among the admissible clusterings.  Let $\chi \in \mathcal{A}(L, U)$ be an arbitrary admissible clustering. We claim that 
\begin{align}
\sum_{i = 1}^P \sum_{x \in X_N} \chi_i^m(x)(u_i(x)-m_i) &\geq \sum_{i = 1}^P \sum_{x \in X_N} \chi_i(x)(u_i(x)-m_i), \label{eq:ineq_m_induced}\\
\sum_{i = 1}^P \sum_{x \in X_N} \chi_i^m(x) m_i &\geq \sum_{i = 1}^P \sum_{x \in X_N} \chi_i(x) m_i. \label{eq:ineq_order_statistic}
\end{align}
Then the theorem follows because 
\begin{align*}
\sum_{i = 1}^P \sum_{x \in X_N} \chi_i^m(x)u_i(x) &= \sum_{i = 1}^P \sum_{x \in X_N} \chi_i^m(x)(u_i(x)-m_i) + \sum_{i = 1}^P \sum_{x \in X_N} \chi_i^m(x) m_i \\
 &\geq \sum_{i = 1}^P \sum_{x \in X_N} \chi_i(x)(u_i(x)-m_i) + \sum_{i = 1}^P \sum_{x \in X_N} \chi_i(x) m_i \\
& = \sum_{i = 1}^P \sum_{x \in X_N} \chi_i(x)u_i(x).
\end{align*}
Indeed, the first inequality \eqref{eq:ineq_m_induced} follows by definition of $\chi^m$ (see \eqref{eq:sep_cond}). It remains to prove \eqref{eq:ineq_order_statistic}.
To show \eqref{eq:ineq_order_statistic} we first observe that
\begin{equation*}
\sum_{i = 1}^P \left( \sum_{x \in X_N} \chi_i^m(x) - \sum_{x \in X_N} \chi_i(x) \right) = \sum_{i = 1}^P ( V_i - V_i ) = 0.
\end{equation*}
W.l.o.g. assume that $m_1 \geq \dots \geq m_b \geq \dots \geq m_w \geq \dots \geq m_P$. For all $k \geq w$ we have 
\begin{align*}
\sum_{x \in X_N} \chi_k^m(x) m_k - \sum_{x \in X_N} \chi_k(x) m_k \geq m_w \left(\sum_{x \in X_N} \chi_k^m(x) - \sum_{x \in X_N} \chi_k(x)\right)
\end{align*}
as either $m_k = m_w$ or $m_k < m_w$ and by assumption $\sum_{x \in X_N} \chi_k^m(x) - \sum_{x \in X_N} \chi_k(x) = L_k - \sum_{x \in X_N}\chi_k(x) \leq 0$. Completely analogously we have for all $k < w$ 
\begin{align*}
\sum_{x \in X_N} \chi_k^m(x) m_k - \sum_{x \in X_N} \chi_k(x) m_k \geq m_w \left(\sum_{x \in X_N} \chi_k^m(x) - \sum_{x \in X_N} \chi_k(x) \right).
\end{align*}
All in all this yields
\begin{align*}
&\sum_{i = 1}^P \sum_{x \in X_N} \chi_i^m(x) m_i - \sum_{i = 1}^P \sum_{x \in X_N} \chi_i(x) m_i\\
&\geq m_w \sum_{k \geq w} \left( \sum_{x \in X_N} \chi_{k}^m(x)  - \sum_{x \in X_N} \chi_{k}(x) \right) + m_w \sum_{k < w} \left( \sum_{x \in X_N} \chi_{k}^m(x)  - \sum_{x \in X_N} \chi_{k}(x)\right)\\
&= m_w \sum_{i = 1}^P \left( \sum_{x \in X_N} \chi_i^m(x) - \sum_{x \in X_N} \chi_i(x) \right)\\
&= 0,
\end{align*}
which is precisely the  claimed inequality \eqref{eq:ineq_order_statistic} and finishes the proof.
\end{proof}

The optimality of the $V$-order statistic follows immediately.
\begin{corollary}\label{cor:opt_order_statistic}
Let $m$ be the $V$-order statistic. If $\chi^m$ is a $m$-induced clustering, then $\chi^m$ is optimal in \eqref{alg:volumeMBO}.
\end{corollary}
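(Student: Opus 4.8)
The plan is to deduce this directly from Theorem~\ref{the:opt_criterium} by specializing the inequality constraints in \eqref{alg:inequalityMBO} to the equality constraints in \eqref{alg:volumeMBO}, i.e.\ by setting $L_i = U_i = V_i$ for all $i$. With this choice the admissible sets coincide, $\mathcal{A}(V,V) = \mathcal{A}(V)$, and the two objective functions in \eqref{alg:volumeMBO} and \eqref{alg:inequalityMBO} are literally the same, so a clustering is optimal in one problem if and only if it is optimal in the other.

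First I would record that a $V$-order statistic $m$, together with an $m$-induced clustering $\chi^m \in \mathcal{A}(V)$, satisfies the hypotheses of Theorem~\ref{the:opt_criterium} for the data $L = U = V$. Indeed, order the coordinates $m_{i_1} \geq \dots \geq m_{i_P}$ and choose $b = P$ and $w = 1$, so that $m_{i_b} = m_{i_P} \leq m_{i_1} = m_{i_w}$. Then for any $k$ with $m_{i_k} > m_{i_b}$ we have $\sum_{x \in X_N} \chi^m_{i_k}(x) = V_{i_k} = U_{i_k}$ since $\chi^m \in \mathcal{A}(V)$, and for any $k$ with $m_{i_k} < m_{i_w}$ likewise $\sum_{x \in X_N} \chi^m_{i_k}(x) = V_{i_k} = L_{i_k}$; in fact both identities hold for every $k$. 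Hence Theorem~\ref{the:opt_criterium} applies and yields that $\chi^m$ is optimal in \eqref{alg:inequalityMBO} with $L = U = V$, which is exactly \eqref{alg:volumeMBO}.

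As an alternative and perhaps more transparent route, I would simply replay the proof of Theorem~\ref{the:opt_criterium} in this special case. For any competitor $\chi \in \mathcal{A}(V)$ one has $\sum_{x \in X_N} \chi^m_i(x) = V_i = \sum_{x \in X_N} \chi_i(x)$ for every $i$, so the estimate \eqref{eq:ineq_order_statistic} degenerates to an equality (each summand $m_i\left(\sum_{x \in X_N} \chi^m_i(x) - \sum_{x \in X_N} \chi_i(x)\right)$ vanishes), and the desired inequality $\sum_{i=1}^P \sum_{x \in X_N} \chi^m_i(x) u_i(x) \geq \sum_{i=1}^P \sum_{x \in X_N} \chi_i(x) u_i(x)$ reduces to \eqref{eq:ineq_m_induced}, which is precisely the defining separation property \eqref{eq:sep_cond} of the $m$-induced clustering.

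There is no genuine obstacle here: the statement is a corollary. The only point that deserves a word of care is the reduction of the equality-constrained problem to the inequality-constrained one, and this is immediate once one observes that $\mathcal{A}(V) = \mathcal{A}(V,V)$ and that the two objectives agree verbatim.
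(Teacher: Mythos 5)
Your proposal is correct and takes essentially the same route as the paper: invoke Theorem~\ref{the:opt_criterium} with $L = U = V$ and the choice $b = P$, $w = 1$ (the paper writes $b = N$, which is evidently a typo for $b = P$ since $b,w \in \{1,\dots,P\}$ in Definition~\ref{def:median}), noting that $\mathcal{A}(V,V) = \mathcal{A}(V)$ and the objectives coincide. Your alternative observation---that in this degenerate case \eqref{eq:ineq_order_statistic} collapses to an equality and the claim reduces directly to \eqref{eq:ineq_m_induced}, i.e.\ to the separation property \eqref{eq:sep_cond}---is a nice remark but not a genuinely different argument, just the same proof unwound.
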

\begin{proof}
The proof follows from Theorem \ref{the:opt_criterium} by setting $b = N$ and $w = 1$.
\end{proof}

\subsection{A Fast Algorithm for the Volume Constrained MBO Scheme}\label{sec:fast_alg}
This section is dedicated to developing an efficient, exact algorithm to solve \eqref{alg:volumeMBO} by finding the $V$-order statistic. The final algorithm will have similarities to a discretized gradient flow where the respective energy $E(m)$ is given by \eqref{eq:alg_energy}.

Remember we want to minimize $E(m)$ as $E(m) = 0$ if and only if $m$ is $V$-order statistic. Thus solving \eqref{alg:volumeMBO} can be done by minimizing $E$. To get a feeling on how to minimize $E$ we first briefly look at the continuum counterpart of the energy $E$ and for simplicity the Euclidean case with constant density
\begin{equation}
E_{\mathcal{L}^d}(m) := \sum_{i = 1}^P \left|\int_M \chi^m_i(x) \,dx - V_i \right| = \sum_{i = 1}^P \left|u_{\#}\mathcal{L}^d(U_i(m)) - V_i \right|.
\end{equation}
Here $U_i(m) = \{u(x) \in u(M)|(u(x) - m )\cdot (e_i - e_j) \geq 0\ \forall j \neq i \}$ is the set of points in the $i$-th cluster and $u_{\#}\mathcal{L}^d$ denotes the pushforward by $u$ of the $d$-dimensional Lebesgue measure. Assume $u_{\#}\mathcal{L}^d$ is absolutely continuous with respect to the Lebesgue measure and denote by $\rho$ the density of $u_{\#}\mathcal{L}^d$. Now we compute for arbitrary $v \in \R^d$ using Reynold's transport theorem
\begin{equation}
\left. \frac{d}{ds}\right|_{s = 0}u_{\#}\mathcal{L}^d(U_i(m+s\cdot v)) = \left. \frac{d}{ds}\right|_{s = 0} \int_{U_i(m+sv)} \rho \,dx = \sum_{j \neq i} \int_{\partial U_i(m) \cap \partial U_j(m)} v \cdot \frac{e_i - e_j}{\sqrt{2}} \rho \,dx.
\end{equation}
Thereby the gradient of the energy is given by
\begin{equation}
\nabla E_{\mathcal{L}^d}(m) = 2 \sum_{i=1}^P \sign\left(\mathcal{L}^d(U_i(m)) - V_i \right) \sum_{j \neq i} \int_{\partial U_i(m) \cap \partial U_j(m)} \frac{e_i - e_j}{\sqrt{2}} \,dx.
\end{equation}
Thus, if we want to minimize $E_{\mathcal{L}^d}$ we need to move $m$ in the direction of the normals $e_i - e_j$ with $\sign$ given by $\sign(\mathcal{L} (U_i(m)) - V_i)$. For example if the cluster $U_i(m)$ has too little mass then one wants to move $m$ in direction $ \sum_{j \neq i} (e_i - e_j)$. Notice that directions cancel each other out in the sum over $i$ and $j$: If $U_j(m)$ has too little mass as well Cluster $j$ will move $m$ in direction $e_j - e_i = - (e_i - e_j)$ (and others). Thus this direction cancels out $e_i - e_j$ of $U_i(m)$.  This motivates to also move along the directions $e_i - e_j$ in the discrete setting and also to join directions of same signed clusters.

Therefore we denote by 
\begin{align}
\mathcal{I}_+ &:= \Big\{i \in \{1, \dots, P\} \Big| \sum_{x\in X_N} \chi_i(x) > V_i\Big\}, \label{eq:index_sets}\\
\mathcal{I}_- &:= \Big\{i  \in \{1, \dots, P\} \Big| \sum_{x\in X_N} \chi_i(x) < V_i \Big\}, \label{eq:index_sets2}\\
\mathcal{I}_{:)} &:= \Big\{i  \in \{1, \dots, P\} \Big| \sum_{x\in X_N} \chi_i(x) = V_i \Big\} \label{eq:index_sets3}
\end{align}
the indices of the clusters with positive, negative and neutral sign. The idea is now to construct an iterative algorithm.

\begin{figure}
\hspace{-60pt}
\begin{subfigure}[t]{0.15\linewidth}
\includegraphics[height = 1.55\linewidth]{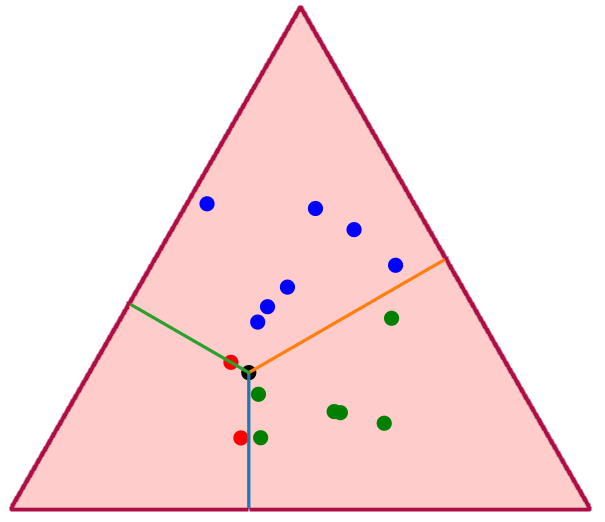}
\caption{Initial guess for the order statistic (black point).}
\label{fig:init_guess}
\end{subfigure}%
\hspace{10pt}
\begin{subfigure}[t]{0.15\linewidth}
\includegraphics[height = 1.55\linewidth]{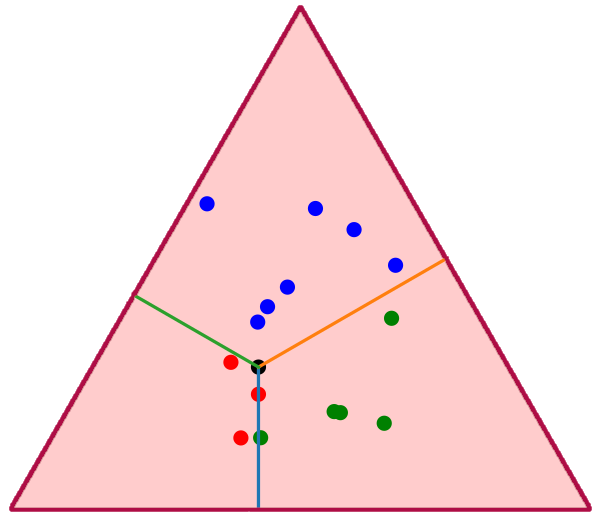}
\caption{Move order statistic to next point and change phase.}
\label{fig:move_and_change}
\end{subfigure}%
\hspace{10pt}
\begin{subfigure}[t]{0.15\linewidth}
\includegraphics[height = 1.55\linewidth]{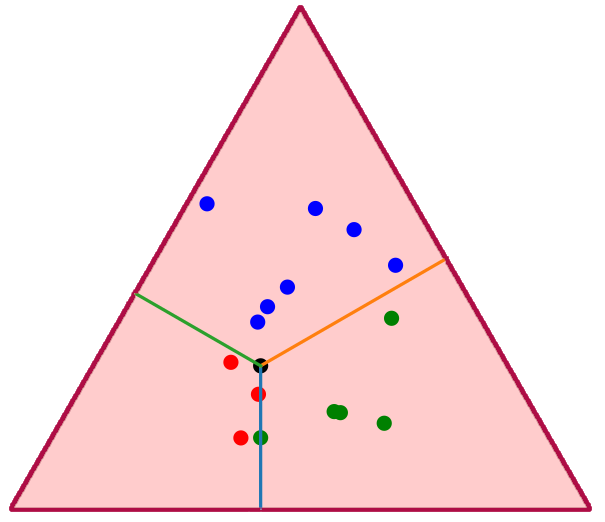}
\caption{Move order statistic until the green point lies on the hyperplane.}
\label{fig:move_and_stay}
\end{subfigure}%
\hspace{10pt}
\begin{subfigure}[t]{0.15\linewidth}
\includegraphics[height = 1.55\linewidth]{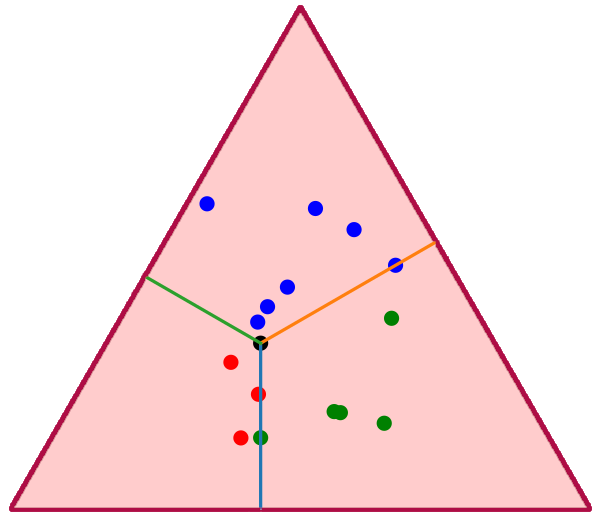}
\caption{The green phase has already the correct volume, thus move in other direction.}
\label{fig:join_phase}
\end{subfigure}%
\hspace{10pt}
\begin{subfigure}[t]{0.15\linewidth}
\includegraphics[height = 1.55\linewidth]{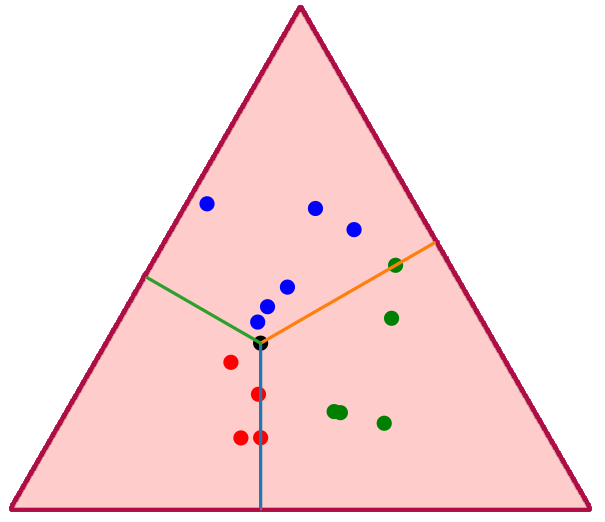}
\caption{Change the phases of the points on the hyperplanes.}
\label{fig:triple_change}
\end{subfigure}%
\caption{Visualization of first steps of Algorithm \ref{alg:median} for $V_1 = V_2 = V_3 = 5$.}
\label{fig:visualization_algo}

\vspace{10pt}
\hspace{-60pt}
\begin{subfigure}[t]{0.15\linewidth}
\begin{tikzpicture}
\filldraw[red] (0,0) circle (3pt);
\end{tikzpicture}
\caption{The tree $\mathcal{T}$ consist only of the root \quad \color{red} \leavevmode\put(-7,3.4){\circle*{6.5}} \color{black} $\!\in \!\mathcal{I}_-$.}
\label{fig:tree1}
\end{subfigure}%
\hspace{10pt}
\begin{subfigure}[t]{0.15\linewidth}
\begin{tikzpicture}
\draw[black, line width = 2pt] (0,0) -- (2,0);
\filldraw[red] (0,0) circle (3pt);
\filldraw[green] (2,0) circle (3pt);
\end{tikzpicture}\caption{Phase \quad \color{green} \leavevmode\put(-7,3.4){\circle*{6.5}} \color{black} is added to $\mathcal{T}$. The edge is added as there is a point on the hyperplane between the two phases.}
\label{fig:tree2}
\end{subfigure}%
\hspace{10pt}
\begin{subfigure}[t]{0.15\linewidth}
\begin{tikzpicture}
\filldraw[red] (0,0) circle (3pt);
\end{tikzpicture}
\caption{As \quad \color{green} \leavevmode\put(-7,3.4){\circle*{6.5}} \color{black} was in $\mathcal{I}_+$ the reassignment was done. Thus, the tree is restarted.}
\label{fig:tree3}
\end{subfigure}%
\hspace{10pt}
\begin{subfigure}[t]{0.15\linewidth}
\begin{tikzpicture}
\draw[black, line width = 2pt] (0,0) -- (2,0);
\filldraw[red] (0,0) circle (3pt);
\filldraw[green] (2,0) circle (3pt);
\end{tikzpicture}\caption{As in Figure \ref{fig:tree2}.}
\label{fig:tree4}
\end{subfigure}%
\hspace{10pt}
\begin{subfigure}[t]{0.15\linewidth}
\begin{tikzpicture}
\draw[black, line width = 2pt] (0,0) -- (2,0);
\draw[black, line width = 2pt] (1,1.81) -- (2,0);
\filldraw[red] (0,0) circle (3pt);
\filldraw[green] (2,0) circle (3pt);
\filldraw[blue] (1,1.81) circle (3pt);
\end{tikzpicture}\caption{Phase \quad \color{blue} \leavevmode\put(-7,3.4){\circle*{6.5}} \color{black} is added and connected to \quad \color{green} \leavevmode\put(-7,3.4){\circle*{6.5}} \color{black}.}
\label{fig:tree5}
\end{subfigure}%
\caption{Visualization of the to Figure \ref{fig:visualization_algo} according tree $\mathcal{T}$ constructed in Algorithm \ref{alg:median}.}
\label{fig:visualization_tree}
\end{figure}

In every step one wants to find a direction such that when $m$ is moved in this direction one point of a cluster with too much mass changes to a cluster with too little mass. So after at most $N/2$ iterations one would be done. Depending on the positions of the points $\{u(x)\}_{x \in X_N}$ such a direction can exist (compare Fig. \ref{fig:init_guess}, \ref{fig:move_and_change}) but does not necessarily have to exist. Imagine for example the three cluster case with one cluster $a \in \mathcal{I}_-$, one $ b \in \mathcal{I}_+$ and the last $c \in \mathcal{I}_{:)}$. It could happen that moving in the directions $e_b - e_a$ or $e_c - e_a$ one always hits a point of the neutral cluster $c$ (see Fig. \ref{fig:move_and_stay}). So after moving $m$ a bit further Cluster $c$ would not be in $\mathcal{I}_{:)}$ any more which means that no progress would have been made.

The solution to this problem is given by joining the directions of $a$ and $c$ when  a point $u(x_{ac})$ lies on the hyperplane between $a$ and $c$, i.e., we want to move in direction $(e_b - e_a) + (e_b - e_c)$. As this direction does not effect the hyperplane separating the clusters $a$ and $c$, we can move in this direction until hitting a point $u(x_b)$ in Cluster $b$ (see Fig. \ref{fig:join_phase}). If $u(x_b)$ lies on the hyperplane between $a$ and $b$ we can immediately put $u(x_b)$ into cluster $a$ decreasing the energy $E$. But if the point lies on the hyperplane between $c$ and $b$ we have to change the cluster of $u(x_{ac})$ to $a$ and the cluster of $u(x_{b})$ to $c$. With this three way swap, $b$ loses the point $u(x_{b})$, $a$ gains the point $u(x_{ac})$ and $c$ stays neutral by gaining $u(x_{b})$ and loosing $u(x_{b})$ (see Fig. \ref{fig:triple_change}). Thus we also decrease $E$ in the second case. 

This motivates the translation of $m$ such that we get a path of points $u(x_{a_1 a_2}),u(x_{a_2 a_3}),$ $ \dots, u(x_{a_{k-1} a_k} )$ lying on the hyperplanes between clusters $a_1, \dots, a_k$. The first point $u(x_{a_1 a_2})$ lies on the hyperplane between a positive cluster $a_1$ and neutral cluster $a_2$. The last point $u(x_{a_{k-1} a_k})$ is between a neutral cluster $a_{k-1}$ and a negative cluster $a_k$. The intermediate points are between neutral clusters. Then we can swap along the path to get an improvement of $E$. 

In Algorithm \ref{alg:median} it is described how to find such a path which is essentially done as above described in the three cluster case. The algorithm iteratively constructs a tree $\mathcal{T}$ with nodes given by phases and edges representing points on hyperplanes between the adjacent phases (see Figure \ref{fig:visualization_tree}). Joining the clusters $i \in \mathcal{T}$ is done by picking the direction
\begin{equation}\label{eq:direction}
d_\mathcal{T} := \frac{\# \mathcal{T}}{P-1}\cdot\mathbbold{1}- \left(1+\frac{1}{P-1}\right)\sum_{i \in \mathcal{T}} e_i.
\end{equation}
Translating the order statistic in this direction doesn't change the hyperplanes $H_{ij}(m)$ for $i,j \in \mathcal{T}$. It does moves the hyperplanes $H_{ik}(m)$ for $i\in \mathcal{T}, k \notin \mathcal{T}$ in the direction increasing the volume of Cluster $i$ (and shrinking Cluster $k$).

One could equally use the direction 
\begin{equation*}
\tilde{d_\mathcal{T}} := \sum_{i \in \mathcal{T}} e_i
\end{equation*}
but we use the scaling in \eqref{eq:direction} such that $\sum_{p \in P} (d_\mathcal{T})_p = 1$. This is motivated by the fact that the points of the MBO scheme with equal surface tensions lie on this plane $\sum_{p \in P} u(x)_p = 1$.
The correctness and a first running time estimate of Algorithm \ref{alg:median} are proven next.
\begin{theorem}[Correctness of the algorithm]\label{the:correctness}
Algorithm \ref{alg:median} finds an $V$-order statistic $m$ as well as a by $m$ induced clustering $\chi^m$ that fulfils the volume constraints and solves optimization problem~ \eqref{alg:volumeMBO}. 
\end{theorem}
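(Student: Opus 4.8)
The plan is to verify that Algorithm~\ref{alg:median} maintains a loop invariant, to prove that it terminates, and then to read off its output; optimality follows at once from Corollary~\ref{cor:opt_order_statistic}. The invariant is that at every stage $\chi^m$ is an $m$-induced clustering for the current $m$, i.e.\ \eqref{eq:sep_cond} holds. Line~\ref{alg:line1} establishes it by assigning each $x$ to a maximizer in \eqref{eq:price_vector}. Translating $m$ in Line~\ref{alg:line7} preserves it, since we stop at the first contact of a point with a separating hyperplane and a point lying \emph{on} $H_{ij}(m)$ still obeys \eqref{eq:sep_cond}, while the tree and direction updates leave $\chi^m$ untouched. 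Finally, moving $u_{p_{i^*}i^*}$ across the hyperplane $H_{p_{i^*}i^*}(m)$ it lies on in Line~\ref{alg:line14} preserves \eqref{eq:sep_cond}, because $(u_{p_{i^*}i^*}-m)\cdot(e_{p_{i^*}}-e_{i^*})=0$ gives $(u_{p_{i^*}i^*}-m)\cdot(e_{p_{i^*}}-e_j)=(u_{p_{i^*}i^*}-m)\cdot(e_{i^*}-e_j)\ge0$ for all $j$. Throughout I assume the $\{u(x)\}$ are in general position, so that no point lies on two of the hyperplanes at once and no two points touch a hyperplane simultaneously; the general case is handled by a fixed tie-breaking rule.

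I then analyze a single pass of the outer loop. The first inner loop, Lines~\ref{alg:line6}--\ref{alg:line12}, never alters $\chi^m$, so the index sets $\mathcal{I}_+,\mathcal{I}_-,\mathcal{I}_{:)}$ are frozen during it; since the signed volume defect $\sum_{i=1}^P(\sum_{x\in X_N}\chi^m_i(x)-V_i)$ vanishes and $i^*\in\mathcal{I}_-$, the set $\mathcal{I}_+$ is nonempty. The running direction equals $d_\mathcal{T}$ of \eqref{eq:direction}: while $\mathcal{T}\neq\{1,\dots,P\}$ it is nonzero, with negative entries on $\mathcal{T}$ and positive entries off $\mathcal{T}$, so a point in a phase of $\mathcal{T}$ stays weakly on its own side of every separating hyperplane, a point in any phase $k\notin\mathcal{T}$ drifts monotonically toward each $H_{ik}(m)$ with $i\in\mathcal{T}$, and hence the ``move'' instruction terminates as soon as some point lies outside the phases of $\mathcal{T}$. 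That there always is such a point, and that the loop meets $\mathcal{I}_+$ before $\mathcal{T}$ grows to all $P$ phases, follows from monitoring $\Phi(\mathcal{T}):=\sum_{i\in\mathcal{T}}(\sum_{x\in X_N}\chi^m_i(x)-V_i)$: initially $\Phi(\mathcal{T})=\sum_{x\in X_N}\chi^m_{i^*}(x)-V_{i^*}<0$, appending a phase from $\mathcal{I}_-\cup\mathcal{I}_{:)}$ can only decrease it, yet $\Phi(\{1,\dots,P\})=0$; moreover $\Phi(\mathcal{T})<0$ forces $\sum_{i\in\mathcal{T}}\sum_{x\in X_N}\chi^m_i(x)<N$, i.e.\ a point outside $\mathcal{T}$. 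So after at most $P-1$ insertions the loop stops with $i^*\in\mathcal{I}_+$, and following predecessors from $i^*$ back to the root yields a chain $i^*=q_0\to q_1\to\cdots\to q_s$ with $q_0\in\mathcal{I}_+$, $q_s\in\mathcal{I}_-$, and $q_1,\dots,q_{s-1}\in\mathcal{I}_{:)}$ --- every node before $i^*$ avoided $\mathcal{I}_+$, and $q_s$ is the first $\mathcal{I}_-$-node along the chain. The second inner loop, Lines~\ref{alg:line13}--\ref{alg:line16}, walks exactly this chain, reassigning $u_{q_{t+1}q_t}$ from $q_t$ to $q_{t+1}$, and halts after $s\le P-1$ steps.

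It then remains to control the energy $E$ of \eqref{eq:alg_energy} and to identify the output. Along the chain, $q_0$ loses exactly one point, and since its (integer) volume exceeded $V_{q_0}$ its term in $E$ drops by $1$; $q_s$ gains one point, and since its volume was below $V_{q_s}$ its term drops by $1$; every $q_t$ with $0<t<s$ gains and loses one point, so its volume and its (zero) term are unchanged; all other phases are untouched. Hence each outer pass lowers $E$ by exactly $2$. Since $E\ge0$ is integer-valued and $E(m^0)\le N$, the outer loop terminates after at most $N/2$ passes, at which point $\mathcal{I}_{:)}=\{1,\dots,P\}$, i.e.\ $\chi^m\in\mathcal{A}(V)$. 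Combined with the invariant this says $m$ is a $V$-order statistic in the sense of Definition~\ref{def:median}; Corollary~\ref{cor:opt_order_statistic} then gives that the induced clustering $\chi^m$ is optimal in \eqref{alg:volumeMBO}.

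The main obstacle is the termination of the first inner loop (Lines~\ref{alg:line6}--\ref{alg:line12}): one has to exclude simultaneously that ``move $m$'' runs forever and that $\mathcal{T}$ exhausts all phases without ever reaching $\mathcal{I}_+$, and both are settled by the single bookkeeping fact that $\Phi(\mathcal{T})<0$ while $\sum_{i=1}^P(\sum_{x\in X_N}\chi^m_i(x)-V_i)=0$. Everything else --- the invariant, the per-pass decrease of $E$, and the concluding optimality --- is then routine, modulo the general-position assumption, which can be removed by a symbolic perturbation of the values $u(x)$.
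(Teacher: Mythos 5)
Your proof is correct and follows essentially the same plan as the paper's: maintain the $m$-induced invariant, grow the tree $\mathcal{T}$ until it touches $\mathcal{I}_+$, walk the predecessor chain to swap along a path from $\mathcal{I}_+$ to $\mathcal{I}_-$, and deduce a per-pass drop of $E$ by $2$, closing with Corollary~\ref{cor:opt_order_statistic}. The one place where you add something the paper leaves terse is the potential $\Phi(\mathcal{T})=\sum_{i\in\mathcal{T}}(\sum_x\chi^m_i(x)-V_i)$: the paper simply asserts that a point of a cluster in $\mathcal{I}_+\setminus\mathcal{T}$ is always available to be hit, whereas your observation that $\Phi$ starts negative, cannot increase while only $\mathcal{I}_-\cup\mathcal{I}_{:)}$ phases are appended, and equals $0$ on the full index set, gives both that the move step always finds a target (a point outside $\mathcal{T}$) and that the inner loop must meet $\mathcal{I}_+$ before $\mathcal{T}$ is exhausted — a cleaner and slightly more self-contained justification of the same fact.
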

\begin{proof}
We first claim that lines \ref{alg:line6} to \ref{alg:line12} find a directed tree $\mathcal{T}$ with the following properties:
\begin{itemize}
\item[i)] There is exactly one node in $\mathcal{I}_+ \cup \mathcal{T}$ and this node is a leaf. 
\item[ii)] There is at least one node in $\mathcal{I}_- \cup \mathcal{T}$ and this node is the root.
\item[iii)] If there is a directed edge $(a,b)$ in $\mathcal{T}$ then there is point $u_{ab}$ in Cluster $a$ lying on the hyperplane $H_{ab}(m)$ between Cluster $a$ and $b$.
\end{itemize}
At the start of the loop $\mathcal{T}$ as defined in line \ref{alg:line4} is our tree.  It has empty edge set and there is one point in $\mathcal{I}_-$ in it. Thus $ii)$ and $iii)$ are valid. In every iteration we find a point $u_{ij}$ such that $i$ is in the tree $\mathcal{T}$ and $j$ is not. We have to check that such a point exists. By iterating over line \ref{alg:line11} we move in direction
\begin{equation}
d_{\mathcal{T}} = \sum_{i \in \mathcal{T}} \frac{1}{P-1}\cdot\mathbbold{1}- \Big(1+\frac{1}{P-1}\Big)e_{i} = \frac{\#\mathcal{T}}{P-1}\cdot\mathbbold{1} - \Big(1+\frac{1}{P-1}\Big)\sum_{i \in \mathcal{T}} e_{i}
\end{equation}
and see that for a pair $i \in \mathcal{T}$ and $j \notin \mathcal{T}$ we move the hyperplane $H_{ij}(m)$ in direction of Cluster $j$ (i.e. make Cluster $j$ smaller and Cluster $i$ bigger) because
\begin{equation}
d_{\mathcal{T}}\cdot (e_j -e_i) = 1+\frac{1}{P-1} > 0.
\end{equation}
As there is at least one non-empty cluster $j \notin \mathcal{T}, j \in \mathcal{I}_+$ there exists a point in direction $d_{\mathcal{T}}$.

Important is that the hyperplanes $H_{i_1 i_2}(m)$ for $i_1, i_2 \in \mathcal{T}$ are not changed since $d_{\mathcal{T}}$ is perpendicular to their normal $e_{i_1} - e_{i_2}$. Thus the clustering $\chi^m$ stays always by $m$ induced because we never shift a hyperplane over a point that is not in the corresponding cluster. $\mathcal{T}$ is updated in line \ref{alg:line10}  such that $j$ now also belongs to $\mathcal{T}$ so $u_{ij}$ will stay in the hyperplane $H_{ij}(m)$ (as the hyperplane will not shift afterwards anymore). Line \ref{alg:line8} describes the directed edge from $j$ to $i$. Thus, $iii)$ stays valid after the update. By line \ref{alg:line6} and \ref{alg:line9} we keep doing this until we find a cluster $i^* \in \mathcal{I}_+$ which will then be a leaf. Thus $i)$ holds which proves the claim. 

By our claim we conclude that in lines \ref{alg:line13} to \ref{alg:line16} we follow the path starting at $i^*$ back to some $i_{end} \in \mathcal{I}_-$. The path is unique as every point is a predecessor of exactly one other point. We can change the clusters of the points along the path and stay $m$ induced as the points lie on their respective hyperplanes and can thus belong to either of the two clusters by Definition \ref{def:median}. The energy $E$ is decreased by $2$ as the number of points in the cluster in the middle of the path stays the same, the number of points in $i^* \in \mathcal{I}_+$ is decreased and the number of points in $j^*\in \mathcal{I}_-$ is increased. So $E$ will be $0$ after at most $N/2$ iterations which means that $m$ is a $V$-order statistic and $\chi$ is a $m$ induced clustering that fulfills the volume constraints.
\end{proof}
\begin{theorem}[First running time estimate]\label{the:runningtime}
Algorithm \ref{alg:median} can be implemented such that every iteration of the outer while loop has running time $\mathcal{O}((P+\log(N))P^2)$. In particular for an initial $V$-order statistic guess $m$ the total running time is 
\begin{align*}
\frac{E(m)}{2} \cdot \mathcal{O}((P+\log(N))P^2) + \mathcal{O}(NP) \subset \mathcal{O}(N (\log(N) + P) P^2).
\end{align*}
\end{theorem}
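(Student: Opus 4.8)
The plan is to pin down a concrete implementation of Algorithm~\ref{alg:median} and then just count, taking the iteration count for free from Theorem~\ref{the:correctness}. That theorem's proof already establishes that each pass through the outer \texttt{while} loop (lines~\ref{alg:line2}--\ref{alg:line16}) lowers the energy $E$ of~\eqref{eq:alg_energy} by exactly $2$ and that the loop stops once $E=0$; hence for the initial guess $m$ the outer loop runs exactly $E(m)/2$ times, with $E(m)=\mathcal{O}(N)$ in the worst case. So it remains to prove (i) that line~\ref{alg:line1} together with the one-time set-up of the data structures costs $\mathcal{O}(NP)$, and (ii) that one pass of the outer loop costs $\mathcal{O}((P+\log N)P^2)$; multiplying and adding, and using $E(m)=\mathcal{O}(N)$ for the final inclusion (note $\mathcal{O}(NP)\subseteq\mathcal{O}(N(\log N+P)P^2)$), then yields the statement.

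For (i): writing $c(x)$ for the phase with $\chi^m_{c(x)}(x)=1$, I would maintain, besides the cluster sizes and the induced index sets $\mathcal{I}_+,\mathcal{I}_-,\mathcal{I}_{:)}$ of~\eqref{eq:index_sets}--\eqref{eq:index_sets3} ($\mathcal{O}(P)$ to store, $\mathcal{O}(1)$ to update when a single point changes cluster), a max-heap $Q_{ab}$ for every ordered pair of phases $(a,b)$ holding the points currently in cluster $a$, keyed by the $m$-\emph{independent} number $u_b(x)-u_a(x)$. Line~\ref{alg:line1} is a per-point $\argmax$ over $P$ phases, hence $\mathcal{O}(NP)$; each point then belongs to the $P-1$ heaps $Q_{c(x),b}$, $b\ne c(x)$, and a heap for cluster $a$ is built by heapification in time linear in $\#\{x\colon c(x)=a\}$, so the total build cost is $\sum_a(P-1)\,\mathcal{O}(\#\{x\colon c(x)=a\})=\mathcal{O}(NP)$.

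For (ii): both inner loops are short. Lines~\ref{alg:line6}--\ref{alg:line12} add one node to $\mathcal{T}\subseteq\{1,\dots,P\}$ per iteration, hence at most $P-1$ iterations, and lines~\ref{alg:line13}--\ref{alg:line16} walk a path with at most $\#\mathcal{T}-1\le P-1$ edges. The heart is line~\ref{alg:line7}. From the computation in the proof of Theorem~\ref{the:correctness}, $d_\mathcal{T}\cdot(e_i-e_j)$ vanishes when $i,j$ lie both in or both outside $\mathcal{T}$ and equals $-(1+\tfrac1{P-1})$ when $i\in\mathcal{T},\,j\notin\mathcal{T}$; so only the hyperplanes $H_{ij}(m)$ with $i\in\mathcal{T},\,j\notin\mathcal{T}$ move, and only points of clusters $j\notin\mathcal{T}$ can cross one. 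A point $x$ with $c(x)=j\notin\mathcal{T}$ reaches $H_{ij}(m)$, $i\in\mathcal{T}$, after $m$ has moved by $s_{x,i}=\big[(u_j(x)-u_i(x))-(m_j-m_i)\big]/(1+\tfrac1{P-1})\ge 0$, whose minimum over $x$ in cluster $j$ is attained at the root of $Q_{ji}$. Thus line~\ref{alg:line7} is the minimum of $\big[(m_i-m_j)-\mathrm{top}(Q_{ji})\big]/(1+\tfrac1{P-1})$ over the $\le P^2$ pairs $(i\in\mathcal{T},\,j\notin\mathcal{T})$ (a nonempty set, as in the proof of Theorem~\ref{the:correctness}), i.e.\ $\mathcal{O}(P^2)$, followed by the $\mathcal{O}(P)$ updates of lines~\ref{alg:line8}--\ref{alg:line11}; crucially, no heap is touched while $m$ slides, since the keys are $m$-free. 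So lines~\ref{alg:line6}--\ref{alg:line12} cost $\mathcal{O}(P^3)$. One execution of line~\ref{alg:line14} reassigns a single point between two clusters, i.e.\ $2(P-1)$ heap updates at $\mathcal{O}(\log N)$ each (each point stores its positions in its heaps) plus $\mathcal{O}(1)$ bookkeeping, hence $\mathcal{O}(P\log N)$, so lines~\ref{alg:line13}--\ref{alg:line16} cost $\mathcal{O}(P^2\log N)$. One outer pass is therefore $\mathcal{O}(P^3+P^2\log N)=\mathcal{O}((P+\log N)P^2)$.

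The only genuinely delicate point — where I would be careful — is the bookkeeping behind line~\ref{alg:line7}: one must observe both that pairs with both indices inside (or both outside) $\mathcal{T}$ contribute no motion, so only $\mathcal{O}(P)$ "active" pairs matter per inner step, and that the heap keys $u_b(x)-u_a(x)$ are independent of $m$, so the continuous motion of $m$ along $d_\mathcal{T}$ never forces a heap update — only the at most $P$ honest reassignments of an outer pass do. Everything else (bounding the inner loops via $|\mathcal{T}|\le P$, the $E\mapsto E-2$ decrement from Theorem~\ref{the:correctness}, linear-time heap construction, the final arithmetic) is routine.
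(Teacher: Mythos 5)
Your argument is correct and follows essentially the same implementation plan as the paper's proof: one priority queue per ordered pair of phases, $\mathcal{O}(NP)$ build cost via linear-time heapification, an $\mathcal{O}(P^2)$ scan over the queue tops for the pairs $(i,j)\in\mathcal{T}\times\mathcal{T}^c$ in line~\ref{alg:line7}, and $\mathcal{O}(P\log N)$ per point reassignment in lines~\ref{alg:line13}--\ref{alg:line16}, giving $\mathcal{O}((P+\log N)P^2)$ per outer pass and the stated total. Your explicit observation that the keys $u_b(x)-u_a(x)$ are independent of $m$ — so that translating $m$ along $d_\mathcal{T}$ never triggers a heap operation, only the $\le P$ actual reassignments do — makes precise a correctness point that the paper's proof leaves implicit.
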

\begin{proof}
Finding the $m$ induced clustering in line \ref{alg:line1} can be done via a simple calculation by \eqref{eq:price_vector} and \eqref{eq:m_induced_cluster} in $\mathcal{O}(NP)$. Except for line \ref{alg:line7} everything is straightforward to implement. Line \ref{alg:line7} can be efficiently implemented using a priority queue that supports inserting, deleting and getting the smallest priority element in $\mathcal{O}(\log(N))$ time. Note that one has to keep track of indices of every element in the priority queue such that deleting is possible in $\mathcal{O}(\log(N))$. We use for each hyperplane $H_{ij}(m), i\neq j \in \{1, \dots, P\}$ the priority queue to get the closest element $u_{ij}$ to the hyperplane going through $m$ in the direction $d$. Then we only need to take the closest of the $\mathcal{O}(P^2)$ many $u_{ij}$ with $i \in \mathcal{T}, j\notin \mathcal{T}$ to know how much we have to move $m$ in direction $d$. Thus line \ref{alg:line7} can be implemented in $\mathcal{O}(P^2)$. There are at at most $P$ iterations of the while loop from line \ref{alg:line6} to \ref{alg:line12} yielding the running time of $\mathcal{O}(P^3)$  for the while loop from line \ref{alg:line6} to \ref{alg:line12}.

The while loop from line \ref{alg:line13} to \ref{alg:line16} also has at most $P$ iteration. In every iteration one has to remove and add the changed points from the $(P-1)$ priority queues corresponding to the $(P-1)$ hyperplanes of the removed or added phase. The running time of lines \ref{alg:line13} -- \ref{alg:line16} is given by $\mathcal{O}(\log(N) P^2)$ as removing and adding from a priority queue takes $\mathcal{O}(\log(N))$.
The initialization of a priority queue takes linear time in the number of elements in the queue. In our case all points of phase $i$ are in the priority queue corresponding to hyperplane $H_{ij}$. Thus initializing all the priority queues to $H_{ij}$ for fixed $j$ takes $\mathcal{O}(N)$ time. So in total for all $j$ we need $\mathcal{O}(N P)$ time for initialization.
\end{proof}

\subsection{Extension to Inequality Constraints}\label{sec:alg_inequality}
We now aim to adapt Algorithm \ref{alg:median} to lower and upper volume constraints. The driving force behind the adaptation is the following simple lemma. It describes how a step of Algorithm \ref{alg:median} changes the objective function \eqref{eq:objective_value} by using the observation we made in \eqref{eq:change_in_m}.
\begin{lemma}\label{lem:mononotonicity}
Let a path of points $u(x_{1 ,2}),u(x_{2, 3}), \dots, u(x_{k-1, k} )$ be given. Assume the points $u(x_{j-1, j} )$ are located on the hyperplane $H_{j-1,j}(m)$ and belonging to Cluster $j$. Then changing the clusters along this path from $j$ to $j-1$ for all $j = \{2,\dots, k\}$ increases the objective function \eqref{eq:objective_value} by
\begin{equation}
m_{1} - m_{k}.
\end{equation}
\end{lemma}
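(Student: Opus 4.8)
The plan is to reduce the statement to a single reassignment and then sum. The objective function \eqref{eq:objective_value} is a sum of contributions $\chi_i(x)\,u_i(x)$ over points $x$ and phases $i$; reassigning the phase of one data point $x$ therefore changes only the single inner sum $\sum_{i=1}^P \chi_i(x)\,u_i(x)$, and this change is completely independent of what happens at the other points. Since by construction the path $u(x_{1,2}),u(x_{2,3}),\dots,u(x_{k-1,k})$ consists of pairwise distinct data points (it is a simple path), the total change of the objective equals the sum of the individual changes obtained by moving $x_{j-1,j}$ from Cluster $j$ to Cluster $j-1$, for $j=2,\dots,k$.

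For a single such reassignment I would simply invoke the computation already carried out in \eqref{eq:change_in_m}: since $u(x_{j-1,j})$ lies on the hyperplane $H_{j-1,j}(m)$, we may write $u(x_{j-1,j}) = m + v$ with $v\cdot(e_{j-1}-e_j)=0$, so that moving this point from Cluster $j$ to Cluster $j-1$ changes the objective by
\[
u_{j-1}(x_{j-1,j}) - u_j(x_{j-1,j}) = (m+v)\cdot(e_{j-1}-e_j) = m_{j-1}-m_j .
\]
Summing over $j=2,\dots,k$ the contributions telescope and give $\sum_{j=2}^k (m_{j-1}-m_j) = m_1 - m_k$, which is exactly the claimed increment.

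There is no serious obstacle in this lemma; the only point deserving a word of care is the additivity step, i.e.\ verifying that the successive reassignments along the path do not interfere. This follows at once from the fact that the path visits each data point only once together with the fact that the objective \eqref{eq:objective_value} decouples over points, so one does not need to track any order-dependence in the reassignments. (Were the path allowed to revisit a point, the increment formula would have to be re-derived, but this does not occur for the paths produced by Algorithm~\ref{alg:median}.)
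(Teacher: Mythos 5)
Your proof is correct and follows the same route as the paper's: compute the single-point increment $m_{j-1}-m_j$ from the hyperplane condition exactly as in \eqref{eq:change_in_m}, then telescope. The only difference is that you make explicit the additivity step (the objective decouples over data points and the path visits distinct points), which the paper's proof leaves implicit when it writes ``Summing up the changes\ldots''; that is a harmless and reasonable bit of extra care, not a departure in method.
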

\begin{proof}
Changing the cluster of point $u(x_{j-1, j} )$ from $j$ to $j-1$ increases the objective function by $u(x_{j-1, j} )_{j-1} -u(x_{j-1, j} )_{j}$. As $u(x_{j-1, j} ) \in H_{j, j-1}(m)$ there is $v \in \R^P$ such that $u(x_{j-1, j}) = m + v$ and $v \cdot (e_{j-1} - e_{j}) =0$. Thus it holds
\begin{equation*}
u(x_{j-1, j} )_{j-1} -u(x_{j-1, j} )_{j} = (m+v)\cdot(e_{j-1} - e_{j}) = m_{j-1} - m_{j}.
\end{equation*}
Summing up the changes of the objective function for $j \in \{2, \dots, k\}$ yields the result
\begin{equation*}
\sum_{j = 2}^k u(x_{j-1,j} )_{j-1} -u(x_{j-1, j} )_{j} = \sum_{j = 2}^k m_{j-1} - m_{j} = m_{1} - m_{k}. \qedhere
\end{equation*}
\end{proof}
By Lemma \ref{lem:mononotonicity} the objective function increases if and only if $m_{1} > m_{k}$. To ensure that this holds true in our adapted Algorithm \ref{alg:lower_upper} we define the to \eqref{eq:index_sets} and \eqref{eq:index_sets2} corresponding index sets
\begin{align}\label{eq:def_jplus}
\mathcal{J}_+ &:= \Big\{i \in \{1, \dots, P\} \Big| \sum_{x\in X_N} \chi_i(x) > L_i\Big\}, \\
\mathcal{J}_- &:= \Big\{i  \in \{1, \dots, P\} \Big| \sum_{x\in X_N} \chi_i(x) < U_i \Big\}.
\end{align}
As long as there is an index $w \in \mathcal{J}_+$ and an index $b \in \mathcal{J}_-$ with $m_w < m_b$ we can use the same procedure as in Algorithm \ref{alg:median}. Start with Phase $b$ and join other phases by translating $m$ until Phase $w$ is reached. Then phases of the points on the hyperplanes can be swapped along the produced path. Lemma \ref{lem:mononotonicity} guarantees a gain in the objective function.

This algorithm can be used until there are no such points $w$ and $b$ anymore. At that point the assumptions of Theorem \ref{the:opt_criterium} are satisfied. Thus $m$ is the $(L,U)$-order statistic.

The only challenge is that the order of the $m_i$ can change during the algorithm. To take care of that one needs to keep the set $\argmin_{i \in \mathcal{J}_-} m_i$ updated to ensure that all improving paths are found. For more details see the following proof that Algorithm~\ref{alg:lower_upper} finds a $(L,U)$-order statistic.
\begin{theorem}\label{the:correctness_inequality}
Let $L_i, U_i$ be given with $\sum_{i = 1}^P L_i \leq N \leq \sum_{i=1}^P U_i$. Then at the end of Algorithm~\ref{alg:lower_upper} $m$ is an $(L,U)$-order statistic and $\chi^m$ is an $m$ induced clustering solving \eqref{alg:inequalityMBO}.
\end{theorem}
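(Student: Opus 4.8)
The plan is to mirror the structure of the proof of Theorem \ref{the:correctness}, tracking now the objective function \eqref{eq:objective_value} rather than the energy $E(m)$, and to conclude optimality by verifying that the termination state is exactly the hypothesis of Theorem \ref{the:opt_criterium}. First I would check the base case: by Theorem \ref{the:correctness} the call to Algorithm \ref{alg:median} in line \ref{alg2:line1} returns a feasible $\chi^m \in \mathcal{A}(L,U)$ together with an $m$-inducing order statistic guess; this uses the hypothesis $\sum_i L_i \le N \le \sum_i U_i$ to guarantee feasibility (set the equality targets $V_i$ between $L_i$ and $U_i$ summing to $N$). Next I would establish the \emph{invariant} that throughout the outer loop (lines \ref{alg2:line2}--end) the clustering $\chi^m$ stays $m$-induced and feasible: as in the proof of Theorem \ref{the:correctness}, the direction $d_\mathcal{T}$ from \eqref{eq:direction} is perpendicular to the normals $e_i - e_j$ for $i,j \in \mathcal{T}$, so no hyperplane between two tree phases moves and no point is ever swept across a hyperplane it does not already lie on; hence the separation condition \eqref{eq:sep_cond} persists. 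Feasibility is preserved because a swap along the path is only triggered between a phase $b$ in $\mathcal{J}_-$ (which may still receive points) and a phase $w$ in $\mathcal{J}_+$ (which may still give points away), while all intermediate phases are left volume-neutral exactly as in the three-cluster discussion preceding the theorem.

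The core of the argument is the \emph{tree-construction claim}, the analogue of properties i)--iii) in the proof of Theorem \ref{the:correctness}: the inner while loop (lines \ref{alg2:line5}--\ref{alg2:line10}) terminates with a directed tree $\mathcal{T}$ rooted at $\argmax_{p \in \mathcal{J}_-} m_p$, every directed edge $(a,b)$ corresponds to a point $u_{ab}$ of Cluster $a$ lying on $H_{ab}(m)$, and $\mathcal{T}$ contains a node $w \in \mathcal{J}_+$ with $m_w \le \max_{p\in\mathcal{J}_-} m_p$. I would argue termination of this inner loop as before: moving along $d_\mathcal{T}$ strictly pushes, for each $i \in \mathcal{T}$, $j \notin \mathcal{T}$, the hyperplane $H_{ij}(m)$ toward Cluster $j$ (since $d_\mathcal{T}\cdot(e_j - e_i) = 1 + \tfrac{1}{P-1} > 0$), so as long as the loop guard holds we eventually either meet a point $u_{ij}$ on a hyperplane (case a, adding an edge) or reach the situation where some newly-swept phase attains the running maximum of $\max_{p\in\mathcal{J}_-}m_p$ (case b); in either case $\mathcal{T}$ grows, so after at most $P$ steps the guard fails. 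Then lines \ref{alg2:line11}--\ref{alg2:line15} retrace the unique predecessor path from the minimal-$m$ phase $i$ in $\mathcal{T}\cap\mathcal{J}_+$ back to the root and perform the swaps. By Lemma \ref{lem:mononotonicity} this increases the objective \eqref{eq:objective_value} by $m_{\mathrm{root}} - m_i > 0$, where strict positivity comes precisely from the outer-loop guard $m_b > m_w$ combined with $i$ being chosen as $\argmin$ over $\mathcal{T}\cap\mathcal{J}_+$ and the root being the $\argmax$ over $\mathcal{J}_-$. (One subtlety to spell out: after a swap the order of the coordinates $m_i$ does not change — only the clustering does — so the $\argmax$/$\argmin$ sets over $\mathcal{J}_\pm$ are stable within one outer iteration, which is why re-examining $\argmin_{i\in\mathcal{J}_-}m_i$ between outer iterations suffices.)

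Finally I would argue \emph{termination of the outer loop and optimality}. Since the objective \eqref{eq:objective_value} strictly increases by a positive amount at each outer iteration and the feasible set $\mathcal{A}(L,U)$ is finite, the loop terminates. At termination the guard fails, i.e.\ there are no $b \in \mathcal{J}_-$, $w \in \mathcal{J}_+$ with $m_b > m_w$; equivalently, ordering $m_{i_1}\ge\dots\ge m_{i_P}$, every phase above some threshold index has hit its upper bound $U_{i_k}$ and every phase below has hit its lower bound $L_{i_k}$ — I would make the choice of the witnesses $b,w$ explicit by taking $b$ to index the largest $m$-value still in $\mathcal{J}_-$ and $w$ the smallest still in $\mathcal{J}_+$. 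This is exactly the hypothesis of Theorem \ref{the:opt_criterium}, so $\chi^m$ is optimal for \eqref{alg:inequalityMBO}, and by Definition \ref{def:median} $m$ is an $(L,U)$-order statistic. I expect the main obstacle to be the bookkeeping in the tree-construction claim — in particular cleanly handling case b) of line \ref{alg2:line6} (a phase becoming a new running maximum of $\max_{p\in\mathcal{J}_-}m_p$ while being swept into the tree) and verifying that the predecessor path extracted in lines \ref{alg2:line11}--\ref{alg2:line15} is well-defined and genuinely connects a $\mathcal{J}_+$-leaf to the root with all intermediate phases volume-neutral, so that Lemma \ref{lem:mononotonicity} applies and feasibility is preserved.
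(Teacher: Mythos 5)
Your plan follows the paper's own proof closely: initialize via Theorem~\ref{the:correctness}, re-run the tree/path machinery of that proof, track the objective through Lemma~\ref{lem:mononotonicity}, and finish by matching the terminal state to the hypothesis of Theorem~\ref{the:opt_criterium} (ordering the $m_i$, taking $b$ at the top of $\mathcal{J}_-$ and $w$ at the bottom of $\mathcal{J}_+$). That last step is the paper's argument almost verbatim.

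The genuine gap is in termination. You argue the objective \emph{strictly} increases each outer iteration, grounding strict positivity in the outer-loop guard $m_b > m_w$. But the inner while loop (line~\ref{alg2:line5}) exits as soon as some $w \in \mathcal{T}\cap\mathcal{J}_+$ satisfies $m_w \leq \max_{p\in\mathcal{J}_-}m_p$ --- the inequality is not strict --- so Lemma~\ref{lem:mononotonicity} only gives $m_{\mathrm{root}} - m_i \geq 0$; and the outer guard's witnesses $b,w$ are evaluated before line~\ref{alg2:line6} translates $m$, so they cannot simply be carried over to the root and to $i = \argmin_{p\in\mathcal{T}\cap\mathcal{J}_+}m_p$ after the translation. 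The paper sidesteps this with a counting argument: the nonnegative integers $\sum_{i\in\mathcal{J}_-}\bigl(U_i - \sum_{x\in X_N}\chi_i^m(x)\bigr)$ and $\sum_{i\in\mathcal{J}_+}\bigl(\sum_{x\in X_N}\chi_i^m(x) - L_i\bigr)$ each drop by exactly one per outer iteration, since the path shifts one unit of volume from a $\mathcal{J}_+$ leaf to a $\mathcal{J}_-$ root with all intermediate phases volume-neutral, so the outer guard must fail after finitely many steps regardless of whether the objective increases strictly or only weakly. You should adopt this. The paper also closes a case you leave implicit: if $\mathcal{T}$ grows to all of $\{1,\dots,P\}$ while the inner guard still holds, then $\min_{i\in\mathcal{J}_+}m_i > \max_{i\in\mathcal{J}_-}m_i$, so the outer guard is simultaneously violated.

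One smaller imprecision: your parenthetical mislocates the subtlety. The coordinates of $m$ \emph{do} change, but during the tree-building phase (line~\ref{alg2:line6} translates $m$), not the swap phase; that translation is precisely what can reorder the $m_p$ and what makes case~b) of line~\ref{alg2:line6} necessary. Keeping $\argmax_{p\in\mathcal{J}_-}m_p$ synchronized with this movement --- the bookkeeping in lines~\ref{alg2:line3} and~\ref{alg2:line6}b --- is the delicate point the paper flags.
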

\begin{proof}
To prove the theorem we have to ensure that the algorithm finds all changes that improve the objective function \eqref{eq:objective_value} and thus reaches the stopping criteria of line \ref{alg2:line2}. For that line \ref{alg2:line3} and  \ref{alg2:line6}b keep the set $\argmax_{p \in \mathcal{J}_-} m_p$ updated. With the same argumentation as in Theorem \ref{the:correctness} is follows that lines \ref{alg2:line5} to  \ref{alg2:line10} produce a path of points $u(x_{i_1 i_2}),u(x_{i_2 i_3}), \dots, u(x_{i_{k-1} i_k} )$ with $u(x_{i_{j-1} i_j} )$ on the hyperplane $H_{j-1,j}(m)$ and belonging to Cluster $j$. Additionally, $i_1$ is in $\argmax_{p \in \mathcal{J}_-} m_p$ and by line \ref{alg2:line5}, $i_k$ is in $\mathcal{J}_+$ with $m_{i_k} > m_{i_1}$. By Lemma \ref{lem:mononotonicity} changing the phases in lines  \ref{alg2:line12} -- \ref{alg2:line15} improves the objective function.

If $\mathcal{T}= \{1,\dots,P\}$ and the condition of line  \ref{alg2:line5} is still not satisfied then $\min_{i \in \mathcal{J}_+} m_i > \max_{i \in \mathcal{J}_-} m_i$. Thus the condition of line \ref{alg2:line2} will also not be satisfied in this case. Note that 
\begin{align*}
\sum_{i \in \mathcal{J}_-} \left( U_i -\sum_{x \in X_N} \chi_i^m(x) \right) \text{ and }  \sum_{i \in \mathcal{J}_+} \left( \sum_{x \in X_N} \chi_i^m(x)  - L_i \right)
\end{align*}
are decreasing by one in every step (line \ref{alg2:line3} to  \ref{alg2:line15}). Thus either the condition of line  \ref{alg2:line2} will be violated or at some point $\mathcal{I}_- = \emptyset$ or $\mathcal{I}_+ = \emptyset$ thus also violating line \ref{alg2:line2}. In all cases the violation of condition of line \ref{alg2:line2} will be reached after finitely many steps of the algorithm.

It then holds $\max_{i \in \mathcal{J}_-} m_i \leq \min_{i \in \mathcal{J}_+} m_i$. Now choose 
\begin{equation}
i_b \in \argmax_{p \in \mathcal{J}_-} m_p \text{ and } i_w \in \argmin_{p \in \mathcal{J}_+} m_p
\end{equation} with either 
\begin{equation}
m_{i_1} \geq \dots \geq m_{i_w} > \dots > m_{i_b} \geq \dots \geq m_{i_P}
\end{equation}
or 
\begin{equation}
m_{i_1} \geq \dots >m_{i_w}  = \dots = m_{i_b}> \dots \geq m_{i_P}
\end{equation}
as in the definition of the $(L,U)$-order statistic. Then it holds for $k > w$ that $m_{i_k} < m_{i_w} = \min_{p \in \mathcal{J}_+} m_p$. Thus $k$ is not in $\mathcal{J}_+$ for all $k > w$, i.e $\sum_{x \in X_N} \chi^m_k(x) = L_k$. Similarly for all $k < b$ it holds $\sum_{x \in X_N} \chi^m_k(x) = U_k$. As $m_{i_b} \leq m_{i_w}$ all requirements in Definition \ref{def:median} of the $(L,U)$-order statistic are satisfied. By Theorem \ref{the:opt_criterium} it follows that $\chi^m$ is optimal for \eqref{alg:inequalityMBO}.
\end{proof}

The running time analysis of Algorithm \ref{alg:lower_upper} is analogous to the running time analysis of Algorithm \ref{alg:median} in Theorem \ref{the:runningtime}.
\begin{theorem}\label{the:running_time_inequality}
Algorithm \ref{alg:lower_upper} can be implemented such that every iteration of the outer while loop has running time $\mathcal{O}((\log(N) +P) P^2)$. In particular for an initial $(L,U)$-order statistic guess $m$ the total running time is $\frac{E(m|m^*)}{2} \cdot \mathcal{O}((\log(N) +P) P^2) = \mathcal{O}(N(\log(N) +P) P^2)$. Here $m^*$ is the $(L,U)$-order statistic found by Algorithm \ref{alg:lower_upper} and the energy is given by
\begin{equation}
E(m|m^*) := \sum_{i = 1}^P \left|\sum_{x \in X_N} \chi_i^m(x) - \chi_i^{m^*}(x)\right|.
\end{equation}
\end{theorem}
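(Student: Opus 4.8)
The plan is to follow the two-part template of the proof of Theorem~\ref{the:runningtime}: first bound the cost of a single pass through the outer \textbf{while} loop (lines~\ref{alg2:line2}--\ref{alg2:line15} of Algorithm~\ref{alg:lower_upper}), and then bound the number of such passes in terms of the comparison energy $E(\,\cdot\mid m^*)$.

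\emph{Cost of one outer iteration.} Here I would reuse verbatim the data structure of Theorem~\ref{the:runningtime}: for each ordered pair $(i,j)$ a priority queue that returns, for the current direction $d_{\mathcal T}$, the point of Cluster~$j$ closest to the hyperplane $H_{ij}(m)$. Then line~\ref{alg2:line6} is executed by scanning the $\mathcal{O}(P^2)$ pairs with $i\in\mathcal T$, $j\notin\mathcal T$ and taking the nearest candidate, at cost $\mathcal{O}(P^2)$; the inner \textbf{while} loop (lines~\ref{alg2:line5}--\ref{alg2:line10}) enlarges $\mathcal T$ at most $P$ times, so it costs $\mathcal{O}(P^3)$; and the swap loop (lines~\ref{alg2:line12}--\ref{alg2:line15}) runs at most $P$ times, each time reinserting the $\mathcal{O}(P)$ queue entries attached to the phase whose clustering changed at cost $\mathcal{O}(\log N)$ per entry, i.e.\ $\mathcal{O}(P^2\log N)$ altogether. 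The only bookkeeping beyond Algorithm~\ref{alg:median} is maintaining $\mathcal{J}_+,\mathcal{J}_-$ from \eqref{eq:def_jplus} and, within them, the extremal-$m$ phases read in lines~\ref{alg2:line3}, \ref{alg2:line6}, \ref{alg2:line11} and~\ref{alg2:line12}; since each tree-growth step changes the $P$ coordinates of $m$ by the explicit increment of line~\ref{alg2:line9} and alters the volume of $\mathcal{O}(P)$ phases by at most one, recomputing these sets costs $\mathcal{O}(P^2)$ per outer iteration and is absorbed. Hence one outer iteration runs in $\mathcal{O}((\log N+P)P^2)$, and the initial feasible clustering in line~\ref{alg2:line1} is produced by Algorithm~\ref{alg:median} at a cost already covered by Theorem~\ref{the:runningtime}.

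\emph{Number of outer iterations.} The claim I would prove — the exact analogue of the ``$E$ decreases by $2$'' step of Theorem~\ref{the:runningtime}, now with $E(\,\cdot\mid m^*)$ in place of $E$ — is that each pass of the outer loop lowers $E(\,\cdot\mid m^*)$ by exactly $2$. By the path analysis already carried out in the proof of Theorem~\ref{the:correctness_inequality}, one pass builds a path $i_1,\dots,i_k$ and reassigns each point $u(x_{i_{j-1}i_j})$ from Cluster~$i_j$ to Cluster~$i_{j-1}$, so the induced volumes change only in the root $i_1$, which gains one point and satisfies $\sum_{x\in X_N}\chi^m_{i_1}(x)<U_{i_1}$ because $i_1\in\mathcal{J}_-$, and in the leaf $i_k$, which loses one point and satisfies $\sum_{x\in X_N}\chi^m_{i_k}(x)>L_{i_k}$ because $i_k\in\mathcal{J}_+$; moreover $m_{i_1}\geq m_{i_k}$, so by Lemma~\ref{lem:mononotonicity} the objective \eqref{eq:objective_value} does not decrease. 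It remains to upgrade the feasibility inequalities $<U_{i_1}$ and $>L_{i_k}$ to the comparisons $\sum_{x\in X_N}\chi^m_{i_1}(x)<\sum_{x\in X_N}\chi^{m^*}_{i_1}(x)$ and $\sum_{x\in X_N}\chi^m_{i_k}(x)>\sum_{x\in X_N}\chi^{m^*}_{i_k}(x)$, which I would extract from the $(L,U)$-order-statistic characterization of $m^*$ (Definition~\ref{def:median}, Theorem~\ref{the:opt_criterium}), pinning $\sum_{x\in X_N}\chi^{m^*}_i(x)\in\{L_i,U_i\}$ for every phase away from the boundary value of $m^*$, together with the strict increase of the objective, which forbids a phase from being grown past and then shrunk below its final volume. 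Granting the claim, $E(m\mid m^*)$ drops by $2$ per pass and vanishes exactly when $\chi^m=\chi^{m^*}$, so there are $E(m\mid m^*)/2$ passes; multiplying by the per-iteration cost gives the running time $\tfrac{E(m\mid m^*)}{2}\cdot\mathcal{O}((\log N+P)P^2)$ stated in the theorem, and since $E(m\mid m^*)=\sum_{i=1}^P\left|\sum_{x\in X_N}\chi^m_i(x)-\sum_{x\in X_N}\chi^{m^*}_i(x)\right|\leq 2N$, the worst-case bound $\mathcal{O}(N(\log N+P)P^2)$ follows.

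I expect the main obstacle to be precisely this ``correct-side'' step. The sets $\mathcal{J}_\pm$ that drive the algorithm encode only feasibility against the \emph{given} constraints $L,U$, whereas $E(\,\cdot\mid m^*)$ is measured against the \emph{a priori unknown} optimal volumes, which are revealed only at termination; in Theorem~\ref{the:runningtime} this difficulty is absent because the targets $V_i$ are fixed data and the path endpoints lie in $\mathcal{I}_\pm$ by construction. One therefore has to argue backwards from the optimality characterization of Theorem~\ref{the:opt_criterium} and exclude overshoot of the intermediate volumes. If overshoot cannot be ruled out for the few boundary phases, the worst-case complexity $\mathcal{O}(N(\log N+P)P^2)$ can still be salvaged by replacing the sharp per-pass decrease with a coarser monotone potential in the spirit of the termination argument in the proof of Theorem~\ref{the:correctness_inequality}.
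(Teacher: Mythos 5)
Your decomposition into per-iteration cost and iteration count mirrors the approach the paper intends: the paper offers no proof for this theorem at all, just the one-line remark that the analysis is ``analogous'' to Theorem~\ref{the:runningtime}. So you are correct to try to fill in what the analogy means, and the per-iteration cost part of your write-up is fine: the priority-queue bookkeeping of Theorem~\ref{the:runningtime} carries over verbatim, and maintaining $\mathcal{J}_\pm$ and the $\argmax/\argmin$ queries is $\mathcal{O}(P^2)$ per pass and is absorbed.

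The gap you flag in the iteration count is genuine and is not closed by your proposal. The assertion that every pass lowers $E(\,\cdot\mid m^*)$ by exactly $2$ requires
$\sum_x\chi^m_{i_1}(x)<\sum_x\chi^{m^*}_{i_1}(x)$ and $\sum_x\chi^m_{i_k}(x)>\sum_x\chi^{m^*}_{i_k}(x)$
at the moment of each swap, but the algorithm only guarantees the much weaker feasibility facts $i_1\in\mathcal{J}_-$ and $i_k\in\mathcal{J}_+$, and $m^*$ with its volume vector is not known until termination. Your suggested repair via Lemma~\ref{lem:mononotonicity} shows that the \emph{objective} increases strictly, which prevents cycling, but a strictly increasing objective does not by itself forbid a single phase's volume from first overshooting and later being pulled back toward its final value; that is exactly the scenario you would need to exclude, and the proposal does not exclude it. Your fallback --- ``a coarser monotone potential in the spirit of the termination argument of Theorem~\ref{the:correctness_inequality}'' --- also does not work as stated: for any feasible clustering, $\sum_{i\in\mathcal{J}_-}\bigl(U_i-\sum_x\chi^m_i(x)\bigr)=\sum_{i=1}^P U_i-N$ and $\sum_{i\in\mathcal{J}_+}\bigl(\sum_x\chi^m_i(x)-L_i\bigr)=N-\sum_{i=1}^P L_i$ are \emph{constants}, so those quantities cannot serve as decreasing potentials (this is in fact a weak spot in the paper's own termination argument, not something you can lean on).

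To actually close the gap you need a structural ``no-overshoot'' lemma: that the greedy choices $i_1\in\argmax_{p\in\mathcal{J}_-}m_p$ and $i_k\in\argmin_{p\in\mathcal{T}\cap\mathcal{J}_+}m_p$ always lie on the correct side of the final volume vector, so that the volume vector $v^\ell:=\bigl(\sum_x\chi^m_i(x)\bigr)_i$ converges coordinate-wise monotonically to $v^*$. This would have to be extracted from the optimality characterization (Theorem~\ref{the:opt_criterium}) together with an analysis of how the coordinate ordering of $m$ evolves under the moves $d_{\mathcal{T}}$; merely citing objective monotonicity and the characterization is not enough, and the proposal stops short of the needed argument. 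The worst-case $\mathcal{O}(N(\log N+P)P^2)$ bound would also inherit this gap unless proved by a different route, since the objective-increase argument alone is not quantitative.
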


\section{Improved Running Time Analysis}\label{sec:improved}
We first state the exact assumptions under which our results on the efficiency of Algorithm \ref{alg:median} hold.
\begin{assumption}\label{ass:manifold} \ 
\begin{itemize}
     \item The points $X_N := \{x_1, \dots, x_N\} \subset \R^D$ are i.i.d. samples of a \textit{probability measure} $\mu = \rho \Vol_M$, with $M$ a closed, smooth, connected $d$-dimensional Riemannian manifold and $\rho$ a smooth and positive density. In particular it holds $0 < \frac{1}{c} \leq \rho \leq c < \infty$.
     \item The weights between points $x, y \in X_N$ are given by
     \begin{equation*}
     w_\eps(x,y) = \frac{1}{\eps^d}\eta \left(  \frac{|x-y|}{\eps} \right).
     \end{equation*}
     where the kernel $\eta$ satisfies 
         \begin{align}
    &\eta(0) > 0 \text{ and } \eta \text{ is continuous at zero, } \\
    &\eta(t) \geq 0 \text{ for every } t > 0, \eta \text{ is non-increasing, }\\
&\eta \text{ has exponential decay }.    
\end{align}
\item The sequence of positive numbers $\{\eps_N\}_N$ satisfies
\begin{align*}
\lim_{N \rightarrow \infty} \eps_N = 0 \text{ and } \lim_{N \rightarrow \infty} \frac{\eps^{d}_N N}{\log (N)} =\infty.
\end{align*}
\item The sequence of initial clusterings $\chi^0_N: X_N \rightarrow \{0,1\}^P$ converge weakly in $TL^2$ to some $\chi^0:M \rightarrow \{0,1\}^P$, i.e.
\begin{align*}
\chi^0_N \xrightharpoonup[]{TL^2} \chi^0.
\end{align*}
See Definition \ref{def:weak_TL2} for more information on $TL^2$ convergence. Additionally $\chi^0$ is assumed to have finite perimeter, that is 
\begin{align*}
\sum_{i = 1}^P \int_M \rho\; d|\nabla \chi_i^0| < \infty.
\end{align*}
\item The sequence of prescribed volume $\{V_N\}_N \subset \N^P$ with $\sum_{i = 1}^P V_{N,i} = N$ in the volume-constrainted optimization problem \eqref{alg:volumeMBO} converges to some $V \in [0,1]^P$ with $\sum_{i = 1}^P V_i = 1$, i.e.,
\begin{equation}
\frac{V_N}{N} \rightarrow V.
\end{equation}
 \end{itemize}
\end{assumption}
We give some remarks to the assumptions in order
\begin{rem}
\begin{itemize}
\item The assumptions on the distribution of the datapoints in the so called data manifold is often refered to as manifold assumption. It is a reasonable assumption as although the data points are in a very high dimensional space $\R^D$ the data points that are contain in the same class do not have many degrees of freedom to vary. Image for example a (american) handwritten digit $1$ which might be determined only by its length and rotation. 
\item The assumptions on the weights are also standard \cite{MR3458162,laux2021large}. The hidden dimension $d$ is for the analysis import but is not needed for computing the weights. For example when the graph Laplacia $\tilde{\Delta}_N  = I - D^{-1}W$, with diagonal degree matrix $D_{xx} = \sum_{y \in X_N} w(x,y)$ is used, then the $\frac{1}{\eps^d}$ factor cancels. For the Laplacian $\Delta_N  = D - W$ one observes that only the heat semigroup $e^{-h \Delta_N}$ is needed for the computation of the scheme and $\eps^d$ is only a rescaling of time in the heat semigroup as both $D$ and $W$ have the prefactor $\frac{1}{\eps^d}$.
\item The assumptions on $\eps_N$ are optimal. The limit $\eps_N \rightarrow 0$ is necessary to ensure localization in the limit. The condition $\lim_{N \rightarrow \infty} \frac{\eps^{d}_N N}{\log (N)} =\infty$ is the optimal sampling rate; it roughly says that $\log N$ edges are connected to every node.  A bigger $\eps$ would lead to more edges and thus a bigger computational cost. A smaller $\eps$ would yield with high probability a disconnected graph. For more information on the sampling rate see for example \cite{MR3458162}.  
\item The weak convergence in $TL^2$ of the initial conditions is for example given for some $\chi^0 \in C^0(M, \{0,1\}^P)$ and $\chi^0_N := \chi^0\big|_{X_N}$.
\item The condition that $\chi^0$ has finite perimeter is for example satisfied when the $k$-means algorithm or Voronoi cells as in \cite{jacobs2018auction} are used to generate $\chi^0_N$. The assumption ensures that the initial thresholding energies are uniformly bounded for all $h > 0$ and $N \in \N$. To be more precise it holds 
\begin{equation}
\sup_{h > 0} \limsup_{N \rightarrow \infty} E_{h, N}(X_N^0) < \infty.
\end{equation}
To see that this bound holds one first uses for fixed $h$ the continuous convergence $E_{N,h}(X_N^0) \rightarrow E_h(X^0)$ of the tresholding energy \cite{laux2021large} as in Proposition \ref{the:conv_os}. Then by Lemma \ref{lem:monotonicity_independent} and the $\Gamma$-limsup inequality in \cite{laux2021large} one has additional 
\begin{align*}
E_h(\chi^0)\leq C \lim_{\tilde{h} \rightarrow 0}E_{\tilde{h}}(\chi^0) = C \sum_{i = 1}^P \int_M \rho \; d |\nabla \chi_i^0| < \infty.
\end{align*} 
Here one sees the close connection of our approach to mean curvature flow, which is the $L^2$-gradient flow to the weighted Perimeter $\int_M \rho d |\nabla \chi_i^0|$. For the convergence of the volume-constrained MBO scheme to volume-preserving mean curvature flow see for example \cite{laux2017convergence}.
\end{itemize}
\end{rem}

Under this assumptions we will show that we can expect an improved running time of Algorithm \ref{alg:median} in most iterations of the volume-preserving MBO scheme. The precise statement is given in the following Theorem.
\begin{theorem}[Improved efficiency analysis of Algorithm \ref{alg:median}]\label{the:improvedRunning}
Let Assumption \ref{ass:manifold} be in place. Assume we make $L$ iterations of the volume constrained MBO scheme \eqref{alg:volumeMBO}. Denote by $m^\ell_{N}$ the $V$-order statistic in the $\ell$-th step of the MBO scheme with diffusion time $h$. If in every iteration the center or the predecessor as initial guess of Algorithm \ref{alg:median} is used, that is $m^0 \in \{ \frac{1}{P} \mathbbold{1} ,m_N^{\ell -1}\}$, then almost surely there exists $h_0 > 0$ such that for every $h < h_0$ there exists $N_0(h) > 0$ such that for every $N > N_0(h)$ and arbitrary $\delta > 0$ the accumulated running time of Algorithm \ref{alg:median} for $L$ iterations in the volume constrained MBO scheme \eqref{alg:volumeMBO} is of the order
\begin{equation}
\mathcal{O} \left(\left(L \sqrt{h} \frac{P}{1 - 2 \delta P} + \frac{1}{\delta^2}\right) P^2 N (\log N + P) \right).
\end{equation}
In particular for $\delta = \frac{1}{4P}$ we get a running time estimate of
\begin{equation}
\mathcal{O} \left(\left(L \sqrt{h} P +  P^2 \right) P^2 N (\log N + P) \right),
\end{equation}
where we see that the $P^2$-term is independent of $L$. 
\end{theorem}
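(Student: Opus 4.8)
The plan is to combine the elementary running-time bound of Theorem~\ref{the:runningtime} with a quantitative estimate of the algorithm energy $E(m^{0})$ at the chosen initial guess, made \emph{uniform over the $L$ iterations}. By Theorem~\ref{the:runningtime} the cost of the $\ell$-th call of Algorithm~\ref{alg:median} is $\tfrac12 E_\ell(m^{0,\ell})\,\mathcal{O}((\log N+P)P^2)+\mathcal{O}(NP)$, where $E_\ell$ is the energy \eqref{eq:alg_energy} built from the diffused data $v^\ell:=e^{-h\Delta_N}\chi^{\ell-1}_N$ of that step and $m^{0,\ell}\in\{\tfrac1P\mathbbold{1},\,m_N^{\ell-1}\}$. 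The $\mathcal{O}(NP)$ terms sum to $\mathcal{O}(LNP)$, which is absorbed, so everything reduces to bounding $\sum_{\ell=1}^L E_\ell(m^{0,\ell})$. The only ingredient I would use across \emph{all} iterations is the energy dissipation of the scheme: since $\chi^{\ell-1}_N$ is admissible for the volume-constrained minimizing movement, $E_{h,N}(\chi^\ell_N)\le E_{h,N}(\chi^{\ell-1}_N)\le\cdots\le E_{h,N}(\chi^0_N)$ (Lemma~\ref{lem:lagrangeMulti}), and by Assumption~\ref{ass:manifold} (finite perimeter of $\chi^0$) together with the $\Gamma$-$\limsup$ inequality of~\cite{laux2021large} this last quantity is $\le C_0$ for $h$ small and $N$ large; hence $E_{h,N}(\chi^\ell_N)\le C_0$ for every $\ell\le L$.

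For the center guess $m^{0,\ell}=\tfrac1P\mathbbold{1}$ I would argue directly and obtain the factor $\sqrt h$ for \emph{every} iteration. Since $\chi_N^{\ell-1}\in\mathcal{A}(V_N)$, one has $E_\ell(\tfrac1P\mathbbold{1})\le 2\,\#\{x:\chi^{m^{0,\ell}}(x)\ne\chi_N^{\ell-1}(x)\}$; a point $x$ lying deep inside its phase (at distance $\gg\sqrt h$ from the interface of $\chi_N^{\ell-1}$) keeps that phase under thresholding of $v^\ell$ at the barycentre, by a Gaussian bound on $e^{-h\Delta_N}$, so every point that changes phase sits within $\mathcal{O}(\sqrt h)$ of the interface. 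Quantifying this with Markov's inequality applied to the thresholding energy \eqref{eq:discrete_thresholding_energy} (recognizing $\langle\chi_{j,N}^{\ell-1},e^{-h\Delta_N}(\mathbbold{1}-\chi_{j,N}^{\ell-1})\rangle$, up to the normalization, as the $j$-th summand of $\sqrt h\,N\,E_{h,N}$) yields $E_\ell(\tfrac1P\mathbbold{1})\le C\sqrt h\,N\,E_{h,N}(\chi_N^{\ell-1})\le CC_0\sqrt h\,N$.

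For the predecessor guess $m^{0,\ell}=m_N^{\ell-1}$ the geometric picture of Section~\ref{sec:new_alg} is the right tool: the $m_N^{\ell-1}$-induced clustering of $v^\ell$ and the $m_N^{\ell-1}$-induced clustering of $v^{\ell-1}$ (which is $\chi_N^{\ell-1}$, of volume $V_N$) differ only at points $x$ for which $v^{\ell-1}(x)$ and $v^\ell(x)$ straddle some hyperplane $H_{ij}(m_N^{\ell-1})$, so $E_\ell(m_N^{\ell-1})$ is bounded by twice the number of such $x$, and these lie in slabs around the $H_{ij}(m_N^{\ell-1})$ of width $\le|(v^\ell-v^{\ell-1})\cdot(e_i-e_j)|$. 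Splitting the points genuinely close to the interface of $\chi_N^{\ell-1}$ (estimated as above) from those separated from it only because $m_N^{\ell-1}$ sits off-centre, I expect a bound of the form $E_\ell(m_N^{\ell-1})\le C\sqrt h\,N\,E_{h,N}(\chi_N^{\ell-1}) + C'P^2\,\#\{x:\dist(v^\ell(x),\bigcup_{i\neq j}H_{ij}(\tfrac1P\mathbbold{1}))\le 2|m_N^{\ell-1}-\tfrac1P\mathbbold{1}|\}$, the second term controlled through a Minkowski-content/concentration estimate for the diffused point cloud by $\le C''\sqrt h\,N\,|m_N^{\ell-1}-\tfrac1P\mathbbold{1}|\,E_{h,N}(\chi_N^{\ell-1})$. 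To make $|m_N^{\ell-1}-\tfrac1P\mathbbold{1}|$ small I would invoke the gradient-flow input: Proposition~\ref{the:conv_os} gives $m_N^\ell\to\bar m^\ell$ (the mean-field order statistic) as $N\to\infty$ for each of the finitely many $\ell\le L$, while the $L^2$-estimate of Proposition~\ref{the:l2estimate}, applied to the mean-field Lagrange multiplier $\bar\lambda^\ell=\tfrac{2}{\sqrt h}(\bar m^\ell-\tfrac1P\mathbbold{1})$, yields $\sum_{\ell=1}^L|\bar m^\ell-\tfrac1P\mathbbold{1}|^2=\tfrac14\sum_{\ell=1}^L h|\bar\lambda^\ell|^2\le C_1$. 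By Chebyshev at most $C_1/\delta^2$ of the iterations are \emph{bad} in the sense $|\bar m^\ell-\tfrac1P\mathbbold{1}|>\delta$; on the remaining \emph{good} iterations $|m_N^{\ell-1}-\tfrac1P\mathbbold{1}|\le 2\delta$ once $N$ is large. On bad iterations (and on the at most one iteration following each, whose predecessor guess is then uncontrolled) I use the worst case $E_\ell\le N$, contributing $\mathcal{O}(\tfrac1{\delta^2}NP^2(\log N+P))$; on good iterations the two estimates above give $E_\ell\le C\sqrt h N(1+\delta P\,E_{h,N}(\chi^{\ell-1}_N))$, and summing the resulting compounding estimate over the $\le L$ good iterations produces the geometric factor $\tfrac{P}{1-2\delta P}$ (which forces $\delta<\tfrac1{2P}$), hence $\mathcal{O}\big(L\sqrt h\,\tfrac{P}{1-2\delta P}\,NP^2(\log N+P)\big)$. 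Adding the two contributions gives the stated bound; $\delta=\tfrac1{4P}$ gives the second display.

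For the quantifiers I would fix a realization of $X_N$ in the full-measure event on which the empirical measures converge and the graphs are connected; pick $h_0$ so that $2\delta P<1$ is attainable and the consistency/$\sqrt h$-asymptotics hold; and for each $h<h_0$ pick $N_0(h)$ so large that $E_{h,N}(\chi^0_N)\le C_0$, $|m_N^\ell-\bar m^\ell|\le\delta$ for all $\ell\le L$, and the finitely many near-interface counts are within a constant factor of their $\mu$-expectations. The main obstacle I anticipate is exactly making the estimate uniform in $\ell$ when the initial guess is the \emph{previous} order statistic: a single off-centre $m_N^{\ell-1}$ could in principle spoil a whole run, so one must control how the order-statistic error propagates along the iterations, which is what the split into $\mathcal{O}(1/\delta^2)$ bad iterations (coming from Proposition~\ref{the:l2estimate}) and the geometric summation with ratio $2\delta P$ are designed to absorb. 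Getting the second term of the $E_\ell(m_N^{\ell-1})$ bound right---with the correct powers of $P$ and a single power of $\sqrt h$ rather than $h$---requires carefully combining the thresholding-energy dissipation with the law-of-large-numbers and Minkowski-content estimates for the diffused cloud $\{v^\ell(x)\}$, and this is the place where the manifold assumption and the earlier convergence result (Proposition~\ref{the:conv_os}) genuinely enter.
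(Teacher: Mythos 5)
Your overall scaffolding is the same as the paper's: Theorem~\ref{the:runningtime} for the per-iteration cost, Propositions~\ref{the:conv_os} and~\ref{the:l2estimate} plus a Chebyshev count to split the $L$ iterations into at most $\mathcal{O}(1/\delta^2)$ ``bad'' ones (bounded by the worst case) and ``good'' ones (bounded by the $\sqrt{h}$ speed-up). Where you genuinely depart from the paper is in how the per-iteration energy $E_\ell(m^0)$ is controlled.

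For the center guess your argument is in fact valid and arguably cleaner than Lemmas~\ref{lem:region}--\ref{lem:boundonpoints}: by comparing $\chi^{m^0}$ to $\chi^{\ell-1}_N\in\mathcal{A}(V_N)$ rather than to the (unknown) target $\chi^{m^\ell_N}$, you get $E_\ell(\tfrac1P\mathbbold{1})\leq 2\,\#\{x:\chi^{m^0}(x)\neq\chi^{\ell-1}_N(x)\}$, and the Markov step works because a point that changes its argmax satisfies $\chi^{\ell-1}_i(x)\big(1-v^\ell_i(x)\big)\geq v^\ell_{j^*(x)}(x)\geq 1/P$, so that $\#\{\text{changers}\}\leq PN\sqrt{h}E_{N,h}(\chi^{\ell-1}_N)$. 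This bound holds in \emph{every} iteration and avoids the $\delta$-closeness hypothesis entirely, which the paper requires even for the center guess (Lemma~\ref{lem:region}). If one always takes $m^0=\tfrac1P\mathbbold{1}$, your route would in fact give a bound without the $\tfrac1{\delta^2}$ term, a mild strengthening, at the cost of nothing new. Notice, however, that since the theorem also covers the predecessor guess, one still needs the convergence plus $L^2$-estimate machinery, so no overall simplification results.

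Two aspects of the predecessor branch are not justified and do not match the paper's mechanism. First, you assert a bound linear in $|m_N^{\ell-1}-\tfrac1P\mathbbold{1}|$ via a ``Minkowski-content/concentration estimate for the diffused cloud.'' No such estimate is established anywhere, and it is not obviously true at the discrete level; the paper's Lemma~\ref{lem:boundonpoints} instead bounds $\nu^h_N(Y_\delta)$ by $\tfrac{NP}{1-2\delta P}\sqrt{h}E_{N,h}$, which degrades as $\delta\to\tfrac1{2P}$ but is not linear in $\delta$. Second, you obtain the factor $\tfrac{P}{1-2\delta P}$ by a ``geometric summation with ratio $2\delta P$'' across the $L$ good iterations, claiming some compounding of the off-centredness. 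That is not how the factor arises: in the paper it is a per-iteration constant, coming from the lower bound $\min_i\max_{j\ne i}u_j\geq\tfrac1P-2\delta=\tfrac{1-2\delta P}{P}$ used in Lemma~\ref{lem:boundonpoints}; there is no compounding, and indeed $\sum_\ell|m_N^\ell-\tfrac1P\mathbbold{1}|^2\leq C$ from Proposition~\ref{the:l2estimate} already precludes the off-centredness from accumulating. You reach the right final display by two wrong steps that happen to cancel, so you should replace both with the paper's single step. Finally, your handling of the quantifiers and the a.s.\ statement is a sketch of Step~1 of the paper (the contradiction/compactness argument passing the $L^2$ bound from mean-field to discrete); you should make explicit that the bound on $\sum_\ell|m_N^\ell-\tfrac1P\mathbbold{1}|^2$ is what you obtain, not merely termwise closeness of $m_N^\ell$ to $\bar m^\ell$, since only the $\ell^2$-sum is controlled uniformly.
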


So indeed in (up to constants) only $P^2$ iterations the running time is not sped up. Note that in practice $P^2$ is a lot smaller than $L$.

The proof of Theorem \ref{the:improvedRunning} can be split into three major components. The first component is Proposition \ref{cor:iterationbound} which states that we get the improved running time for one iteration provided the order statistic guess $m^0$ is close to the actual order statistic. 
\begin{proposition}[Improved running time if close to center]\label{cor:iterationbound}
Assume the $V$-order statistic $m_N^\ell$ is close to $m^0$, i.e,. $|m_N^\ell - \frac{1}{P} \mathbbold{1}|_\infty < \delta$. If we use $m^0 = \frac{1}{P}\mathbbold{1}$ as initial guess of Algorithm \ref{alg:median} or use $m^0 = m_N^{\ell-1}$ with $|m_N^{\ell-1} - \frac{1}{P}\mathbbold{1}|_\infty < \delta$, then the running time of Algorithm \ref{alg:median} to find $m_N^\ell$ is in
\begin{align*}
\mathcal{O}\left(\frac{P}{1-2P \delta} \sqrt{h} N \log (N) + NP\right).
\end{align*}
\end{proposition}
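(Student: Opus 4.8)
The plan is to reduce the claimed complexity to a bound on the initial value $E(m^0)$ of the energy \eqref{eq:alg_energy}, and then to exploit that in the MBO scheme the diffused labels $u=e^{-h\Delta_N}\chi^{\ell-1}$ concentrate near the vertices $e_1,\dots,e_P$ of the simplex at the scale $\sqrt{h}$. First I would invoke Theorem \ref{the:runningtime}: Algorithm \ref{alg:median} started from $m^0$ terminates in $\frac{E(m^0)}{2}\cdot\mathcal{O}((P+\log N)P^2)+\mathcal{O}(NP)$ operations, since every outer iteration decreases $E$ by $2$. Hence it suffices to prove that $E(m^0)$ is of order $\frac{\sqrt{h}\,N}{1-2P\delta}$, with a constant that may depend on $P$, the manifold dimension and $\sup_{h>0}\limsup_{N}E_{h,N}(\chi^0_N)$, but not on $h$, $\ell$, $\delta$ or $N$ (and $N$ large); the residual polynomial-in-$P$ factors from the per-iteration cost of Algorithm \ref{alg:median} are then absorbed into the stated $\mathcal{O}$.

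Next I would turn $E(m^0)$ into a count of borderline data points. Since $m^\ell_N$ is the $V$-order statistic, $\sum_x\chi^{m^\ell_N}_i(x)=V_i$, so
\[
E(m^0)=\sum_{i=1}^P\Big|\sum_{x\in X_N}\big(\chi^{m^0}_i(x)-\chi^{m^\ell_N}_i(x)\big)\Big|\le 2\,\#\big\{x\in X_N:\chi^{m^0}(x)\ne\chi^{m^\ell_N}(x)\big\}.
\]
Writing $u_{(1)}(x)\ge u_{(2)}(x)\ge\dots$ for the decreasing rearrangement of $(u_1(x),\dots,u_P(x))$, the hypotheses $|m^0-\tfrac1P\mathbbold{1}|_\infty<\delta$ and $|m^\ell_N-\tfrac1P\mathbbold{1}|_\infty<\delta$ force: if $u_{(1)}(x)-u_{(2)}(x)>2\delta$, then the index realizing $u_{(1)}(x)$ is the unique maximizer of $u_i(x)-m_i$ for every $m$ with $|m-\tfrac1P\mathbbold{1}|_\infty<\delta$, so by \eqref{eq:m_induced_cluster} the point $x$ is assigned to the same cluster by $\chi^{m^0}$ and by $\chi^{m^\ell_N}$. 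Therefore $E(m^0)\le 2\,\#\{x:u_{(1)}(x)-u_{(2)}(x)\le 2\delta\}$. Moreover, for such a borderline point, $2u_{(1)}(x)-2\delta\le u_{(1)}(x)+u_{(2)}(x)\le\sum_k u_{(k)}(x)=1$, so $\max_i u_i(x)$ is bounded away from $1$ by a margin $c_\delta>0$ with $c_\delta^{-1}\lesssim \frac{P}{1-2P\delta}$; i.e.\ the diffused label of $x$ lies in the $\sqrt{h}$-transition layer between the clusters of $\chi^{\ell-1}$.

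Then I would bound the size of this transition layer by $\mathcal{O}(\sqrt{h}\,N)$. With $k(x)$ denoting the cluster of $x$ in $\chi^{\ell-1}$, the previous step gives $1-u_{k(x)}(x)\ge 1-\max_i u_i(x)\ge c_\delta$, hence
\[
\#\big\{x:u_{(1)}(x)-u_{(2)}(x)\le 2\delta\big\}\le\frac1{c_\delta}\sum_{x\in X_N}\big(1-u_{k(x)}(x)\big)=\frac1{c_\delta}\sum_{i=1}^P\big\langle 1-\chi^{\ell-1}_i,\,e^{-h\Delta_N}\chi^{\ell-1}_i\big\rangle,
\]
using $\sum_i u_i(x)=e^{-h\Delta_N}\mathbbold{1}=\mathbbold{1}$. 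The right-hand sum equals $\sqrt{h}\,N$ times the discrete thresholding energy $E_{h,N}(\chi^{\ell-1})$ (up to the chosen normalisation); by the monotonicity of the thresholding energy along the volume-constrained MBO iterations (Lemma \ref{lem:monotonicity_independent}) one has $E_{h,N}(\chi^{\ell-1})\le E_{h,N}(\chi^0_N)$, and by the well-preparedness part of Assumption \ref{ass:manifold} together with the $\Gamma$-limsup inequality (as recalled in the remark after that assumption) $\sup_{h>0}\limsup_{N}E_{h,N}(\chi^0_N)<\infty$, so $E_{h,N}(\chi^{\ell-1})\le C$ for all $h$ and all sufficiently large $N$. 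Chaining the displays back through Theorem \ref{the:runningtime} yields the asserted running time.

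The main obstacle is the last two steps taken together: one must show that the only points Algorithm \ref{alg:median} ever needs to reshuffle are those whose diffused labels lie in the $\mathcal{O}(\sqrt{h})$-transition layer, and that the number of such points is $\mathcal{O}(\sqrt{h}\,N)$ with a constant that does not degenerate as $h\to 0$. The geometric half is elementary once the closeness of $m^0$ and $m^\ell_N$ to the center is used; the counting half is exactly where the MBO structure of the data $u$ (as opposed to an arbitrary distribution) enters — through the uniform-in-$h$ bound on the thresholding energies and its monotonicity along the scheme — and is the place where one has to be careful that the implied constant is genuinely independent of $h$, of the iteration index $\ell$, and is controlled in $P$ and $\delta$ by $\frac{P}{1-2P\delta}$.
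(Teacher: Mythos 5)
Your argument is correct and proceeds essentially along the lines of the paper's own proof (Lemma~\ref{lem:region} followed by Lemma~\ref{lem:boundonpoints}): reduce via Theorem~\ref{the:runningtime} to bounding $E(m^0)$, characterize the data points that can change cluster as those in a $\delta$-tube near the decision boundary, and bound their number by $\sqrt{h}N$ times the thresholding energy. Your borderline criterion $u_{(1)}(x)-u_{(2)}(x)\le 2\delta$ is a slightly tighter description than the paper's set $Y_\delta=\{\exists\,i\ne j\colon u_i,u_j\ge\tfrac1P-2\delta\}$, yielding the sharper margin $c_\delta=\tfrac{1-2\delta}{2}$ instead of $\tfrac{1-2\delta P}{P}$; both proofs then lower-bound the same quantity, since $\sum_x\sum_i\sum_{j\ne i}\chi_i^{\ell-1}(x)u_j(x)=\sum_x\bigl(1-u_{k(x)}(x)\bigr)=\sqrt{h}\,N\,E_{N,h}(\chi^{\ell-1})$. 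One citation slip: the inequality $E_{N,h}(\chi^{\ell-1})\le E_{N,h}(\chi^0_N)$ is not Lemma~\ref{lem:monotonicity_independent} (which compares a fixed $\chi$ at two time scales $h_0\le h$); it is the energy dissipation along the iterates, which follows by iteratively testing the discrete minimizing-movement interpretation of Lemma~\ref{lem:lagrangeMulti_discrete} against the predecessor, exactly as in the continuous Lemma~\ref{lem:energyDis}.
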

The proof of Proposition \ref{cor:iterationbound} is explained in Subsection \ref{sec:iterationbound}. The idea is to bound the number of points that are close to the hyperplanes \eqref{eq:hyperplanes} that are induced by the center $\frac{1}{P} \mathbbold{1}$. Combined with the assumption of Proposition \ref{cor:iterationbound} that $m_N^\ell$ is $\delta$-close to the center, this will lead to a bound on the number of points that change their clustering during the algorithm.

To apply Proposition \ref{cor:iterationbound} one needs a bound on the distance between the order statistic and the center. Thus, the second component of the proof of Theorem \ref{the:improvedRunning} is an $L^2$-estimate on the distance between the order statistic and the center for the continuous counterpart of the volume constrained MBO scheme.
\begin{proposition}[$L^2$-estimate for Lagrange multiplier]\label{the:l2estimate}
Let Assumption \ref{ass:manifold} be in place. Given the iterates $\chi^\ell$ for $\ell = 1,\dots, L$ of the volume constrained MBO scheme according to Definition \ref{def:cont_median} below with $V$-order statistic $m_\mu^\ell$, for $h \ll \frac{\min_{i = 1,\dots, P} V_i}{P (E_0 + 1)} $, we have 
\begin{align*}
h \sum_{\ell = 1}^L \left|\frac{m_\mu^\ell - \frac{1}{P}\mathbbold{1}}{\sqrt{h}}\right|^2 = \frac{h}{4} \sum_{\ell=1}^L |\Lambda^\ell_h|^2 \lesssim  \frac{ P^6}{ \min_{i = 1,\dots, P} V_i^4} (1+T) (1+ E_0^6).
\end{align*}
\end{proposition}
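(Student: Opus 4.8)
The plan is to use the minimizing-movements interpretation of the volume-preserving MBO scheme on the weighted manifold $(M,\mu)$. By Lemma~\ref{lem:lagrangeMulti} and Definition~\ref{def:cont_median}, the iterate $\chi^\ell$ minimizes $E_h(\chi)+\tfrac1{2h}d_h^2(\chi,\chi^{\ell-1})$ over $\chi\in\mathcal C$ subject to the volume constraints $\int_M\chi_i\,d\mu=V_i$, with associated Lagrange multiplier $\Lambda_h^\ell=\tfrac2{\sqrt h}\bigl(m_\mu^\ell-\tfrac1P\mathbbold{1}\bigr)$; equivalently, $\chi^\ell$ minimizes the unconstrained functional $\mathcal F^\ell(\chi):=E_h(\chi)+\tfrac1{2h}d_h^2(\chi,\chi^{\ell-1})-\Lambda_h^\ell\cdot\int_M\chi\,d\mu$ over all of $\mathcal C$, the constant part of $m_\mu^\ell$ being immaterial since $\sum_i\chi_i\equiv1$. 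Testing this minimality against the feasible competitor $\chi^{\ell-1}$ (same volumes, so the $\Lambda$-terms cancel) gives the usual energy monotonicity $E_h(\chi^\ell)+\tfrac1{2h}d_h^2(\chi^\ell,\chi^{\ell-1})\le E_h(\chi^{\ell-1})$; summing over $\ell$ and invoking the uniform initial-energy bound $E_h(\chi^0)\le E_0$ (Assumption~\ref{ass:manifold} and the attached remark) yields $\sup_\ell E_h(\chi^\ell)\le E_0$ and the dissipation estimate $\tfrac1h\sum_{\ell=1}^L d_h^2(\chi^\ell,\chi^{\ell-1})\lesssim E_0$.

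To estimate $\Lambda_h^\ell$ itself, I would probe it with competitors obtained by transporting the whole partition: letting $\Psi_\tau$ be the time-$\tau$ flow of a bounded vector field $\xi$ on $M$ and $\chi_\tau:=\chi^\ell\circ\Psi_\tau^{-1}$, one has an admissible clustering $\chi_\tau\in\mathcal C$ with first-order volume change $\partial_\tau|_{\tau=0}\int_M(\chi_\tau)_i\,d\mu=\int_{\Omega_i^\ell}\divrho\xi\,d\mu$, where $\Omega_i^\ell:=\{\chi_i^\ell=1\}$. Fixing $i_+\in\argmax_i\Lambda_{h,i}^\ell$ and $i_-\in\argmin_i\Lambda_{h,i}^\ell$, one chooses $\xi$ so that the transport moves $\mu$-mass $\tau$ out of $\Omega_{i_-}^\ell$ into $\Omega_{i_+}^\ell$ to leading order, chaining the transfer through intermediate phases exactly as in Lemma~\ref{lem:mononotonicity} when $i_+,i_-$ are not adjacent (such a chain exists because $M$ is connected and every $\Omega_i^\ell$ has finite perimeter by the previous step). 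Since $\sum_i\Lambda_{h,i}^\ell=0$, we have $\Lambda_{h,i_+}^\ell-\Lambda_{h,i_-}^\ell\ge|\Lambda_h^\ell|_\infty\ge\tfrac1{\sqrt P}|\Lambda_h^\ell|$, and minimality of $\chi^\ell$ for $\mathcal F^\ell$ gives
\[
\frac{\tau}{\sqrt P}\,|\Lambda_h^\ell|\ \lesssim\ \bigl(E_h(\chi_\tau)-E_h(\chi^\ell)\bigr)+\frac1{2h}\bigl(d_h^2(\chi_\tau,\chi^{\ell-1})-d_h^2(\chi^\ell,\chi^{\ell-1})\bigr)+O(\tau^2),
\]
where the $O(\tau^2)$ remainder is genuinely lower order by the smallness hypothesis $h\ll\min_iV_i/(P(E_0+1))$.

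It remains to estimate the two terms on the right, dividing by $\tau$ and letting $\tau\to0$. The distance term is easy: expanding the quadratic form $d_h^2$ and using $\|\xi\|_\infty\lesssim1$, its $\tau$-derivative is bounded by $d_h(\chi^\ell,\chi^{\ell-1})$ times a factor depending only on $\xi$ and on the perimeters of the $\Omega_i^\ell$, so after squaring and summing over $\ell$ it is controlled by $\tfrac1h\sum_\ell d_h^2(\chi^\ell,\chi^{\ell-1})\lesssim E_0$ via Cauchy--Schwarz. The energy term is the delicate one: writing $E_h$ through the heat kernel $p(h,\cdot,\cdot)$ and differentiating along the transport produces a double integral over the interfaces of $\chi^\ell$ in which $\nabla_xp$ and $\nabla_yp$ are tested against $\xi(x)$ and $\xi(y)$. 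In $\R^d$ these cancel exactly ($\nabla_xp+\nabla_yp=0$), leaving only the mean-curvature first variation of the perimeter, but on $M$ they live in different tangent spaces and cancel only up to the defect
\[
\bigl|\langle\xi(x),\nabla_x\tilde p(h,x,y)\rangle_x+\langle\xi(y),\nabla_y\tilde p(h,x,y)\rangle_y\bigr|\ \le\ \frac{\dist_M^2(x,y)}{2h}\,\Lip(\xi)\,\tilde p(h,x,y).
\]
Combined with Gaussian bounds for $p$ and with the segment-type inequality of Lemma~\ref{lem:bound_by_grad} (our adaptation of Cheeger--Colding, which aligns geodesics with polar coordinates so that the interface integrals reduce to gradient integrals), this yields $E_h(\chi_\tau)-E_h(\chi^\ell)\lesssim\tau\,\|\xi\|_{C^1}\,(E_h(\chi^\ell)+1)+O(\tau^2)$, and the construction of $\xi$ (a normal deformation along the possibly chained interface) bounds $\|\xi\|_{C^1}$ and the factor above by a fixed power of $P/\min_iV_i$ and of $E_h(\chi^\ell)\le E_0$. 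Assembling everything, $\sqrt h\,|\Lambda_h^\ell|\lesssim A^\ell+B$ with $\sum_\ell(A^\ell)^2$ coming from the dissipation and $B$ the dissipation-independent curvature part; hence $\tfrac h4\sum_{\ell=1}^L|\Lambda_h^\ell|^2\lesssim\sum_\ell(A^\ell)^2+L\,B^2$, which with $T=Lh$ is exactly the claimed bound $\lesssim\tfrac{P^6}{\min_iV_i^4}(1+T)(1+E_0^6)$ once the exponents are tracked through the Cauchy--Schwarz, Young, and chaining steps.

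I expect the energy term to be the main obstacle: on a curved weighted manifold the Euclidean antisymmetry of the heat-kernel gradient fails, so one must both quantify the cancellation defect (the displayed inequality, obtained from Gaussian estimates and the local geometry of geodesics) and integrate it against the interface, which forces the Cheeger--Colding coordinate trick of Lemma~\ref{lem:bound_by_grad} since geodesics are not translation invariant. Intertwined with this is the task of choosing $\xi$ so that it simultaneously realizes the prescribed first-order volume transfer $i_-\to i_+$, has $C^1$-norm controlled only by $E_0$ and $\min_iV_i$, and leaves a uniformly small second-order remainder once $h\ll\min_iV_i/(P(E_0+1))$; it is this interplay, together with the several Cauchy--Schwarz/Young splittings and the chaining through up to $P$ phases, that is responsible for the large powers of $P$, $1/\min_iV_i$, and $E_0$ in the final constant.
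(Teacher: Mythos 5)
The overall architecture of your sketch matches the paper's: minimizing movements and energy dissipation, inner variations to exploit the Euler--Lagrange equation, the cancellation defect of $\nabla_x p+\nabla_y p$ on a curved manifold, and the Cheeger--Colding-type segment inequality (Lemma~\ref{lem:bound_by_grad}). Where you diverge genuinely from the paper is in the construction of the test vector field, and this is precisely where the gap is. You propose $\xi$ as ``a normal deformation along the possibly chained interface'' so that the flow carries first-order mass $\tau$ from $\Omega_{i_-}^\ell$ to $\Omega_{i_+}^\ell$, with $\|\xi\|_{C^1}$ controlled by a power of $P/\min_i V_i$ and $E_0$. But the phases $\chi^\ell$ are only characteristic functions of finite-perimeter sets: their interfaces are not $C^1$ hypersurfaces and need not be mutually adjacent, so a ``normal deformation'' has no controlled Lipschitz constant, and a continuous analogue of the algorithmic chaining in Lemma~\ref{lem:mononotonicity} (which reassigns individual discrete points) does not transfer. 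There is also no mechanism in your sketch for regularizing the interfaces, yet regularization at a scale $\eps$ with $h\le\eps^2$ is what makes the $\|\xi\|_{C^1}\lesssim 1+1/\eps$ bound even possible.

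The paper sidesteps all of this by never constructing a geometric deformation at all: in Step 3 of the proof it sets $\chi_{i,\eps}:=e^{-\eps^2\Delta_\mu}\chi_i$, solves the Poisson equation $\Delta_\mu\phi_i=\chi_{i,\eps}-\langle\chi_i\rangle$, and takes $\xi_i:=\nabla\phi_i$, so that $\divrho\xi_i=\chi_{i,\eps}-V_i$ exactly and interior Schauder estimates control $\|\xi_i\|_{C^1}$ by $1+1/\eps$ independently of the geometry of $\chi^\ell$. Then, rather than a single competitor moving mass between two extremal phases, it tests the squared Euler--Lagrange identity against all $P-1$ such fields, assembles the matrix $A_{i,j}=\int_M\divrho(\xi_i)\chi_j^\ell\,d\mu$, and inverts $A\Pi^{-1}=A^{(0)}+A^{(\eps)}$ via the Sherman--Morrison formula plus a Neumann series; the smallness condition $h\ll\min_iV_i/(P(E_0+1))$ enters exactly here, to guarantee $|(A^{(0)})^{-1}A^{(\eps)}|_2\le 1/2$ and hence the invertibility (not merely to make an $O(\tau^2)$ remainder small). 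Your $\Lambda_{i_+}-\Lambda_{i_-}\ge|\Lambda|_2/\sqrt{P}$ lower bound is fine, but without the $\xi_i=\nabla\phi_i$ construction and the matrix inversion there is no way to turn the Euler--Lagrange testing into an isolated $|\Lambda_\mu^\ell|_2^2$, so the key step of your argument does not close.
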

The continuous counterpart of the volume constrained MBO scheme \eqref{alg:volumeMBO} is carefully introduced in Subsection \ref{sec:convergence}. The proof of Proposition \ref{the:l2estimate} is an adaptation of Swartz and the second author \cite{laux2017convergence} to the multi-cluster case on a Riemannian manifold and can be found in Subsection \ref{sec:l2}. One of the main tools for the proof is the well known minimizing movement interpretation of Lemma \ref{lem:lagrangeMulti} as well as the a priori energy dissipation estimate of Lemma \ref{lem:energyDis}. The Euler--Lagrange equation \eqref{eq:ELequation} that is induced by the minimizing movement interpretation is used to get the $L^2$-estimate but is also only available in the continuous setting. This is one of the several reasons why the presented techniques can not by adapted to immediately get the desired bound in the discrete setting. Therefore, one has to take an alternative route by proving that the discrete volume constrained MBO scheme converges to the continuous volume constrained MBO scheme which will be our third component to prove Theorem \ref{the:improvedRunning}.
\begin{proposition}[Convergence of the volume constrained MBO scheme]\label{the:conv_os}
Assume $\chi_N^0$ is the initial clustering of the discrete MBO scheme \eqref{alg:volumeMBO} with $u^{\ell - 1} = e^{-h \Delta_N} \chi^{\ell -1}$. Denote by $m_N$ the $V_N$-order statistic for given volumes $(V_{N,i})_{i = 1}^P$. Similarly, let $\chi^0$ be the initial clustering of the continuous MBO scheme \eqref{alg:volumeMBOcont} and $m_\mu$ the continuous $V$-order statistic for given volumes $(V_i)_{i = 1}^P$. Then almost surely the order statistics converge, i.e.,
\begin{equation}
m_N \xrightarrow[]{N \rightarrow \infty} m_\mu.
\end{equation}
Furthermore, write $\chi_N^1$ for a clustering after one iteration of the discrete volume constrained MBO scheme. Then it holds almost surely that for every sequence $(N') \subset \N$ there is a subsequence $(N'') \subset (N')$ and a clustering after one iteration of the continuous volume constrained MBO scheme $\chi^1$ with
\begin{equation}
\chi_{N''}^1 \xrightharpoonup[]{TL^2(M, \mu)} \chi^1.
\end{equation}
\end{proposition}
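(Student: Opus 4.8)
The plan is to reduce the statement to a single iteration by induction on $\ell$ and then to pass to the limit in the variational characterization \eqref{eq:sep_cond}--\eqref{eq:m_induced_cluster} of an $m$-induced clustering, using the $\Gamma$-convergence of the thresholding energies established in \cite{laux2021large} together with the elementary fact that the diffused labels stay in a fixed compact set, which makes the order statistics automatically bounded. Concretely, I would argue inductively that $\chi^{\ell-1}_N \rightharpoonup \chi^{\ell-1}$ weakly in $TL^2(M,\mu)$ (for $\ell=1$ this is the hypothesis) and then invoke the spectral convergence of graph Laplacians under Assumption \ref{ass:manifold} (cf.\ \cite{MR3458162, laux2021large}), which upgrades this to \emph{strong} $TL^2$-convergence $u^{\ell-1}_N = e^{-h\Delta_N}\chi^{\ell-1}_N \to u^{\ell-1} := e^{-h\Delta_\mu}\chi^{\ell-1}$, with $\Delta_\mu$ the weighted Laplace--Beltrami operator. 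Since $e^{-h\Delta_N}\mathbbold{1}=\mathbbold{1}$ and $e^{-h\Delta_N}$ is positivity preserving, all points $u^{\ell-1}_N(x)$ and $u^{\ell-1}(x)$ lie in the standard simplex $\Sigma$. Because \eqref{eq:sep_cond} is insensitive to adding multiples of $\mathbbold{1}$ to $m$, one may normalize $\mathbbold{1}\cdot m_N=1$; then $\{m_N\}$ is contained in a fixed compact subset of $\R^P$, so a subsequence converges, $m_N\to\bar m$, and, since $\chi^\ell_N$ is bounded in $L^2(\mu_N)$, along a further subsequence $\chi^\ell_N\rightharpoonup\chi^\ell$ weakly in $TL^2$.

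The core step is to identify $(\bar m,\chi^\ell)$ as one iterate of the continuous scheme \eqref{alg:volumeMBOcont}. The volume constraint passes to the limit directly: weak $TL^2$-convergence together with $\mu_N\rightharpoonup\mu$ gives $\int_M \chi^\ell_i\,d\mu=\lim_N V_{N,i}/N=V_i$. For the induced-clustering property I would exploit that $\chi^\ell_N$ maximizes the \emph{unconstrained} linear program \eqref{alg:m_induced_ILP}, i.e.\ $\tfrac1N\sum_{i,x}(\chi^\ell_{N,i}(x)-\tilde\chi_{N,i}(x))(u^{\ell-1}_{N,i}(x)-m_{N,i})\ge 0$ for every competing partition $\tilde\chi_N$; given an arbitrary measurable partition $\tilde\chi:M\to\{0,1\}^P$, building an admissible $\tilde\chi_N$ by transport/restriction and letting $N\to\infty$ (using the strong convergence of $u^{\ell-1}_N$ and the weak convergence of $\chi^\ell_N$) yields $\int_M(\chi^\ell-\tilde\chi)\cdot(u^{\ell-1}-\bar m)\,d\mu\ge 0$. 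Hence $\chi^\ell$ maximizes $\chi\mapsto\int_M\chi\cdot(u^{\ell-1}-\bar m)\,d\mu$ over all partitions, which forces the pointwise $\argmax$ condition \eqref{eq:m_induced_cluster} for $\mu$-a.e.\ $x$; thus $\bar m$ is a continuous $V$-order statistic and $\chi^\ell=\chi^{\bar m}$ an $\bar m$-induced clustering. This closes the induction, and for $\ell=1$ it gives precisely the claimed subsequential $TL^2$-convergence $\chi^1_{N''}\rightharpoonup\chi^1$.

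To upgrade to convergence of the whole sequence $m_N\to m_\mu$ it remains to see that the continuous $V$-order statistic is unique modulo the normalization above. The set where $u^{\ell-1}_i-u^{\ell-1}_j$ equals a fixed constant is a level set of the non-constant real-analytic function $e^{-h\Delta_\mu}(\chi^{\ell-1}_i-\chi^{\ell-1}_j)$ and is therefore $\mu$-null, so the cluster volumes of an $m$-induced clustering are a well-defined function of $m$ alone; moreover, translating $m$ along the direction $d_i$ of \eqref{eq:first_direction} strictly enlarges the $i$-th region by a set of positive $\rho\Vol_M$-mass. The volume map is thus strictly monotone and injective, so every subsequential limit coincides with $m_\mu$ and $m_N\to m_\mu$.

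The step I expect to be the main obstacle is the identification in the second paragraph: transporting the pointwise separation condition \eqref{eq:sep_cond} through the $TL^2$-limit is delicate precisely because weak $TL^2$-convergence of $\chi^\ell_N$ does not preserve the $\{0,1\}$-valued structure pointwise, and what rescues the argument is the \emph{strong} convergence of the diffused labels $u^{\ell-1}_N$. That strong convergence in turn depends on the sharp sampling condition $\eps^d_N N/\log N\to\infty$ from Assumption \ref{ass:manifold} and on the $\Gamma$-convergence/compactness machinery of \cite{laux2021large}; the non-degeneracy facts used in the third paragraph (null level sets, strict monotonicity of volumes) form a secondary difficulty where the smoothness and positivity of $\rho$ and the smoothing effect of $e^{-h\Delta_\mu}$ are essential.
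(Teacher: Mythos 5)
Your overall strategy — extract a convergent subsequence of $m_N$ by compactness, identify the limit through the linear-programming optimality condition, and then appeal to uniqueness of the continuous $V$-order statistic — is a legitimate route, and it genuinely differs from the paper's. The paper instead uses the variational characterization of the order statistic as the unique minimizer of $F(m) = \int_M |u-m|_\triangle\,d\mu + V\cdot m$ (Lemma~\ref{lem:char_median}/\ref{lem:discrete_char}) and shows $F(m_N) \to F(m_\mu)$ directly from $F_N(m_N)\leq F_N(m_\mu)$ plus the weak-to-strong convergence of Theorem~\ref{the:weak_strong}; combined with uniqueness and coercivity of $F$ (Lemma~\ref{lem:uniqueness}) this gives convergence of the full sequence. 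Both approaches hinge on the same ingredient — strong $TL^2$-convergence of $u_N=e^{-h\Delta_N}\chi_N$ — and both ultimately need uniqueness of the continuous order statistic. Your compactness route is arguably a bit more elementary at the identification step, while the paper's value-convergence argument packages the optimality condition more cleanly through the asymmetric-norm functional.

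There is, however, a genuine gap in your uniqueness argument. You claim that the level sets $\{u^{\ell-1}_i - u^{\ell-1}_j = \text{const}\}$ are $\mu$-null because $e^{-h\Delta_\mu}(\chi^{\ell-1}_i-\chi^{\ell-1}_j)$ is real-analytic. For a merely smooth Riemannian manifold $(M,g)$ with smooth weight $\rho$ — which is all Assumption~\ref{ass:manifold} grants — the heat semigroup produces smooth solutions, but \emph{not} real-analytic ones unless the geometric data are themselves real-analytic; analytic hypoellipticity requires analytic coefficients. In fact the remark after Proposition~\ref{the:conv_os} explicitly warns that tie sets $\{x:u_i(x)-m_i=u_j(x)-m_j\}$ \emph{can} have positive $\mu$-measure, which is precisely why the clustering $\chi^1$ is only subsequential. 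So the null-level-set claim — and with it the ``strict monotonicity of the volume map'' you build on it — does not follow. Uniqueness of $m_\mu$ is nonetheless true, but it is proved in Lemma~\ref{lem:uniqueness} by a topological argument: path-connectedness of $\supp(u_\#\mu)$ forces a positive-measure region between the induced clusterings of two distinct order statistics, contradicting equality of volumes. Your proof should invoke that lemma rather than analyticity.

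A secondary point worth flagging: the weak $TL^2$ limit $\chi^1$ of $\{0,1\}^P$-valued clusterings need not itself be $\{0,1\}^P$-valued on the tie sets, so after showing $\chi^1$ maximizes $\chi\mapsto\int_M\chi\cdot(u^0-m_\mu)\,d\mu$ over partitions, one must still perform the measure-theoretic rearrangement of Lemma~\ref{lem:char_median} to recover a genuine $\{0,1\}^P$-valued $m_\mu$-induced clustering with the prescribed volumes. Your write-up jumps from the integral inequality to ``the pointwise $\argmax$ condition'' without addressing this, though the paper's own proof is also terse here and delegates it to the $\Gamma$-convergence framework of \cite{laux2021large}.
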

\begin{rem}
The statement for the convergence of the discrete solutions $\chi_N^1$ to the continuous solutions $\chi^1$ was formulated with subsequences as both $\chi_N^1$ and $\chi^1$ as solutions of \eqref{alg:volumeMBO} and \eqref{alg:volumeMBOcont} may not be unique. It could exist a set of positive measure $\{x : u_i(x) - m_i = u_j(x) -m_j\}$ for some $i,j \in \{1, \dots, P\}$ with points that can either be assigned to the $i$-th or $j$-th cluster. Due to this non-uniqueness one can not guarantee the convergence of the whole sequence in Proposition \ref{the:conv_os} but the $\Gamma$-convergence proof shows that all limit points are the $m$-induced clusters. 
\end{rem}

The prove of the proposition given in Subsection \ref{sec:convergence} will rely on a variational characterization of the order statistic (see Lemma \ref{lem:char_median}) and the weak-to-strong convergence Theorem \ref{the:weak_strong} of the heat equation in \cite{laux2021large} that has recently been improved to the optimal scaling regime in \cite{ullrich2024medianfiltermethodmean}.

Putting the three major components together, the proof of Theorem \ref{the:improvedRunning} is indeed just one application of Chebyshev's inequality:
\begin{proof}[Proof of Theorem \ref{the:improvedRunning}]
\emph{Step 1: We claim that for all $h \ll \frac{\min_{i = 1,\dots, P} V_i}{P (E_0 + 1)} $ there exists $N_0 \in \N$ such that for all $N > N_0$ it holds
\begin{equation}
\limsup_{N \rightarrow \infty} \sum_{\ell = 1}^L \left|m^\ell_N - \frac{1}{P}\mathbbold{1}\right|^2 \leq C
\end{equation}
with $C := \frac{ C_0 P^5}{ \min_{i = 1,\dots, P} V_i^4} (1+T) (1+ E_0^6)$ the constant of Proposition \ref{the:l2estimate}.} 

Assume not, then there exists a subsequence $(N_1) \subset \N$ such that 
\begin{equation}\label{eq:nogation_bound}
\lim_{N_1 \rightarrow \infty}  \sum_{\ell = 1}^L \left|m^\ell_{N_1} - \frac{1}{P}\mathbbold{1}\right|^2 > C.
\end{equation}
By Assumption \ref{ass:manifold} we can apply Proposition \ref{the:conv_os} to get convergence of the order statistic $m_{N_1}^1 \rightarrow m_\mu^1$ and a $\chi^1$, a solution of one iteration of the continuous MBO scheme given by \eqref{alg:volumeMBOcont}, together with a further subsequence $(N_2) \subset \N$ such that $\chi^1_{N_2} \xrightharpoonup[]{TL^2} \chi^1$. By inductively applying Proposition \ref{the:conv_os} we get a subsequence $(N_L) \subset \N$ such that for all $\ell \in \{1, \dots, L\}$ it holds $m_{N_L}^\ell \rightarrow m_\mu^\ell$ and  $\chi^\ell_{N_L} \xrightharpoonup[]{TL^2} \chi^\ell$. Here $\chi^\ell$ are iterates of the volume constrained MBO scheme \eqref{alg:volumeMBOcont} and $m_\mu^\ell$ are the respective $V$-order statistics. Thus we can apply Proposition \ref{the:l2estimate} to bound the norm of the $m^\ell_\mu$ which yields 
\begin{align*}
\lim_{N_L \rightarrow \infty}  \sum_{\ell = 1}^L \left|m^\ell_{N_L} - \frac{1}{P}\mathbbold{1}\right|^2 =  \sum_{\ell = 1}^L \left|m^\ell_\mu- \frac{1}{P}\mathbbold{1}\right|^2 \leq C,
\end{align*}
a contradiction to \eqref{eq:nogation_bound}.

\emph{Step 2: Conclusion via the Chebyshev inequality.}
We define the set of ``bad'' iterations by 
\begin{equation*}
S := \left\{\ell \in \{1, \dots, L\}: \left|m^\ell_N - \frac{1}{P}\mathbbold{1}\right| > \delta \right\}.
\end{equation*}
Now by the \emph{Step 1} for all $h \ll \frac{\min_{i = 1,\dots, P} V_i}{P (E_0 + 1)} $ there exists $N_0 \in \N$ such that for all $N > N_0$ it holds 
\begin{align*}
h \sum_{\ell=1}^L \left|\frac{ m_N^\ell - \frac{1}{P}\mathbbold{1} }{\sqrt{h}}\right|^2 \leq C.
\end{align*}
Thus, we can bound the number of bad iterations by calculating
\begin{equation}
\# S = \sum_{\ell \in S} 1 \leq \sum_{\ell \in S} \frac{|m^\ell_N - \frac{1}{P}\mathbbold{1}|^2}{\delta^2} =  \frac{C}{\delta^2}.
\end{equation}
Therefore, we have at most $\frac{C}{\delta^2}$ iterations for which we bound the running time only by
\begin{equation*}
\mathcal{O} \big(P^2 N (\log N +P)\big)
\end{equation*}
according to Theorem \ref{the:runningtime}.

Otherwise we have a ``good'' iteration with $|m^\ell_N - \frac{1}{P}\mathbbold{1}| \leq \delta$. Using the improved running time of a single iteration of Proposition \ref{cor:iterationbound} we infer that the running time is then given by
\begin{equation*}
\mathcal{O} \left(\sqrt{h} \frac{P}{1 - 2 \delta P} P^3 N \log N \right)
\end{equation*}
if we use $m^0 = \frac{1}{P}\mathbbold{1}$. If $m^0 = m_N^{\ell - 1}$ is used one easily adapts the proof such that also $|m^{\ell-1}_N - \frac{1}{P}\mathbbold{1}| \leq~\delta$ holds. Adding up all the iterations yields the result. 
\end{proof}

\begin{rem}
Our results can also be used to improve the running time analysis of the auction dynamics algorithm presented in \cite{jacobs2018auction}. Their auction algorithm finds an approximated solution to the linear program \eqref{alg:volumeMBO}. Approximated solution means that they find an approximated  order statistic $\tilde{m}$ in the sense that for a small parameter $\eps > 0$ it holds
\begin{equation}\label{eq:eps_cs}
u_i(x) - \tilde{m}_i + \eps \geq \max_{1 \leq j \leq P} u_j(x) - \tilde{m}_j
\end{equation}
for all $i = 1, \dots, P$ and $x \in X_N$ with $\chi_i(x) = 1$. As these inequalities do not change under a constant translation $\tilde{m} + c \mathbbold{1}$ with $c \in \R$ we can assume that $\sum_{i = 1}^P \tilde{m}_i = 0$. From the inequalities \eqref{eq:eps_cs} one can conclude that there exists an exact order statistic $m$ and $C > 0$ such that ${\sum_{i = 1}^P m_i = 0}$ and 
\begin{align*}
|\tilde{m}_i - m_i| \leq C \eps \quad \text{for all } i = 1, \dots, P.
\end{align*}
Let $\bar{m}$ denote the approximate order statistic returned by the auction algorithm. Note $\bar{m}$ does not necessarily sum to $0$. To get an improved running time for the auction algorithm it is necessary to bound $\max_{i = 1, \dots, P} \bar{m}_i$ as this yields an upper bound on the number of bets during the algorithm. To be more precise one needs at most $P N \frac{\max_{i = 1, \dots, P} \bar{m}_i}{\eps}$ many bets as at most $N$ bets on one phase are necessary to rise the price on the phase by at least $\eps$. According to \cite{jacobs2018auction} the running time of one bet is given by $\mathcal{O}(\log(N) + P)$ yielding a running time of 
\begin{equation}
\mathcal{O}\left((\log N + P) P N \frac{\max_{i = 1, \dots, P} \bar{m}_i}{\eps}\right).
\end{equation}
To obtain an upper bound on $\max_{i = 1, \dots, P} \bar{m}_i$ we observe that by design of the auction dynamics, the last unassigned point will be assigned to a phase with price $0$. As all the other prices are non-negative as well we get $\min_{i = 1, \dots,P} \bar{m} = 0$. Thus the mean $\frac{1}{P} \sum_{i = 1}^P \bar{m}_i \leq \frac{P-1}{P} \max_{i = 1, \dots, P }\bar{m}_i$. From there one concludes 
\begin{align*}
\max_{i = 1, \dots, P} \bar{m}_i &\leq \max_{i = 1, \dots, P} \left(\bar{m}_i - \frac{1}{P} \sum_{j = 1}^P \bar{m}_j\right) + \frac{1}{P} \sum_{j = 1}^P \bar{m}_j\\
&\leq \max_{i = 1, \dots, P} \left(\bar{m}_i - \frac{1}{P} \sum_{j = 1}^P \bar{m}_j\right) + \frac{P-1}{P} \max_{i = 1, \dots, P} \bar{m}_i
\end{align*}
and thus
\begin{align*}
\max_{i = 1, \dots, P} \bar{m}_i \leq P \max_{i = 1, \dots, P} \left(\bar{m}_i - \frac{1}{P} \sum_{j = 1}^P \bar{m}_j\right).
\end{align*}
By our previous observations we know that there exists an order statistic $m$ that sums to zero that is $\eps$-close to $\bar{m} - \mathbbold{1} \frac{1}{P} \sum_{i = 1}^P \bar{m}_i$. This yields a good estimate
\begin{align*}
\max_{i = 1, \dots, P} \bar{m}_i \leq P \max_{i = 1, \dots, P} m_i + \eps PC \leq P|m|_2 + \eps CP.
\end{align*}
By Proposition \ref{the:l2estimate} together with Proposition \ref{the:conv_os} one gets, as in Theorem \ref{the:improvedRunning}, that for $\ell = 1,\dots, L$ iterations, $h$ small enough and $N$ big enough
\begin{align*}
\sum_{\ell = 1}^L |m^\ell|_2 \leq \sqrt{L} \left(\sum_{\ell = 1}^L |m^\ell|_2^2\right)^{\frac{1}{2}} \leq \sqrt{L} C(P,V)
\end{align*}
where $C(P,V)$ is a constant that depends on $P$ and $V$. Putting things together one achieves a running time estimate for $L$ iterations of the auction dynamics algorithm of
\begin{align*}
\mathcal{O}\left((\log N + P) P N \left(LP + \frac{L\sqrt{h}}{\sqrt{T}}\frac{C(P,V)}{\eps}\right)\right).
\end{align*}
for $h$ small enough and $N$ big enough. Here we think of $T := Lh \sim 1$, but $\eps$ small; for moderate $\eps \gtrsim 1$, this almost matches our estimate in Theorem \ref{the:improvedRunning}.
\end{rem}

\subsection{Running Time Improvement for Known Bound on Order Statistic}\label{sec:iterationbound}
\begin{figure}
\includegraphics[scale=0.4]{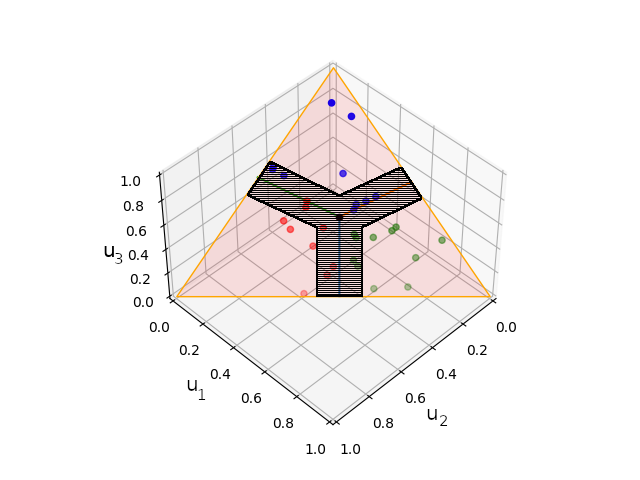}
\caption{Visualization of the region $Y_\delta$ that contains all points that can change during the algorithm if the order statistic is close to the center.}
\label{fig:y_eps}
\end{figure}
We want to show Proposition \ref{cor:iterationbound}, that is, if the order statistic $m_N^\ell$ is $\delta$-close to the center $\frac{1}{P}\mathbbold{1}$ of the simplex, then the running time of Algorithm \ref{alg:median} is improved. We observe now that points that change their cluster during the algorithm are contained in a small region $Y_\delta$ if the order statistic is close to the center $\frac{1}{P} \mathbbold{1}$. One should think of the following region $Y_\delta$ as an $\delta$-tube around the hyperplanes through $\frac{1}{P} \mathbbold{1}$ (see Fig. \ref{fig:y_eps} for a schematic picture).
\begin{lemma}[Region of points of interest]\label{lem:region}
Assume either we use $m^0 = \frac{1}{P}\mathbbold{1}$ as initial guess of Algorithm~\ref{alg:median} and the $V$-order statistic $m^\ell_N$ lies in the $\delta$-ball around $m^0$, i.e., $|m_N^\ell - \frac{1}{P}\mathbbold{1}|_\infty < \delta$, or we use the previous order statistic as initial guess $m^0 = m_N^{\ell-1}$ and it holds $\max\{|m_N^\ell - \frac{1}{P}\mathbbold{1}|_\infty, |m_N^\ell - \frac{1}{P}\mathbbold{1}|_\infty\} < \delta$. If a point $u$ is clustered differently by $m^\ell_N$ than by $m^0$ then 
\begin{equation}
u \in Y_\delta := \left\{u \in \mathbb{R}^P: 0 \leq u_k \leq 1, \exists i \neq j \text{ s.t. } u_i, u_j \geq \frac{1}{P}- 2\delta \right\}.
\end{equation}
\end{lemma}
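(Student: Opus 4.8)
The plan is to argue directly from the characterization of an $m$-induced clustering in Definition~\ref{def:median}. Write $m := m^\ell_N$ for the $V$-order statistic returned by Algorithm~\ref{alg:median}. Saying that $u$ is clustered differently means: the $m^0$-induced clustering used to initialize the algorithm (line~\ref{alg:line1}) puts $u$ in some cluster $i$, while the $m$-induced clustering the algorithm outputs puts $u$ in some cluster $j \neq i$. I would show that $u_i > \frac{1}{P} - 2\delta$ and $u_j > \frac{1}{P} - 2\delta$; combined with $i \neq j$ and with the fact that the MBO label vectors lie in the standard simplex, this is exactly $u \in Y_\delta$.

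First I would record the two estimates available in both branches of the hypothesis, namely $|m^0 - \frac{1}{P}\mathbbold{1}|_\infty < \delta$ (which is trivial when $m^0 = \frac{1}{P}\mathbbold{1}$) and $|m - \frac{1}{P}\mathbbold{1}|_\infty < \delta$; and I would note that every $u = u(x) = e^{-h\Delta_N}\chi^{\ell-1}(x)$ satisfies $0 \le u_k \le 1$ for all $k$ and $\sum_{k=1}^P u_k = 1$, because $e^{-h\Delta_N}$ is a stochastic matrix fixing $\mathbbold{1}$. This makes the box constraint in the definition of $Y_\delta$ automatic and reduces the lemma to the two lower bounds on $u_i$ and $u_j$.

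The core step is then a short computation. Since $u$ is assigned to cluster $i$ by an $m^0$-induced clustering, \eqref{eq:sep_cond} gives $u_i - m^0_i \ge u_k - m^0_k$ for every $k \in \{1,\dots,P\}$, hence $u_i \ge u_k + (m^0_i - m^0_k) > u_k - 2\delta$, where the last step uses $|m^0_i - \frac{1}{P}|, |m^0_k - \frac{1}{P}| < \delta$. Summing over $k=1,\dots,P$ and inserting $\sum_k u_k = 1$ yields $P u_i > 1 - 2\delta P$, i.e.\ $u_i > \frac{1}{P} - 2\delta$. The identical argument applied to $(m, j)$ in place of $(m^0, i)$ gives $u_j > \frac{1}{P} - 2\delta$, which finishes the proof.

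I do not expect a genuine obstacle: this is a routine estimate. The two points needing a little care are (a) using the defining inequality \eqref{eq:sep_cond} at \emph{both} ends --- for $m^0$ and for $m$ --- which is legitimate even though $m^0$- and $m$-induced clusterings need not be unique, so the argument can be symmetrized; and (b) not forgetting to invoke the simplex identity $\sum_k u_k = 1$, which is precisely what converts the $P$ pointwise comparisons into the quantitative bound $u_i > \frac{1}{P} - 2\delta$ rather than the vacuous $u_i > u_i - 2\delta$.
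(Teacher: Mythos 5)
Your proposal is correct and follows essentially the same route as the paper: apply the defining inequality \eqref{eq:sep_cond} at both $m^0$ and $m^\ell_N$, compare cluster components, and sum over $k$ using $\sum_k u_k = 1$ to pass from the pointwise comparison to the quantitative bound. The only (inconsequential) difference is that you handle both initialization branches uniformly via $|m^0_i - m^0_k| < 2\delta$, whereas the paper first treats $m^0 = \frac{1}{P}\mathbbold{1}$ (getting $u_i \geq u_k$ exactly) and then remarks that the $m^0 = m_N^{\ell-1}$ case is analogous.
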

\begin{proof}
We first consider the case $m^0 = \frac{1}{P}\mathbbold{1}$. Assume the point $u$ is assigned into Cluster $i$ by $m^0$ and into Cluster $j$ by $m^\ell_N$. Then we have by Definition \ref{def:median} that
\begin{equation*}
\begin{tabular}[c]{r l l}
$u_i - m^0_i$&\!\!\!\!$\geq u_k - m^0_k$ & \text{ for all } $k \neq i$,\\
$u_j - m^\ell_{N,j}$&\!\!\!\!$\geq u_k - m^\ell_{N,k}$& \text{ for all } $k \neq j$.
\end{tabular}
\end{equation*}
Thus, as $m^0_k = m^0_{k'}$ and $|m^\ell_{N,k} - m^0_k| < \delta$ for all $k,k'$ we conclude
\begin{alignat}{2}
u_i&\geq u_k && \text{ for all } k \neq i, \label{eq:bound1_computation}\\
u_j&\geq u_k - m_{N,k}^\ell + m_k^0 - m_j^0 + m_{N,j}^\ell \geq u_k - 2\delta && \text{ for all } k \neq j \label{eq:bound_computation}.
\end{alignat}
Now the result follows by observing $1 = \sum_{k = 1}^P u_k \leq P\ u_i$ and $1 = \sum_{k = 1}^P u_k \leq P (u_j + 2\delta)$ . The proof for $m^0 = m_N^{\ell-1}$ is analogous by replacing \eqref{eq:bound1_computation} by the counterpart of \eqref{eq:bound_computation} for $u_i$.
\end{proof}
The next step is to bound the number of points in $Y_\delta$:
\begin{lemma}[Bound on number of points of interest]\label{lem:boundonpoints}
If Assumptions \ref{ass:manifold} hold and $\chi:M \rightarrow \{0,1\}^P$ with $\sum_{i = 1}^P \chi_i = 1$ is fixed then the number of points in $Y_\delta$ is up to constants bounded by $N \sqrt{h}$. More precisely it holds
\begin{equation} 
\nu^h_N(Y_\delta) \leq \frac{N P}{1-2 \delta P} \sqrt{h}E_{N,h}(\chi),
\end{equation}
where $\nu^h_N := (u_N^h)_{\#} \left(\sum_{x \in X_N} \delta_x \right) = \sum_{x \in X_N} \delta_{u_N^h(x)}$ is $N$ times the empirical measure of the points $u_N^h(x)$ and $u_N^h = e^{-h \Delta_N} \chi$ denotes the diffused clusters.
\end{lemma}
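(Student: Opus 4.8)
The plan is to prove the bound by an elementary counting argument that turns the membership condition defining $Y_\delta$ into a pointwise lower bound for the summand of the discrete thresholding energy. First I would fix a data point $x\in X_N$ with $u^h_N(x)\in Y_\delta$. By the definition of $Y_\delta$ there are two distinct indices $i\neq j$ with $(e^{-h\Delta_N}\chi_i)(x)\geq \tfrac1P-2\delta$ and $(e^{-h\Delta_N}\chi_j)(x)\geq\tfrac1P-2\delta$. Since $\chi$ is a hard clustering, $\sum_{k=1}^P\chi_k(x)=1$ with $\chi_k(x)\in\{0,1\}$, so at most one index $k$ has $\chi_k(x)=1$; hence at least one of $i,j$, say $i(x)$, satisfies $\chi_{i(x)}(x)=0$. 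Consequently
\[
\big(1-\chi_{i(x)}(x)\big)\,(e^{-h\Delta_N}\chi_{i(x)})(x)=(e^{-h\Delta_N}\chi_{i(x)})(x)\geq \tfrac1P-2\delta=\tfrac{1-2\delta P}{P}.
\]
Here one uses that the graph heat semigroup preserves $[0,1]$, which also makes the trivial constraint $0\leq u_k\leq 1$ in the definition of $Y_\delta$ automatic; implicitly we also assume $\delta<\tfrac1{2P}$, so that the right-hand side is positive, which is the only regime in which the claimed bound is meaningful and exactly the one used later for $\delta=\tfrac1{4P}$.

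Next I would sum this estimate. Every summand $\big(1-\chi_i(x)\big)(e^{-h\Delta_N}\chi_i)(x)$ is nonnegative, so dropping all terms except the one with $i=i(x)$ for each $x$ with $u^h_N(x)\in Y_\delta$ yields
\[
\sum_{i=1}^P\sum_{x\in X_N}\big(1-\chi_i(x)\big)(e^{-h\Delta_N}\chi_i)(x)\;\geq\;\frac{1-2\delta P}{P}\,\nu^h_N(Y_\delta).
\]
Then I would identify the left-hand side with $N\sqrt h\,E_{N,h}(\chi)$: by the definition of the discrete thresholding energy \eqref{eq:discrete_thresholding_energy}, together with $\sum_i\chi_i=\mathbbold{1}$ and self-adjointness of $e^{-h\Delta_N}$ with respect to the discrete inner product (equivalently, conservation of the semigroup), one has $N\sqrt h\,E_{N,h}(\chi)=\sum_{i=1}^P\sum_{x\in X_N}(1-\chi_i(x))(e^{-h\Delta_N}\chi_i)(x)$. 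Dividing the previous display by $\tfrac{1-2\delta P}{P}$ then gives exactly $\nu^h_N(Y_\delta)\leq \frac{NP}{1-2\delta P}\sqrt h\,E_{N,h}(\chi)$, as claimed.

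There is no serious analytic obstacle here: the heart of the argument is the one-line geometric observation in the first paragraph, and Assumption \ref{ass:manifold} enters only to the extent that it makes $e^{-h\Delta_N}$ an averaging operator (so diffused labels lie in $[0,1]$) and keeps $E_{N,h}(\chi)$ well defined and finite; the large-data/small-$h$ content of that assumption is not needed for this lemma but for the probabilistic estimates that follow. The only point requiring a little care is matching the precise normalization of $E_{N,h}(\chi)$ from \eqref{eq:discrete_thresholding_energy} (the factor $\sqrt h$ and the empirical measure $\tfrac1N\sum_x\delta_x$) with the combinatorial sum above, and checking that the symmetrization $\sum_i\langle\chi_i,e^{-h\Delta_N}(\mathbbold{1}-\chi_i)\rangle=\sum_i\langle \mathbbold{1}-\chi_i,e^{-h\Delta_N}\chi_i\rangle$ is valid for the graph Laplacian in use.
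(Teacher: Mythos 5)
Your proof is correct and is essentially the same argument as the paper's: both establish the pointwise lower bound $\sum_i\sum_{j\ne i}\chi_i(x)\,u^h_{N,j}(x)\ge\tfrac{1-2\delta P}{P}$ for $x$ with $u^h_N(x)\in Y_\delta$ (you by singling out an index $i(x)$ with $\chi_{i(x)}(x)=0$, the paper via $\min_i\sum_{j\ne i}u_j\ge\min_i\max_{j\ne i}u_j$) and then sum over $X_N$, identifying the total with $N\sqrt{h}\,E_{N,h}(\chi)$. One small simplification: your appeal to self-adjointness of $e^{-h\Delta_N}$ is unnecessary, since $\sum_i\sum_x(1-\chi_i)(e^{-h\Delta_N}\chi_i)=\sum_{i\ne j}\sum_x\chi_j\,(e^{-h\Delta_N}\chi_i)$ follows from $\sum_j\chi_j=1$, and this equals $N\sqrt{h}\,E_{N,h}(\chi)$ by relabeling $i\leftrightarrow j$.
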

\begin{proof}
Since for $u \in Y_\delta$ and $\sum_{i = 1}^P \chi_i = 1$
\begin{equation*}
\sum_{i = 1}^P \sum_{j \neq i} \chi_i u_j = \sum_{i = 1}^P \left(\chi_i \sum_{j \neq i} u_j\right) \geq \min_{i = 1, \dots, P} \sum_{j \neq i} u_j \geq \min_{i = 1, \dots, P} \max_{j \neq i} u_j \geq \frac{1}{P} - 2 \delta = \frac{1-2 \delta P}{P}, 
\end{equation*}
and $\nu_N^h = (u_N^h)_{\# N\mu_N}$ for $\mu_N := \frac{1}{N}\sum_{x \in X_N} \delta_x$, we have 
\begin{equation*}
\nu_N^h(Y_\delta) = N\mu_N(\{x\colon u_N^h(x) \in Y_{\delta}\})\leq \frac{P}{1-2 \delta P} \sum_{x \in X_N} \sum_{i = 1}^P \sum_{j\neq i} \chi_i(x) u_{N,j}^h(x) \leq \frac{N P}{1-2 \delta P} \sqrt{h} E_{N,h}(\chi).\qedhere
\end{equation*}
\end{proof}

Combining the two previous lemmas we can show the improved running time of Algorithm \ref{alg:median} if the order statistic is close to the center, which was stated in Proposition \ref{cor:iterationbound}.
\begin{proof}[Proof of Proposition \ref{cor:iterationbound}]
To bound the running time, according to Theorem \ref{the:runningtime}, we need to get a bound on the energy \eqref{eq:alg_energy}:
\begin{equation*}
E(m^0) = \sum_{i=1}^P \left|\sum_{x\in X_N} \chi_i^{m^0}(x) - V_i \right|\leq \sum_{i=1}^P \sum_{x\in X_N} \left|\chi_i^{m^0}(x) - \chi_i^{m^\ell_h}(x)\right|.
\end{equation*}
The right-hand side is bounded by twice the number of points that are clustered differently by $m^0$ than by $m^\ell_h$. So by Lemma \ref{lem:region} we have
\begin{equation*}
E(m^0) \leq 2 \nu_N^h(Y_\delta).
\end{equation*}
The claimed running time now follows from Lemma \ref{lem:boundonpoints} and Theorem \ref{the:runningtime}.
\end{proof}

\subsection{Convergence to Continuous order Statistic}\label{sec:convergence}
For proving convergence of the order statistic we begin by defining the volume constrained MBO scheme and the $V$-order statistic in the continuous setting of a Riemannian manifold. For that we first introduce the weighted Laplace operator on $(M, g, \mu)$
\begin{definition}[Weighted Laplace operator and heat kernel on Riemannian Manifold]\label{def:laplace_on_M}
Let $\langle v, w \rangle_x := g_x(v,w) $ be the inner product on $v,w \in T_x M$ given by the metric $g$. We use the usual gradient for a smooth function $f \in C^\infty(M)$ defined uniquely by $\langle \nabla f(x), Y\rangle_x = d_x f(Y)\ \forall \ Y \in T_x M$. Also write $\Gamma(TM)$ for the set of all smooth vector fields on $M$. Then the weighted divergence operator $\divrho : \Gamma(TM) \rightarrow C^\infty(M)$ is defined by the duality formula
\begin{equation}
\int_M \langle \nabla f(x), Y(x)\rangle_x \, d\mu (x)= - \int_M f(x) \divrho Y(x) \, d\mu (x)\quad \text{ for all } Y \in T_x M.
\end{equation}
Consequently, we define the weighted Laplacian $\Delta_\mu : C^\infty (M) \rightarrow C^\infty (M)$ as $\Delta_\mu = - \divrho \circ \nabla$. All eigenvalues of the operator are non negative by the used sign convention.
On can then show, see \cite{GrigoryanA2009Hkaa}, that there exists a smooth map $p: (0, \infty)\times M \times M \rightarrow \R$ such that 
the solution $e^{- h \Delta_\mu} \chi$ to the heat equation
\begin{align*}
\begin{cases}
\frac{d}{dh} u = - \Delta_\mu u & \text{ on } M \times (0, T],\\
u(x,0) = \chi(x) & \text{ in } M
\end{cases} 
\end{align*}
can be written as 
\begin{equation}
e^{-h \Delta_\mu} \chi = \int_M p(h,x,y) \chi(y) \, d\mu(y).
\end{equation}
We call $e^{-h \Delta_\mu}$ the heat semigroup and $p$ the heat kernel of $\Delta_\mu$.
\end{definition}
The definition of the continuous volume constrained MBO scheme and order statistic is then analogous to the discrete case:
\begin{definition}[Continuous volume constrained MBO scheme and order statistic]\label{def:cont_median}
Let volume fractions $\{V_i\}_{i = 1}^P$ with $\sum_{i = 1}^P V_i = 1$, a time-step size $h>0$, and initial conditions $\chi^0:M \rightarrow \{0,1\}^P$ with $\sum_{i = 1}^P \chi_i^0 = 1$ be given. The volume constrained MBO scheme is defined by the following iterative procedure
\begin{align}\label{alg:volumeMBOcont}
\chi_i^\ell \in \argmax_{\chi:M \rightarrow \{0,1\}^P} &\sum_{i=1}^P \int_M \chi_i(x) u^{\ell-1}_i(x)\,d\mu(x)\\
\text{s.t. } &\sum_{i=1}^P \chi_i(x) = 1 \quad \hspace{3pt} \text{ for all } x \in M, \nonumber\\
&\int_M \chi_i(x)\,dx = V_i \quad \text{ for all } i = 1,\dots,P. \nonumber
\end{align}
Here $u_i^{\ell -1 } := e^{-h \Delta_\mu} \chi^{\ell -1}_i$ is the solution to the heat equation with initial condition given by the previous phase $\chi^{\ell - 1}_i$ of the MBO scheme.

A function $\chi^{m_\mu}: M\rightarrow \{0,1\}^P$ is called an  $m_{\mu}$-induced clustering if it holds
\begin{align}\label{eq:cont_m_induced}
\sum_{i = 1}^P \chi^{m_\mu}_i(x) = 1 \quad \quad \text{ for all } x \in M,\\
\chi^{m_\mu}_i(x) = 1 \Rightarrow i \in \argmax_{j = 1, \dots, P} u_j(x) - m_j \nonumber.
\end{align}

A vector $m_{\mu}$ is called a continuous $V$-order statistic if there exists an $m_\mu$-induced clustering $\chi^{m_\mu}$ with 
\begin{equation}
\int_M \chi^{m_\mu}_i \,d\mu = V_i \quad \quad \text{ for all } i = 1, \dots, P. 
\end{equation}
\end{definition}

\begin{figure}
\begin{tikzpicture}
\draw[black, thick] (-3,0) -- (3,0);
\draw[black, thick] (3,0) -- (0,5.1961);
\draw[black, thick] (-3,0) -- (0,5.1961);
\filldraw[black] (0,1.7320) circle (0.01pt) node[]{$\frac{1}{P}\mathbbold{1}$};
\filldraw[black] (-3.2, -0.2) circle (0.01pt) node[]{$e_1$};
\filldraw[black] (3.2, -0.2) circle (0.01pt) node[]{$e_2$};
\filldraw[black] (0, 5.4) circle (0.01pt) node[]{$e_3$};
\filldraw[black] (0, -0.3) circle (0.01pt) node[]{$ |\cdot|_\triangle  = const.$};

\draw[green!25!blue, line width = 2pt] (0,0) --(1.5,2.59) -- (-1.5,2.59) -- (0,0);
\draw[green!0!blue, line width = 2pt] (0,0.86) --(0.75,2.15) -- (-0.75,2.15) -- (0,0.86);
\draw[green!40!blue, line width = 2pt] (0+ 1*0.508,0 ) --(1.5 + 1 *0.508/3 * 1.5,2.591 - 1*0.508/3 * 2.59);
\draw[green!55!blue, line width = 2pt] (0+ 2*0.508,0 ) --(1.5 + 2 *0.508/3 * 1.5,2.591 - 2*0.508/3 * 2.59);
\draw[green!70!blue, line width = 2pt] (0+ 3*0.508,0 ) --(1.5 + 3 *0.508/3 * 1.5,2.591 - 3*0.508/3 * 2.59);
\draw[green!85!blue, line width = 2pt] (0+ 4*0.508,0 ) --(1.5 + 4 *0.508/3 * 1.5,2.591 - 4*0.508/3 * 2.59);
\draw[green!100!blue, line width = 2pt] (0+ 5*0.508,0 ) --(1.5 + 5 *0.508/3 * 1.5,2.591 - 5*0.508/3 * 2.59);

\draw[green!40!blue,  line width = 2pt] (0-1*0.508,0 ) --(-1.5 - 1 *0.508/3 * 1.5,2.591 - 1*0.508/3 * 2.59);
\draw[green!55!blue,  line width = 2pt] (0- 2*0.508,0 ) --(-1.5 - 2 *0.508/3 * 1.5,2.591 - 2*0.508/3 * 2.59);
\draw[green!70!blue, line width = 2pt] (0- 3*0.508,0 ) --(-1.5 - 3 *0.508/3 * 1.5,2.591 - 3*0.508/3 * 2.59);
\draw[green!85!blue,  line width = 2pt] (0- 4*0.508,0 ) --(-1.5 - 4 *0.508/3 * 1.5,2.591 - 4*0.508/3 * 2.59);
\draw[green!100!blue,  line width = 2pt] (0- 5*0.508,0 ) --(-1.5 - 5 *0.508/3 * 1.5,2.591 - 5*0.508/3 * 2.59);

\draw[green!40!blue,  line width = 2pt] (1.5 - 1 *0.508/3 * 1.5,2.591 + 1*0.508/3 * 2.59) --(-1.5 + 1 *0.508/3 * 1.5,2.591 + 1*0.508/3 * 2.59);
\draw[green!55!blue,  line width = 2pt] (1.5 - 2 *0.508/3 * 1.5,2.591 + 2*0.508/3 * 2.59) --(-1.5 + 2 *0.508/3 * 1.5,2.591 + 2*0.508/3 * 2.59);
\draw[green!70!blue,  line width = 2pt] (1.5 - 3 *0.508/3 * 1.5,2.591 + 3*0.508/3 * 2.59) --(-1.5 + 3 *0.508/3 * 1.5,2.591 + 3*0.508/3 * 2.59);
\draw[green!85!blue, line width = 2pt] (1.5 - 4 *0.508/3 * 1.5,2.591 + 4*0.508/3 * 2.59) --(-1.5 + 4 *0.508/3 * 1.5,2.591 + 4*0.508/3 * 2.59);
\draw[green!100!blue, line width = 2pt] (1.5 - 5 *0.508/3 * 1.5,2.591 + 5*0.508/3 * 2.59) --(-1.5 + 5 *0.508/3 * 1.5,2.591 + 5*0.508/3 * 2.59);

\end{tikzpicture}
\begin{tikzpicture}
\draw[black, thick] (-3,0) -- (3,0);
\draw[black, thick] (3,0) -- (0,5.1961);
\draw[black, thick] (-3,0) -- (0,5.1961);
\filldraw[black] (-3.2, -0.2) circle (0.01pt) node[]{$e_1$};
\filldraw[black] (3.2, -0.2) circle (0.01pt) node[]{$e_2$};
\filldraw[black] (0, 5.4) circle (0.01pt) node[]{$e_3$};
\filldraw[black] (0, -0.3) circle (0.01pt) node[]{$\mathbf{H} = \partial |\cdot|_\triangle $};

\fill[green, fill opacity=0.2] (0,0) -- (0,1.7320) -- (-1.5, 2.59) -- (-3,0) -- (0,0);
\fill[red, fill opacity=0.2] (0,0) -- (0,1.7320) -- (1.5, 2.59) -- (3,0) -- (0,0);
\fill[blue, fill opacity=0.2] (1.5, 2.59) -- (0,1.7320) -- (-1.5, 2.59) -- (0,5.1961) -- (1.5, 2.59);
\filldraw[black] (0,1.7320) circle (0.01pt) node[]{$\frac{1}{P}\mathbbold{1}$};
\filldraw[black] (-1.3,0.9) circle (0.01pt) node[]{$\mathbf{H}  = e_1$};
\filldraw[black] (1.3,0.9) circle (0.01pt) node[]{$\mathbf{H}  = e_2$};
\filldraw[black] (0,2.9) circle (0.01pt) node[]{$\mathbf{H}  = e_3$};

\end{tikzpicture}
\caption{Visualization of the contour lines and the subdifferential of the asymmetric norm $|\cdot|_\triangle$}
\label{fig:tri_asy_norm}
\end{figure}
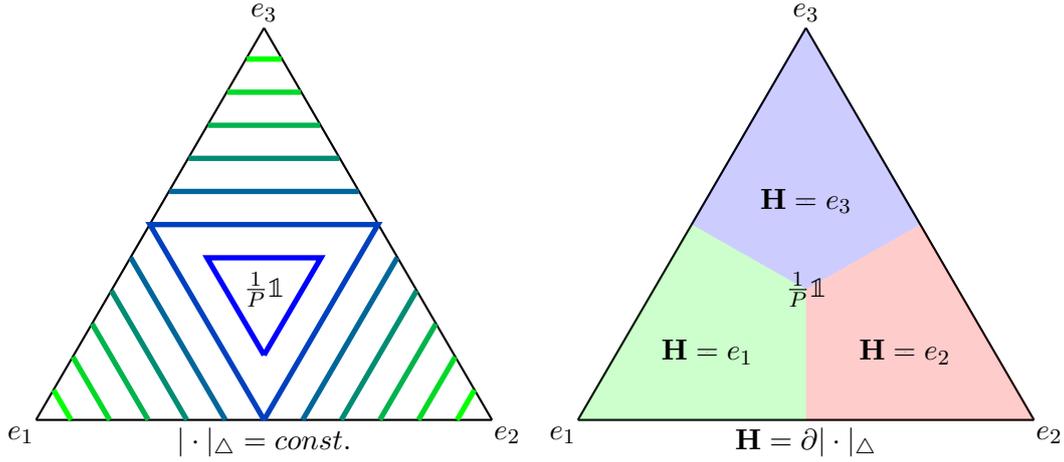

\begin{rem}
Denote $\R^P_{\Sigma} := \left\{v \in \R^P \big| \sum_{i = 1}^P v_i = 0\right\} $, then the function
\begin{equation*}
|\cdot|_\triangle:  \R^P_{\Sigma}  \rightarrow \R, v \mapsto \max_{i = 1,\dots,P} v_i,
\end{equation*}
visualized in Figure \ref{fig:tri_asy_norm}, is an asymmetric norm. This means it holds
\begin{itemize}
\item[a)] $|\lambda v|_\triangle = \lambda |v|_\triangle$ for all $v \in \R^P_{\Sigma} , \lambda \geq 0$.
\item[b)] $|v + w|_\triangle \leq |v|_\triangle + |w|_\triangle$ for all $v,w \in \R^P_{\Sigma}$.
\item[c)]  $|v|_\triangle \geq 0$ for all in $\R^P_{\Sigma}$ with $|v|_\triangle = 0$ if and only if $v = 0$.
\end{itemize}
We leave the simple proof to the interested reader.
\end{rem}
\begin{lemma}[Variational characterization of the order statistic]\label{lem:char_median}
Given volumes $\{V_i\}_{i = 1}^P$ the following two statements are equivalent:
\begin{itemize}
\item[\namedlabel{item:cont_order}{i)}]  $m_\mu$ is a continuous $V$-order statistic. 
\item[\namedlabel{item:min_order}{ii)}] $m_\mu$ is a minimizer of 
\begin{equation}
F(m):= \int_M |u(x) - m|_\triangle \,d\mu(x) + V\cdot m, \quad m \in \R^P.
\end{equation}
\end{itemize} 
\end{lemma}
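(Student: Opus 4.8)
The plan is to exploit that $F$ is a finite, convex, piecewise-affine function of $m$, and to read off both implications from first-order optimality; the only genuine subtlety is the treatment of the ``tie set''
\[
S(x):=\argmax_{j=1,\dots,P}\bigl(u_j(x)-m_{\mu,j}\bigr),\qquad A_i:=\{x\in M:\ i\in S(x)\},
\]
which may carry positive $\mu$-mass. Since $u_i=e^{-h\Delta_\mu}\chi_i^{\ell-1}$ is smooth with $0\le u_i\le1$ and $\sum_i u_i\equiv1$, the assignment $x\mapsto S(x)$ and the sets $A_i$ are measurable, $F$ is finite valued, and $t\mapsto|u(x)-m_\mu-tv|_\triangle=\max_i(u_i(x)-m_{\mu,i}-tv_i)$ is Lipschitz uniformly in $x$.

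\emph{(i) $\Rightarrow$ (ii).} I would argue directly. For every $m\in\R^P$ and $\mu$-a.e.\ $x$, since $(\chi_i^{m_\mu}(x))_i$ is a probability vector,
\[
|u(x)-m|_\triangle=\max_i\bigl(u_i(x)-m_i\bigr)\ \ge\ \sum_{i=1}^P\chi_i^{m_\mu}(x)\bigl(u_i(x)-m_i\bigr),
\]
with \emph{equality} when $m=m_\mu$, because $\chi_i^{m_\mu}(x)=1$ forces $i\in S(x)$ and hence every active summand equals the maximum. Integrating against $\mu$ and using $\int_M\chi_i^{m_\mu}\,d\mu=V_i$ makes the terms linear in $m$ cancel, leaving
\[
F(m)\ \ge\ \sum_{i=1}^P\int_M\chi_i^{m_\mu}u_i\,d\mu\ =\ F(m_\mu),
\]
so $m_\mu$ minimises $F$.

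\emph{(ii) $\Rightarrow$ (i).} Convexity makes $m_\mu$ a minimiser if and only if $F'(m_\mu;v)\ge0$ for all $v\in\R^P$. For small $t>0$ the maximum defining $|u(x)-m_\mu-tv|_\triangle$ is attained inside $S(x)$, so $|u(x)-m_\mu-tv|_\triangle=|u(x)-m_\mu|_\triangle-t\min_{i\in S(x)}v_i$, and dominated convergence gives
\[
F'(m_\mu;v)=V\cdot v-\int_M\min_{i\in S(x)}v_i\,d\mu(x)\ \ge\ 0\qquad\text{for all }v\in\R^P.
\]
Now I would look at the set of volume vectors realisable by $m_\mu$-induced clusterings,
\[
K:=\Bigl\{\bigl(\mu(B_1),\dots,\mu(B_P)\bigr):\ (B_i)_i\text{ a measurable partition of }M,\ B_i\subseteq A_i\Bigr\}.
\]
Decomposing $M$ into the finitely many tie regions $M_T:=\{x:\ S(x)=T\}$, $T\subseteq\{1,\dots,P\}$, and using that $\mu$ is non-atomic (the manifold having positive dimension), Sierpi\'nski's theorem lets one split the mass $\mu(M_T)$ among the clusters of $T$ in any prescribed proportions; hence $K=\sum_T\mu(M_T)\,\mathrm{conv}\{e_i:i\in T\}$ is a compact convex polytope with support function $\sigma_K(v)=\int_M\max_{i\in S(x)}v_i\,d\mu$, so that $\inf_{w\in K}w\cdot v=\int_M\min_{i\in S(x)}v_i\,d\mu$. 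The displayed inequality says exactly $V\cdot v\ge\inf_{w\in K}w\cdot v$ for all $v$, which for a compact convex set forces $V\in K$; any partition attaining $V$ is then an $m_\mu$-induced clustering with the prescribed volumes, i.e.\ $m_\mu$ is a continuous $V$-order statistic.

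I expect the main obstacle to be this last ``rounding'' step: turning the linear first-order inequality into an honest $\{0,1\}$-valued clustering of the correct volumes, i.e.\ establishing that the achievable-volume set $K$ is precisely the claimed Minkowski sum of simplices. This is where non-atomicity of $\mu$ and a Lyapunov/Sierpi\'nski-type convexity argument genuinely enter; by contrast \emph{(i) $\Rightarrow$ (ii)} is elementary, and the remaining points (measurability of $S(\cdot)$ and of the $M_T$, and the domination needed to differentiate under the integral sign) are routine given smoothness of the $u_i$.
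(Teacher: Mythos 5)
Your proof is correct. It rests on the same two pillars as the paper's: convexity of $F$ (so that optimality is a first-order condition) and non-atomicity of $\mu$ to ``round'' on the tie sets. The packaging differs in two small but pleasant ways. For $\ref{item:cont_order}\Rightarrow\ref{item:min_order}$ you give a one-line direct estimate,
$|u(x)-m|_\triangle\ge\sum_i\chi_i^{m_\mu}(x)(u_i(x)-m_i)$ with equality at $m_\mu$, integrate, and let the linear terms cancel; this avoids the paper's explicit computation of $\partial F(m)$ as $\mathbf{co}\{V-\int_M\chi\,d\mu:\chi\ m\text{-induced}\}$ and is a touch more elementary. For $\ref{item:min_order}\Rightarrow\ref{item:cont_order}$ you compute the one-sided directional derivative $F'(m_\mu;v)=V\cdot v-\int_M\min_{i\in S(x)}v_i\,d\mu$ (dominated convergence is exactly the right justification) and recognise the integral as $\inf_{w\in K}w\cdot v$ for the achievable-volume polytope $K=\sum_T\mu(M_T)\,\mathrm{conv}\{e_i:i\in T\}$, forcing $V\in K$ by the support-function characterisation of compact convex sets. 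The paper instead reads off from $0\in\partial F(m_\mu)$ that $V$ equals the volume vector of a \emph{fractional} $m_\mu$-induced clustering $\sum_k\lambda^{(k)}\chi^{(k)}$ and then corrects it to a genuine $\{0,1\}$-valued one on each tie region $H_{i_1,\dots,i_s}(m_\mu)$ using that $\mu$ is atomless. The two arguments are dual descriptions of the same convex body --- you compute its support function, the paper exhibits a point and then represents it by vertices --- and the Sierpi\'nski/Lyapunov mass-splitting step is identical in both. The only caveat worth flagging is that your $K$-as-Minkowski-sum description already uses non-atomicity, so you should state up front (as the paper implicitly does through the manifold assumption) that $\dim M\ge1$ so that $\mu=\rho\Vol_M$ is atomless.
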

The essential idea of the proof is to derive the Euler--Lagrange equation for $F$. One needs to be careful as $\max_{i = 1, \dots, P} v_i $ is not differentiable on the sets $H_{ij} := \{v \in \R^P: v_i = v_j \geq v_k \forall k\neq i,j\}$. Note that those sets may have positive mass under the pushforward measure $u_{\#} \mu$ and can thus not be neglected.

But as $F$ is a convex functional, techniques from convex analysis can be used to characterize the subdifferential of $F(m)$. 
\begin{proof}
We first define the family of linear functionals 
\begin{align*}
F_\chi(m) :=  \sum_{i = 1}^P \int_M (u_i(x) - m_i) \chi_i(x) \,d\mu(x), \quad \chi:M \rightarrow \{0,1\}^P, \sum_{i = 1}^P \chi_i = 1 \text{ a.e.}
\end{align*}
Maximizing $F_\chi(m)$ over the set of clusterings $\chi$ for fixed $m$ reduces to a pointwise maximization of $\sum_{i = 1}^P (u_i(x) - m_i) \chi_i(x)$. Thus the maximizers are given by the $m$-induced clusterings (see the definition of $m$-induced cluster \eqref{eq:cont_m_induced}), i.e.,
\begin{align*}
\argmax_{\chi:M \rightarrow \{0,1\}^P,\, \sum_{i = 1}^P \chi_i(x) = 1} F_\chi(m) = \left\{ \chi: \chi \text{ is } m \text{-induced}\right\}.
\end{align*}
Also, the maximum value is equal to $F(m) - V\cdot m$:
\begin{align*}
\max_{\chi:M \rightarrow \{0,1\}^P,\, \sum_{i = 1}^P \chi_i = 1} F_\chi(m)= \sum_{i = 1}^P \int_M (u_i(x) - m_i) \chi^m_i(x) \,d\mu(x) = F(m) - V\cdot m.
\end{align*}
So $F(m) = \max_{\chi:M \rightarrow \{0,1\}^P,\, \sum_{i = 1}^P \chi_i = 1} (F_\chi(m) + V\cdot m)$ is a maximum over affine functions and thus the subdifferential can be calculated by
\begin{align}
\partial F(m) &= \textbf{co} \left\{ \nabla_m \left(F_\chi(m) + V \cdot m\right) : F_\chi(m) = F(m)\right\} \nonumber\\
&= \textbf{co}\left\{V - \int_M \chi \,d\mu : \chi \text{ is } m \text{-induced}\right\}. \label{eq:subdifferentialF}
\end{align}
Here \textbf{co} denotes the convex hull.

If \ref{item:cont_order} holds there is an $m_\mu$-induced clustering $\chi^{m_\mu}$, i.e. $V = \int_M \chi^{m_\mu} \,d\mu$. Thus $0 \in \partial F(m_\mu)$ from which it follows that $m_\mu$ is a minimizer of $F(m)$, i.e. $ii)$ holds.

Now if \ref{item:min_order} holds there must be $0 \in \partial F(m_\mu)$. By \eqref{eq:subdifferentialF}, there exists $m$-induced $\{\chi^{(k)}\}_{k = 1, \dots, K}$ and $\{\lambda^{(k)}\}_{k = 1, \dots, K}$ with $\sum_{k = 1}^K \lambda^{(k)} = 1$ such that 
\begin{align*}
0 = \sum_{k = 1}^K \lambda^{(k)} \left(V  - \int_M \chi^{(k)}(x) \,d\mu(x) \right) = V - \int_M  \sum_{k = 1}^K \lambda^{(k)} \chi^{(k)}(x) \,d\mu(x).
\end{align*}
Thus $\sum_{k = 1}^K \lambda^{(k)} \chi^{(k)}$ has the correct volumes but does not necessarily has values in $\{0,1\}^P$. So it remains to prove that we can change $\lambda^{(k)} \chi^{(k)}$ to values in $\{0,1\}^P$ without changing its volumes.

For all 
\begin{equation*}
y \in A:= \{x \in M \, |\,\exists i \in \{1, \dots, P\}: \forall j\neq i,  u_i(x) - {m_\mu}_i(x) > u_j(x) - {m_\mu}_j(x) \  \}
\end{equation*}
it already holds $\sum_{k = 1}^K \lambda^{(k)} \chi^{(k)}(y) =  \chi^{(1)}(y) \in \{0,1\}^P$ as all $\chi^{(k)}$ are $m_\mu$ induced. So we can define the new clustering $\chi^{m_\mu}$ by $\chi^{m_\mu}(y) := \sum_{k = 1}^K \lambda^{(k)} \chi^{(k)}(y)$ for all $y \in A$.

The remaining set $M \setminus A$ is now partitioned into
\begin{align*}
&H_{i_1,\dots, i_s}(m_\mu):=\left\{u_{i_1}(\cdot)  - m_{\mu, i_1}(\cdot) = \dots = u_{i_s}(\cdot)  - m_{\mu,i_s}(\cdot) > u_{j}(\cdot)  - m_{\mu,j}(\cdot) \ \forall j\notin \{ i_1,\dots, i_s\} \right\}
\end{align*}
for $s= 2, \dots, P,\ i_1, \dots, i_s \in \{1, \dots, P\} $ pairwise different.
Now define a partition $\chi^{m_\mu} \in \{0,1\}^P$ on every $H_{i_1,\dots, i_s}(m_\mu)$ such that 
\begin{align*}
\int_{H_{i_1,\dots, i_s}(m_\mu)} \chi^{m_\mu}_j(x) \,d\mu(x) = \int_{H_{i_1,\dots, i_s}(m_\mu)} \sum_{k = 1}^K \lambda^{(k)} \chi^{(k)}_j(x) \,d\mu(x) \ \forall j \in \{1, \dots, P\}
\end{align*}
 holds. This is possible as $\mu$ is atomless by assumption and the mass on $H_{i_1,\dots, i_s}(m_\mu)$ is preserved
\begin{align*}
\sum_{j = 1}^P \int_{H_{i_1,\dots, i_s}(m_\mu)} \chi^{m_\mu}_j(x) \,d\mu(x) &= \mu(H_{i_1,\dots, i_s}(m_\mu))\\
&= \sum_{j = 1}^P \int_{H_{i_1,\dots, i_s}(m_\mu)} \sum_{k = 1}^K \lambda^{(k)} \chi^{(k)}_j(x) \,d\mu(x),
\end{align*}
so that we can set $\chi_j^{m_\mu}(x) = 1 \Leftrightarrow x \in A_j$ on $H_{i_1,\dots, i_s}(m_\mu)$, where $H_{i_1,\dots, i_s}(m_\mu) = A_{i_1} \dot{\cup} \dots \dot{\cup} A_{i_s}$ is some partition s.t. $\mu(A_j) = \int_{H_{i_1,\dots, i_s}(m_\mu)} \sum_{k = 1}^K \lambda^{(k)} \chi_j^{(k)}$.
This definition directly ensures that the mass of $\chi^{m_\mu}$ is equal to the mass of $\sum_{k = 1}^K \lambda^{(k)} \chi^{(k)}$. Thus $\chi^{m_\mu}$ is a $m_\mu$-induced clustering as $\chi^{m_\mu} = \chi^{(1)}$ on $A$, $\chi^{m_\mu}$ has the correct mass and $\chi^{m_\mu} \in \{0,1\}^P$. In a nutshell, $m_\mu$ is a $V$-order statistic.
\end{proof}

\begin{lemma}[Variational characterization of the discrete order statistic]\label{lem:discrete_char}
Given volumes $V = \{V_i\}_{i = 1}^P$ the following two statements are equivalent:
\begin{itemize}
\item[i)]  $m_N \in \R^P$ is a discrete $V$-order statistic.
\item[ii)] $m_N \in \R^P$ is a minimizer of 
\begin{equation}
F_N(m):= \sum_{x \in X_N} |u(x) - m|_\triangle + V\cdot m .
\end{equation}
\end{itemize} 
\end{lemma}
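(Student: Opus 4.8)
The plan is to follow the proof of Lemma~\ref{lem:char_median} almost verbatim, replacing $\int_M \cdot\, d\mu$ by $\sum_{x \in X_N}$ and keeping track of the fact that the prescribed volumes $V_i$ are now positive integers. (Here I read $|\cdot|_\triangle$ as the function $v \mapsto \max_i v_i$ on all of $\R^P$; note that $F_N$ is invariant under $m \mapsto m + c\mathbbold{1}$ because $\#X_N = \sum_i V_i$, so one could equally restrict to $\{\sum_i m_i = 1\}$, where $u(x) - m$ lies in the domain of the asymmetric norm.) First I would introduce, for every clustering $\chi \in \mathcal{C}$, the map $F_{N,\chi}(m) := \sum_{x \in X_N}\sum_{i=1}^P (u_i(x) - m_i)\chi_i(x)$, which is affine in $m$ with gradient $-\sum_{x \in X_N}\chi(x)$. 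For fixed $m$, maximizing $F_{N,\chi}(m)$ over $\chi$ decouples into a pointwise maximization of $\sum_i (u_i(x) - m_i)\chi_i(x)$; its value is $\max_i (u_i(x)-m_i) = |u(x)-m|_\triangle$ and its maximizers are exactly the clusterings satisfying the separation condition \eqref{eq:sep_cond}, so
\begin{equation*}
F_N(m) = \max_{\chi \in \mathcal{C}}\bigl(F_{N,\chi}(m) + V\cdot m\bigr), \qquad \argmax_{\chi \in \mathcal{C}} F_{N,\chi}(m) = \{\chi \colon \chi \text{ is } m\text{-induced}\}.
\end{equation*}
Consequently $F_N$ is a finite maximum of affine functions, hence convex and piecewise linear, and the standard formula for the subdifferential of such a maximum gives
\begin{equation*}
\partial F_N(m) = \textbf{co}\Bigl\{\, V - \sum_{x \in X_N}\chi(x) \ \colon\ \chi \text{ is } m\text{-induced}\,\Bigr\}.
\end{equation*}

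The implication i) $\Rightarrow$ ii) is then immediate and follows the continuous argument: if $m_N$ is a discrete $V$-order statistic there is an $m_N$-induced $\chi^{m_N} \in \mathcal{A}(V)$, so $V - \sum_{x \in X_N}\chi^{m_N}(x) = 0 \in \partial F_N(m_N)$, and since $F_N$ is convex this is equivalent to $m_N$ being a global minimizer.

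For ii) $\Rightarrow$ i), if $m_N$ minimizes $F_N$ then $0 \in \partial F_N(m_N)$, so the subdifferential formula produces $m_N$-induced clusterings $\chi^{(1)}, \dots, \chi^{(K)}$ and weights $\lambda^{(k)} \geq 0$ with $\sum_k \lambda^{(k)} = 1$ such that $\bar\chi := \sum_k \lambda^{(k)}\chi^{(k)}$ has column sums $\sum_{x \in X_N}\bar\chi_i(x) = V_i$ for all $i$. This $\bar\chi$ has the right volumes but takes values only in $[0,1]^P$, and the step I expect to be the real obstacle is rounding it to a genuine $\{0,1\}$-valued, $m_N$-induced clustering with the same integer column sums --- the atomlessness of $\mu$ that trivialized this in Lemma~\ref{lem:char_median} is not available here. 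I would dispatch it as an integral feasibility statement: the polytope of all $\chi \in [0,1]^{X_N \times P}$ with row sums $1$, column sums $V_i$, and $\chi_{x,i} = 0$ whenever $i \notin \argmax_j(u_j(x) - m_{N,j})$ is nonempty (it contains $\bar\chi$) and is defined by the incidence matrix of a bipartite graph together with integral right-hand sides; such a matrix is totally unimodular, so this polytope has an integral, hence $\{0,1\}$-valued, vertex $\chi^{m_N}$. By construction $\chi^{m_N}$ respects \eqref{eq:sep_cond} and lies in $\mathcal{A}(V)$, so it is an $m_N$-induced clustering with volumes $V$, and therefore $m_N$ is a discrete $V$-order statistic. (Equivalently, and closer in spirit to Algorithm~\ref{alg:median}, one may round $\bar\chi$ by repeatedly pushing mass around cycles in its support until it becomes an extreme point, which changes neither its row nor its column sums.)
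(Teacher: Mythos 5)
Your proof is correct and, in essence, covers the same ground as the paper's, but it substitutes a cleaner off-the-shelf tool for the paper's more hands-on combinatorial argument at the one genuinely new step. Both you and the paper observe that the implication i)~$\Rightarrow$~ii) and the subdifferential formula for $F_N$ carry over verbatim from Lemma~\ref{lem:char_median}, and both identify the real issue for ii)~$\Rightarrow$~i): after extracting $\bar\chi = \sum_k \lambda^{(k)}\chi^{(k)}$ with the correct column sums from $0 \in \partial F_N(m_N)$, one must round $\bar\chi \in [0,1]^{X_N\times P}$ to a $\{0,1\}$-valued $m_N$-induced clustering with the same integer volumes, and the atomlessness trick used in the continuous proof is unavailable. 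Where you differ is in how you discharge that rounding. The paper works directly: it groups the tied points into the sets $H_{i_1,\dots,i_s}(m_N)$, notes that the fractional per-phase masses on each such set sum to an integer, chases the fractional defect across phases to produce a cycle of non-integral edge cases, and pushes a fixed amount of mass around the cycle to make one more edge case integral, then recurses. You instead package the whole thing as an integral feasibility statement for a (restricted) transportation polytope: the constraint matrix is the vertex--edge incidence matrix of a bipartite graph (forbidding $\chi_{x,i}$ for $i\notin\argmax_j(u_j(x)-m_{N,j})$ just deletes edges, preserving total unimodularity), the right-hand sides are integral, and the polytope is nonempty because it contains $\bar\chi$, so it has a $\{0,1\}$-valued vertex. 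This is a well-known result, and your parenthetical correctly observes that pushing mass around cycles in the support of $\bar\chi$ until it becomes a vertex is exactly the constructive content behind it --- which is essentially what the paper does by hand. The total-unimodularity route is shorter and makes the structure transparent; the paper's route is self-contained and mirrors the logic of Algorithm~\ref{alg:median}. Either is a complete argument. One minor but worthwhile point you handled well: $|\cdot|_\triangle$ is only defined on $\{v : \sum_i v_i = 0\}$ in the paper, and $u(x)-m$ lies there only if $\sum_i m_i = 1$; your remark that $F_N$ is invariant under $m \mapsto m + c\mathbbold{1}$ (so one may normalize, or simply read $|\cdot|_\triangle$ as $v\mapsto \max_i v_i$) settles this cleanly, though the paper does not bother to spell it out.
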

\begin{proof}
The proof is analogous to the proof of the continuous counterpart, Lemma \ref{lem:char_median}. The only part that takes some extra effort is to argue that the mass of convex-combinations $\lambda^{(k)} \chi^{(k)}$ can be assumed to be an integer on $H_{i_1,\dots, i_s}(m_N)$. This is necessary as the empirical measure is not atomless any more. We give in the following a quick sketch of this part.

So assume we have a convex combination $\{\lambda^{(k)} \chi^{(k)}\}_k$ with $\sum_{k=1}^K \sum_{x \in X_N} \lambda^{(k)} \chi^{(k)} =V$. Partition the edge cases into $H_{i_1,\dots, i_s}(m_N)$ as done in the proof of Lemma \ref{lem:char_median}.  Denote by
\begin{align*}
\mu_{N,j}(H_{i_1,\dots, i_s}(m_N)) := \sum_{k=1}^K \sum_{x \in X_N \cap H_{i_1,\dots, i_s}(m_N)} \lambda^{(k)} \chi_j^{(k)}(x)
\end{align*} the measure of the $j$-th phase on the edge case $H_{i_1,\dots, i_s}(m_N)$. We observe that 
\begin{align*}
\sum_{j = 1}^P \mu_{N,j}(H_{i_1,\dots, i_s}(m_N)) = \#\big(X_N \cap H_{i_1,\dots, i_s}(m_N)\big)
\end{align*}
is an integer. Thus, if $\mu_{N,j_0}(H_{i_1,\dots, i_s}(m_N))$ is not an integer then there is a $j_1 \neq j_0$ s.t. $\mu_{N,j_1}(H_{i_1,\dots, i_s}(m_N))$ is not an integer either. Denote by 
\begin{align*}
A_j := A_{j}(m_N) := \left\{x \in X_N: u_{j}(x) - m_{N,j}(x) \geq u_i(x) - m_{N,i} \forall i \neq j \right\}
\end{align*}
the domain of points that can potentially be assigned to phase $j \in \{1, \dots, P\}$. It also holds 
\begin{align*}
\sum_{k=1}^K \sum_{x \in X_N \cap A_{j_1}} \lambda^{(k)} \chi_j^{(k)}(x) = V_j
\end{align*}
which is an integer. Therefore, there has to be an edge case of $A_{j_1}$ named $H_{i^1_1,\dots, i^1_{s^1}}(m_N)$ with $\mu_{N,j_1}(H_{i^1_1,\dots, i^1_{s^1}}(m_N)) \notin \N$. Repeating this argument inductively one gets a  cycle of indices $j_a, j_{a+1}, \dots, j_b, j_a$ with edge cases $H_{i^\ell_1,\dots, i^\ell_{s^\ell}}(m_N)$ between $A_\ell$ and $A_{\ell+1}$ that are  not integers. One then performs a circular change of fixed mass such that one of the $\mu_{N,j_1}(H_{i^\ell_1,\dots, i^\ell_{s^\ell}}(m_N))$ is an integer while the volume constraint $\sum_{x\in X_N} \tilde{\chi} = V$ for the new phases $\tilde{\chi}$ is contained. Thus there is one more edge case with integer mass. Note that it may happen that in the circular change, points have to separate their mass to different phases. This is not an issue, because after inductively applying the whole argument all edge case sets have integer mass such that the mass can be validly reassigned as in the proof of Lemma \ref{lem:char_median}. 
\end{proof}

\begin{lemma}[Existence and uniqueness of the continuous order statistic]\label{lem:uniqueness}
Assume volumes $\{V_i\}_{i = 1}^P$ with $\sum_{i = 1}^P V_i = N$ and $V_i > 0$ are given, then there exists a unique continuous $V$-order statistic $m_\mu$.
\end{lemma}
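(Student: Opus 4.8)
The plan is to route everything through the variational characterization of Lemma~\ref{lem:char_median}: $m_\mu$ is a continuous $V$-order statistic precisely when it minimizes $F(m)=\int_M|u(x)-m|_\triangle\,d\mu(x)+V\cdot m$ over $m\in\R^P$. Since $\sum_iV_i=1$ we have $F(m+c\mathbbold{1})=F(m)$ for all $c$, so ``uniqueness'' is meant modulo $\R\mathbbold{1}$ (equivalently, on the slice $\{\sum_i m_i=1\}$), and it suffices to show $F$ has exactly one minimizer there. For existence, $F$ is convex and continuous, and using $u_i\ge 0$ (positivity of the heat semigroup) we get $|u(x)-m|_\triangle=\max_i(u_i(x)-m_i)\ge-\min_i m_i$, hence
\begin{equation*}
F(m)\ge-\min_i m_i+\sum_i V_i m_i=\sum_i V_i\big(m_i-\min_j m_j\big)\ge\big(\min_i V_i\big)\big(\max_i m_i-\min_i m_i\big),
\end{equation*}
which tends to $+\infty$ as $|m|\to\infty$ along $\{\sum_i m_i=1\}$ because $V_i>0$; so $F$ attains its minimum.

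For uniqueness I would take two $V$-order statistics $m,m'$ with induced clusterings $\chi^m,\chi^{m'}$ of mass $V$ and argue in three steps. First, a regularity input from the heat kernel: since $u_i=e^{-h\Delta_\mu}\chi^{\ell-1}_i$ and $M$ is connected, each difference $u_i-u_j$ ($i\ne j$) is a non-constant solution of the heat equation at the positive time $h$, hence real-analytic and non-constant --- if $u_i-u_j\equiv c$ then injectivity of $e^{-h\Delta_\mu}$ together with $e^{-h\Delta_\mu}\mathbbold{1}=\mathbbold{1}$ forces $\chi^{\ell-1}_i-\chi^{\ell-1}_j\equiv c\in\{-1,0,1\}$, contradicting that all phases carry positive volume. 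Consequently $\mu(\{u_i-u_j=c\})=0$ for every $c$, so for every $m$ the set where $\argmax_k(u_k(\cdot)-m_k)$ fails to be a singleton is $\mu$-null; in particular $\chi^m,\chi^{m'}$ are $\mu$-a.e.\ determined. Second, the clusterings coincide: since $\int_M\chi^m\,d\mu=\int_M\chi^{m'}\,d\mu=V$ we have $\int_M(\chi^m-\chi^{m'})\cdot(m-m')\,d\mu=0$; at a.e.\ $x$, writing $\chi^m(x)=e_a$ and $\chi^{m'}(x)=e_{a'}$, the two defining inequalities of Definition~\ref{def:cont_median} give $m_a-m_{a'}\le u_a(x)-u_{a'}(x)\le m'_a-m'_{a'}$, so the integrand $(e_a-e_{a'})\cdot(m-m')=(m_a-m_{a'})-(m'_a-m'_{a'})$ is $\le0$ a.e.; having integral $0$ it vanishes a.e., and by the strict inequalities from the first step this forces $a'\in\argmax_k(u_k(x)-m_k)=\{a\}$, i.e.\ $\chi^m=\chi^{m'}=:\chi$ a.e.

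Third, and this is the heart of the matter, one must show that the set $\Pi=\{m:\chi\text{ is $m$-induced}\}=\{m:m_i-m_k\le c_{ik}\ \forall i,k\}$, with $c_{ik}:=\inf_{\{\chi=e_i\}}(u_i-u_k)$ and every $\{\chi=e_i\}$ of positive $\mu$-mass since $V_i>0$, reduces to a single point modulo $\R\mathbbold{1}$; as $m,m'\in\Pi$ by the second step, this yields $m=m'$ modulo $\R\mathbbold{1}$. Here one uses that, by analyticity of $u$, the partition $M=\bigsqcup_i\{\chi=e_i\}$ is, up to a $\mu$-null set, into open semianalytic pieces with codimension-one interfaces, and that whenever phases $i$ and $j$ share an interface of positive $(d-1)$-dimensional measure the value $u_i-u_j$ equals $m_i-m_j$ along it, so $\inf_{\{\chi=e_i\}}(u_i-u_j)=m_i-m_j=\sup_{\{\chi=e_j\}}(u_i-u_j)$, i.e.\ $c_{ij}+c_{ji}=0$ and the difference $m_i-m_j$ is rigidly fixed; connectedness of $M$ then makes the interface graph connected, pinning down all coordinate differences of $m$.

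The main obstacle is exactly this last step: making rigorous that the interface graph of the semianalytic clustering $\chi$ is connected when edges are counted only between phases with a genuinely $(d-1)$-dimensional common boundary, that such an interface really forces the corresponding coordinate difference of $m$, and that a ``gap'' configuration (a third phase wedged between $i$ and $j$, so that $c_{ij}+c_{ji}>0$) remains innocuous --- this is where the positivity of every $V_i$ and the non-degeneracy of the diffused labels $u$ genuinely enter. The first two steps are soft and only use $\sum_i V_i=1$, $V_i>0$, connectedness of $M$, and the smoothing of the heat semigroup.
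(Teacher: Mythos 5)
Your existence argument is sound and arguably cleaner than the paper's: the coercivity bound $F(m)\ge(\min_iV_i)\,(\max_i m_i-\min_j m_j)$ on the slice $\{\sum_i m_i=1\}$, which uses $u_i\ge0$ from positivity-preservation of the heat semigroup, gives existence directly; the paper instead restricts attention to an explicit compact set $K$ and shows that any competitor outside $K$ can be strictly improved. Both work.

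The uniqueness argument, however, has a genuine gap at its very first step. You assert that $u_i-u_j=e^{-h\Delta_\mu}(\chi_i^{\ell-1}-\chi_j^{\ell-1})$ is real-analytic because it solves the heat equation at the positive time $h$. But Assumption~\ref{ass:manifold} only makes $(M,g)$ and $\rho$ \emph{smooth}, not real-analytic; for a smooth (non-analytic) elliptic operator, the heat semigroup produces $C^\infty$ functions, and there is no general theorem upgrading them to $C^\omega$ --- analytic-hypoellipticity requires analytic coefficients. With analyticity gone, the crucial claim $\mu(\{u_i-u_j=c\})=0$ for all $c$ is unjustified, and without it step~2 no longer forces $\chi^m=\chi^{m'}$ a.e. (the integrand vanishing a.e.\ is now perfectly compatible with $u_a-u_{a'}$ being locally constant on a set of positive measure). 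Your step~3, which you yourself flag as the ``main obstacle'' and leave unfinished, also rests on the same semianalytic stratification you can no longer invoke.

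The paper's route avoids analyticity entirely: it uses that $\supp(u_{\#}\mu)$ is path-connected (since $\rho\ge c>0$, $M$ is connected, and $u$ is continuous), orders the coordinate differences $m_1-\tilde m_1\le\dots\le m_P-\tilde m_P$, observes the nesting $\{\chi_1^{\tilde m}=1\}\subset\{\chi_1^m=1\}$, and then extracts from connectedness a boundary point and a small ball of positive $u_{\#}\mu$-mass inside $U_1^m\setminus U_1^{\tilde m}$, contradicting the equal-volume constraint unless $m_1-\tilde m_1=m_2-\tilde m_2$; iteration over the remaining indices gives $m=\tilde m$ modulo $\R\mathbbold{1}$. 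So the correct non-degeneracy input is connectedness of the support of $u_{\#}\mu$ together with $V_i>0$, not analyticity of $u$. Your step~2 monotonicity identity $\int_M(\chi^m-\chi^{m'})\cdot(m-m')\,d\mu=0$ is a nice observation, but to turn it into a proof you would need to replace the measure-zero-level-set claim by a connectedness argument of exactly the paper's flavor, at which point you essentially reproduce their proof.
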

\begin{proof}
\textit{Existence:} By Lemma \ref{lem:char_median} it is enough to show that 
\begin{align*}
F(m) = \int_M |u(x) - m|_\triangle \,d\mu(x) + V\cdot m
\end{align*}
attains its minimum. As $F(m) = F(m + c \mathbbold{1})$ it is enough to restrict the domain to all $m$ with $\sum_{i = 1}^P m_i = 1$. We claim that one can even constrain the space of competitors to  the compact set
\begin{align*}
K := \Bigg\{ m \in \R^P: &\sum_{i = 1}^P m_i = 1, \ \exists \{i_1,\dots,i_P\}=\{1,\dots,P\} \text{ s.t. } m_{i_1} \leq \dots \leq m_{i_P}\\
 & \text{ and } |m_{i_j} - m_{i_{j+1}}| \leq 1 \text{ for all } j = 1, \dots, P-1 \Bigg\}.
\end{align*}
If the claim holds, the existence of a minimizer of $F(m)$ follows by the compactness of $K$ and continuity of $F(m)$. To see the claim, take a point $m \in \R^P \setminus K$. We now construct a point $\tilde{m} \in K$ with $F(m) > F(\tilde{m})$. So $\inf_{x \in K}F(x) = \inf_{x \in \R^P} F(x)$.

As $m \in \R^P \setminus K$ there exists now an index $k$ such that w.l.o.g.\ after a change of coordinates
\begin{align*}
m_1 \leq \dots \leq m_k <  m_{k+1} -1 \leq \dots \leq m_P - 1
\end{align*}
holds. For $\eps> 0$ to be defined later set $\tilde{m_i} = m_i + \eps$ for $i \leq k$ and $\tilde{m_i} = m_i - \eps$ for $i > k$. By definition it follows that
\begin{align*}
V \cdot \tilde{m} = V \cdot m + \eps \sum_{i \leq k} V_i - \eps \sum_{i > k} V_i < V \cdot m + \eps.
\end{align*}
Now we observe that by assumption on $m$ and the fact that $u_i(x) \in [0,1]$, it is true that for all $x \in M$
\begin{align*}
u_i(x) - m_i \geq - m_i> 1 - m_j \geq u_j(x) - m_j \quad \quad \text{ for any }i \leq k \text{ and } j > k.
\end{align*}
Choose $\eps > 0$ such that $\tilde{m}_k = \tilde{m}_{k+1} -1$ and it follows again
\begin{align*}
u_i(x) - \tilde{m}_i \geq - \tilde{m}_i \geq 1 - \tilde{m}_j \geq u_j(x) - \tilde{m}_j \quad \quad \text{ for any }i \leq k \text{ and } j > k.
\end{align*}
From this it follows that
\begin{align*}
\max_{i = 1,\dots,P} (u_i(x) - m_i) &= \max_{i = 1,\dots,k} (u_i(x) - m_i )\\
&\geq \max_{i = 1,\dots,k} (u_i(x) - \tilde{m}_i) + \eps\\
&= \max_{i = 1,\dots,P} (u_i(x) - \tilde{m}_i) + \eps
\end{align*}
for all $x \in M$. All together we get
\begin{align*}
F(m) &= \int_M \max_{i  =1,\dots,P} (u_i(x) - m_i) \,d\mu(x) + V \cdot m \\
&> \int_M \max_{i  =1,\dots,P} (u_i(x) - \tilde{m}_i) + \eps \,d\mu(x) + V \cdot \tilde{m} - \eps \\
&= F(\tilde{m}).
\end{align*}
We can also assume that $\sum_{i = 1}^P \tilde{m}_i = 1$ as $F(\tilde{m}) = F(\tilde{m} + c \mathbbold{1})$ for any $c \in \R$. As $m_{i_1} - m_{i_2} = \tilde{m}_{i_1} - \tilde{m}_{i_2}$ for all $i_1, i_2 \leq k$ or $i_1, i_2 > k$ we can repeat the argument for any index $k$ with $m_k < m_{k+1} - 1$. This yields an element $\tilde{m} \in K$ with $F(m) > F(\tilde{m})$.

\textit{Uniqueness:} Assume that there are two $V$-order statistics $m$ and $\tilde{m}$.
First we note that $\mu = \rho \Vol_M$ has (path-)connected support as $\rho \geq C > 0$ and $M$ is connected by assumption. Hence also the support of $u_{\#} \mu$ is (path-)connected as $u = e^{-h \Delta_\mu} \chi$ is continuous. To see this, note that $\supp(u_{\#}\mu) \supset u(\supp (\mu))$ as preimages of open sets of $u$ are open. Also  $\supp(u_{\#}\mu) \subset u(\supp( \mu))$ as $\supp \mu = M$ is compact and $u$ maps compact sets to compact sets which implies $u(\supp \mu)$ is closed by the Heine--Borel theorem. The property of being path-connected follows by taking a path in $\supp (\mu)$ and mapping it by $u$ into $u(\supp( \mu))$. 

Denote by $\chi^m$ and $\chi^{\tilde{m}}$, respectively the $m$ and $\tilde{m}$ induced clusterings that fulfill the volume constraints. Assume without loss of generality 
\begin{align*}
m_1 - \tilde{m}_1 \leq \dots \leq m_P - \tilde{m}_P.
\end{align*}
It then follows that 
\begin{align*}
1 \in \argmax_{i = 1, \dots, P} u_i(x) - m_i
\end{align*}
if 
\begin{align*}
1 \in \argmax_{i = 1, \dots, P} u_i(x) - \tilde{m}_i.
\end{align*}
So we conclude that $\{\chi^{\tilde{m}}_1 = 1\} \subset \{\chi^{m}_1 = 1\} $. Denote the region in which the images $u( \{\chi^{\tilde{m}}_1 = 1\} )$ and $u(\{\chi^{m}_1 = 1\})$ lie by
\begin{align*}
U_1^{\tilde{m}} &:= \left\{u \in \R^P: u_1 - \tilde{m}_1 > u_i - \tilde{m}_i \ \forall i\neq 1 \right\},\\
U_1^{m} &:= \left\{u \in \R^P: u_1 - m_1 \geq u_i - \tilde{m}_i \ \forall i\neq 1 \right\}.
\end{align*}
As by assumption $V_i > 0$ for all $i = 1,\dots,P$  it has to hold
\begin{align*}
 \supp (u_{\#} \mu) \neq \supp (u_{\#} \mu) \cap  U_1^{\tilde{m}} \neq \emptyset.
\end{align*}
By the (path-)connectivity of the support of $u_{\#} \mu$ there exists an index $k \in \{2, \dots, P\}$ and a point $u^{(k)} \in H_{1k}(\tilde{m})$ such that for every $\eps > 0$ it holds
\begin{align*}
\supp (u_{\#} \mu) \cap \left(B_{\eps}(u^{(k)}) \setminus U_1^{\tilde{m}}  \right)\neq \emptyset.
\end{align*} 
This is visualized in Fig.~\ref{fig:proof_vis}. W.l.o.g.\ assume $k = 2$. If $m_1 - \tilde{m}_1 < m_2 - \tilde{m}_2$ then there exists a $\eps > 0 $ such that $m_1 - \tilde{m}_1 < m_2 - \tilde{m}_2 - \eps$. Recall that the hyperplane $H_{ij}(m)$ between Phase $i$ and Phase $j$ was given by 
\begin{align*}
H_{ij}(m) = \big\{u \in \R^P: u_i - m_i = u_j -m_j \geq u_k - m_k \ \forall k \neq i, j\big\}.
\end{align*}
which yields 
\begin{align*}
B_\eps(u^{(2)}) \subset U_1^m. 
\end{align*}
Therefore there exists $u^{(1,2)} \in \supp(u_{\#}\mu) \cap (U_1^{m} \setminus U_1^{\tilde{m}}) $ and $s > 0$ with $B_s(u^{(1,2)}) \subset U_1^{m}  \setminus U_1^{\tilde{m}} $.  Thus it holds 
\begin{align*}
u_{\#} \mu \left(U_1^{m} \setminus U_1^{\tilde{m}}\right) \geq u_{\#} \mu \left(B_s(u^{(1,2)}) \right) > 0.
\end{align*}
This leads to the following contradiction to the volume constraints 
\begin{align*}
 0 &= \int_M \chi_1^m \,d\mu -\int_M \chi^{\tilde{m}}_i \,d\mu\\
 &= \mu\left(\left\{x \in M:  u_1(x) - m_1 > u_i(x) - m_i \ \forall i\neq 1\right\}\setminus \left\{ x \in M:  u_1(x) - \tilde{m}_1 \geq u_i(x) - \tilde{m}_i \ \forall i\neq 1 \right\} \right)\\
 &= \mu(u^{-1}( U_1^{m} \setminus U_1^{\tilde{m}} ))\\
 &= u_{\#} \mu \left(U_1^{m} \setminus U_1^{\tilde{m}}\right)\\
 &> 0.
 \end{align*}
So we conclude that $m_1 - \tilde{m}_1 = m_2 - \tilde{m}_2$ has to be true.

Repeating the whole argument for the other components (you get $\{\chi^{\tilde{m}}_1 = 1\} \cap \{\chi^{\tilde{m}}_2 = 1\} \subset \{\chi^{m}_1 = 1\} \cap \{\chi^{m}_2 = 1\} $... ) leads to $m_1 - \tilde{m}_1 = \dots = m_P - \tilde{m}_P$. As $\sum_{i=1}^P m_i = \sum_{i=1}^P \tilde{m}_i = 1$ it follows that $m = \tilde{m}$. 
\end{proof}
\begin{rem}
As $F$ is convex a more standard approach for proving the uniqueness in Lemma \ref{lem:uniqueness} would be to calculate the Hessian of $F$ and see whether it is positive definite. The distributional second derivative for a test function $\phi \in C^1(M, [0, \infty))$ can be calculated to be
\begin{align*}
\langle D^2_{ij} F, \phi \rangle = 
 \left\{\begin{array}{lr}
        -\frac{1}{\sqrt{2}} \int_{M} \int_{\partial U_i \cap \partial U_j}  \phi(m + u(x)) \;d\mathcal{H}^{P-1}(m) \;d\mu(x),  & \text{for } j \neq i\\
        \frac{1}{\sqrt{2}} \int_{M} \int_{\partial U_i}  \phi(m + u(x)) \;d\mathcal{H}^{P-1}(m) \;d\mu(x),  & \text{for } j = i
        \end{array}\right.
\end{align*}
with $U_i := \{m_i > m_j\ \forall j \neq i\}$. The definiteness of the matrix $A_{ij} := \langle D^2_{ij} F, \phi \rangle$ depends on the mass of the pushforward $u_{\#}\mu$ on the boundaries $\partial U_i \cap \partial U_j$. In particular the matrix is not positive definite in general. But the curvature in the direction $(e_i - e_j)$
\begin{align*}
(e_i - e_j)^\intercal A (e_i - e_j) = \sum_{k \in \{i,j\}} \frac{1}{\sqrt{2}} \int_{M} \int_{\partial U_k}  \phi(m + u(x)) \;d\mathcal{H}^{P-1}(m) \;d\mu(x)
\end{align*}
is positive if and only if one of the boundaries $\partial U_i \cap \partial U_j$ has positive pushforward measure. With the same densities arguments as in the proof of Lemma \ref{lem:uniqueness} one can thus conclude that two minimizers $m$ and $\tilde{m}$ of $F$ can not differ in the direction $(e_i - e_j)$, i.e.\ $m_i - m_j = \tilde{m}_i - \tilde{m}_j$. But also with this approach one has to argue inductively with the connectedness of the support to get positive mass on the boundaries such that no simplification of the proof is expected.
\end{rem}
\begin{figure}
\begin{tikzpicture} 
\draw[draw=none,fill=red] plot [smooth] 
    coordinates {(0,0.4) (-1.8,0.5) (-1.7,0.9) (-0.9,0.8) (0, 0.9)}
     -- cycle;
     
 \draw[draw=none,fill=red] plot [smooth] 
    coordinates {(0, 0.9) (0.6,1.3) (0.45,2.0)}
     -- 
     plot [smooth] coordinates {(1.1,2.4) (0.9,0.9) (0, 0.4)} --cycle;
     
\draw [draw=none, fill=red] plot [smooth] 
    coordinates {(1.1,2.4) (1.0,2.6) (0.05,3.9) (0.45,2.0)}
     -- cycle;     
\draw[black, thick] (-3,0) -- (3,0);
\draw[black, thick] (3,0) -- (0,5.1961);
\draw[black, thick] (-3,0) -- (0,5.1961);
\filldraw[black] (0,1.7320) circle (3pt) node[anchor=south]{$m$};
\draw[black, thick] (0,0) -- (0,1.7320);
\draw[black, thick] (-1.5,2.5980) -- (0,1.7320);
\draw[black, thick] (1.5,2.5980) -- (0,1.7320);

\filldraw[black] (-0.6,0.9) circle (3pt) node[anchor=south]{$\tilde{m}$};
\draw[black, thick] (-0.6,0) -- (-0.6,0.9) ;
\draw[black, thick] (-2.0,1.7) -- (-0.6,0.9) ;
\draw[black, thick] (1.73,2.2) -- (-0.6,0.9) ;

 \draw[black,thick](-0.3,0.55) circle (0.2) node[anchor=west]{$\ \ B_s(u^{(1,2)})$};
 \filldraw[black] (-0.3,0.55) circle (1pt);
 \filldraw[black] (-0.6,0.55) circle (1pt) node[anchor=east]{$u^{(2)}$};

%\draw [gray] plot [smooth cycle] coordinates {(-1.8,0.9) (-0.9,0.8) (0.7,0.9) (1.0,2.6) (0.15,3.2) (0.5,1.6)};
\end{tikzpicture}
\caption{Visualization of the uniqueness proof. The support of $u_{\#}\mu$ is drawn in red. }
\label{fig:proof_vis}
\end{figure}
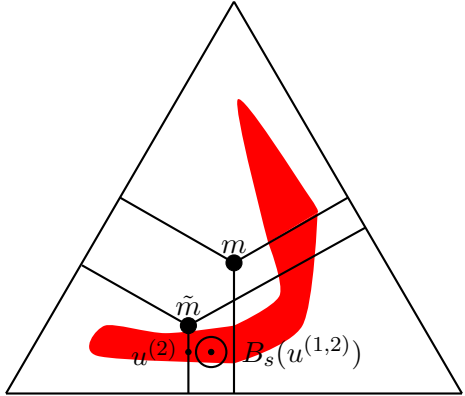

To prove convergence of the order statistic to its continuous counterpart, the key theorem is a theorem of Lelmi and the second author \cite{laux2021large} which has recently been improved by Ullrich and the second author \cite{ullrich2024medianfiltermethodmean}. It states that the solution of the heat equation converges strongly in $TL^2(M)$ if the initial values converge weakly in $TL^2(M)$. The $TL^p$ metric was introduced by Garcia-Tr\'{i}llos and Slep\v{c}ev \cite{MR3458162} to generalize $L^p$ convergence between different measure spaces. In our case, the first measure space is the discrete space $(X_N, \frac{1}{N} \sum_{x \in X_N} \delta_x )$ and the second is $(M, g, \mu)$. We use the following characterization of $TL^2$ convergence that can be proved as Garcia-Tr\'{i}llos and Slep\v{c}ev did in \cite{MR3458162}.
\begin{theorem}\label{prop:TL2convergence}
Let $T_N$ be  an arbitrary $2$-stagnating sequence of transport maps $T_N$. This means it holds
\begin{align*}
\mu_N := \frac{1}{N}\sum_{x \in X_N} \delta_x = \left(T_N\right)_{\#} \mu \quad \text{ and }\\
\int_M \dist_M^2(T_N(x), x) \,d\mu(x) \xrightarrow[]{N \rightarrow \infty} 0.
\end{align*}
Let $\mu$ be absolutely continuous with respect to the volume measure. Then the following two statements are equivalent:
\begin{itemize}
\item [i)] $(u_N, \mu_N) \rightarrow (u, \mu)$ in $TL^2$.
\item [ii)] $u_N \circ T_N \rightarrow u$ in $L^2(M, \mu)$.
\end{itemize}
\end{theorem}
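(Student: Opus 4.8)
The plan is to follow the argument of García Trillos and Slepčev \cite{MR3458162}, transplanting it from a bounded Euclidean domain to the compact Riemannian manifold $(M,g)$, with $\dist_M$ in place of the Euclidean distance. Throughout I write, for probability measures $\nu_1,\nu_2$ on $M$, $f_j\in L^2(\nu_j)$, and $\Pi(\nu_1,\nu_2)$ the set of their couplings,
\[
d_{TL^2}^2\big((\nu_1,f_1),(\nu_2,f_2)\big)=\inf_{\pi\in\Pi(\nu_1,\nu_2)}\int_{M\times M}\Big(\dist_M^2(x,y)+|f_1(x)-f_2(y)|^2\Big)\,d\pi(x,y).
\]
I will only use two structural facts about $M$: its compactness, so that $C(M)$ is dense in $L^2(M,\mu)$ and every continuous function on $M$ is bounded and uniformly continuous; and absolute continuity of $\mu$, which is what makes $2$-stagnating sequences of transport maps from $\mu$ to the empirical measures $\mu_N$ exist at all, so that the statement is not vacuous.

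For the easy implication (ii) $\Rightarrow$ (i) I would simply test the infimum in $d_{TL^2}$ with the graph coupling $\pi_N:=(T_N,\mathrm{id})_\#\mu\in\Pi(\mu_N,\mu)$, which gives
\[
d_{TL^2}^2\big((u_N,\mu_N),(u,\mu)\big)\le\int_M\dist_M^2(T_N(x),x)\,d\mu(x)+\int_M\big|u_N(T_N(x))-u(x)\big|^2\,d\mu(x);
\]
the first term vanishes in the limit because $T_N$ is $2$-stagnating and the second by the hypothesis of (ii).

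The real content is (i) $\Rightarrow$ (ii). Here I would fix near-optimal couplings $\pi_N\in\Pi(\mu,\mu_N)$ with $\int_{M\times M}(\dist_M^2(x,y)+|u(x)-u_N(y)|^2)\,d\pi_N\le d_{TL^2}^2((u_N,\mu_N),(u,\mu))+\tfrac1N$ and glue them to the graph coupling $\gamma_N:=(\mathrm{id},T_N)_\#\mu$ of the given transport map along their common second marginal $\mu_N$: writing $d\pi_N(x,y)=d(\pi_N)_y(x)\,d\mu_N(y)$ and $d\gamma_N(x',y)=d(\gamma_N)_y(x')\,d\mu_N(y)$, I form the three-fold coupling $\lambda_N:=\int_M (\pi_N)_y\otimes\delta_y\otimes(\gamma_N)_y\,d\mu_N(y)$ of $(\mu,\mu_N,\mu)$, whose $(x,y)$-marginal is $\pi_N$ and whose $x$- and $x'$-marginals are both $\mu$. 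Since $(\gamma_N)_y$ is concentrated on $T_N^{-1}(\{y\})$, one has $u_N(T_N(x'))=u_N(y)$ for $\lambda_N$-a.e.\ $(x,y,x')$, so that by the triangle inequality in $L^2(\lambda_N)$
\[
\int_M\big|u(x')-u_N(T_N(x'))\big|^2\,d\mu(x')\le 2\int_{M\times M}\big|u(x)-u_N(y)\big|^2\,d\pi_N+2\int_{M^3}\big|u(x)-u(x')\big|^2\,d\lambda_N.
\]
The first term tends to $0$ by near-optimality. For the second, I would first observe that $\int_{M^3}\dist_M^2(x,x')\,d\lambda_N\le 2\int\dist_M^2(x,y)\,d\pi_N+2\int\dist_M^2(T_N(x'),x')\,d\mu(x')\to0$, so that $x$ and $x'$ become $L^2(\lambda_N)$-close, and then approximate $u$ by $g\in C(M)$ with $\|u-g\|_{L^2(\mu)}<\varepsilon$; since both the $x$- and the $x'$-marginal of $\lambda_N$ equal $\mu$, the quantity $\int_{M^3}|u(x)-u(x')|^2\,d\lambda_N$ is bounded by $C\varepsilon^2$ plus $C\int_{M^3}|g(x)-g(x')|^2\,d\lambda_N$, and the latter vanishes as $N\to\infty$ by uniform continuity and boundedness of $g$ on the compact manifold together with $\lambda_N(\{\dist_M(x,x')\ge\eta\})\to0$ for each fixed $\eta>0$. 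Sending $N\to\infty$, then $\eta\to0$, then $\varepsilon\to0$ closes the estimate and yields $u_N\circ T_N\to u$ in $L^2(M,\mu)$.

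I expect the main obstacle to be essentially organizational rather than analytic: making the gluing construction and the continuous-approximation step clean on a manifold, i.e.\ checking that compactness of $M$ genuinely substitutes for boundedness of the Euclidean domain used in \cite{MR3458162} (for density of $C(M)$ in $L^2(\mu)$, uniform continuity, and finiteness of $\|g\|_\infty$), and separately verifying, again via absolute continuity of $\mu$, that $2$-stagnating transport maps from $\mu$ to the empirical measures $\mu_N$ exist so that there is something to prove. Beyond this, no new difficulty compared to \cite{MR3458162} should arise.
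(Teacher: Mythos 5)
Your proposal is correct and is essentially the approach the paper takes: the paper does not give a self-contained proof but explicitly defers to García Trillos--Slepčev \cite{MR3458162}, and your argument is a faithful transplantation of their gluing-along-$\mu_N$ plus continuous-approximation proof to the compact manifold setting, using exactly the two structural substitutes (compactness of $M$ for boundedness of the domain, and Polishness for the disintegration/gluing lemma) that the adaptation requires.
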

This leads to the definition of weak $TL^2$ convergence introduced by Lelmi and the second author in \cite{laux2021large}.
\begin{definition}[Weak $TL^2$ convergence]\label{def:weak_TL2}
Let $\mu$ be absolutely continuous with respect to the volume measure. We say $(u_N, \mu_N)$ converges weakly to $(u, \mu)$ in $TL^p$ if
$u_N \circ T_N$ converges to $u$ weakly in $L^2(\mu)$. Here $T_N$ denotes a $2$-stagnating transport map as in Proposition \ref{prop:TL2convergence}.
\end{definition}
We recall that the graph Laplacian is given by
\begin{align}\label{def:graph_laplace}
\Delta_N u(x) =  -\frac{1}{N \eps^2_N} \sum_{y \in X_N} \frac{1}{\eps_N^d} \eta\left(\frac{\dist_M(x,y)}{\eps_N}\right) (u(y) - u(x)).
\end{align}
and the scaling of $\eps_N$ is given in Assumption \ref{ass:manifold}.
The convergence result of the heat semigroups of Laux and Lelmi \cite[Theorem 2]{laux2021large} with the improvement of \cite[Theorem 4.3]{ullrich2024medianfiltermethodmean} then reads  as follows:
\begin{theorem}[Weak to strong convergence \cite{laux2021large, ullrich2024medianfiltermethodmean}]\label{the:weak_strong}
Let Assumption \ref{ass:manifold} be in place. If
\begin{align*}
\chi_N \xrightharpoonup[]{TL^2(M,\mu)} \chi \quad \text{ weakly in } TL^2 \text{ as } N \rightarrow \infty,
\end{align*}
then almost surely for every $t>0$ the solutions to heat equation converge strongly, i.e.,
\begin{align*}
e^{-t \Delta_N} \chi_N \xrightarrow[]{TL^2(M,\mu)} e^{-t \Delta_\mu} \chi \quad \text{ strongly in } TL^2 \text{ as } N \rightarrow \infty.
\end{align*} 
\end{theorem}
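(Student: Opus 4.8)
The plan is to reduce Theorem~\ref{the:weak_strong} to the (known) spectral convergence of the graph Laplacian $\Delta_N$ to the weighted Laplacian $\Delta_\mu$ and then exploit the exponentially decaying factors in the spectral decomposition~\eqref{eq:spectral_decomposition}: it is precisely these factors that upgrade weak convergence of the data to strong convergence of the diffused data. Since Theorem~\ref{the:weak_strong} is quoted verbatim from \cite{laux2021large, ullrich2024medianfiltermethodmean}, a fully rigorous proof ultimately amounts to invoking those works; what follows is the route one takes to prove it.

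First I would fix one $2$-stagnating sequence of transport maps $T_N$ as in Theorem~\ref{prop:TL2convergence}, so that $\chi_N \rightharpoonup \chi$ weakly in $TL^2$ becomes $\chi_N\circ T_N \rightharpoonup \chi$ weakly in $L^2(\mu)$, and strong $TL^2$ convergence of the output is, by the same theorem, equivalent to strong $L^2(\mu)$ convergence of its composition with $T_N$. Write $(\lambda_i^N,\phi_i^N)$ and $(\lambda_i,\phi_i)$ for the increasingly ordered eigenpairs of $\Delta_N$ and $\Delta_\mu$, with eigenvectors normalized in $L^2(\mu_N)$ resp.\ $L^2(\mu)$, and let $\langle\cdot,\cdot\rangle$ denote the $\mu_N$-inner product, so that $\langle f,g\rangle=\int_M (f\circ T_N)(g\circ T_N)\,d\mu$ since $\mu_N=(T_N)_\#\mu$. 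The key input — available almost surely precisely because of the optimal connectivity scaling $\eps_N^d N/\log N\to\infty$ in Assumption~\ref{ass:manifold} — is that for every fixed $i$ one has $\lambda_i^N\to\lambda_i$ and $\phi_i^N\to\phi_i$ in $TL^2$; since $M$ is closed and connected, $0=\lambda_1<\lambda_2\le\cdots\to\infty$.

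Given this, I would split $e^{-t\Delta_N}\chi_N=\sum_{i=1}^N e^{-t\lambda_i^N}\langle\chi_N,\phi_i^N\rangle\phi_i^N$ at a truncation level $K$. On the low modes $i\le K$, the coefficient $\langle\chi_N,\phi_i^N\rangle=\int_M(\chi_N\circ T_N)(\phi_i^N\circ T_N)\,d\mu$ converges to $\langle\chi,\phi_i\rangle_\mu$ by weak-times-strong convergence (here $\phi_i^N\circ T_N\to\phi_i$ strongly by Theorem~\ref{prop:TL2convergence}); together with $e^{-t\lambda_i^N}\to e^{-t\lambda_i}$ and $\phi_i^N\to\phi_i$ in $TL^2$ this shows that the finite sum converges strongly in $TL^2$ to $S_K:=\sum_{i\le K}e^{-t\lambda_i}\langle\chi,\phi_i\rangle_\mu\,\phi_i$, the corresponding truncation of $e^{-t\Delta_\mu}\chi$. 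On the high modes, Parseval and $\|\chi_N\|_{L^2(\mu_N)}\le 1$ bound the contribution of $i>K$ by $e^{-2t\lambda_{K+1}^N}$ in squared $L^2(\mu_N)$-norm, hence — after $N\to\infty$, using $\lambda_{K+1}^N\to\lambda_{K+1}$ — by $2e^{-2t\lambda_{K+1}}$ eventually; the analogous bound $e^{-2t\lambda_{K+1}}$ holds for $e^{-t\Delta_\mu}\chi - S_K$ on the continuous side. A triangle inequality in $TL^2$ (equivalently, in $L^2(\mu)$ after composing with $T_N$), letting first $N\to\infty$ and then $K\to\infty$ with $\lambda_{K+1}\to\infty$, yields $e^{-t\Delta_N}\chi_N\to e^{-t\Delta_\mu}\chi$ strongly in $TL^2$, simultaneously for all $t>0$. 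An equivalent packaging: $e^{-t\Delta_N}$ is uniformly trace-class and converges in the strong operator sense to the compact operator $e^{-t\Delta_\mu}$, and such operators send weakly convergent sequences to strongly convergent ones.

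The genuine obstacle is exactly the ingredient I took for granted: the almost-sure, pollution-free spectral convergence of $\Delta_N$ in the optimal regime $\eps_N^d N/\log N\to\infty$. This is the technical core of \cite{laux2021large} and its sharpening \cite{ullrich2024medianfiltermethodmean}: it rests on concentration estimates for the random geometric graph and a careful comparison of Dirichlet energies transported through the maps $T_N$, and it is what simultaneously rules out spurious small eigenvalues (making the tail estimate uniform in $N$) and forces the low eigenfunctions to converge strongly. Everything downstream of that is the soft Hilbert-space bookkeeping above.
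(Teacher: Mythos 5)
The paper does not prove Theorem~\ref{the:weak_strong}; it is cited verbatim from \cite{laux2021large, ullrich2024medianfiltermethodmean}, which you correctly recognize, so there is no in-paper proof to compare against. Your route --- truncate the spectral expansion, control the low modes by spectral convergence of eigenpairs and weak-times-strong convergence of the coefficients $\langle\chi_N,\phi_i^N\rangle_{\mu_N}$, control the high modes by the uniform tail estimate $e^{-2t\lambda_{K+1}^N}$, then send $K\to\infty$ --- is a sound and standard way to organize the argument, and the regularization the heat semigroup provides is indeed what upgrades weak input to strong output.

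Two points deserve a caveat. First, a genuine (if routine) gap: convergence of individual eigenvectors $\phi_i^N\to\phi_i$ in $TL^2$ is false in general if $\Delta_\mu$ has a repeated eigenvalue, or even when nearby eigenvalues merely cluster, because graph eigenvectors can rotate freely within the near-degenerate subspace as $N$ varies. The robust statement, and the one you should phrase the low-mode step around, is $TL^2$-convergence of the spectral projectors onto eigenvalue clusters; the finite-sum and tail estimates then go through unchanged. Second, a scope remark rather than a gap: the ingredient you put in a black box --- almost-sure, pollution-free spectral convergence of $\Delta_N$ under the borderline scaling $\eps_N^d N/\log N\to\infty$ of Assumption~\ref{ass:manifold} --- is not merely ``the technical core'' of the cited works, it essentially \emph{is} the content of \cite{ullrich2024medianfiltermethodmean} relative to \cite{laux2021large}. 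Your argument is therefore a correct reduction of the theorem to a statement of comparable depth, and it should be presented as conditional on that spectral input rather than as a self-contained proof.
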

The convergence of the volume constrained MBO scheme will be proven with the help of Theorem \ref{the:weak_strong} and the minimizing movement interpretation of the MBO scheme. The minimizing movement interpretation was first given by Esedo\={g}lu and Otto \cite{esedog2015threshold} in the continuous setting and afterwards adapted to the discrete setting by Bertozzi et al. \cite{MR3115457}. This interpretation shows that the volume MBO scheme is a time discretization of a gradient flow of the so-called tresholding energy. The thresholding energy in the continuous setting is given by 
\begin{align}\label{eq:thresholding_energy}
E_{h}(\chi) := \frac{1}{\sqrt{h}}\sum_{i\neq j} \int_M \chi_i(x) e^{-h \Delta_\mu} \chi_j(x) \,d\mu(x)
\end{align}
where $e^{-h\Delta_\mu}$ is the semigroup generated by the Laplace operator $\Delta_\mu$ on the manifold $(M, g, \mu)$. The metric in the minimizing movement scheme is given by
\begin{align*}
\frac{1}{2h} d_{h}^2(\chi,\tilde{\chi}) := - E_h(\chi - \tilde{\chi}) = \frac{1}{\sqrt{h}}\sum_{i = 1}^P \int_M (\chi_i - \tilde{\chi}_i)(x) \left(e^{-h\Delta_\mu}(\chi_i -\tilde{\chi}_i)\right)(x) \,d\mu(x).
\end{align*}
For more information on why this term behaves like a distance see Section 5.3 in \cite{esedog2015threshold}.

The quantities are defined analogously in the discrete setting. The discrete thresholding energy is given by
\begin{align}\label{eq:discrete_thresholding_energy}
E_{N,h}(\chi) := \frac{1}{\sqrt{h}}\sum_{i\neq j} \frac{1}{N} \sum_{x \in X_N} \chi_i(x) e^{-h \Delta_N} \chi_j(x)
\end{align} 
while the discrete distance term is
\begin{align}\label{eq:discrete_thresholding_distance}
\frac{1}{2h} d_{N,h}^2(\chi,\tilde{\chi}) := - E_{N,h}(\chi - \tilde{\chi})  = \frac{1}{\sqrt{h}}\sum_{i = 1}^P  \frac{1}{N}\sum_{x \in X_N} (\chi_i - \tilde{\chi}_i)(x) \left(e^{-h\Delta_N}(\chi_i -\tilde{\chi}_i)\right)(x).
\end{align}
In the minimizing movement interpretation we will also see the role of the order statistic as rescaled Lagrange multiplier. 
\begin{lemma}[Continuous minimizing movement interpretation]\label{lem:lagrangeMulti}
Let $m^\ell_\mu \in \R^p$ be the $V$-order statistic for the continuous MBO scheme. Then for the Lagrange multiplier 
\begin{equation}\label{def:lagrange}
\Lambda_\mu^\ell := \frac{2}{\sqrt{h}}\left(m_\mu^\ell - \frac{1}{P}\mathbbold{1}\right)
\end{equation}
it holds $\mathbbold{1} \cdot \Lambda_\mu^\ell = 0$ and we have the equivalence
\begin{equation}
\chi^\ell \in \argmin_{\int_M \chi \,d\mu= V } E_h(\chi) + \frac{1}{2h} d_{h}^2(\chi,\chi^{\ell-1})\\
\end{equation}
if and only if 
\begin{equation}
\chi^\ell \in \argmin_{\chi} E_h(\chi) + \frac{1}{2h} d_{h}^2(\chi,\chi^{\ell-1}) + \Lambda_\mu^\ell \cdot \int_M \chi \,d\mu. \label{eq:constrained_min_problem}
\end{equation}
\end{lemma}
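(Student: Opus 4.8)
The plan is to carry out the classical ``completing the square'' computation (as in Esedo\u{g}lu--Otto and Laux--Swartz) in the present weighted-manifold, multi-phase setting, and then to read off the Lagrange multiplier directly from the definition of the $V$-order statistic. The normalization claim $\mathbbold 1\cdot\Lambda_\mu^\ell=0$ is purely conventional: whether $m_\mu^\ell$ is a $V$-order statistic depends only on the differences $m_{\mu,i}^\ell-m_{\mu,j}^\ell$ (the $m$-induced condition \eqref{eq:cont_m_induced} is invariant under $m_\mu^\ell\mapsto m_\mu^\ell+c\,\mathbbold 1$), so by Lemma \ref{lem:uniqueness} we may fix the representative with $\mathbbold 1\cdot m_\mu^\ell=1$, and then $\mathbbold 1\cdot\Lambda_\mu^\ell=\tfrac2{\sqrt h}\bigl(\mathbbold 1\cdot m_\mu^\ell-\tfrac1P\,\mathbbold 1\cdot\mathbbold 1\bigr)=\tfrac2{\sqrt h}(1-1)=0$, using $\mathbbold 1\cdot\mathbbold 1=P$ and $\sum_iV_i=\mu(M)=1$.

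For the exact square, write $u_i:=u_i^{\ell-1}=e^{-h\Delta_\mu}\chi_i^{\ell-1}$ and use that $e^{-h\Delta_\mu}$ is self-adjoint on $L^2(M,\mu)$ and fixes constants ($\Delta_\mu\mathbbold 1=0$, hence $e^{-h\Delta_\mu}\mathbbold 1=\mathbbold 1$), together with $\sum_i\chi_i=\sum_i\chi_i^{\ell-1}=1$. Expanding the cross terms in the quadratic and the square and cancelling, one obtains
\begin{equation*}
E_h(\chi)+\frac1{2h}d_h^2(\chi,\chi^{\ell-1})=C(\chi^{\ell-1})-\frac2{\sqrt h}\sum_{i=1}^P\int_M\chi_i\,u_i\,d\mu,
\end{equation*}
with $C(\chi^{\ell-1})$ independent of $\chi$; in particular the left-hand (volume-constrained) minimization is literally the continuous MBO step \eqref{alg:volumeMBOcont}. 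Adding the Lagrange term and using that $\mathbbold 1\cdot\int_M\chi\,d\mu=\mu(M)$ is $\chi$-independent,
\begin{equation*}
E_h(\chi)+\frac1{2h}d_h^2(\chi,\chi^{\ell-1})+\Lambda_\mu^\ell\cdot\int_M\chi\,d\mu=\widetilde C(\chi^{\ell-1})-\frac2{\sqrt h}\sum_{i=1}^P\int_M\chi_i\,(u_i-m_{\mu,i}^\ell)\,d\mu,
\end{equation*}
so the right-hand minimization reduces to the pointwise problem of maximizing $\sum_i\chi_i(x)\,(u_i(x)-m_{\mu,i}^\ell)$ over $\chi(x)\in\{0,1\}^P$ with $\sum_i\chi_i(x)=1$, whose maximizers are precisely the $m_\mu^\ell$-induced clusterings of \eqref{eq:cont_m_induced}.

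It then remains to match the two solution sets. For $(\Rightarrow)$: if $\chi^\ell$ minimizes the constrained problem, let $\bar\chi$ be the feasible $m_\mu^\ell$-induced clustering guaranteed by Definition \ref{def:cont_median}; splitting $\sum_i\int_M\chi_i u_i\,d\mu=\sum_i\int_M\chi_i(u_i-m_{\mu,i}^\ell)\,d\mu+\sum_i m_{\mu,i}^\ell\int_M\chi_i\,d\mu$ for both $\chi^\ell$ and $\bar\chi$, the $m$-terms agree (both have volumes $V$), while the remaining term is maximized pointwise exactly on the $m_\mu^\ell$-induced set, so optimality of $\chi^\ell$ forces it to be $m_\mu^\ell$-induced, i.e.\ a minimizer of the Lagrangian (the same bookkeeping as in the proof of Theorem \ref{the:opt_criterium}). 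For $(\Leftarrow)$: a minimizer of the Lagrangian is $m_\mu^\ell$-induced, and, being feasible, it is a constrained minimizer by the continuous analogue of Corollary \ref{cor:opt_order_statistic}.

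I do not expect a serious obstacle here: the whole statement is the exact-square identity plus the pointwise characterization of $m$-induced clusterings. The only point that needs care is the $(\Leftarrow)$ direction, because neither optimization problem has a unique solution (the edge sets $H_{i_1\dots i_s}(m_\mu^\ell)$ may carry positive $u_\#\mu$-mass, so $m_\mu^\ell$-induced clusterings realize a whole range of volumes); accordingly the ``iff'' is to be understood as an identification of solution sets among \emph{feasible} clusterings, with feasibility of the Lagrangian minimizer imported from the defining property of the $V$-order statistic.
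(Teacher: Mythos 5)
Your proof is correct and takes essentially the same route as the paper: complete the square so that both minimization problems reduce, modulo $\chi$-independent constants, to pointwise maximization of $\sum_{i=1}^P\chi_i\bigl(u_i^{\ell-1}-m_{\mu,i}^\ell\bigr)$, whose maximizers are exactly the $m_\mu^\ell$-induced clusterings, and fix the normalization of $m_\mu^\ell$ (equivalently of $\Lambda_\mu^\ell$) using invariance under adding multiples of $\mathbbold{1}$. Your caveat on the $(\Leftarrow)$ direction is apt and in fact tacit in the paper's own proof as well, which invokes the analogue of Theorem~\ref{the:opt_criterium} and hence only compares feasible competitors; a literal reading of the ``iff'' does require, as you say, restricting the Lagrangian minimizers to the feasible ones, with feasibility of at least one minimizer guaranteed by the defining property of the $V$-order statistic.
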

\begin{proof}
By Definition \ref{def:cont_median} of the $V$-order statistic,
\begin{equation}\label{eq:lagrangemultis}
\chi^\ell \in \argmax_{\chi} \int_M \sum_{i=1}^P \chi_i (u_i^{\ell -1} - m_{\mu,i}) \,d\mu
\end{equation}
fulfills $\int_M \chi^\ell \,d\mu =  V = \int_M \chi^{\ell-1} \,d\mu$ where as always $u_i^{\ell-1} = e^{-h \Delta_\mu} \chi_i^{\ell-1}$.
By Lemma \ref{lem:uniqueness} such an $m_\mu^\ell$ exists and is optimal in the sense that
\begin{gather}
\chi^\ell \in \argmax_{\chi} \int_M \sum_{i=1}^P \chi_i (u_i^{\ell-1} - m_{\mu,i}^\ell) \nonumber \,d\mu \\
\Longleftrightarrow \nonumber\\ 
\chi^\ell \in \argmax_{\chi} \int_M \sum_{i=1}^P \chi_i u_i^{\ell-1} \,d\mu \quad \text{ s.t. } \int_M \chi^\ell \,d\mu=  V = \int_M \chi^{\ell-1} \nonumber \,d\mu
\end{gather}
The proof for the optimality of $\chi^m$ is analogous to the discrete proof of Theorem \ref{the:opt_criterium}. 
Also note that $m_\mu^\ell$ is optimal in the above sense if and only if $m_\mu^\ell + c\mathbbold{1}$ is optimal with $c \in \mathbb{R}$. We  pick $c = \frac{1}{P}$ s.t. $\mathbbold{1} \cdot \Lambda_\mu^\ell  = 0$ holds.
Then the result follows from the computation
\begin{align*}
&E_h(\chi) - E_h(\chi- \chi^{\ell-1}) + \Lambda_\mu^\ell \cdot  \int_M \chi \,d\mu \\
&= \frac{2}{\sqrt{h}} \sum_{i\neq j}\int_M \chi_i e^{-h\Delta_\mu} \chi_j^{\ell-1} \,d\mu + C + \Lambda_\mu^\ell \cdot \int_M \chi \,d\mu\\
&= \frac{2}{\sqrt{h}} \sum_{i=1}^P \int_M (1-\chi_i) e^{-h\Delta_\mu} \chi_i^{\ell-1} \,d\mu(x) + C + \frac{2}{\sqrt{h}} \smedian \cdot \int_M \chi \,d\mu\\
&= -\frac{2}{\sqrt{h}} \int_M \sum_{i=1}^P \chi_i \left(u_i^{\ell-1} - m_{\mu,i}^\ell\right) \,d\mu + \tilde{C}.
\end{align*}
Here, $C$ and $\tilde{C}$ denote constants independent of $\chi$. 
\end{proof}
\begin{lemma}[Discrete minimizing movement interpretation]\label{lem:lagrangeMulti_discrete}
Let $m_N^\ell \in \R^p$ be the $\{V_{N,i}\}_i$-order statistic for the continuous MBO scheme. Then for the Lagrange multiplier 
\begin{equation}
\Lambda_N^\ell := \frac{2}{\sqrt{h}}\left(m_N^\ell - \frac{1}{P}\mathbbold{1}\right)
\end{equation}
it holds $\mathbbold{1} \cdot \Lambda_N^\ell = 0$ and we have the equivalence
\begin{equation}
\chi^\ell \in \argmin_{\chi} E_{N,h}(\chi) + \frac{1}{2h} d_{N,h}^2(\chi,\chi^{\ell-1}) \text{ s.t. } \sum_{x \in X_N} \chi(x) = \sum_{x \in X_N} \chi^{\ell-1}(x)  \\
\end{equation}
if and only if
\begin{equation}
\chi^\ell \in \argmin_{\chi} E_{N,h}(\chi) + \frac{1}{2h} d_{N,h}^2(\chi,\chi^{\ell-1}) + \Lambda_N^\ell \cdot \frac{1}{N} \sum_{x \in X_N} \chi(x).
\end{equation}
\end{lemma}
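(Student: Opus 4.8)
The plan is to transcribe the proof of Lemma~\ref{lem:lagrangeMulti} to the discrete setting, replacing every integral $\int_M \,\cdot\; d\mu$ by the empirical average $\frac1N\sum_{x\in X_N}\cdot$ and the semigroup $e^{-h\Delta_\mu}$ by $e^{-h\Delta_N}$. The two structural facts that make the computation go through are that $e^{-h\Delta_N}$ is self-adjoint with respect to the inner product $\langle f,g\rangle_N := \frac1N\sum_{x\in X_N} f(x)g(x)$ (immediate, since $\Delta_N$ in \eqref{def:graph_laplace} is a symmetric matrix) and that it preserves constants, $e^{-h\Delta_N}\mathbbold 1 = \mathbbold 1$ (since $\Delta_N\mathbbold 1 = 0$).

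First I would normalize $m_N^\ell$. By Definition~\ref{def:median} the defining inequalities \eqref{eq:sep_cond} only involve the differences $e_i - e_j$, so $m_N^\ell + c\mathbbold 1$ is again a $\{V_{N,i}\}_i$-order statistic (with the same induced clustering) for every $c\in\R$; choosing $c$ so that $\sum_{i=1}^P m_{N,i}^\ell = 1$ gives $\mathbbold 1\cdot\Lambda_N^\ell = \frac2{\sqrt h}\bigl(\sum_i m_{N,i}^\ell - 1\bigr) = 0$. With this normalization fixed, note that $\tfrac1P\mathbbold 1\cdot\tfrac1N\sum_{x\in X_N}\chi(x) = \tfrac1{PN}\sum_x\sum_i\chi_i(x) = \tfrac1P$ for every clustering $\chi$, so the penalty $\Lambda_N^\ell\cdot\frac1N\sum_x\chi(x)$ and $\frac2{\sqrt h}\,m_N^\ell\cdot\frac1N\sum_x\chi(x)$ differ only by a $\chi$-independent constant.

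Then I would complete the square exactly as in the continuous proof. Using $\frac1{2h}d_{N,h}^2(\chi,\chi^{\ell-1}) = -E_{N,h}(\chi-\chi^{\ell-1})$, the bilinearity and (by self-adjointness) symmetry of the quadratic form $E_{N,h}$, and the pointwise identities $\sum_j\chi_j^{\ell-1} = \mathbbold 1 = \sum_i\chi_i$, one obtains
\begin{equation*}
E_{N,h}(\chi) - E_{N,h}(\chi-\chi^{\ell-1}) + \tfrac2{\sqrt h}\,m_N^\ell\cdot\tfrac1N\!\!\sum_{x\in X_N}\!\!\chi(x) = -\tfrac2{\sqrt h}\,\tfrac1N\!\!\sum_{x\in X_N}\sum_{i=1}^P \chi_i(x)\bigl(u_i^{\ell-1}(x) - m_{N,i}^\ell\bigr) + \tilde C,
\end{equation*}
with $u^{\ell-1} = e^{-h\Delta_N}\chi^{\ell-1}$ and $\tilde C$ independent of $\chi$. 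Hence minimizing the penalized functional over all clusterings amounts to maximizing $\frac1N\sum_x\sum_i\chi_i(x)(u_i^{\ell-1}(x)-m_{N,i}^\ell)$, i.e.\ to producing an $m_N^\ell$-induced clustering; and by the definition of the discrete $V$-order statistic together with the optimality Corollary~\ref{cor:opt_order_statistic} — playing here the role that Lemma~\ref{lem:uniqueness} plays in the continuous argument, cf.\ also the variational characterization Lemma~\ref{lem:discrete_char} — this is the same as $\chi^\ell$ solving the volume-constrained minimizing movement $\argmin_\chi E_{N,h}(\chi) + \frac1{2h}d_{N,h}^2(\chi,\chi^{\ell-1})$ subject to $\sum_x\chi(x) = V_N = \sum_x\chi^{\ell-1}(x)$, which is the claimed equivalence.

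I do not expect a genuine obstacle here: every step is a verbatim translation of the continuous case, the only points needing a line of justification being the two elementary spectral facts about $\Delta_N$ and — exactly as in the continuous setting — the availability of a \emph{feasible} $m_N^\ell$-induced clustering, which is precisely the existence statement for the discrete order statistic established in Section~\ref{sec:exis_opt}. Non-uniqueness of $m_N^\ell$-induced clusterings is harmless, since by the displayed identity they all realize the same value of the penalized functional.
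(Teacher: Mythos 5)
Your proof is correct and is exactly the transcription the paper has in mind: the paper's own proof of Lemma~\ref{lem:lagrangeMulti_discrete} reads, verbatim, ``Analogous to the continuous case,'' and your write-up supplies precisely the two discrete ingredients that justify this—the symmetry of $\Delta_N$ from \eqref{def:graph_laplace} (so $e^{-h\Delta_N}$ is self-adjoint with respect to $\langle f,g\rangle_N := \frac1N\sum_{x}f(x)g(x)$, which is what the continuous argument uses when it invokes self-adjointness of $e^{-h\Delta_\mu}$) and $\Delta_N\mathbbold 1=0$ (so $e^{-h\Delta_N}$ is mass-conserving), together with the correct substitutions of existence/optimality (Theorem~\ref{the:correctness}, Corollary~\ref{cor:opt_order_statistic}, and Lemma~\ref{lem:discrete_char} as the discrete stand-ins for Lemma~\ref{lem:uniqueness} and Lemma~\ref{lem:char_median}). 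The completing-the-square computation, the normalization of $m^\ell_N$ up to multiples of $\mathbbold1$, and the observation that $\tfrac1P\mathbbold 1\cdot\tfrac1N\sum_x\chi(x)$ is $\chi$-independent all carry over exactly as you describe, so there is nothing missing.
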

\begin{proof}
Analogous to the continuous case.
\end{proof}
Having done the preparation we are now in the position to prove the main result of the section, namely the convergence of the order statistic and the volume constrained MBO scheme. 
\begin{proof}[Proof of Proposition \ref{the:conv_os}]
We start by proving the convergence of the order statistic. Remember, the order statistic is the unique minimizer of the continuous energy
\begin{align*}
F(m)= \int_M \max_{i = 1, \dots, P}(u_i(x) - m_i) \,d\mu(x) + V \cdot m
\end{align*}
by the characterization in Lemma \ref{lem:char_median}. Therefore, it is enough to show that $F(m_N) \xrightarrow{N \rightarrow \infty} F(m)$.

For showing the convergence of the order statistic we first notice that the lower bound
\begin{align*}
0 \leq F(m_N) - F(m)
\end{align*}
always holds as $m$ is the minimizer of $F$. To derive a matching upper bound we rewrite 
\begin{align*}
F_N(m_N) &= \sum_{x \in X_N} \max_{i = 1, \dots, P}(u_{N,i}(x) - m_{N,i}) + V_N \cdot m_N \\
&= N \int_M \max_{i = 1, \dots, P}(\tilde{u}_{N,i}(x) - m_{N,i})\,d\mu(x) + V_N \cdot m_N.
\end{align*}
Here $\tilde{u}_N := u_N \circ T_N$ where $T_N$ is an arbitrary $2$-stagnating transport map as described in Theorem \ref{prop:TL2convergence}. Theorem \ref{the:weak_strong} yields the convergence of $\tilde{u}_N$ pointwise a.e.\ for a subsequence to $u$. The subsequence is again denoted by the index $N \in \N$ for easier readability. Using $F_N(m_N) \leq F_N(m)$ (by Lemma \ref{lem:discrete_char}) we compute
\begin{align*}
F(m_N) - F(m) &= F(m_N) - \frac{F_N(m_N)}{N} + \frac{F_N(m_N)}{N} - F(m)\\
&\leq F(m_N) - \frac{F_N(m_N)}{N} + \frac{F_N(m)}{N} - F(m)\\
&\leq (|m_N| + |m|)\Big|V - \frac{V_N}{N}\Big| + \int_M  \max_{i = 1, \dots, P}(u_{N,i} \circ T_N  - u_i)\,d\mu \\
 &\hspace{118pt}+ \int_M  \max_{i = 1, \dots, P}(u_i - u_{N,i} \circ T_N )\,d\mu.
\end{align*}
As the right-hand side goes to zero for $N \rightarrow \infty$ (by the dominated convergence theorem), the convergence of the order statistic follows. 

Again by Theorem \ref{the:weak_strong} the thresholding energy converges continuously meaning that 
\begin{align*}
E_{N,h}(\chi_N) \xrightarrow[]{N \rightarrow \infty} E_h(\chi) \quad \text{ as } N \rightarrow \infty.
\end{align*}
Together with the convergence of the order statistics it follows that
\begin{align*}
E_{N,h}(\bullet) + \frac{1}{2h}d^2_{N,h}(\bullet, \chi_N) + \langle \Lambda_N, \bullet\rangle_{L^2(X_N)} \xrightharpoonup[]{\Gamma} E_{h}(\bullet) + \frac{1}{2h}d^2_{h}(\bullet, \chi) + \langle \Lambda_\mu, \bullet \rangle_{L^2(M, \mu)}
\end{align*}
in the weak $TL^2$-topology. The $\Gamma$-compactness follows from the fact that $\|\chi_N \circ T_N\|_{L^2(\mu_N)} \leq 1$ for arbitrary clusterings $\chi_N: X_N \rightarrow \{0,1\}^P$ and transport maps $T_N$. Thus, the volume constrained MBO scheme converges up to subsequences. By induction the result holds also true for all later iterations for further subsequences. 
\end{proof}

\subsection{$L^2$-Estimate on the $V$-Order Statistic}\label{sec:l2}
The aim of this chapter is to introduce the necessary tools for the $L^2$-estimate, Proposition \ref{the:l2estimate}, and then prove it.
We sketch our approach for the $L^2$-estimate for two phases in the sharp interface limit. These types of estimates are already well known in the community of volume-preserving mean curvature flow; see for example \cite{MR3744817, MR3455792, laux2017convergence}. However, we find it instructive to think of the simplest case first, which unfortunately cannot be found in the literature. Hence we decide to sketch the idea of the proof in this simplest case: We take a volume-preserving mean curvature flow, say on the flat torus $M = \mathbb{T}^d$.

Assume that we have an open set $\Omega(t)$ that evolves with volume-preserving mean curvature flow. The motion is then described by
\begin{equation}\label{eq:preserving_MCF}
v = -H + \Lambda \quad \text{ on } \Sigma(t) = \partial \Omega(t).
\end{equation}
where $v$ is the normal velocity, $H$ is the mean curvature and $\Lambda = \frac{1}{\Area(\Sigma(t))} \int_{\Sigma(t)} H \,dS \in \R$ is the Lagrange multiplier ensuring the mass conservation. Testing \eqref{eq:preserving_MCF} with $\xi \cdot n$ for an arbitrary vector field $\xi$ and the normal $n$ as well as using $\int_{\Sigma(t)} -H \xi \cdot n \,ds = \int_{\Sigma(t)} \div(\xi) \,dS$ and Gauss' Theorem yields
\begin{equation}\label{eq:distributional_MCF}
\int_{\Sigma(t)} v \xi \cdot n \,dS = \int_{\Sigma(t) } \div \xi \,dS + \Lambda(t) \int_{\Omega(t)} \div(\xi) \,ds.
\end{equation}
The idea is to choose for every $t \in [0,T]$  a test vector field $\xi$ such that 
\begin{equation}\label{eq:sharp_xi}
\div(\xi) = \chi_{\Omega(t)} - \int \chi_{\Omega(t)} \,dx
\end{equation}
and hope that $\| \xi \|_{\infty}, \|\nabla \xi\|_{\infty}$ are controlled. One gets by rearranging and squaring \eqref{eq:distributional_MCF}, integrating in time and using $\int \chi_\Omega \,dx = V \in (0,1)$ for some fixed $V$ that
\begin{align*}
 V^2(1-V)^2 \int_0^T \Lambda^2(t) \,dt\leq \|\xi\|_{\infty}^2 \sup_t \Area(\Sigma(t)) \int_0^T \int_{\Sigma(t)} v^2 \,dS \,dt + \|\nabla\xi\|_\infty^2 T^2 \left( \sup_t \Area(\Sigma(t))\right)^2
\end{align*}
As $\frac{d}{\,dt} \Area(\Sigma(t)) = - \int_{\Sigma(t)}v^2 \,dS$ it follows $\int_0^T \int_{\Sigma(t)} v^2 \,dS \,dt \leq \Area(\Sigma(0)) =: E_0$ and $\Area(\Sigma(t)) \leq E_0$, which implies
\begin{equation}
\int_0^T \Lambda^2(t) \,dt \leq \frac{1}{V^2(1-V)^2} \|\xi\|_{C^{1,1}}^2 T(1+T) E_0^2.
\end{equation}
Note that \eqref{eq:sharp_xi} is underdetermined, so we make the Ansatz $\xi = \nabla \phi$. Then $\Delta \phi = \chi_\Omega -V$ has a unique solution and elliptic regularity gives the sharp estimates (modulo mollifying the right-hand side on some scale $\eps > 0$).

The proof for the estimate of the Lagrange multiplier of the volume-constrained MBO scheme is similar to the presented proof sketch but has some further complications: The differential equation \eqref{eq:preserving_MCF} is not valid for the MBO scheme. Thus we have to replace \eqref{eq:distributional_MCF} by the Euler-Lagrange equation of the minimizing movement interpretation of the MBO scheme (see \eqref{eq:ELequation}). The estimates of the first variation of the energy and distance term in the Euler-Lagrange equation will be one of the main challenges of the proof later. The other hurdle will be the complications introduced by the multi-phase setting. This is solved by picking $\xi$ as in \eqref{eq:sharp_xi} for every phase by its own and then finding a suitable linear combination to isolate a norm of $\Lambda$ from the corresponding term in \eqref{eq:distributional_MCF}.

We begin by giving some more results on the minimizing movement interpretation we already witnessed in the previous section.
The minimizing movement interpretation is a strong tool that allows variational techniques to be used.

Iteratively testing the minimizing movement interpretation with the predecessor we immediately get the following a-priori energy dissipation estimate.
\begin{lemma}[Energy dissipation estimate]\label{lem:energyDis}
Let $\chi^\ell$ result of $\ell$ iterations of the MBO scheme then it holds
\begin{equation}
E_h(\chi^L) + \frac{h}{2}\sum_{\ell = 1}^L \left(\frac{d_h(\chi^\ell, \chi^{\ell-1})}{h} \right)^2 \leq E_0 := E_h(\chi^0).
\end{equation}
\end{lemma}
In the next lemma we adapt the proof of Yip and one of the authors \cite{MR3925538} of the monotonicity of a rescaled thresholding energy. The scaling is usually used for tresholding schemes in codimension 2. Although our scheme is ``the classical one'' in codimension 1 the monotonicity is still useful to compare energies at different scales. Note that the approximate monotonicity of Esedo\={g}lu and Otto \cite{esedog2015threshold} is not expected to hold in our case.

\begin{lemma}\label{lem:monotonicity_rescaled}
Let $\chi$ be fixed. Then for every $h>0$ it holds
\begin{align}
&\frac{d}{dh}\left(\frac{1}{\sqrt{h}}E_h(\chi) \right)\leq 0, \label{eq:mon_negativ}\\
&\frac{d}{dh} \left( \sqrt{h} E_h(\chi) \right) \geq 0.\label{eq:mon_positiv}
\end{align}
In particular for any constant $C > 0$ it holds
\begin{equation}
E_{Ch}(\chi) \leq \max\left\{ \sqrt{C}, \frac{1}{\sqrt{C}}\right\} E_h(\chi).
\end{equation}
\end{lemma}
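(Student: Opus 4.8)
The plan is to diagonalize the heat semigroup and reduce both monotonicity statements to the single elementary inequality $e^{-t}(1+t)\le 1$ for $t\ge 0$. Since $M$ is closed and connected, $\Delta_\mu$ has discrete spectrum $0=\lambda_1<\lambda_2\le\lambda_3\le\cdots$ with an $L^2(\mu)$-orthonormal eigenbasis $(\phi_k)_k$, and $\phi_1\equiv\mathbbold 1$ because $\mu$ is a probability measure. Expanding a clustering $\chi=(\chi_i)_{i=1}^P$, $\chi_i=\sum_k a_{i,k}\phi_k$ with $a_{i,k}=\langle\chi_i,\phi_k\rangle_{L^2(\mu)}$, and using self-adjointness of $e^{-h\Delta_\mu}$, the thresholding energy \eqref{eq:thresholding_energy} becomes $\sqrt h\,E_h(\chi)=\sum_{i\ne j}\langle\chi_i,e^{-h\Delta_\mu}\chi_j\rangle=\sum_k e^{-h\lambda_k}\bigl(c_k^2-s_k\bigr)$ with $c_k:=\sum_i a_{i,k}$ and $s_k:=\sum_i a_{i,k}^2$. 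I would then record the two facts that use that $\chi$ is a clustering: orthogonality of $\phi_k$ ($k\ge 2$) to constants gives $c_1=1$ and $c_k=0$ for $k\ge 2$; and $\chi_i^2=\chi_i$ together with $\sum_i\chi_i\equiv 1$ gives $\sum_k s_k=\sum_i\|\chi_i\|_{L^2(\mu)}^2=\sum_i\int_M\chi_i\,d\mu=1$. Hence $\sqrt h\,E_h(\chi)=(1-s_1)-\sum_{k\ge 2}e^{-h\lambda_k}s_k$ and $\tfrac1{\sqrt h}E_h(\chi)=\tfrac1h\bigl((1-s_1)-\sum_{k\ge 2}e^{-h\lambda_k}s_k\bigr)$.

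From here each monotonicity statement is one differentiation; term-by-term differentiation is legitimate because $\sum_k\lambda_k e^{-h\lambda_k}s_k$ converges locally uniformly in $h>0$, being dominated there by a constant times $\sum_k s_k<\infty$ (using $\sup_{t\ge 0}te^{-t/2}<\infty$). For \eqref{eq:mon_positiv} one gets $\frac{d}{dh}\bigl(\sqrt h\,E_h(\chi)\bigr)=\sum_{k\ge 2}\lambda_k e^{-h\lambda_k}s_k\ge 0$ directly. For \eqref{eq:mon_negativ} one computes $\frac{d}{dh}\bigl(\tfrac1{\sqrt h}E_h(\chi)\bigr)=\tfrac1{h^2}\bigl(-1+\sum_{k\ge 1}e^{-h\lambda_k}(1+h\lambda_k)s_k\bigr)$ (using $\lambda_1=0$ to fold the $k=1$ term back in), and then the inequality $e^{-t}(1+t)\le 1$ applied with $t=h\lambda_k$ gives $\sum_k e^{-h\lambda_k}(1+h\lambda_k)s_k\le\sum_k s_k=1$, so the bracket is $\le 0$. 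Finally, the comparison $E_{Ch}(\chi)\le\max\{\sqrt C,1/\sqrt C\}\,E_h(\chi)$ follows by cases: if $C\ge 1$, monotonicity of $t\mapsto t^{-1/2}E_t(\chi)$ from $h$ to $Ch\ge h$ yields $(Ch)^{-1/2}E_{Ch}(\chi)\le h^{-1/2}E_h(\chi)$, i.e.\ $E_{Ch}(\chi)\le\sqrt C\,E_h(\chi)$; if $0<C\le 1$, monotonicity of $t\mapsto t^{1/2}E_t(\chi)$ from $Ch\le h$ to $h$ yields $(Ch)^{1/2}E_{Ch}(\chi)\le h^{1/2}E_h(\chi)$, i.e.\ $E_{Ch}(\chi)\le C^{-1/2}E_h(\chi)$.

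I do not expect a genuine obstacle here; the only places needing care are (i) justifying the spectral expansion and the term-by-term differentiation, which is routine given the exponential decay of the heat semigroup, and (ii) using honestly that $\chi$ is a clustering rather than an arbitrary $\{0,1\}^P$-valued map — it is precisely the identities $c_1=1$, $c_k=0$ ($k\ge 2$) and $\sum_k s_k=1$ that make the signs come out right, so I would state them explicitly rather than gloss over them. An essentially equivalent route that avoids eigenfunctions is to write $\sqrt h\,E_h(\chi)=\mu(M)-\sum_i\|e^{-h\Delta_\mu/2}\chi_i\|_{L^2(\mu)}^2$ by the semigroup property and self-adjointness, then differentiate using $\frac{d}{dh}\|e^{-h\Delta_\mu/2}\chi_i\|^2=-\|\Delta_\mu^{1/2}e^{-h\Delta_\mu/2}\chi_i\|^2$ together with the operator inequality $0\le\Delta_\mu e^{-t\Delta_\mu}(1+t\Delta_\mu)\le\tfrac1t\,\mathrm{Id}$; I would pick whichever reads more cleanly in the final writeup, but the eigenfunction version is the most transparent.
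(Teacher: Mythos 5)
Your proof is correct and takes essentially the same approach as the paper: a spectral expansion of $E_h(\chi)$ reducing \eqref{eq:mon_negativ} to the elementary inequality $e^{-t}(1+t)\le 1$, a direct differentiation of the spectral sum for \eqref{eq:mon_positiv}, and the two-case monotonicity comparison for the final estimate. The only cosmetic difference is that the paper proves \eqref{eq:mon_positiv} via the semigroup splitting $\sqrt h\,E_h(\chi)=\sum_i\int_M\chi_i\,d\mu-\sum_i\|e^{-h\Delta_\mu/2}\chi_i\|_{L^2(\mu)}^2$ and identifies the derivative as the Dirichlet energy $\sum_i\int_M|\nabla e^{-h\Delta_\mu/2}\chi_i|^2\,d\mu\ge 0$ --- precisely the alternative route you mention at the end --- rather than the eigenfunction expansion you lead with.
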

\begin{proof}
We start by proving inequality \eqref{eq:mon_negativ}. Denote by $\{\psi_k\}_{k \in \mathbb{N}}$ an orthonormal basis of $L^2(M, \mu)$ of eigenfunctions of $\Delta_\mu$, i.e. 
\begin{align*}
\Delta_\mu \psi_k = \lambda_k \psi_k,
\end{align*}
where $0 \leq \lambda_1 \leq \lambda_2 \leq \dots$. It then holds $e^{-h\Delta_\mu}\psi_k = e^{-\lambda_k h}\psi_k$ as
\begin{align*}
\frac{d}{dh} e^{-\lambda_k h} \psi_k = -\lambda_k e^{-\lambda_k h}\psi_k = - \Delta_\mu e^{-\lambda_k h} \psi_k
\end{align*}
is the unique solution to the heat equation on $M$.

Now the orthonormal basis is used to transform $\frac{1}{\sqrt{h}}E_h(\chi)$ with similar ideas as used in Fourier analysis. Due to the orthonormality of the $\psi_k$, every function $f \in L^2(M, \mu)$ can be written as 
\begin{align*}
f(x) = \sum_{k = 1}^\infty \langle \psi_k, f\rangle_{L^2(M, \mu)} \psi_k(x) \quad \text{ and } \quad  \langle f,g \rangle_{L^2(M,\mu)} = \sum_{k = 1}^\infty \langle \psi_k, f\rangle_{L^2(M, \mu)} \langle \psi_k, g\rangle_{L^2(M, \mu)}.
\end{align*}
Now this decomposition yields, using also $\int_M e^{-h \Delta_\mu} \chi_i \; d\mu = \int_M \chi_i \;d \mu = \int_M \chi_i^2 \;d\mu$,
\begin{align*}
\frac{1}{\sqrt{h}} E_h(\chi) &= \frac{1}{h} \sum_{i \neq j} \int_M \chi_j(x) (e^{-h \Delta_\mu} \chi_i)(x) \,d\mu(x)\\
&=\frac{1}{h}  \sum_{i = 1}^P \left(\int_M \chi_i^2(x) \,d\mu(x) - \int_M \chi_i(x) (e^{-h \Delta_\mu} \chi_i)(x) \,d\mu(x) \right)\\
&= \sum_{i = 1}^P \sum_{k = 1}^\infty  \langle \psi_k, \chi_i \rangle_{L^2(M, \mu)} ^2 \lambda_k\left(\frac{1-e^{-\lambda_k h}}{\lambda_k h}\right).
\end{align*}
Thus it remains to prove for inequality \eqref{eq:mon_negativ} that $\frac{1-e^{-\lambda_k h}}{\lambda_k h}$ is non-increasing in $h$ for every $k$. Denoting $s = h \lambda_k$ this follows from the fact that $e^{-s} (1+s) \leq 1$ for all $ s \in \R$ and the direct computation
\begin{equation*}
\frac{d}{ds} \left(\frac{1-e^{-s}}{s}\right) = \frac{e^{-s}}{s} - \frac{1- e^{-s}}{s^2} = \frac{e^{-s} (1+s) - 1}{s^2} \leq 0.  
\end{equation*}
To prove inequality \eqref{eq:mon_positiv} we use the semi-group property to write
\begin{align*}
\sqrt{h}E_h(\chi) &= \sum_{i = 1}^P \int_M (1- \chi_i) e^{-h \Delta_\mu}\chi_i \, d\mu\\
&= \sum_{i = 1}^P \left( \int_M e^{-h \Delta_\mu} \chi_i \, d\mu - \int_M \chi_i e^{-\frac{h}{2} \Delta_\mu} e^{-\frac{h}{2} \Delta_\mu}\chi_i \, d\mu \right)\\
&= \sum_{i = 1}^P \int_M \chi_i \, d\mu - \sum_{i = 1}^P \int_M \left(e^{-\frac{h}{2} \Delta_\mu} \chi_i \right)^2 d\mu.
\end{align*}
Now, by our sign convention of $\Delta_\mu = -\divrho \circ \nabla$ inequality \eqref{eq:mon_positiv} follows immediately:
\begin{align*}
\frac{d}{dh} \left(\sqrt{h} E_h(\chi)\right) &= - \sum_{i = 1}^P \int_M e^{-\frac{h}{2} \Delta_\mu} \chi_i (-\Delta_\mu) e^{-\frac{h}{2} \Delta_\mu} \chi_i \, d\mu \\
&= \int_{i = 1}^P \int_M \left| \nabla e^{-\frac{h}{2} \Delta_\mu}\chi_i \right|^2 \, d\mu\\
&\geq 0. \qedhere
\end{align*}
\end{proof}

The previous lemma can only be used for time-scales that are comparable. To get some kind of monotonicity across scales in the tresholding energy we use the following comparison of the thresholding energy with the $L^1$-norm of the gradient. In the Euclidean case, it is easy to see that by the fundamental theorem of calculus, Fubini, and the translation invariance of $\int_{\R^d} dx$ 
\begin{align*}
\int_{\R^d} \left|u - e^{-h\Delta}u\right| \,dx &\leq \int_{R^d} \int_{R^d} \frac{1}{(4\pi)^{\frac{d}{2}}}e^{-\frac{|z|^2}{4}} |u(x)- u(x- \sqrt{h}z)| \, dz \, dx\\
&=  \int_{R^d} \int_{R^d} \frac{1}{(4\pi)^{\frac{d}{2}}}e^{-\frac{|z|^2}{4}} \left|\int_0^1 \frac{d}{dt} u(x + t\sqrt{h}z) \, dt\right| \, dz \, dx\\
&\leq \int_0^1 \int_{\R^d} \frac{1}{(4\pi)^{\frac{d}{2}}} e^{\frac{|z|^2}{4}} \sqrt{h} |z| \int_{\R^d} |\nabla u(x+t\sqrt{h}z) | \,dx \,dz \, dt\\
&\leq \sqrt{h} \left(\int_{\R^d} \frac{1}{(4\pi)^{\frac{d}{2}}} |z| e^{- \frac{|z|^2}{4}} \, dz \right) \int_{\R^d} |\nabla u(x)| \,dx.
\end{align*}
So in the Euclidean case, we have
\begin{align*}
\int_{\R^d} \left| u - e^{-h \Delta} u\right| \leq C \sqrt{h} \int_{\R^d} |\nabla u|
\end{align*}
with $C$ the first moment of the standard Gaussian.

The proof in our case of a (weighted) manifold is a careful adaptation of the proof of the Segment Inequality by Cheeger and Colding \cite{MR1320384}. The only additional ingredient are Gaussian bounds on the heat kernel that were also used in \cite{laux2021large}.
\begin{lemma}\label{lem:bound_by_grad}
For any $h > 0$ and $u \in C^1(M)$ it holds
\begin{align}\label{eq:bound_by_grad}
\int_M \left| u - e^{-h \Delta_\mu} u\right| \,d\mu \leq C(M, \mu) \sqrt{h} \left(\int_M \left| \nabla u \right| \,d\mu + 1\right)
\end{align}
where $C(M, \mu)$ is a constant that depends only on $M$ and $\mu$ but neither on $u$ or $h$.
\end{lemma}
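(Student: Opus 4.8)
The plan is to adapt the Cheeger--Colding Segment Inequality \cite{MR1320384} to the heat semigroup on the weighted manifold $(M,g,\mu)$, replacing the exact heat kernel by its Gaussian-type comparison. First I would write
\[
u(x) - e^{-h\Delta_\mu}u(x) = \int_M p(h,x,y)\,(u(x)-u(y))\,d\mu(y),
\]
using $\int_M p(h,x,y)\,d\mu(y)=1$, so that
\[
\int_M |u - e^{-h\Delta_\mu}u|\,d\mu \le \int_M\int_M p(h,x,y)\,|u(x)-u(y)|\,d\mu(y)\,d\mu(x).
\]
The next step is to invoke the Gaussian upper bound on the heat kernel (valid on a closed manifold, cf.\ \cite{GrigoryanA2009Hkaa} and as used in \cite{laux2021large}): there are constants $C_1,C_2>0$ depending only on $(M,\mu)$ with $p(h,x,y)\le \frac{C_1}{h^{d/2}}e^{-C_2\dist_M^2(x,y)/h}$, so it suffices to bound
\[
\int_M\int_M \frac{C_1}{h^{d/2}}e^{-C_2\dist_M^2(x,y)/h}\,|u(x)-u(y)|\,d\mu(y)\,d\mu(x).
\]

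For the main estimate I would fix $x$, pass to geodesic polar coordinates around $x$, and write $|u(x)-u(y)| \le \int_0^{\dist_M(x,y)} |\nabla u(\gamma(s))|\,ds$ along the minimizing geodesic $\gamma$ from $x$ to $y$ (which exists for a.e.\ $y$). This is precisely where the Cheeger--Colding idea enters: the polar coordinate system aligns with the geodesics emanating from $x$, so one can write the double integral as an integral over the unit sphere $S_xM\subset T_xM$ of directions $\theta$ and radii, Fubini in the radial variable, and then shift along the radial coordinate of a fixed angle $\theta$ — this replaces the Euclidean translation invariance of $\int_{\R^d}dx$ used in the model computation. The Jacobian of the exponential map contributes a bounded factor (using compactness of $M$, lower Ricci bound, injectivity radius, and $1/c\le\rho\le c$); the Gaussian weight $\frac{C_1}{h^{d/2}}e^{-C_2 r^2/h}$ supplies, after integrating the radial variable $r$ against $|\nabla u|$ evaluated at an intermediate point, exactly the factor $\sqrt h$ — since $\int_0^\infty \frac{C_1}{h^{d/2}}e^{-C_2 r^2/h}\, r\, r^{d-1}\,dr \lesssim \sqrt h$ after the change of variables $r = \sqrt h\,z$, matching the ``first moment of the Gaussian'' in the Euclidean sketch. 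Collecting terms yields
\[
\int_M |u - e^{-h\Delta_\mu}u|\,d\mu \le C(M,\mu)\sqrt h \int_M |\nabla u|\,d\mu,
\]
and the additional $+1$ in \eqref{eq:bound_by_grad} is harmless slack that absorbs the error terms away from the diagonal (the exponentially small contribution from pairs $(x,y)$ with $\dist_M(x,y)$ not small, where the geodesic segment argument is not uniformly controlled, as well as the finite-volume factor $\mu(M)$).

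The main obstacle, as the authors indicate in the Context section, is the lack of translation invariance of geodesics on $M$: in the Euclidean case one freely shifts the integration variable $x\mapsto x+t\sqrt h z$, but on $M$ there is no such global symmetry. The Cheeger--Colding device of working in polar coordinates around each base point and integrating along fixed-angle rays is what rescues the argument, but it requires care to control the metric distortion of the exponential map uniformly in the base point (using compactness of $M$) and to handle the cut locus, where minimizing geodesics are not unique — this is a measure-zero set and does not affect the integral, but it must be acknowledged. A secondary technical point is matching the Gaussian bound on $p(h,x,y)$ with the radial measure $d\mu$ in polar coordinates so that the $\sqrt h$ scaling comes out cleanly and the constant genuinely depends only on $(M,\mu)$ and not on $u$ or $h$.
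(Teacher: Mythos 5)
Your proposal follows the paper's proof essentially step by step: the heat-kernel representation, the Gaussian upper bound, the split into near-field (inside the injectivity radius) and far-field contributions with the far field absorbed into the "$+1$", the fundamental theorem of calculus along minimizing geodesics, and the Cheeger--Colding device of working in geodesic polar coordinates around $x$ and shifting along fixed-angle rays via Fubini, with the $\sqrt h$ emerging from the first moment of the Gaussian. The one place you gloss is the precise mechanism of the shift: what is needed is not merely that the polar-coordinate area element $\mathcal{A}(\theta,r)$ is "bounded", but the two-sided comparison $\bar C^{-1}r^{d-1}\le\mathcal{A}(\theta,r)\le\bar C r^{d-1}$, which yields the ratio bound $\mathcal{A}(\theta,s)/\mathcal{A}(\theta,t)\lesssim (s/t)^{d-1}$ that lets one trade the area element at radius $s$ for the one at the intermediate radius $t\le s$ where $|\nabla u|$ is evaluated; this is exactly the inequality that replaces Euclidean translation invariance and must be stated to close the argument.
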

\begin{rem}
Recall, by Assumption \ref{ass:manifold} is our domain given by a smooth, closed (and hence compact), connected Riemannian manifold $(M, \mu, g)$. The probability measure $\mu$ is absolutely continuous with respect to the volume form and has a smooth positive density. The smooth metric on the manifold is denoted by $g$. The Laplace operator $\Delta_\mu$, the heat semigroup $e^{-h \Delta_\mu}$ and the heat kernel $p(h,x,y)$ are as introduced in Definition \ref{def:laplace_on_M}.
\begin{itemize}
\item[i)] \textbf{Scaling of volume and area.} Denote by $B_r(x)$ the ball around $x$ with radius $r$ with respect to the Riemannian distance. There exists $C > 0$ such that for any $x \in M$ and any $r > 0$ it holds 
\begin{align}\label{eq:doubling_prop}
 \frac{1}{C} r^{d} \leq \mu(B_r(x)) \leq C r^d.
\end{align}
There exists $\bar{C} > 0$ and $r_0 >0$ such that, denoting by $\mathcal{A}(\theta, r)$ the area element for polar normal coordinates around a point $x \in M$, for any $\theta \in \mathbb{S}^{d-1}$ and $r > 0$ it holds
\begin{equation}\label{eq:area_scaling}
 \frac{1}{\bar{C}} r^{d-1} \leq \mathcal{A}(\theta, r) \leq \bar{C} r^{d-1}.
\end{equation}
\item[ii)] \textbf{Asymptotic expansion of the heat kernel.} There are smooth coefficient functions $v_i \in C^\infty(M \times M)$ such that for any $j \in \N$ and $K > j + \frac{d}{2}$ there exists a constant $C_K$ such that for $x,y \in M$ with $\dist_M(x,y) \leq \frac{\inj}{2}$ it holds 
\begin{align}\label{eq:asymptotic_exp}
\left|\nabla^j \left(p(h,x,y) - \frac{e^{\frac{-\dist_M^2(x,y)}{4h}}}{(4\pi h)^{d/2}} \sum_{i = 0}^K v_i(x,y) h^i \right) \right| \leq C_K h^{K + 1 - \frac{d}{2}}.
\end{align}
\item[iii)]\textbf{Gaussian bounds for heat kernel.} There are constants $C_1,C_2,C_3,C_4 > 0$ such that for every $h> 0$ and $x,y \in M$ it holds
\begin{align}\label{eq:gaussian_one}
\frac{C_{1}}{\mu(B_{\sqrt{h}}(x))}e^{-\frac{\dist_M^2(x,y)}{C_{2} h}} \leq p(h,x,y) \leq \frac{C_{3}}{\mu(B_{\sqrt{h}}(x))}e^{-\frac{\dist_M^2(x,y)}{C_{4} h}}.
\end{align}
\item[iv)] \textbf{Upper Gaussian bound for gradient of heat kernel.} There are constants $\tilde{C}_1, \tilde{C}_2 > 0$ such that for every $h> 0$ and $x,y \in M$ it holds
\begin{align}\label{eq:gaussian_two}
|\nabla_x p(h,x,y)| \leq \frac{\tilde{C}_1}{\sqrt{h} \mu(B_{\sqrt{h}}(x))}e^{-\frac{\dist_M^2(x,y)}{\tilde{C}_2 h}}.
\end{align}
\end{itemize}
The scaling of area and volume are induced by the Bishop--Gromorov inequality \cite{MR169148} (see also \cite[Lemma 36]{MR2243772}) and the explicit representation of the volume and area element on the hyperbolic space. The necessary bound on the Ricci curvature follows from our regularity assumption and the compactness of the manifold.
The asymptotic expansion can be proven by the parametrix method as done in \cite{MR1462892}.
The Gaussian bounds \eqref{eq:gaussian_one} and \eqref{eq:gaussian_two} are a consequence of the Li--Yau inequality \cite{MR1186481}.
\end{rem}
\begin{proof}[Proof of Lemma \ref{lem:bound_by_grad}]
We begin by using the heat kernel $p(h,x,y)$ to rewrite the left-hand-side of \eqref{eq:bound_by_grad} as
\begin{align*}
\int_M \left| u - e^{-h \Delta_\mu} u\right| \,d\mu = \int_M \left|\int_M p(h,x,y) (u(x) - u(y))  \,d\mu(x)\right| \,d\mu(y).
\end{align*}
Next we use the Gaussian bound \eqref{eq:gaussian_one} to estimate the heat kernel by a kernel that looks similar to the Euclidean heat kernel. This yields
\begin{align}\label{eq:rhs_of_grad}
\int_M \left| u - e^{-h \Delta_\mu} u \right| \,d\mu \lesssim  \int_M \int_M \frac{1}{\mu\big(B_{\sqrt{h}} (x)\big) } e^{-\frac{\dist_M^2(x,y)}{C_4 h}} | u(x) - u(y)| \,d\mu(y) \,d\mu(x).
\end{align}
As we want to work with polar normal coordinates it is necessary to restrict the domain of integration to the set $\left\{ x,y \in M: \dist_M(x,y) \leq \inj \right\}$ where $\inj$ is the injectivity radius of $M$. Therefore, we use the scaling property \eqref{eq:doubling_prop} and the fact that $\frac{1}{h^{d/2+1}}e^{-1/h}$ is uniformly bounded in $h$ to estimate the integral over the complementary set by
\begin{align*}
&\int_M \int_{M \setminus B_{\inj}(x)} \frac{1}{\mu\big(B_{\sqrt{h}} (x)\big) } e^{-\frac{\dist_M^2(x,y)}{C_4 h}} | u(x) - u(y)| \,d\mu(y) \,d\mu(x) \\
&\lesssim \sqrt{h} \int_M \int_{M \setminus B_{\inj}(x)} \frac{1}{h^{\frac{d}{2}+1} } e^{-\frac{\inj^2}{C_4 h}} | u(x) - u(y)| \,d\mu(y) \,d\mu(x)\\
&\lesssim \sqrt{h}.
\end{align*} 
Denote by $\gamma_{x,y}:[0, \dist(x,y)] \rightarrow M$ the unit speed geodesic from $x \in M$ to $y \in M$ to estimate the remaining term of the right-hand side of \eqref{eq:rhs_of_grad} by
\begin{align*}
&\int_M \int_{B_{\inj}(x)} \frac{1}{\mu\left(B_{\sqrt{h}} (x)\right) }  e^{-\frac{\dist_M^2(x,y)}{C_4 h}} | u(x) - u(y)| \,d\mu(y) \,d\mu(x) \\
&\leq \int_M \int_{B_{\inj}(x)} \frac{1}{\mu\left(B_{\sqrt{h}} (x)\right) }  e^{-\frac{\dist_M^2(x,y)}{C_4 h}}  \int_0^{\dist_M(x,y)} \left| \frac{d}{dt} u(\gamma_{x,y}(t)) \right|\,dt  \,d\mu(y) \,d\mu(x)\\
& \leq  \int_M \int_{B_{\inj}(x)} \frac{1}{\mu\left(B_{\sqrt{h}} (x)\right) }  e^{-\frac{\dist_M^2(x,y)}{C_4 h}}  \int_0^{\dist_M(x,y)} | \nabla u(\gamma_{x,y}(t)) |\, dt \,d\mu(y) \,d\mu(x).
\end{align*}
Now define 
\begin{align*}
F(x,y)&:= \int_0^{\dist_M(x,y)} | \nabla u(\gamma_{x,y}(t)) | \,dt, \quad \quad f(x,y) = h^{-\frac{d}{2} }  e^{-\frac{\dist_M^2(x,y)}{C_4 h}}
\end{align*}
and, with a slight abuse of notation, denote by $\gamma_{x,\theta}:[0, \inj] \rightarrow M$ the unit speed geodesic starting in $x \in M$ and going in direction $\theta \in T_x M$. Using polar normal coordinates with the area element $\mathcal{A}(\theta,s)$ at radius $s$ and direction $\theta \in \mathbb{S}^{d-1}$ as well as once more the scaling property \eqref{eq:area_scaling}, it follows
\begin{align}
&\int_M \int_{B_{\inj}(x)} \frac{1}{\mu\big(B_{\sqrt{h}} (x)\big) }  e^{-\frac{\dist_M^2(x,y)}{C_4 h}} | u(x) - u(y)| \,d\mu(x) \,d\mu(y) \nonumber\\
&\lesssim   \int_M \int_{B_{\inj}(x)} f(x,y) F(x,y)\,d\mu(x) \,d\mu(y) \nonumber\\
&=\int_M \int_0^{\inj} \int_{\mathbb{S}^{d-1}} \mathcal{A}(\theta, s)  f(x,\gamma_{x,\theta}(s)) F(x,\gamma_{x,\theta}(s)) \,d\mathcal{H}^{d-1}(\theta)\,ds \,d\mu(x). \label{eq:estimate_fF}
\end{align}
By definition $f(x,\gamma_{x,\theta}(s)) = h^{-\frac{d}{2}}  e^{-\frac{s^2}{C_4 h}}$ is independent of $x$ and $\theta$. It holds by the scaling properties of the area element \eqref{eq:area_scaling} that
\begin{align*}
\mathcal{A}(\theta,s) =  \mathcal{A}(\theta,t) \frac{\mathcal{A}(\theta,s)}{\mathcal{A}(\theta,t)} \lesssim \mathcal{A}(\theta,t) \frac{s^{d-1}}{t^{d-1}}.
\end{align*} 
Together with the obvious identity $\gamma_{x,  \gamma_{x,\theta}(s)}(t) = \gamma_{x,\theta}(t)$ and $t = \dist_M(x, \gamma_{x,\theta}(t))$ it follows that
\begin{equation}\label{eq:estimate_areaF}
 \mathcal{A}(\theta,s)  F(x,\gamma_{x,\theta}(s)) \lesssim s^{d-1} \int_0^s \mathcal{A}(\theta,t)\frac{\left|\nabla u \left(\gamma_{x,\theta}(t)\right) \right|}{\dist_M^{d-1}(x, \gamma_{x,\theta}(t) )} \,dt.
\end{equation}
Plugging inequality \eqref{eq:estimate_areaF} into estimate \eqref{eq:estimate_fF} and recognizing that we can view $(\theta,t)$ (instead of $(\theta,s)$) as polar coordinates $y = \gamma_{x,\theta}(t)$ one arrives at 
\begin{align*}
&\int_M \int_{B_{\inj}(x)} \frac{1}{\mu\big(B_{\sqrt{h}} (x)\big) }  e^{-\frac{\dist_M^2(x,y)}{C_4 h}} | u(x) - u(y)| \,d\mu(x) \,d\mu(y) \\
&\lesssim \int_0^{\inj} \frac{1}{\sqrt{h}^d} e^{-\frac{s^2}{C^4h}} \int_M \int_{B_{s}(x)} \frac{\left|\nabla u \left(y\right) \right|}{\dist_M^{d-1}(x, y )} \,d\mu(y) \,d\mu(x) \,ds\\
&\lesssim  \sqrt{h}  \int_0^{\inj} \frac{s^d}{\sqrt{h}^{d+1}} e^{-\frac{s^2}{C^4h}}\,ds  \int_M \left|\nabla u \left(y\right) \right| \,d\mu(y),
\end{align*}
where we used Fubini's theorem in $x$ and $y$ in the last step and that since $\mathcal{A}(\theta,t) \lesssim t^{d-1}$ by \eqref{eq:area_scaling}, we get for any $y \in M$ 
\begin{align*}
\int_{B_s(y)} \frac{1}{\dist_M^{d-1}(x,y)} \,d\mu(x) &= \int_{\mathbb{S}^{d-1}} \int_0^s \frac{\mathcal{A}(\theta,t)}{t^{d-1}}  \,dt \,d\mathcal{H}^{d-1}(\theta) \lesssim s.
\end{align*}
The proof follows by the final estimate
\begin{equation*}
\int_0^{\inj} \frac{s^d}{\sqrt{h}^{d+1}} e^{-\frac{s^2}{C^4h}}\,ds \leq \int_0^{\infty} s^d e^{-\frac{s^2}{C^4}} \,ds < \infty.\qedhere
\end{equation*}
\end{proof}

Using the previous lemma we get a similar monotonicity estimate for the thresholding energy as in Lemma \ref{lem:monotonicity_rescaled}. The difference is that we can compare arbitrary times across scales and not only times that are scaled by a constant of $\mathcal{O}(1)$. 
\begin{lemma}\label{lem:monotonicity_independent}
For any $\chi:M \rightarrow \{0,1\}^P$ with $\sum_{i=1}^P \chi_i = 1$ a.e.\ and any $0 < h_0 \leq h$ it holds
\begin{equation}
E_h(\chi) \leq C(M, \mu) (E_{h_0}(\chi) + 1 ).
\end{equation}
\end{lemma}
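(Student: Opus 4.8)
The plan is as follows. First I would dispose of the trivial regime $h_0 \le h \le 2h_0$, where Lemma~\ref{lem:monotonicity_rescaled} applies directly: taking $C = h/h_0 \in [1,2]$ gives $E_h(\chi) = E_{Ch_0}(\chi) \le \max\{\sqrt C, 1/\sqrt C\}\,E_{h_0}(\chi) \le \sqrt 2\,E_{h_0}(\chi)$. So from now on I assume $h > 2h_0$, hence $h - h_0 \in (h_0, h)$. The second ingredient is the elementary identity
\begin{equation*}
E_t(\chi) \;=\; \frac{1}{2\sqrt t}\sum_{i=1}^P \int_M \bigl|\chi_i - e^{-t\Delta_\mu}\chi_i\bigr|\,d\mu, \qquad t > 0,
\end{equation*}
valid because $e^{-t\Delta_\mu}1 = 1$ (so $\sum_{j\neq i} e^{-t\Delta_\mu}\chi_j = 1 - e^{-t\Delta_\mu}\chi_i$), because $\chi_i$ takes values in $\{0,1\}$ and $0 \le e^{-t\Delta_\mu}\chi_i \le 1$, and because $e^{-t\Delta_\mu}$ preserves $\int_M(\cdot)\,d\mu$; one checks $\|\chi_i - e^{-t\Delta_\mu}\chi_i\|_{L^1(\mu)} = 2\int_M \chi_i(1 - e^{-t\Delta_\mu}\chi_i)\,d\mu$ and sums in $i$.

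Using the semigroup property $e^{-h\Delta_\mu}\chi_i = e^{-(h-h_0)\Delta_\mu}\bigl(e^{-h_0\Delta_\mu}\chi_i\bigr)$ and the triangle inequality I would write, with $u_i := e^{-h_0\Delta_\mu}\chi_i \in C^\infty(M)$,
\begin{equation*}
\bigl\|\chi_i - e^{-h\Delta_\mu}\chi_i\bigr\|_{L^1(\mu)} \;\le\; \bigl\|\chi_i - e^{-h_0\Delta_\mu}\chi_i\bigr\|_{L^1(\mu)} + \bigl\|u_i - e^{-(h-h_0)\Delta_\mu}u_i\bigr\|_{L^1(\mu)},
\end{equation*}
and bound the last term by Lemma~\ref{lem:bound_by_grad} (with time $h - h_0 < h$) by $C(M,\mu)\sqrt h\,(\|\nabla u_i\|_{L^1(\mu)} + 1)$. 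Summing over $i$ and invoking the identity above at $t = h$ and $t = h_0$ yields $2\sqrt h\,E_h(\chi) \le 2\sqrt{h_0}\,E_{h_0}(\chi) + C(M,\mu)\sqrt h\bigl(\sum_{i}\|\nabla u_i\|_{L^1(\mu)} + P\bigr)$. Dividing by $2\sqrt h$ and using $\sqrt{h_0} \le \sqrt h$, the whole lemma reduces to the scale-invariant estimate
\begin{equation*}
\sum_{i=1}^P \bigl\|\nabla e^{-h_0\Delta_\mu}\chi_i\bigr\|_{L^1(\mu)} \;\le\; C_\nabla(M,\mu)\,E_{h_0}(\chi),
\end{equation*}
i.e.\ with a constant that carries no negative power of $h_0$; the additive $P$ is harmless (it may be absorbed into the constant since $P$ is a fixed structural parameter, or removed entirely by keeping the sharper form $\|u - e^{-t\Delta_\mu}u\|_{L^1}\lesssim \sqrt t(\|\nabla u\|_{L^1} + \|u\|_{L^1})$ of Lemma~\ref{lem:bound_by_grad}, since $\sum_i\|u_i\|_{L^1(\mu)} = \sum_i\int_M\chi_i\,d\mu = 1$).

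To prove this last estimate I would use the heat-kernel bounds collected before Lemma~\ref{lem:bound_by_grad}. Since $\int_M p(h_0,x,y)\,d\mu(y) = e^{-h_0\Delta_\mu}1(x) = 1$, subtracting $\chi_i(x)$ gives $\nabla e^{-h_0\Delta_\mu}\chi_i(x) = \int_M \nabla_x p(h_0,x,y)\,(\chi_i(y) - \chi_i(x))\,d\mu(y)$, hence $\|\nabla e^{-h_0\Delta_\mu}\chi_i\|_{L^1(\mu)} \le \iint_{M\times M} |\nabla_x p(h_0,x,y)|\,|\chi_i(x) - \chi_i(y)|\,d\mu(y)\,d\mu(x)$. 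The gradient Gaussian bound~\eqref{eq:gaussian_two}, combined with the lower Gaussian bound~\eqref{eq:gaussian_one} and volume doubling~\eqref{eq:doubling_prop} (which make $\mu(B_{\sqrt{h_0}}(x))$ and $\mu(B_{\sqrt{s_0}}(x))$ comparable), produces a constant and a scale $s_0 = c_0(M,\mu)\,h_0 > 0$ with $|\nabla_x p(h_0,x,y)| \le \tfrac{C^\sharp(M,\mu)}{\sqrt{h_0}}\,p(s_0,x,y)$. Running the computation behind the identity of the first paragraph backwards — using the symmetry $p(s_0,x,y) = p(s_0,y,x)$ and $\chi_i \in \{0,1\}$ — gives $\sum_i \iint p(s_0,x,y)\,|\chi_i(x) - \chi_i(y)|\,d\mu\,d\mu = 2\sqrt{s_0}\,E_{s_0}(\chi)$. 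Thus $\sum_i\|\nabla e^{-h_0\Delta_\mu}\chi_i\|_{L^1(\mu)} \le 2C^\sharp\sqrt{s_0/h_0}\,E_{s_0}(\chi)$, and since $s_0 = c_0 h_0$ with $c_0$ fixed, Lemma~\ref{lem:monotonicity_rescaled} turns $E_{s_0}(\chi)$ into a constant multiple of $E_{h_0}(\chi)$, which is the claimed bound. Plugging everything back finishes the proof.

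The main obstacle is precisely this scale-invariant gradient estimate. A soft argument — bounding $\|\nabla e^{-h_0\Delta_\mu}\chi_i\|_{L^1}$ by $\mu(M)^{1/2}\|\nabla e^{-h_0\Delta_\mu}\chi_i\|_{L^2}$ and controlling the latter by the $L^2$ Dirichlet-energy dissipation of the heat flow — only yields a bound with a factor $h_0^{-1/4}$, which destroys the estimate as $h_0 \to 0$. Obtaining the scale-invariant version genuinely requires the pointwise Gaussian bound on $\nabla_x p$, which is exactly why the heat-kernel preliminaries (and Lemma~\ref{lem:bound_by_grad}) are in place right before this lemma.
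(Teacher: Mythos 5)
Your proof is correct and follows essentially the same route as the paper: reduce to the $L^1$ identity $2\sqrt{t}\,E_t(\chi)=\sum_i\|\chi_i-e^{-t\Delta_\mu}\chi_i\|_{L^1(\mu)}$, split $e^{-h\Delta_\mu}$ via the semigroup property and apply Lemma~\ref{lem:bound_by_grad} at time $h-h_0$, then prove the scale-invariant gradient estimate $\sum_i\|\nabla e^{-h_0\Delta_\mu}\chi_i\|_{L^1(\mu)}\lesssim E_{h_0}(\chi)$ by subtracting $\chi_i(x)$ (using $\nabla_x\int p\,d\mu=0$), dominating $|\nabla_x p(h_0,\cdot,\cdot)|$ by $h_0^{-1/2}p(c_0 h_0,\cdot,\cdot)$ via the Gaussian bounds and volume scaling, and finally invoking Lemma~\ref{lem:monotonicity_rescaled}. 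The only cosmetic difference is your initial case split $h\le 2h_0$ versus $h>2h_0$, which is unnecessary since the bounds $1/\sqrt{h}\le 1/\sqrt{h_0}$ and $1/\sqrt{h}\le 1/\sqrt{h-h_0}$ already hold for all $h\ge h_0$.
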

\begin{proof}
For any $\tilde{\chi} \in \{0,1\}$ and $u \in [0,1]$ it holds 
\begin{equation}\label{eq:zero_one_property}
|\tilde{\chi} - u| = (1-\tilde{\chi})u + \tilde{\chi} (1 - u).
\end{equation}
Thereby, thanks to the self-adjointness of $e^{-h \Delta_\mu}$, we get the following representation of the thresholding energy
\begin{align*}
E_h(\chi) = \frac{1}{\sqrt{h}}  \sum_{i=1}^P \int_M \chi_i e^{-h \Delta_\mu} (1- \chi_i) \,d\mu = \frac{1}{2 \sqrt{h}}  \sum_{i=1}^P \int_M \left| \chi_i - e^{-h \Delta_\mu}\chi_i \right| \,d\mu.
\end{align*} 
Using the triangle inequality, the semigroup property $e^{-(t+s) \Delta_\mu} = e^{-t \Delta_\mu} e^{-s \Delta_\mu}$, and Lemma \ref{lem:bound_by_grad} with $u = e^{-h_0 \Delta_\mu} \chi_i$ one concludes
\begin{align*}
E_h(\chi) &\lesssim \frac{1}{\sqrt{h_0}}  \sum_{i=1}^P \int_M \left| \chi_i - e^{-h_0 \Delta_\mu}\chi_i \right| \,d\mu + \frac{1}{\sqrt{h - h_0}}  \sum_{i=1}^P \int_M \left| e^{-h_0 \Delta_\mu}\chi_i - e^{-(h-h_0) \Delta_\mu} e^{-h_0 \Delta_\mu}\chi_i \right| \,d\mu\\
&\lesssim E_{h_0}(\chi) + \sum_{i=1}^P \int_M \left|\nabla e^{-h_0 \Delta_\mu} \chi_i\right| \,d\mu + 1.
\end{align*}
Using $\int_M \nabla_x p(h,x,y) \,d\mu = \nabla_x \int_M  p(h,x,y) \,d\mu = 0$ as well as the Gaussian bounds \eqref{eq:gaussian_one} and \eqref{eq:gaussian_two} one computes
\begin{align*}
\sum_{i=1}^P \int_M \left|\nabla e^{-h_0 \Delta_\mu} \chi_i\right| \,d\mu  &= \sum_{i=1}^P \int_M \left| \int_M \nabla_x p\left(h_0, x,y\right) (\chi_i(y) - \chi_i(x)) \,d\mu(y)\right| \,d\mu(x) \\
&\leq \frac{1}{h_0} \sum_{i=1}^P \int_M  \int_M  p\left(\frac{\tilde{C}_2	}{C_2} h_0, x,y\right) \left|\chi_i(y) - \chi_i(x)\right| \,d\mu(y) \,d\mu(x) .
\end{align*}
By Equation \eqref{eq:zero_one_property} and the monotonicity of Lemma \ref{lem:monotonicity_rescaled} the proof is concluded with the calculation
\begin{equation*}
\sum_{i=1}^P \int_M \left|\nabla e^{-h_0 \Delta_\mu} \chi_i\right| \,d\mu  \leq 2 E_{\frac{\tilde{C}_2	}{C_2} h_0} (\chi) \lesssim E_{h_0}(\chi). \qedhere
\end{equation*}
\end{proof}

Next the first variation of the thresholding energy is defined. Note that this is done by inner variations such that the constraints $\chi_s \in \{0,1\}^P$ and $\sum_{i = 1}^P \chi_{s,i} = 1$ a.e.\ on $M$ are always satisfied. 
\begin{definition}
For any $\chi: M \rightarrow \{0,1\}^P$ with $\sum_{i = 1}^P \chi_i = 1$ a.e.\ on $M$ and any given smooth vector field $\xi \in \Gamma (TM)$ let $X_{s,i}$ for $i = 1,\dots, P$ be generated by the flow of $\xi$, i.e., $\chi_{0,i} = \chi_i$ and $\chi_{s,i}$ solves the following distributional equation:
\begin{equation}
\partial_s \chi_{s,i} + \xi \cdot \nabla \chi_{s,i} = 0.
\end{equation}
The first variation along this flow is denoted by 
\begin{equation}
\delta E_h(\chi,\xi):=\frac{d}{ds}E_h(\chi_s)\Big|_{s=0}, \quad \delta \left( \frac{1}{2h} d_h^2(\cdot, \bar{\chi}) \right)(\chi, \xi):=\frac{d}{ds} \frac{1}{2h} d_h^2(\chi_s, \bar{\chi})\Big|_{s=0}
\end{equation}
where $\bar{\chi}:M \rightarrow \in \{0,1\}^P$ with $\sum_{i = 1}^P \bar{\chi}_i = 1$ a.e.\ is fixed. 
\end{definition}
Due to the definition of the first variation we immediately get a Euler--Lagrange equation for the optimization problem \eqref{eq:constrained_min_problem} which is formulated in the next lemma.
\begin{lemma}[Euler--Lagrange equation]
Let $\chi^\ell$ be obtained by $\chi^{\ell-1}$ through the volume constrained MBO scheme  \eqref{alg:volumeMBOcont} with $V$-order statistic $m_\mu^\ell$. Then $\chi^\ell$ solves the following Euler--Lagrange equation corresponding to \eqref{eq:constrained_min_problem}
\begin{equation}\label{eq:ELequation}
\delta E_h(\chi^\ell,\xi) + \delta \left(\frac{1}{2h} d_h^2(\cdot, \chi^{\ell-1})\right)(\chi^\ell,\xi) + \Lambda_\mu^\ell \cdot \int_M \divrho (\xi)\, \chi^\ell \,d\mu = 0.
\end{equation}
\end{lemma}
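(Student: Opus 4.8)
The plan is to read the Euler--Lagrange equation off the minimizing-movement characterization of Lemma~\ref{lem:lagrangeMulti}. By that lemma, $\chi^\ell$ is a minimizer of the (volume-unconstrained) functional
\[
\mathcal{F}(\chi):=E_h(\chi)+\frac{1}{2h}d_h^2(\chi,\chi^{\ell-1})+\Lambda_\mu^\ell\cdot\int_M\chi\,d\mu
\]
over all measurable $\chi\colon M\to\{0,1\}^P$ with $\sum_{i=1}^P\chi_i=1$ a.e. Given a smooth vector field $\xi\in\Gamma(TM)$ with flow $\Phi_s$, I would let $\chi_s=(\chi_{s,i})_i$ be the inner variation it generates, i.e.\ $\chi_{0}=\chi^\ell$ and $\partial_s\chi_{s,i}+\xi\cdot\nabla\chi_{s,i}=0$, which is solved by $\chi_{s,i}=\chi^\ell_i\circ\Phi_{-s}$. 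Since $\chi_{s,i}$ is again $\{0,1\}$-valued and $\sum_i\chi_{s,i}=(\sum_i\chi^\ell_i)\circ\Phi_{-s}=1$, each $\chi_s$ is admissible in the minimization. Hence $s\mapsto\mathcal{F}(\chi_s)$ attains a minimum at $s=0$, and, using that the first variations $\delta E_h$ and $\delta\big(\tfrac1{2h}d_h^2(\cdot,\chi^{\ell-1})\big)$ exist by definition and that the constant vector $\Lambda_\mu^\ell$ can be pulled out of the $s$-derivative,
\[
0=\frac{d}{ds}\Big|_{s=0}\mathcal{F}(\chi_s)=\delta E_h(\chi^\ell,\xi)+\delta\Big(\frac{1}{2h}d_h^2(\cdot,\chi^{\ell-1})\Big)(\chi^\ell,\xi)+\Lambda_\mu^\ell\cdot\frac{d}{ds}\Big|_{s=0}\int_M\chi_s\,d\mu.
\]

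It then remains to identify $\frac{d}{ds}\big|_{s=0}\int_M\chi_{s,i}\,d\mu$ with $\int_M\chi^\ell_i\,\divrho(\xi)\,d\mu$ for each $i$. The cleanest route is the change of variables $\int_M\chi^\ell_i\circ\Phi_{-s}\,\rho\,d\Vol_M=\int_M\chi^\ell_i\,(\rho\circ\Phi_s)\,J_s\,d\Vol_M$, with $J_s$ the volume Jacobian of $\Phi_s$; differentiating the smooth weight at $s=0$ gives $\frac{d}{ds}\big|_{s=0}\big[(\rho\circ\Phi_s)J_s\big]=\div_{\Vol_M}(\rho\,\xi)=\rho\,\divrho(\xi)$, where the last identity is exactly the duality definition of $\divrho$ from Definition~\ref{def:laplace_on_M}. (Equivalently, one may differentiate under the integral sign, insert the transport equation, and integrate by parts: $\frac{d}{ds}\big|_{s=0}\int_M\chi_{s,i}\,d\mu=-\int_M\xi\cdot\nabla\chi^\ell_i\,d\mu=\int_M\chi^\ell_i\,\divrho(\xi)\,d\mu$.) Summing these identities against the components of $\Lambda_\mu^\ell$ yields precisely the term $\Lambda_\mu^\ell\cdot\int_M\divrho(\xi)\,\chi^\ell\,d\mu$, and \eqref{eq:ELequation} follows.

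The statement is therefore essentially immediate once Lemma~\ref{lem:lagrangeMulti} is in place; the only points I would actually write out are the standard analytic justifications, which I also expect to be the sole (mild) obstacle: that $\chi_{s,i}=\chi^\ell_i\circ\Phi_{-s}$ solves the transport equation distributionally and stays in the admissible class for all small $s$, that the $s$-derivative of $\mathcal{F}(\chi_s)$ may be computed termwise, and that the change-of-variables computation above is legitimate (so that no $BV$ regularity of $\chi^\ell$ is needed, since $\chi^\ell_i\in L^\infty$ and $\divrho(\xi)\in C^\infty$ already make $\int_M\chi^\ell_i\,\divrho(\xi)\,d\mu$ well-defined). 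This is the classical first-variation argument for thresholding schemes in the spirit of Esedo\u{g}lu--Otto~\cite{esedog2015threshold}; the only genuinely new ingredient is the bookkeeping of the Lagrange-multiplier term in the weighted setting.
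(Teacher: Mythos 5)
Your proof is correct and follows the same route the paper implicitly takes (the paper states the lemma without a proof, preceded only by the remark that it follows ``due to the definition of the first variation''): you invoke Lemma~\ref{lem:lagrangeMulti} to pass from the volume-constrained minimization to the unconstrained one with the Lagrange-multiplier term, observe that inner variations $\chi_{s,i}=\chi^\ell_i\circ\Phi_{-s}$ stay admissible and that a minimizer at $s=0$ has vanishing derivative, and then identify $\frac{d}{ds}\big|_{s=0}\int_M\chi_{s,i}\,d\mu=\int_M\chi^\ell_i\,\divrho(\xi)\,d\mu$ via the change of variables (or the duality definition of $\divrho$ applied distributionally). The change-of-variables computation you flag is indeed the right way to handle the fact that $\chi^\ell_i$ is only $L^\infty$; nothing further is needed.
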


The following proof of the $L^2$-estimate, Proposition \ref{the:l2estimate}, is a generalization of Laux and Swartz~\cite{laux2017convergence} to the multi cluster setting on a Riemmanian manifold. 
\begin{proof}[Proof of Proposition \ref{the:l2estimate}]
Squaring the Euler--Lagrange equation \eqref{eq:ELequation} gives for any $\ell = 1, \dots, L$ and $\xi \in \Gamma(TM)$
\begin{equation}\label{eq:euler_squared}
\left(\Lambda_\mu^\ell \cdot \int_M \divrho ( \xi) \chi^\ell \,d\mu(x) \right)^2 \lesssim (\delta E_h(\chi^\ell,\xi))^2 + \left(\delta \left( \frac{1}{2h} d_h^2(\cdot - \chi^{\ell-1}) \right)(\chi^\ell, \xi) \right)^2.
\end{equation}
The proof strategy is now to bound the right-hand side from above for an arbitrary vector field $\xi\in \Gamma(TM)$ and then construct specific vector fields to get a lower bound for the left-hand side.

For easier notation we write in the following
\begin{align*}
\normxi{\xi} := \max\{\|\xi\|_{C^0}, \|\divrho \xi\|_{C^0}, \Lip(\xi)\} 
\end{align*}
with $\Lip(\xi)$ the smallest constant such that
\begin{align*}
|\nabla_v \xi |_x \leq \Lip (\xi) |v|_x
\end{align*}
for all $x \in M$ and $v \in T_x M$. Here, we denoted by $\nabla: TM \times \Gamma(TM) \rightarrow \Gamma(TM)$ the Levi--Civita connection.

\emph{Step 1: Estimates on $\delta E_h(\chi, \xi)$. For any $\chi:M \rightarrow \{0,1\}^P$ with $\sum_{i = 1}^P \chi_i=1$ a.e.\ and any $\xi \in \Gamma(TM)$, we have 
\begin{equation}
|\delta E_h(\chi,\xi)| \lesssim \normxi{\xi}(1 + E_h(\chi)).
\end{equation}}
We use $-\langle\xi, \nabla \chi \rangle = - \divrho(\chi, \xi) + \divrho(\xi)\chi$ to compute the first variation of the energy term by integration by parts:
\begin{align}\label{eq:hard}
&\delta  E_h(\chi,\xi)\\
&= \frac{d}{ds} \frac{1}{\sqrt{h}} \sum_{i  \neq  j} \int_M \chi_{s,i} e^{-h \Delta_\mu} \chi_{s,j} \,d\mu(x) \Big\vert_{s=0} \nonumber \\
&=\frac{1}{\sqrt{h}} \sum_{i  \neq  j} \int_M \langle - \xi, \nabla \chi_i \rangle e^{-h \Delta_\mu} \chi_j + \chi_i e^{-h \Delta_\mu} \langle -\xi , \nabla \chi_j \rangle \,d\mu(x) \nonumber \\
&=\frac{1}{\sqrt{h}} \sum_{i  \neq  j} \int_M \int_M
\begin{aligned}[t] &(-\divrho(\chi_i \xi)(x) + \divrho(\xi)(x) \chi_i(x)) p(h,x,y) \chi_j(y) \nonumber \\
&+ \chi_i(x) p(h,x,y) (-\divrho(\chi_j \xi)(y) + \divrho(\xi)(y) \chi_j(y) \,d\mu(x) \,d\mu(y) 
\end{aligned} \nonumber \\
&=\frac{1}{\sqrt{h}} \sum_{i  \neq  j} \int_M \int_M
\begin{aligned}[t]
& \chi_i(x) \big( \langle \xi(x), \nabla_x p(h,x,y)\rangle_x +   \langle \xi(y), \nabla_y p(h,x,y) \rangle_y \big)\chi_j(y) \nonumber \\
&+ \divrho (\xi)(x) \chi_i(x) p(h,x,y) \chi_j(y) + \chi_i(x) p(h,x,y) \divrho(\xi)(y) \chi_j(y) \,d\mu(x) \,d\mu(y). \end{aligned} \nonumber
\end{align}
The two terms containing $\divrho \xi$ can be estimated by
\begin{align*}
&\left| \frac{1}{\sqrt{h}} \sum_{i  \neq  j} \int_M \int_M \divrho(\xi)(x) \chi_i(x) p(h,x,y)  \chi_j(y) + \chi_i(x) p(h,x,y) \divrho(\xi)(y) \chi_j(y) \,d\mu(x) \,d\mu(y) \right| \\
&\leq \|\divrho \xi\|_{C^0} \frac{1}{\sqrt{h}} \sum_{i  \neq  j} \int_M \int_M \chi_i(x) p(h,x,y)  \chi_j(y) + \chi_i(x) p(h,x,y)   \chi_j(y) \,d\mu(x) \,d\mu(y)\\
&\lesssim \|\divrho \xi\|_{C^0} E_h(\chi).
\end{align*}
For the remaining term we want to apply the asymptotic expansion \eqref{eq:asymptotic_exp} which only holds inside the injectivity radius. Thus we make the following splitting into near- and far-field contributions:
\begin{align*}
&\frac{1}{\sqrt{h}} \sum_{i  \neq  j} \int_M \int_M \chi_i(x) \big( \langle \xi(x), \nabla_x p(h,x,y)\rangle_x +   \langle \xi(y), \nabla_y p(h,x,y) \rangle_y \big)\chi_j(y)  \,d\mu(x) \,d\mu(y) \\
&= \frac{1}{\sqrt{h}} \sum_{i  \neq  j} \int_M \int_{M\setminus B_{\inj}(x)} \chi_i(x) \big( \langle \xi(x), \nabla_x p(h,x,y)\rangle_x )+   \langle \xi(y), \nabla_y p(h,x,y) \rangle_y \big) \chi_j(y)  \,d\mu(x) \,d\mu(y) \\
&\hspace{12pt}+ \frac{1}{\sqrt{h}} \sum_{i  \neq  j} \int_M \int_{B_{\inj}(x)} \chi_i(x) \big( \langle \xi(x), \nabla_x p(h,x,y)\rangle_x +   \langle \xi(y), \nabla_y p(h,x,y)\rangle_y \big) \chi_j(y)  \,d\mu(x) \,d\mu(y) \\
&=: I + I\!I.
\end{align*}
To bound the far field contribution $I$ we use the Gaussian bounds \eqref{eq:gaussian_one} and  \eqref{eq:gaussian_two}  and the scaling property \eqref{eq:doubling_prop}
\begin{align*}
I &\leq \frac{\|\xi\|_{C^0}}{\sqrt{h}} \sum_{i \neq j} \int_M \int_{M \setminus B_{\inj}(x)} \big( |\nabla_x p(h,x,y)| + |\nabla_y p(h,x,y)| \big) \,d\mu(x) \,d\mu(y) \\
&\lesssim  \frac{\|\xi\|_{C^0}}{\sqrt{h}} \sum_{i \neq j} \int_M \int_{M \setminus B_{\inj}(x)} \frac{1}{\sqrt{h}\mu\left(B_{\sqrt{h}}(x)\right)}e^{-\frac{\inj^2}{\tilde{C}_2 h} }\,d\mu(x) \,d\mu(y)\\
&\leq \|\xi\|_{C^0} \frac{P (P-1)}{h^{\frac{d}{2}+1}} e^{-\frac{\inj^2}{\tilde{C}_2 h}}\\
&\lesssim \|\xi\|_{C^0}.
\end{align*}
In fact we see that this term goes to $0$ as $h$ goes to $0$. Now we use the  asymptotic expansion \eqref{eq:asymptotic_exp} with $K = \frac{d}{2} + 2$ for the near-field contribution $I\!I$ and calculate
\begin{align*}
I\!I &\lesssim  \frac{1}{\sqrt{h}} \sum_{i  \neq  j} \int_M \int_{B_{\inj}(x)} \chi_i(x) \big( \langle \xi(x), \nabla_x p_K(h,x,y)\rangle_x +   \langle \xi(y), \nabla_y p_K(h,x,y)\rangle_y \big)\chi_j(y)  \,d\mu(x) \,d\mu(y)\\
& \hspace{12pt} + \|\xi\|_{C^0}.
\end{align*}
Here we denoted the approximate heat kernel $p_K(h,x,y) := \frac{1}{(4\pi h)^{d/2}}e^{\frac{-\dist_M^2(x,y)}{4h}} \sum_{k = 0}^K v_k(x,y) h^k$. With the approximate heat kernel we can now compute
\begin{align*}
&\frac{1}{\sqrt{h}} \sum_{i  \neq  j} \int_M \int_{B_{\inj}(x)} \chi_i(x) \big( \langle \xi(x), \nabla_x p_K(h,x,y)\rangle_x +   \langle \xi(y), \nabla_y p_K(h,x,y)\rangle_y \big)\chi_j(y)  \,d\mu(x) \,d\mu(y)\\
&=\frac{1}{\sqrt{h}} \sum_{i  \neq  j} \int_M \int_{B_{\inj}(x)} 
\begin{aligned}[t]
&\chi_i(x) \big( \big\langle \xi(x), \nabla_x \frac{1}{4h} \dist^2_M(x,y) \big\rangle_x + \big\langle \xi(y), \nabla_y \frac{1}{4h} \dist^2_M(x,y)\big\rangle_y \big)\\
\cdot& p_K(h,x,y)\chi_j(y)  \,d\mu(x) \,d\mu(y)
\end{aligned}\\
& \hspace{12pt}+ \frac{1}{\sqrt{h}} \sum_{i  \neq  j} \int_M \int_{B_{\inj}(x)}
\begin{aligned}[t]
 &\chi_i(x) \left( \langle \xi(x), \sum_{k = 1}^K \nabla_x v_k(x,y) h^k \rangle_x +   \langle \xi(y), \sum_{k = 1}^K \nabla_y v_k(x,y) h^k\rangle_y \right)\\
\cdot & \frac{1}{(4\pi h)^{d/2}}e^{\frac{-\dist_M^2(x,y)}{4h}} \chi_j(y)  \,d\mu(x) \,d\mu(y)
\end{aligned} \\
& =: I\!I' + I\!I''.
\end{align*}
For $I\!I''$ we use that the coefficients $v_k(x,y)$ are uniformly bounded for all $x,y \in M$ as $M$ is compact and the $v_k$'s are smooth. Together with the scaling property \eqref{eq:doubling_prop}, the Gaussian bound \eqref{eq:gaussian_one}, and Lemma \ref{lem:monotonicity_rescaled} this yields
\begin{align*}
I\!I'' \lesssim \frac{\|\xi\|_{C^0}}{\sqrt{h}} \sum_{i  \neq  j} \int_M \int_{B_{\inj}(x)} \chi_i(x)   p\left(\frac{4h}{C_2},x,y\right) \chi_j(y)  \,d\mu(x) \,d\mu(y)\lesssim ||\xi||_{C^0} E_h(\chi).
\end{align*}
To deal with $I\!I'$ we claim that 
\begin{align}\label{eq:cancelation_in_distance}
\left|\langle \xi(x), \nabla_x \dist^2_M(x,y) \rangle_x + \langle \xi(y), \nabla_y \dist^2_M(x,y)\rangle_y\right| \leq 2\  \dist^2_M(x,y) \Lip(\xi).
\end{align}
To prove this claim we need the following ingredients:
\begin{itemize}
\item[1.] $\nabla_x \dist_M^2(x,y) = -2 \exp_x^{-1}(y)$ for all $x,y \in M$ with $\dist_M(x,y) \leq \inj$. Here, $\exp$ is the exponential function on the Riemannian manifold. This can be proven by using a geodesic variation of the geodesic connecting $x$ to $y$.
\item[2.] $PT_{x \rightarrow y}^\gamma \left(-2\exp_x^{-1}(y)\right) = 2 \exp_y^{-1}(x)$ where $PT_{x \rightarrow y}^\gamma$ is the parallel transport (with respect to the Levi--Civita connection) from $x$ to $y$ along the geodesic $\gamma$ connecting $x$ and $y$. To see that this holds, note that $\dot{\gamma}(0) = \exp_x^{-1}(y)$ and $\dot{\gamma}(1) = -\exp_x^{-1}(y)$.
\item[3.] $|PT_{x \rightarrow y}^\gamma (\xi(x)) - \xi(y)| \leq \dist_M(x,y) \Lip(\xi)$. This holds true by  Proposition 10.46 of \cite{MR4533407}. 
\item[4.] $\left|\exp_x^{-1}(y)\right| = \dist_M(x,y)$ holds by definition of the exponential and distance function.
\item[5.] $\langle f(x), g(x) \rangle_x = \langle PT_{x \rightarrow y}^\gamma(f(x)), PT_{x \rightarrow y}^\gamma (g(x)) \rangle_y $ as the Levi--Civita connection preserves the Riemannian metric.
\end{itemize}
Combining all the ingredients we get indeed
\begin{align*}
&\left|\langle \xi(x), \nabla_x \dist^2_M(x,y) \rangle_x + \langle \xi(y), \nabla_y \dist^2_M(x,y)\rangle_y\right|\\
&= \left|\langle \xi(x), -2 \exp_x^{-1}(y) \rangle_x + \langle \xi(y), -2 \exp_y^{-1}(x)\rangle_y\right|\\
&= \left|\langle PT_{x \rightarrow y}^\gamma(\xi(x)), PT_{x \rightarrow y}^\gamma\left(-2 \exp_x^{-1}(y)\right) \rangle_y + \langle \xi(y), -2 \exp_y^{-1}(x)\rangle_y\right|\\
&= \left|\langle \xi(y) - PT_{x \rightarrow y}^\gamma(\xi(x)), -2 \exp_y^{-1}(x) \rangle_y  \right| \\
&\leq |PT_{x \rightarrow y}^\gamma (\xi(x)) - \xi(y)|  |2 \exp_y^{-1}(x)|\\
&\leq 2\ \dist^2_M(x,y) \Lip(\xi),
\end{align*} 
which is precisely the claim \eqref{eq:cancelation_in_distance}.

Now it follows by the claim \eqref{eq:cancelation_in_distance}, the Gaussian bound \eqref{eq:gaussian_one}, the scaling property \eqref{eq:doubling_prop}, and the uniform bound of the coefficient functions $v_k$ that 
\begin{align*}
&\left( \big\langle \xi(x), \nabla_x \frac{1}{4h} \dist^2_M(x,y) \big\rangle_x + \big\langle \xi(y), \nabla_y \frac{1}{4h} \dist^2_M(x,y)\big\rangle_y \right) p_K(h,x,y)\\
&\lesssim \frac{\dist_M^2(x,y)  \Lip(\xi)}{h} \frac{1}{(4\pi h)^{d/2}}e^{\frac{-\dist_M^2(x,y)}{4h}}\\
&\lesssim \frac{ \Lip(\xi)}{(4\pi h)^{d/2}}e^{\frac{-\dist_M^2(x,y)}{2\cdot 4h}}\\
&\lesssim  \Lip(\xi)\, p\left(\frac{8h}{C_2},x,y\right).
\end{align*}
So we can finally conclude with the monotonicity of Lemma \ref{lem:monotonicity_rescaled} that 
\begin{align*}
I\!I' &= \frac{1}{\sqrt{h}} \sum_{i  \neq  j} \int_M \int_{B_{\inj}(x)} 
\begin{aligned}[t]
&\chi_i(x) \left( \big\langle \xi(x), \nabla_x \frac{1}{4h} \dist^2_M(x,y) \big\rangle_x + \big\langle \xi(y), \nabla_y \frac{1}{4h} \dist^2_M(x,y)\big\rangle_y \right)\\
\cdot & p_K(h,x,y)\chi_j(y)  \,d\mu(x) \,d\mu(y)
\end{aligned}\\
&\lesssim  \Lip(\xi) E_{\frac{8h}{C_2}}(\chi)\\
&\lesssim \Lip(\xi) E_{h}(\chi).
\end{align*}
Putting all the estimates together yields Step 1.

\emph{Step 2: Estimate on $\delta \left( \frac{1}{2h} d_h^2(\cdot, \chi^{\ell-1}) \right)(\chi^\ell, \xi)$: For $\chi^\ell:M \rightarrow \{0,1\}^P$ and any $\xi_\ell \in \Gamma(TM)$ with $\ell \in \{1, \dots, L\}$ it holds
\begin{equation}
h \sum_{\ell=1}^L \left(\delta \left( \frac{1}{2h} d_h^2(\cdot, \chi^{\ell-1}) \right)(\chi^\ell, \xi) \right)^2 \lesssim P \max_{\ell = 1, \dots, L} \normxi{\xi_\ell}^2 (1+E_0) E_0.
\end{equation}}
The argument for \emph{Step 2} is as follows. For any $\xi\in \Gamma(TM)$ and any $\ell \in \{1,\dots, L\}$, we use the semigroup property $e^{-(h+\tilde{h}) \Delta_\mu} =  e^{-h \Delta_\mu} e^{-\tilde{h} \Delta_\mu}$ and $-\langle\xi, \nabla \chi \rangle = - \divrho(\chi, \xi) + \divrho(\xi)\chi$ to compute
\begin{align*}
&\delta \left( \frac{1}{2h} d_h^2(\cdot, \chi^{\ell-1}) \right)(\chi^\ell, \xi)\\
&= \frac{d}{ds} \frac{1}{\sqrt{h}} \sum_{i  = 1}^P \int_M \int_M (\chi^\ell_{s,i} - \chi_{i}^{\ell-1})(x) p(h,x,y) (\chi_{s,i}^\ell - \chi_{i}^{\ell-1})(y) \,d\mu(x) \,d\mu(y)\Big\vert_{s=0}\\
&= \frac{2}{\sqrt{h}} \sum_{i  = 1}^P \int_M \int_M  \partial_s \chi^\ell_{s,i}(x) p(h,x,y) (\chi_{s,i}^\ell - \chi_{i}^{\ell-1})(y) \,d\mu(x) \,d\mu(y)\Big\vert_{s=0}\\
&= \frac{2}{\sqrt{h}} \sum_{i  = 1}^P \int_M \int_M   p\left(\frac{h}{2},x,\tilde{y}\right) \partial_s\chi^\ell_{s,i}(\tilde{y}) \,d\mu (\tilde{y}) \int_M p\left(\frac{h}{2},x,y\right) (\chi_{s,i}^\ell - \chi_{i}^{\ell-1})(y) \,d\mu(y) \,d\mu(x) \Big\vert_{s=0}\\
&= \frac{2}{\sqrt{h}} \sum_{i  = 1}^P \int_M 
\begin{aligned}[t]
&\int_M  \left(p \left(\frac{h}{2},x,\tilde{y}\right) (-\divrho(\chi^\ell_i \xi)(\tilde{y})+\divrho(\xi)(\tilde{y})\chi_i^\ell(\tilde{y})) \right)\,d\mu(\tilde{y}) \\
 \cdot& \int_M p\left(\frac{h}{2},x,y\right) (\chi_{i}^\ell - \chi_{i}^{\ell-1})(y) \,d\mu(y) \,d\mu(x)
 \end{aligned}\\
&= \frac{2}{\sqrt{h}} \sum_{i  = 1}^P \int_M \int_M  \left\langle \xi(\tilde{y}), \nabla_{\tilde{y}} p\left(\frac{h}{2},x,\tilde{y}\right)\right\rangle_{\tilde{y}} \chi^\ell_i(\tilde{y})  \,d\mu(\tilde{y}) \int_M p\left(\frac{h}{2},x,y\right) (\chi_{i}^\ell - \chi_{i}^{\ell-1})(y) \,d\mu(y) \,d\mu(x)\\
&\quad + \frac{2}{\sqrt{h}} \sum_{i  = 1}^P \int_M \int_M  p\left(\frac{h}{2},x,\tilde{y}\right) \chi^\ell_i(\tilde{y}) \divrho(\xi)(\tilde{y}) \,d\mu(x)(\tilde{y}) \int_M p\left(\frac{h}{2},x,y\right) (\chi_{i}^\ell - \chi_{i}^{\ell-1})(y) \,d\mu(y) \,d\mu(x).
\end{align*}
We define the dissipation measures
\begin{equation}
\nu^\ell_j:= \frac{1}{\sqrt{h}}\int_M \left(e^{-\frac{h}{2} \Delta_\mu}(\chi_j^\ell-\chi_j^{\ell-1})\right)^2 d\mu(x)
\end{equation}
and use the Cauchy--Schwarz inequality once for the integral and once for the sum to compute
\begin{align*}
&\left(\delta \left( \frac{1}{2h} d_h^2(\cdot - \chi^{\ell-1}) \right)(\chi^\ell, \xi) \right)^2 \\
&\lesssim  \left(\frac{1}{\sqrt{h}} \sum_{i  = 1}^P \int_M \int_M  \left\langle \nabla_{\tilde{y}} p\left(\frac{h}{2},x,\tilde{y}\right) , \xi(\tilde{y}) \right\rangle_{\tilde{y}} \chi^\ell_i(\tilde{y}) \,d\mu(\tilde{y}) \int_M p\left(\frac{h}{2},x,y\right) (\chi_{i}^\ell - \chi_{i}^{\ell-1})(y) \,d\mu(y) \,d\mu(x)\right)^2\\
&\quad + \|\divrho \xi \|_{C^0}^2\left(\frac{1}{\sqrt{h}} \sum_{i  = 1}^P \int_M \int_M  p\left(\frac{h}{2},x,\tilde{y}\right) \chi^\ell_i(\tilde{y}) \,d\mu(\tilde{y}) \left| \int_M p\left(\frac{h}{2},x,y\right) (\chi_{i}^\ell - \chi_{i}^{\ell-1})(y) \,d\mu(y)\right| \,d\mu(x)  \right)^2 \\
&\lesssim \frac{1}{\sqrt{h}} \sum_{i  = 1}^P \int_M \left(\int_M  \left\langle \nabla_{\tilde{y}} p\left(\frac{h}{2},x,\tilde{y}\right) , \xi(\tilde{y}) \right\rangle_{\tilde{y}} \chi^\ell_i(\tilde{y}) \,d\mu(\tilde{y})\right)^2 d\mu(x) \sum_{j=1}^P \nu^\ell_j\\
& \quad+ \|\divrho \xi \|_{C^0}^2 \frac{1}{\sqrt{h}} \sum_{i  = 1}^P \int_M \left( \int_M  p\left(\frac{h}{2},x,\tilde{y}\right) \chi^\ell_i(\tilde{y}) \,d\mu(\tilde{y}) \right)^2 d\mu(x) \sum_{j=1}^P \nu^\ell_j\\
&=: I + I\!I.
\end{align*}
The second term $I\!I$ can be estimated by computing
\begin{align*}
&\|\divrho \xi \|_{C^0}^2 \frac{1}{\sqrt{h}} \sum_{i  = 1}^P \int_M \left( \int_M p\left(\frac{h}{2},x,\tilde{y}\right) \chi^\ell_i(\tilde{y}) \,d\mu(\tilde{y}) \right)^2 \,d\mu(x) \sum_{j=1}^P \nu^\ell_j\\
&= \|\divrho \xi \|_{C^0}^2 \frac{1}{\sqrt{h}} \sum_{i  = 1}^P \int_M \chi_i^\ell(x) \int_M  p\left(h,x,\tilde{y}\right) \chi^\ell_i(\tilde{y}) \,d\mu(\tilde{y}) \,d\mu(x) \sum_{j=1}^P \nu^\ell_j\\
&\leq \|\divrho \xi \|_{C^0}^2 \frac{1}{\sqrt{h}} \sum_{i  = 1}^P \int_M \chi_i^\ell(x) \int_M  p\left(h,x,\tilde{y}\right) \,d\mu(\tilde{y}) \,d\mu(x) \sum_{j=1}^P \nu^\ell_j\\
&\leq \|\divrho \xi \|_{C^0}^2 \frac{1}{h} \sqrt{h} \sum_{j=1}^P  \nu^\ell_j.
\end{align*}
The sum over the dissipation measures is estimated by the total dissipation by using $\sum_{j=1}^P (\chi_j^\ell - \chi_j^{\ell-1}) = 0$:
\begin{align*}
\sum_{j=1}^P \nu^\ell_j &=  \sum_{j=1}^P \frac{1}{\sqrt{h}}\int_M \left(e^{-\frac{h}{2} \Delta_\mu}(\chi_j^\ell-\chi_j^{\ell-1})\right)^2 d\mu(x)\\
&=  \sum_{j=1}^P \frac{1}{\sqrt{h}}\int_M (\chi_j^\ell-\chi_j^{\ell-1}) e^{-h \Delta_\mu}(\chi_j^\ell-\chi_j^{\ell-1}) \,d\mu(x)\\
&= -\sum_{j\neq k=1}^P \frac{1}{\sqrt{h}}\int_M (\chi_k^\ell-\chi_k^{\ell-1}) e^{-h \Delta_\mu}(\chi_j^\ell-\chi_j^{\ell-1}) \,d\mu(x)\\
&= \frac{1}{2h} d_h^2(\chi^\ell, \chi^{\ell-1}).
\end{align*}
To simplify the notation for the first term $I$, we denote $g(z, x):= \left\langle \xi(x), \sqrt{h} \nabla_{x} p\left(\frac{h}{2},z,x\right) \right\rangle_{x}$. As $\nabla_{x} \int_M p\left(\frac{h}{2}, z, x\right) \,d\mu(z) = 0$ it follows that $\int_M g(z, x) \,d\mu(z) = 0$. Together with $\chi_i^\ell = 1- \sum_{k \neq i} \chi_k^\ell$ one obtains
\begin{align*}
&\int_M \chi_i^\ell(z) \chi_i^\ell(y) g(x,z) g(x,y) \,d\mu(z)\\
&=  \int_M -\chi_i^\ell(z) \chi_i^\ell(y) g(z,x) g(x,y) + \chi_i^\ell(z) \chi_i^\ell(y) (g(z,x) + g(x,z)) g(x,y) \,d\mu(z)\\
& =\sum_{k \neq i} \int_M \chi_k^\ell(z) \chi_i^\ell(y) g(z,x) g(x,y) + \chi_i^\ell(z) \chi_i^\ell(y) (g(z,x) + g(x,z)) g(x,y) \,d\mu(z).
\end{align*}
Using this equality we can estimate term $I$ by
\begin{align*}
I &\leq \frac{1}{\sqrt{h}} \sum_{i = 1}^P \int_M \left(\int_M \chi^\ell_i(y)  \left\langle \nabla_y  p\left(\frac{h}{2},x,y\right), \xi(y) \right\rangle_y \,d\mu(y)\right)^2 d\mu(x) \sum_{j = 1}^P \nu^\ell_j\\
&= \frac{1}{h}\frac{1}{\sqrt{h}} \sum_{i = 1}^P \int_M \int_M \int_M \chi_i^\ell(z) \chi_i^\ell(y) g(x,z) g(x,y) \,d\mu(y) \,d\mu(z) \,d\mu(x) \sum_{j = 1}^P \nu^\ell_j\\
&= \frac{1}{h}\frac{1}{\sqrt{h}} \sum_{i \neq k} \int_M \int_M \int_M \chi_k^\ell(z) \chi_i^\ell(y) g(z,x) g(x,y) \,d\mu(y) \,d\mu(z) \,d\mu(x) \sum_{j = 1}^P \nu^\ell_j\\
&\hspace{12pt}+ \frac{1}{h} \frac{1}{\sqrt{h}} \sum_{i = 1}^P \int_M \int_M \int_M \chi_i^\ell(z) \chi_i^\ell(y) (g(z,x) + g(x,z)) g(x,y) \,d\mu(y) \,d\mu(z) \,d\mu(x) \sum_{j = 1}^P \nu^\ell_j\\
&=: a +b.
\end{align*}
To estimate $a$ we use once more the Gaussian bounds \eqref{eq:gaussian_one} and \eqref{eq:gaussian_two} as well as the monotonicity of Lemma \ref{lem:monotonicity_rescaled} to compute
\begin{align*}
a &\leq \frac{\|\xi\|^2_{C^0}}{h}\frac{1}{\sqrt{h}} \sum_{i \neq k}^P \int_M \int_M \int_M
\begin{aligned}[t]
&\chi_k^\ell(z) \chi_i^\ell(y) \left|\sqrt{h} \nabla_x p\left(\frac{h}{2},x,z\right) \right|_x \\
\cdot & \left|\sqrt{h} \nabla_y p\left(\frac{h}{2}, x, y\right) \right|_y \,d\mu(y) \,d\mu(z) \,d\mu(x) \sum_{j = 1}^P \nu^\ell_j
\end{aligned}\\
&\lesssim  \frac{\|\xi\|^2_{C^0}}{h}\frac{1}{\sqrt{h}} \sum_{i \neq k}^P \int_M \int_M \int_M \chi_k^\ell(z) \chi_i^\ell(y) p\left(\frac{h\ \tilde{C}_2}{2\ C_2},x,z \right)p\left(\frac{h\ \tilde{C}_2}{2\ C_2},x,y \right) \,d\mu(y) \,d\mu(z) \,d\mu(x) \sum_{j = 1}^P \nu^\ell_j\\
&=  \frac{\|\xi\|^2_{C^0}}{h} E_{\frac{\tilde{C}_2}{C_2}h}(\chi^\ell)\frac{1}{2h}d_h^2(\chi^\ell, \chi^{\ell-1})\\
&\lesssim \frac{\|\xi\|^2_{C^0}}{h} E_{h}(\chi^\ell)\frac{1}{2h}d_h^2(\chi^\ell, \chi^{\ell-1}).
\end{align*}
For the other term $b$ we need the asymptotic expansion \eqref{eq:asymptotic_exp} again. Thus we split $b$ along the injectivity radius in the $z$ variable
\begin{align*}
b &= \frac{1}{h} \frac{1}{\sqrt{h}} \sum_{i = 1}^P \int_M \int_M \int_{B_{\inj}(x)} \chi_i^\ell(z) \chi_i^\ell(y) (g(z,x) + g(x,z)) g(x,y) \,d\mu(z) \,d\mu(y) \,d\mu(x) \sum_{j = 1}^P \nu^\ell_j\\
& \hspace{12pt}+ \frac{1}{h} \frac{1}{\sqrt{h}} \sum_{i = 1}^P \int_M \int_M \int_{M\setminus B_{\inj}(x)} \chi_i^\ell(z) \chi_i^\ell(y) (g(z,x) + g(x,z)) g(x,y)\,d\mu(z) \,d\mu(y) \,d\mu(x) \sum_{j = 1}^P \nu^\ell_j\\
&=: b' + b''.
\end{align*}
The Gaussian bound \eqref{eq:gaussian_two} yields
\begin{align*}
b'' &\lesssim \frac{\|\xi\|^2_{C^0}}{\sqrt{h}} \sum_{i = 1}^P \int_M \int_M \int_{M\setminus B_{\inj}(x)} 
\left(\frac{\tilde{C}_1}{\mu(B_{\sqrt{\frac{h}{2}}}(x))}e^{-\frac{\inj}{\tilde{C}_2 \frac{h}{2}}} \right)^2 d\mu(z) \,d\mu(y) \,d\mu(x) \sum_{j = 1}^P \nu^\ell_j\\
&\lesssim  \frac{\|\xi\|^2_{C^0}}{h} \frac{1}{2h}d_h^2(\chi^\ell, \chi^{\ell-1}).
\end{align*}
Now use the asymptotic expansion (as in \textit{Step 1}) and the Gaussian bound \eqref{eq:gaussian_one} to estimate 
\begin{align*}
b' &\lesssim  \frac{\|\xi\|_{C^0}}{h \sqrt{h}} \sum_{i = 1}^P \int_M  \int_{B_{\inj}(x)} \left|g(x,z) + g(z,x)\right| \,d\mu(z) \,d\mu(x) \sum_{j = 1}^P \nu^\ell_j\\
&\lesssim    \frac{\|\xi\|_{C^0}}{h} \sum_{i = 1}^P
\begin{aligned}[t]
 &\int_M \int_{B_{\inj}(x)} \left| \left\langle \xi(x), \nabla_x p_K\left(\frac{h}{2},x,z\right) \right\rangle_x + \left\langle \xi(z), \nabla_z p_K\left(\frac{h}{2} x,z\right)\right\rangle_z \right| \,d\mu(z) \,d\mu(x) \\
 \cdot & \sum_{j = 1}^P \nu^\ell_j + \|\xi\|^2
 \end{aligned}
\end{align*}
This yields together with the claim \eqref{eq:cancelation_in_distance}, the Gaussian bound \eqref{eq:gaussian_one}, the scaling property \eqref{eq:doubling_prop}, and the smoothness of the $v_k$ the final estimate
\begin{align*}
b'&\lesssim \frac{\|\xi\|_{C^0}}{h} \sum_{i = 1}^P \int_M \int_{B_{\inj}(x)} \frac{1}{h} p_K\left(\frac{h}{2},x,z\right) \left| \langle \xi(x), \nabla_x \dist_M^2(x,z) \rangle_x + \langle \xi(z), \nabla_z \dist_M^2(x,z) \rangle_z \right|\\
& \hspace{12pt}+ \left| \left\langle \xi(x), \nabla_x \sum_{k = 1}^K v_k(x,z) h^k \right\rangle_x + \left\langle \xi(z), \nabla_z \sum_{k = 1}^K v_k(x,z) h^k \right\rangle_z \right| \frac{e^{\frac{-\dist_M^2(x,z)}{4h}}}{(4\pi h)^{d/2}} \,d\mu(z) \,d\mu(x) \sum_{j = 1}^P \nu^\ell_j\\
& \hspace{12pt}+ \|\xi\|^2\\
&\lesssim  \frac{\|\xi\|_{C^0}}{h} \sum_{i = 1}^P \int_M \int_M  \frac{e^{\frac{-\dist_M^2(x,z)}{4h}}}{(4\pi h)^{d/2}} \left( \Lip(\xi)\frac{\dist^2_M(x,z)}{h} + \|\xi\|_{C^0}\right) \,d\mu(z) \,d\mu(x) \sum_{j = 1}^P \nu^\ell_j + \|\xi\|^2\\
&\lesssim  \frac{\normxi{\xi}^2}{h} \frac{P}{2h}d_h^2(\chi^\ell, \chi^{\ell-1}) + \|\xi\|^2.
\end{align*}
All in all we get
\begin{align*}
\left( \delta \left( \frac{1}{2h} d_h^2(\cdot, \chi^{\ell-1}) \right)(\chi^\ell, \xi )\right)^2 &\lesssim  \frac{P}{h}\left(\normxi{\xi}^2 ( E_h(\chi^\ell) +  1)\right)
\frac{1}{2h}d_h^2(\chi^\ell, \chi^{\ell-1}).
\end{align*}
Together with the energy dissipation estimate \eqref{lem:energyDis} this yields the result.

\emph{Step 3: Choice of $\xi$. Fix $\eps > 0$ which will be determined in \emph{Step 4}. Then, for any $\chi: M \rightarrow \{0,1\}^P$ and any $h \leq \eps^2$ there exist for $i \in \{1,\dots,P\}$ vector fields ${\xi_i \in \Gamma(TM)}$, smooth function $\chi_{i,\eps} \in C^\infty(M)$ and a constant $\textbf{C}$ such that it holds
\begin{align}
\divrho(\xi_i) &= \chi_{i,\eps} - \langle \chi_i \rangle \quad \text{ for all } i \in \{1,\dots,P\}, \label{eq:div_xi}\\
\sum_{i = 1}^P \int_M |\chi_{i,\eps} - \chi_i| \,d\mu &\leq \textbf{C} \eps (E_h(\chi) + 1), \label{eq:chi_eps}\\
\normxi{\xi_i} &\leq 1 + \frac{1}{\eps} \quad \text{ for all } i \in \{1,\dots,P\} \label{eq:upper_xi}.
\end{align}}
Here $\langle f \rangle = \int_M f \,d\mu(x)$ denotes for any $f \in L^1(M)$ the integral mean. Note that $\xi$ depends on $\chi$ but the estimate \eqref{eq:upper_xi} is independent of $\chi$.

We start by defining
\begin{align*}
\chi_{i,\eps} := e^{-\eps^2 \Delta_\mu} \chi_i.
\end{align*}
As $\chi_i \in \{0,1\}$ it holds $|\chi_{i,\eps} - \chi_i| = (1 - \chi_{i, \eps}) \chi + (1 - \chi_i) \chi_{i, \eps}$. This yields together with Lemma~\ref{lem:monotonicity_independent} the estimate \eqref{eq:chi_eps}:
\begin{align*}
\sum_{i = 1}^P \int_M |\chi_{i,\eps} - \chi_i| \,d\mu  = 2 \sum_{i = 1}^P \int_M (1-\chi) \chi_{i,\eps} \,d\mu = 2 \eps E_{\eps^2}(\chi) \lesssim \eps (E_h(\chi) + 1).
\end{align*}
Now let $\phi:M \rightarrow \R$ be the unique solution to
\begin{align*}
\Delta_\mu \phi = \chi_{i,\eps} - \langle \chi_i \rangle,
\end{align*}
with $\int_M \phi \, d\mu = 0$. This unique solution exists as $M$ is a closed manifold without boundary and $\chi_{i,\eps} - \langle \chi_i \rangle$ is a smooth right-hand side that integrates to $0$, cf. Theorem 4.7 in \cite{MR681859}. Now define
\begin{align*}
\xi := \nabla \phi.
\end{align*}
It immediately follows equation \eqref{eq:div_xi} because
\begin{align*}
\divrho(\xi) = \divrho(\nabla \phi) = \Delta_\mu \phi = \chi_{i,\eps} - \langle \chi_i \rangle.
\end{align*}
By \eqref{eq:div_xi} it also follows that 
\begin{align*}
\|\divrho \xi\|_{C^0} = \sup_{x \in M} |\chi_{i,\eps}(x) - \langle \chi_i \rangle| \leq 1.
\end{align*} 
To see the last bound of \eqref{eq:upper_xi} we compute for any $x \in M$ and $v \in T_x M$
\begin{align*}
 |\nabla_v \xi|_x^2 &=  |\nabla_v \nabla \phi |_x^2 \\
 &= \langle \nabla_v \nabla \phi , \nabla_v \nabla \phi \rangle_x\\
 &= (\Hess \phi)(v, \nabla_v \nabla \phi)\\
 &\leq |\Hess \phi|_x |v|_x |\nabla_v \nabla \phi|_x.
\end{align*}
The Hessian $\Hess(f) := \nabla \nabla f \in \Gamma(T^*M \bigotimes T^* M)$ is the $(0,2)$-tensor field given by the Levi--Civita connection $\nabla$. For the above calculation we used that $\Hess(f)(X,Y) = \langle \nabla_X \nabla \ f, Y \rangle$ for every $X,Y \in \Gamma(TM)$.
So we conclude that 
\begin{align*}
|\nabla_v \xi|_x \leq  |\Hess \phi|_x |v|_x,
\end{align*}
from which it follows that $\Lip(\xi) \leq \sup_{x \in M} |\Hess \phi|_x$. Thus it is enough to show that $|\Hess \phi|_x = |\nabla \nabla \phi|_x$ is bounded uniformly over $M$. The Laplacian $\Delta_\mu$ is a elliptic operator in local normal coordinates as the density $\rho$ is assumed to be positive. Thus, one gets an interior Schauder estimate, c.f.\ Theorem~3 in \cite{MR1459795}. By a covering argument we get for every $\alpha \in (0,1)$ the global Schauder estimate 
\begin{align*}
\|\phi\|_{C^{2,\alpha}} &\lesssim \| \Delta_\mu \phi\|_{C^{\alpha}} +  \| \phi\|_{C^{0}}.
\end{align*}
Adapting the proof of Theorem 4.13 on Green's function on compact, unweighted manifolds in \cite{MR681859} to weighted manifolds with positive density one gets 
\begin{equation}
 \| \phi\|_{C^{0}} \lesssim  \| \Delta_\mu \phi\|_{C^{0}} =  \| \chi_{i, \eps} - \langle \chi_i \rangle \|_{C^{0}}
\end{equation}
as $\phi$ is fixed by $\int_M \phi \, d\mu = 0$. Together, this yields
\begin{align*}
&|\Hess \phi|_x \\
&\lesssim \|\chi_{i, \eps} - \langle \chi_i \rangle \|_{C^{0 , \alpha}}\\
&= \|\chi_{i, \eps} - \langle \chi_i \rangle \|_{C^0} + \max\left\{\sup_{x \neq y, \dist_M(x,y) < 1} \frac{|\chi_{i, \eps}(x) - \chi_{i, \eps}(y)|}{\dist_M^\alpha(x,y)}, \sup_{x \neq y, \dist_M(x,y) \geq 1} \frac{|\chi_{i, \eps}(x) - \chi_{i, \eps}(y)|}{\dist_M^\alpha(x,y)} \right\}\\
&\leq 1 + \max \{ \|\nabla \chi_{i, \eps}\|_{C^0} ,1\}.
\end{align*}
We use the Gaussian bounds \eqref{eq:gaussian_one} and \eqref{eq:gaussian_two} to compute the norm of the gradient by
\begin{align*}
\|\nabla \chi_{i, \eps}\|_{C^0} = \sup_{x \in M} \int_M | \nabla p(\eps^2, x,y)| \chi_i(y) \, d\mu(y) \lesssim \sup_{x \in M} \frac{1}{\eps} \int_M p\left(\frac{\tilde{C}_2}{C_2}\eps^2, x,y\right) \, d\mu(y) \leq \frac{1}{\eps} 
\end{align*}
The bound on $\|\xi\|_{C^0} $ follows by the previous computations together with 
\begin{align*}
\|\xi\|_{C^0} = \|\nabla \phi\|_{C^0} \leq \|\phi\|_{C^{2,\alpha}}.
\end{align*}

\emph{Step 4: Lower bound for the left-hand side of \eqref{eq:euler_squared}.}
Pick for every $\ell \in \{1, \dots, L\}$ and every $i \in \{1, \dots, {P-1}\}$ the test vector field $\xi_{i,\ell}$ by \emph{Step 3}. Therefore, we get from \emph{Step 1} and \emph{Step 2} plugged into equation \eqref{eq:euler_squared} together with the energy dissipation estimate from Lemma \ref{lem:energyDis} that
\begin{align}\label{eq:step_4_euler}
&\sum_{i = 1}^{P-1} \sum_{\ell = 1}^L\left(\Lambda_\mu^\ell \cdot \int_M \divrho ( \xi_{i, \ell}) \chi^\ell \,d\mu(x) \right)^2 \nonumber\\
&\lesssim \sum_{i = 1}^{P-1} \sum_{\ell = 1}^L \left( (\delta E_h(\chi^\ell,\xi_{i}))^2 + \left(\delta \frac{1}{2h} d_h(\cdot - \chi^{\ell-1})(\chi^\ell,\xi_{i,\ell})\right)^2\right)\nonumber \\
&\lesssim  \frac{P}{h}\sum_{i = 1}^{P-1}  \left(1 + \frac{1}{\eps}\right)^2 \left(Lh (1+ E_0)^2 + (1+E_0)E_0 \right) \nonumber \\
&\lesssim  \frac{P^2}{h}  \left(1 + \frac{1}{\eps}\right)^2 \left(T (1+ E_0^2) + 1+E_0^2 \right)
\end{align}
where  $T :=L h$ is the terminal time and $\eta_{i, \ell} \in \R$ for $i = 1, \dots,P-1$ are coefficients we will determine now. Namely, the aim is to find coefficients such that 
\begin{align*}
|\Lambda_\mu^\ell|_2^2 \stackrel{!}{=} \sum_{i = 1}^{P-1}\eta_{i, \ell}\, \Lambda_\mu^\ell \cdot \int_M \divrho ( \xi_{i}) \chi^\ell \,d\mu = \eta_\ell^\intercal A \Lambda_\mu^\ell,
\end{align*}
where $A_{i,j}:= \int_M \divrho ( \xi_{i}) \chi_j^\ell \,d\mu \in \R^{(P-1)\times P}$. \\
By our definition of the Lagrange multiplier we have $\Lambda_\mu^\ell  \cdot \mathbbold{1} = 0$. Note that $\R^P_{\Sigma} = \{v \in \R^P: v  \cdot \mathbbold{1} = 0\} \simeq \R^{P-1}$ with the bijection 
\begin{align*}
\Pi = \begin{pmatrix}
1 &  & 0 & 0 \\
 & \ddots &  & \vdots \\
0 &  & 1 & 0 
\end{pmatrix} \in  \R^{(P-1) \times P}, \quad \quad 
\Pi^{-1} = \begin{pmatrix}
1 &  & 0  \\
 & \ddots &  \\
0 &  & 1 \\
 -1 &\dots &-1 
\end{pmatrix} \in  \R^{P \times (P-1)}.
\end{align*}
Next we show that $A \Pi^{-1}  \in \R^{(P-1) \times (P-1)}$ is invertible and get a bound on the spectral norm $|(A \Pi^{-1})^{-1}|_2$. As $\langle \chi_i^\ell \rangle = V_i$ for all $i \in \{1, \dots, P\}$ and $\ell \in \{1, \dots, L\}$, it holds for $i\neq j \in \{1, \dots, P-1\}$ that
\begin{align*}
{A}_{i,j} &= \int_M \divrho ( \xi_{i}) \chi_j^\ell \,d\mu = \int_M (\chi_{i,\eps}^\ell - \langle \chi_i^\ell \rangle) \chi_j^\ell \,d\mu\\
&= \int_M (\chi_{i,\eps}^\ell - V_i) (\chi_j^\ell -  V_j) \,d\mu\\
&= \int_M (\chi_{i}^\ell - V_i) (\chi_j^\ell -  V_j) \,d\mu + \int_M (\chi_{i,\eps}^\ell -  \chi_i^\ell) (\chi_j^\ell -  V_j) \,d\mu\\
&= - V_i V_j + \int_M (\chi_{i,\eps}^\ell -  \chi_i^\ell) (\chi_j^\ell -  V_j) \,d\mu.
\end{align*}
Therefore, it holds
\begin{align*}
( A \Pi^{-1} )_{i,j} &= A_{i,j} - A_{i,P}\\
 &= \left(- V_i V_j  + V_i V_P\right) + \left(\int_M (\chi_{i,\eps}^\ell -  \chi_i^\ell) (\chi_j^\ell -  V_j) \,d\mu - \int_M (\chi_{i,\eps}^\ell -  \chi_i^\ell) (\chi_P^\ell -  V_P) \,d\mu\right) \\
 &=: A^{(0)}_{i,j} + A_{i,j}^{(\eps)}.
\end{align*}
The analogous computation for $i = j \in \{1, \dots, P-1\}$ yields
\begin{align*}
( A \Pi^{-1})_{i,i} &= \left( V_i - V_i^2  + V_i V_P\right) + \left(\int_M (\chi_{i,\eps}^\ell -  \chi_i^\ell) (\chi_i^\ell -  V_i) \,d\mu - \int_M (\chi_{i,\eps}^\ell -  \chi_i^\ell) (\chi_P^\ell -  V_P) \,d\mu\right)\\
 &=: A^{(0)}_{i,i} + A_{i,i}^{(\eps)}.
\end{align*}
Denote $V := (V_1, \dots, V_{P-1})^\intercal \in \R^{(P-1)}$ to rewrite
\begin{align*}
A^{(0)} =\diag(V) - V ( V - V_P \mathbbold{1})^\intercal.
\end{align*} 
Because 
\begin{align*}
1 - ( V - V_P \mathbbold{1})^\intercal  \diag^{-1}(V) V = P  V_P >0 
\end{align*}
it holds by the Sherman--Morrison formula \cite{MR35118,MR40068} that $A^{(0)}$ is invertible with inverse
\begin{align*}
\left(A^{(0)}\right)^{-1} = \diag^{-1}(V) + \frac{\mathbbold{1} \mathbbold{1}^\intercal - V_P \mathbbold{1} \mathbbold{1}^\intercal  \diag^{-1}(V)}{P  V_P}
\end{align*}
Thus, the bound on the spectral norm of the inverse of $A^{(0)}$ is given by 
\begin{align*}
\left|\left(A^{(0)}\right)^{-1}\right|_2 \leq \max_{i = 1,\dots, P} \frac{3}{V_i}
\end{align*}
To get invertibility of the whole matrix $A \Pi^{-1}$ we use the Neumann series. It holds 
\begin{align*}
A \Pi^{-1}  = A^{(0)}+ A^{(\eps)} = A^{(0)}  (I +\left(A^{(0)}\right)^{-1}  A^{(\eps)}).
\end{align*} 
The Neumann series converges to the inverse of $(I +{A^{(0)}}^{-1}  A^{(\eps)})$ if $\left|\left(A^{(0)}\right)^{-1}  A^{(\eps)}\right|_2 < 1$. By equation \eqref{eq:chi_eps} and $E_h(\chi^\ell) \leq E_0$ we get for $h \leq \eps^2$  that
\begin{align*}
\big|A^{(\eps)}\big|_2 \leq \big|A^{(\eps)}\big|_F \leq 2 \left( \sum_{i,j = 1}^{P-1} \left|\int_M |\chi_{i,\eps}^\ell -\chi_i^\ell| \,d\mu \right|^2 \right)^{\frac{1}{2}} \leq 2P \textbf{C}\eps( E_0 + 1).
\end{align*}
Now pick $\eps := \frac{\min_{i = 1,\dots, P} V_i}{12 P \textbf{C} (E_0 + 1)}$  such that
\begin{align*}
\Big|\left(A^{(0)}\right)^{-1} A^{(\eps)}\Big|_2 \leq \Big|\left(A^{(0)}\right)^{-1}\Big|_2 \Big|A^{(\eps)}\Big|_2\leq  \eps \frac{6 P \textbf{C} (E_0 + 1)}{ \min_{i = 1,\dots, P} V_i} \leq \frac{1}{2}.
\end{align*}
The Neumann series yields now
\begin{align*}
|(A\Pi^{-1})^{-1}|_2 &= \Big|\left(A^{(0)}\right)^{-1} \Big|_2 \Big| (I +\left(A^{(0)}\right)^{-1} A^{(\eps)})^{-1}\Big|_2 \\
&\leq \max_{i = 1,\dots, P} \frac{1}{V_i} \frac{1}{1-\Big|\left(A^{(0)}\right)^{-1} A^{(\eps)}\Big|_2 } \\
&\leq \max_{i = 1,\dots, P} \frac{2}{V_i}.
\end{align*}
Finally we define $\eta_{\ell}^\intercal := {\Lambda_\mu^\ell}^\intercal \Pi^{-1} (A \Pi^{-1})^{-1}$ to get
\begin{align*}
 \eta_\ell^\intercal A \Lambda_\mu^\ell = {\Lambda_\mu^\ell}^\intercal \Pi^{-1} (A \Pi^{-1})^{-1} A \Pi^{-1} \Pi  \Lambda_\mu^\ell = {\Lambda_\mu^\ell}^\intercal \Lambda_\mu^\ell = |\Lambda_\mu^\ell|_2^2
\end{align*} 
and
\begin{align*}
| \eta_\ell |_2 \leq |\Lambda_\mu^\ell|_2 |(A\Pi^{-1})^{-1}|_2 \leq |\Lambda_\mu^\ell|_2  \max_{i = 1,\dots, P} \frac{2P}{V_i}.
\end{align*}
Plugging everything into equation \eqref{eq:step_4_euler} and using the Cauchy--Schwarz inequality twice gives the claimed $L^2$-estimate, i.e.
\begin{align*}
\sum_{\ell = 1}^L |\Lambda_{\mu}^\ell|^2_2 &= \sum_{\ell = 1}^L \sum_{i = 1}^{P-1}\eta_{i, \ell} \Lambda_\mu^\ell \cdot \int_M \divrho ( \xi_{i}) \chi^\ell \,d\mu(x) \\
&\leq \left(\sum_{\ell = 1}^L \sum_{i = 1}^{P-1} |\eta_{i, \ell}|^2 \right)^{\frac{1}{2}} \left(\sum_{\ell = 1}^L \sum_{i = 1}^{P-1} \left(\Lambda_\mu^\ell \cdot \int_M \divrho ( \xi_{i}) \chi^\ell \,d\mu(x)\right)^2 \right)^{\frac{1}{2}}\\
&\leq  \left(\max_{i = 1, \dots, P} \frac{4P^2}{V_i^2 } \sum_{\ell = 1}^L |\Lambda_\mu^\ell|^2_2 \right)^{\frac{1}{2}}  \left( \frac{P^2}{h} \left(1 + \frac{1}{\eps}\right)^2 \left(T (1+ E_0^2) + (1+E_0^2)\right) \right)^{\frac{1}{2}}\\
&\leq  \frac{1}{h^2}\left(\sum_{\ell = 1}^L |\Lambda_\mu^\ell|^2_2 \right)^{\frac{1}{2}} \left(\max_{i = 1, \dots, P} \frac{4P^4}{V_i^2 }  \left(1 +  \frac{12 P \textbf{C} (E_0 + 1)}{ V_i}\right)^2 \left(T (1+ E_0^2) + (1+E_0^2)\right) \right)^{\frac{1}{2}}
\end{align*}
\end{proof}

\section{Numerics and Empirical Tests}
\subsection{Computational Complexity}
To compute the volume constrained MBO scheme it is necessary to calculate the diffused clusters $u = e^{-h \Delta_N}\chi$. But the calculation of the exponential matrix $e^{-h \Delta_N}$ is infeasible in practice. To circumvent this cost, one can use simpler kernels. For example Jacobs, Merkurjev and Esedo\={g}lu used the squared weight matrix as kernel in their auction dynamics algorithm \cite{jacobs2018auction}. 
We will discuss several possibilities of kernels that make to different degrees a trade-off between computational complexity and approximation accuracy of $e^{-h \Delta_N}$. The key property we do not want to violate is the unconditional stability stemming from the variational interpretation \eqref{eq:constrained_min_problem}.

We start be introducing the volume constrained MBO scheme for arbitrary kernels. For some initial clustering $\chi^0: X_N \rightarrow \{0,1\}$ and kernel $A: [0, \infty) \rightarrow \R^{N \times N} \cong (X_N \rightarrow X_N)$ that depends on the time step $h$, a clustering $\chi^{\ell -1 }$ is updated to $\chi^{\ell}$ by
\begin{align}\label{alg:volumeKernel}
\chi^\ell \in \argmax_{\chi:X_N \rightarrow \{0,1\}^P} &\sum_{i=1}^P \sum_{x \in X_N} \chi_i(x) \big(A(h) \chi^{\ell-1}_i\big)(x)\\
\text{s.t. } &\sum_{i=1}^P \chi_i(x) = 1 \quad \hspace{3pt} \text{ for all } x \in X_N, \nonumber\\
\hspace{20.5pt}&\!\!\sum_{x \in X_N}\!\! \chi_i(x) = V_i \quad \text{ for all } i = 1,\dots,P. \nonumber
\end{align}
The natural minimizing movement interpretation associated to the scheme (compare with Lemma~\ref{lem:lagrangeMulti_discrete}) is
\begin{equation}
\chi^\ell \in \argmin_{\chi:X_N \rightarrow \{0,1\}^P, \sum_{x \in X_N} \chi(x) = V} E_{A(h)}(\chi) + \frac{1}{2h} d_{A(h)}^2(\chi, \chi^{\ell -1})
\end{equation}
where the energy and distance term are given for some scaling function $s_A:(0, \infty) \rightarrow (0, \infty)$ by
\begin{align}
E_{A(h)}(\chi) &= \frac{1}{s_A(h)} \sum_{i \neq j} \frac{1}{N} \sum_{x\in X_N} \chi_i(x) (A(h) \chi_j)(x), \label{eq:energyAh}\\
\frac{1}{2h} d_{A(h)}^2(\chi, \tilde{\chi}) &= \frac{1}{s_A(h)}  \sum_{i = 1}^P \frac{1}{N} \sum_{x \in X_N} (\chi_i - \tilde{\chi}_i)(x) \big(A(h) (\chi_i - \tilde{\chi}_i)\big)(x). \label{eq:distance_Ah}
\end{align}
The computational complexity of the scheme consist of three parts, namely:
\begin{enumerate}
\item[\namedlabel{item:matrix_computation}{i)}] Calculate once for all iterations the kernel $A = A(h)$, usually depending on the weights $w:X_N \times X_N \rightarrow [0, \infty)$. For example, for the in Section \ref{sec:improved} discussed heat kernel matrix $e^{-h \Delta_N}$, one needs to compute the expensive exponential matrix of the scaled graph Laplacian $- h \Delta_N$. 
\item[\namedlabel{item:matrix_product}{ii)}] Compute $A \chi_i$ for all $i \in \{1, \dots, P\}$. These matrix vector multiplications take for the dense matrix $e^{- h \Delta_N}$ naively $\mathcal{O}(PN^2)$.
\item[\namedlabel{item:solve_opt}{iii)}] Solve the optimization problem \eqref{alg:volumeKernel} with Algorithm \ref{alg:median}. By the analysis of Section \ref{sec:improved} we expect for the kernel $e^{-h \Delta_N}$ a running time of $\mathcal{O}(\sqrt{h} N (\log N + P) P^2)$ for most iterations.
\end{enumerate}
It would be optimal, in regard of balancing the complexity of item \ref{item:matrix_product} and item \ref{item:solve_opt}, to find a matrix $A$ such that the matrix vector multiplication of item \ref{item:matrix_product} can be computed in $\mathcal{O}(\sqrt{h}N \log N)$. To that aim we can use a simple trick: By the trivial identity
\begin{equation}\label{eq:sped_up_trick}
A \chi^\ell = A (\chi^\ell - \chi^{\ell -1 }) + A \chi^{\ell - 1}
\end{equation}
one can compute the matrix vector product $A \chi^\ell$ by computing $A (\chi^\ell - \chi^{\ell -1 })$ and adding the stored product $A \chi^{\ell - 1}$ of the last iteration. The clou is now, when $A = e^{-h \Delta_N}$ is used  then $\chi^\ell - \chi^{\ell -1 }$ is expected to be $\sqrt{h}N$-sparse, meaning that the vector has only $\sqrt{h}N$ many non-zero entries. If additional the columns of the matrix $A(h)$ are $\log N$-sparse then the computation of $A (\chi^\ell - \chi^{\ell -1 })$ takes the sought after $\mathcal{O}(\sqrt{h} \log N)$ running time. We make the $\sqrt{h}N$-sparsity of $A (\chi^\ell - \chi^{\ell -1 })$ rigorous in the following under mild assumptions on the kernel $A(h)$, but will later not completely reach our aim to find a well suited kernel with $\log N$-sparse columns. 
\begin{assumption}\label{ass:kernel}\ 
\begin{itemize}
\item[1.]Symmetry: $A(h)^\intercal = A(h)$
\item[2.]Conservation of mass: $A(h) \mathbbold{1} = \mathbbold{1}$
\item[3.]Energy scaling: There exists a function $s_A: (0, \infty) \rightarrow (0, \infty)$ such that for all $\chi: X_N \rightarrow \{0,1\}^P$ it holds
\begin{align*}
\limsup_{h \rightarrow 0} \limsup_{N \rightarrow \infty} \frac{1}{s_A(h)}  \sum_{i \neq j} \frac{1}{N} \sum_{x\in X_N} \chi_i(x) \big(A(h) \chi_j\big)(x) < \infty. 
\end{align*}
\end{itemize}
\end{assumption}
\begin{rem}
For $A(h) = e^{-h \Delta_N}$ all the properties of Assumption \ref{ass:kernel} are satisfied with $s_A(h) = \sqrt{h}$.
\end{rem}

\begin{lemma}[$L^1$ estimate of $\chi^\ell - \chi^{\ell - 1}$]\label{the:l1estimate}
Denote by $\chi^\ell$ the iterates of the MBO scheme \eqref{alg:volumeKernel} with arbitrary diffusion matrix $A(h)$ that satisfies the Assumption \ref{ass:kernel}. Then it holds 
\begin{align*}
|\chi^\ell - \chi^{\ell - 1}|_{\ell^1(X_N)} \leq 4 s_A(h) E_{A(h)}(\chi^0).
\end{align*}
\end{lemma}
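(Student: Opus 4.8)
The plan is to mimic the energy-dissipation argument behind Lemma~\ref{lem:energyDis}, but now using the minimizing-movement interpretation \eqref{eq:constrained_min_problem} attached to the general kernel $A(h)$, and then to pass from the $d_{A(h)}$-distance to the $\ell^1$-distance via a simple observation that only uses that $\chi^\ell-\chi^{\ell-1}$ takes values in $\{-1,0,1\}$. First I would record the abstract inequality coming from optimality of $\chi^\ell$ in the (volume-constrained) minimizing-movement problem: testing with the competitor $\chi^{\ell-1}$ gives
\begin{equation*}
E_{A(h)}(\chi^\ell) + \frac{1}{2h} d_{A(h)}^2(\chi^\ell,\chi^{\ell-1}) \le E_{A(h)}(\chi^{\ell-1}),
\end{equation*}
which by telescoping yields $\sum_{\ell=1}^L \frac{1}{2h} d_{A(h)}^2(\chi^\ell,\chi^{\ell-1}) \le E_{A(h)}(\chi^0)$ and in particular $\frac{1}{2h} d_{A(h)}^2(\chi^\ell,\chi^{\ell-1}) \le E_{A(h)}(\chi^0)$ for each single $\ell$. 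Here one needs Assumption~\ref{ass:kernel}: symmetry of $A(h)$ makes the completing-the-square identity $E_{A(h)}(\chi)-E_{A(h)}(\chi-\tilde\chi) = \frac{2}{s_A(h)}\sum_{i\ne j}\frac1N\sum_x\chi_i (A\tilde\chi_j)$ valid, conservation of mass guarantees that the Lagrange-multiplier reformulation of Lemma~\ref{lem:lagrangeMulti_discrete} carries over, and the energy-scaling bound is what makes $E_{A(h)}(\chi^0)$ a finite quantity (it is needed for the \emph{statement} to be meaningful, not strictly for this step).

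Second, I would relate $d_{A(h)}^2(\chi^\ell,\chi^{\ell-1})$ to $|\chi^\ell-\chi^{\ell-1}|_{\ell^1}$ from below. Write $\psi := \chi^\ell-\chi^{\ell-1}$, so $\psi_i(x)\in\{-1,0,1\}$ and $\sum_i\psi_i(x)=0$ for every $x$. By definition
\begin{equation*}
\frac{1}{2h} d_{A(h)}^2(\chi^\ell,\chi^{\ell-1}) = \frac{1}{s_A(h)}\sum_{i=1}^P\frac1N\sum_{x\in X_N}\psi_i(x)\,(A(h)\psi_i)(x).
\end{equation*}
The point is to bound the diagonal contribution of $A(h)$ from below. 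Since $A(h)\mathbbold 1=\mathbbold 1$ and $A(h)$ is symmetric, the associated Dirichlet form $\langle\psi_i,(I-A(h))\psi_i\rangle$ is nonnegative if $A(h)$ has nonnegative off-diagonal entries with row sums one (this holds for $e^{-h\Delta_N}$; for general $A(h)$ one should state it as part of the hypotheses, or else argue via the nonnegativity of the quadratic form which is already implicit in its being a squared distance). Granting $\langle\psi_i,(I-A(h))\psi_i\rangle\ge 0$, i.e. $\langle\psi_i,A(h)\psi_i\rangle\ge\langle\psi_i,\psi_i\rangle - $ wait: the sign goes the correct way only if $A(h)$ is \emph{positive semidefinite}, which is exactly the content of $d_{A(h)}^2\ge 0$ being a genuine distance. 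So instead I would argue directly: $\langle\psi_i,A(h)\psi_i\rangle\le\langle\psi_i,\mathbbold 1\rangle\cdot\|A(h)\|$—no. The clean route: use $\langle\psi_i, A(h)\psi_i\rangle \ge -\tfrac12\|\psi_i\|_{\ell^1}$? This is getting delicate, so let me state the real plan: I would prove $\sum_i\langle\psi_i,A(h)\psi_i\rangle \ge \tfrac14 \sum_i\|\psi_i\|_{\ell^1}^2/N$ — no, more simply, use that $|\chi^\ell-\chi^{\ell-1}|_{\ell^1} = \sum_i\sum_x|\psi_i(x)| = \sum_i\sum_x\psi_i(x)^2$ (as $\psi_i(x)\in\{-1,0,1\}$), hence $|\chi^\ell-\chi^{\ell-1}|_{\ell^1} = N\sum_i\langle\psi_i,\psi_i\rangle_{L^2(\mu_N)}$, and then combine with $\sum_i\langle\psi_i,(I-A(h))\psi_i\rangle\ge0$ (true because this form appears in $-E_{A(h)}$ of a difference... actually because it is a squared distance, which forces $I-A(h)$? no). I will commit to: the quadratic form $\chi\mapsto \langle\chi,(I-A(h))\chi\rangle$ is nonnegative on the constrained space, which follows because $\tfrac1{2h}d_{A(h)}^2$ is a bona fide squared metric; under mass conservation and symmetry this means $A(h)\preceq I$ in the relevant sense. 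Then $\sum_i\langle\psi_i,A(h)\psi_i\rangle\le\sum_i\langle\psi_i,\psi_i\rangle$, which gives the \emph{wrong} direction for a lower bound. Hence the correct mechanism must be the reverse: one lower-bounds $\sum_i\langle\psi_i,A(h)\psi_i\rangle$ using that the off-diagonal terms of $A(h)$ are nonnegative and small, giving $\langle\psi_i,A(h)\psi_i\rangle\ge \langle\psi_i,\psi_i\rangle - (\text{off-diag})$, and the factor $4$ in the statement absorbs exactly this loss: $\tfrac1{2h}d_{A(h)}^2(\chi^\ell,\chi^{\ell-1}) \ge \tfrac{1}{s_A(h)}\cdot\tfrac1N\cdot\tfrac14|\chi^\ell-\chi^{\ell-1}|_{\ell^1}\cdot\tfrac1{?}$. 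I would chase the constants carefully at this stage.

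Third, combining the two bounds: $|\chi^\ell-\chi^{\ell-1}|_{\ell^1}\le c\,s_A(h)\cdot\tfrac1{2h}d_{A(h)}^2(\chi^\ell,\chi^{\ell-1})\le c\,s_A(h)E_{A(h)}(\chi^0)$, and one checks that the accumulated constant is at most $4$, giving the claimed $|\chi^\ell-\chi^{\ell-1}|_{\ell^1(X_N)}\le 4 s_A(h)E_{A(h)}(\chi^0)$. \textbf{The main obstacle} I anticipate is precisely the second step: correctly extracting the $\ell^1$-norm of the increment as a \emph{lower} bound on the squared thresholding distance $d_{A(h)}^2$. For $A(h)=e^{-h\Delta_N}$ one has the spectral representation and the pointwise bound $\psi_i(x)(A(h)\psi_i)(x)\ge \psi_i(x)^2 - \text{(cross terms)}$, but for a general kernel satisfying only Assumption~\ref{ass:kernel} one must be careful that no positivity or locality is silently used; if the three listed properties are genuinely insufficient, the proof in the paper likely invokes an additional elementary consequence of $\sum_i\psi_i=0$ together with $A(h)\mathbbold 1=\mathbbold 1$ to rewrite $\sum_i\langle\psi_i,A(h)\psi_i\rangle = \sum_{i\ne j}\langle\psi_i,(\mathbbold 1\mathbbold 1^\intercal/N - A(h))\psi_j\rangle$ or similar, after which the bound $|\psi_i(x)|=\psi_i(x)^2$ closes the estimate. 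I would verify which identity does the job before writing out the final inequality chain.
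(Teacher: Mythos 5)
Your instinct about the main obstacle is exactly right, and the route you are pursuing would indeed fail. Bounding $|\chi^\ell-\chi^{\ell-1}|_{\ell^1}$ from above by (a multiple of) $\tfrac{1}{2h}d_{A(h)}^2(\chi^\ell,\chi^{\ell-1})$ alone requires a lower bound on $\langle\psi,A(h)\psi\rangle$ in terms of $\langle\psi,\psi\rangle$, i.e.\ control on $\lambda_{\min}(A(h))$. The paper's Remark following this lemma addresses precisely this: for $A(h)=e^{-h\Delta_N}$ with the correct scaling, $\lambda_{\min}\to 0$ as $N\to\infty$, so the direct bound degenerates. None of the three hypotheses in Assumption~\ref{ass:kernel} (symmetry, conservation of mass, energy scaling) gives the positive definiteness or spectral gap that would make this work.

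What the paper does instead is not to lower-bound $d_{A(h)}^2$ by $|\cdot|_{\ell^1}$ at all, but to \emph{split} $|\chi^\ell-\chi^{\ell-1}|_{\ell^1}$ into the distance term plus two energy terms, using a pointwise interpolation identity. For $\chi,\tilde\chi\in\{0,1\}$ and $u,\tilde u\in[0,1]$ one has $|\chi-\tilde\chi|=(\chi-\tilde\chi)^2=(\chi-\tilde\chi)(u-\tilde u)+(\chi-\tilde\chi)\bigl((\chi-u)-(\tilde\chi-\tilde u)\bigr)\le(\chi-\tilde\chi)(u-\tilde u)+|\chi-u|+|\tilde\chi-\tilde u|$. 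Applying this with $\chi=\chi_i^\ell$, $\tilde\chi=\chi_i^{\ell-1}$, $u=A(h)\chi_i^\ell$, $\tilde u=A(h)\chi_i^{\ell-1}$ (using $A(h)\mathbbold 1=\mathbbold 1$ and nonnegativity to ensure $u,\tilde u\in[0,1]$) and summing in $i$ and $x$, the first term is exactly $\tfrac{1}{2h}s_A(h)d_{A(h)}^2(\chi^\ell,\chi^{\ell-1})$, while each of the remaining two terms is identified as twice an energy: using the $\{0,1\}$-valuedness and $A(h)\mathbbold 1=\mathbbold 1$, one has $\sum_i\tfrac1N\sum_x|\chi_i^{\ell-1}-A(h)\chi_i^{\ell-1}|=2s_A(h)E_{A(h)}(\chi^{\ell-1})$ and likewise for $\chi^\ell$. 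So the distance term that you were struggling to lower-bound is now \emph{helping} (it appears with the right sign), and the energy terms it trades against are exactly what the dissipation estimate controls. The constant $4$ then comes out of $\tfrac{1}{2h}s_A d^2(\chi^\ell,\chi^{\ell-1})+2s_A E_{A(h)}(\chi^\ell)+2s_A E_{A(h)}(\chi^{\ell-1})\le 4s_A E_{A(h)}(\chi^0)$ by combining the single-step dissipation inequality with monotonicity of the energy.

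So the missing idea in your proposal is the interpolation inequality $|\chi-\tilde\chi|\le(\chi-\tilde\chi)(u-\tilde u)+|\chi-u|+|\tilde\chi-\tilde u|$; it converts the $\ell^1$-norm of the increment into a combination of a squared distance and two energies, sidestepping any need for positivity or a spectral gap of $A(h)$. This is exactly the robustness over the naive $\lambda_{\min}$-based bound that the subsequent Remark emphasizes.
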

\begin{proof}
We follow the lines of \cite[Lemma 2.5]{MR3556529} . Adapting the minimizing movement interpretation of Lemma \ref{lem:lagrangeMulti_discrete} yields
\begin{align}\label{eq:min_mov_A}
E_{A(h)}(\chi^\ell) + \frac{1}{2h} d_{A(h)}^2(\chi^\ell, \chi^{\ell - 1}) \leq E_{A(h)}(\chi^{\ell - 1}).
\end{align}
We want to bound $|\chi^\ell - \chi^{\ell - 1}|_{\ell^1(X_N)}$ in terms of  $d_{A(h)}(\chi^\ell, \chi^{\ell - 1})$. To this aim, we observe that for any $\chi, \tilde{\chi} \in \{0,1\}$ and any $u, \tilde{u} \in [0,1]$ it holds
\begin{align*}
|\chi - \tilde{\chi}| &= (\chi - \tilde{\chi}) (\chi - \tilde{\chi}) \\
& = (\chi - \tilde{\chi})(u - \tilde{u}) + (\chi - \tilde{\chi})(\chi -u - (\tilde{\chi} - \tilde{u})) \\
&\leq (\chi - \tilde{\chi})(u - \tilde{u}) + |\chi - u | + |\tilde{\chi} - \tilde{u}|.
\end{align*}
Hence for $\chi = \chi_i^\ell, \tilde{\chi} = \chi_i^{\ell -1}, u - \tilde{u}= A(h)(\chi_i^\ell - \chi_i^{\ell - 1})$ one concludes 
\begin{align*}
|\chi_i^\ell - \chi_i^{\ell - 1}|_{\ell^1(X_N)} &= \frac{1}{N} \sum_{x \in X_N} |\chi_i^\ell(x) - \chi_i^{\ell - 1}|\\
&\leq  \frac{1}{N} \sum_{x \in X_N} (\chi_i^\ell(x) - \chi_i^{\ell - 1}(x))\big(A(h)(\chi_i^\ell - \chi_i^{\ell - 1})\big)(x) \\
& \hspace{12pt}+ \frac{1}{N} \sum_{x \in X_N}|\chi_i\ell(x) - \big(A(h)\chi_i^\ell\big)(x) | + |\chi_i^{\ell - 1}(x) - \big(A(h) \chi_i^{\ell - 1}\big)(x)|.
\end{align*}
Summing over $i = 1, \dots, P$, we observe that the first term is just $\frac{1}{2h} s_A(h) d_{A(h)}^2(\chi^\ell, \chi^{\ell - 1})$ and for the second one we use $\chi_i^\ell \in \{0,1\}$ and $A(h)\chi_i^\ell \in [0,1]$ as well as $A(h) \mathbbold{1} = \mathbbold{1}$ to calculate
\begin{align*}
&\frac{1}{N} \sum_{x \in X_N} |\chi_i^{\ell - 1}(x) - (A(h) \chi_i^{\ell - 1})(x)|\\
 &= \frac{1}{N} \sum_{x \in X_N} \chi_i^{\ell - 1}(x)(1 - (A(h) \chi_i^{\ell - 1})(x)) + (1-\chi_i^{\ell - 1}(x)) (A(h) \chi_i^{\ell - 1})(x)\\
&= \frac{1}{N} \sum_{x \in X_N} \chi_i^{\ell - 1}(x)(A(h)(1 - \chi_i^{\ell - 1}))(x) + (1-\chi_i^{\ell - 1}(x)) (A(h) \chi_i^{\ell - 1})(x).
\end{align*}
Summing over $i = 1, \dots, P$, the right-hand side is equal to $2 s_A(h)  E_{A(h)}(\chi^\ell)$.
Iterating inequality \eqref{eq:min_mov_A} yields $E_{A(h)}(\chi^\ell) + \frac{1}{2h}d^2_{A(h)} (\chi^\ell, \chi^{\ell -1}) \leq E_{A(h)}(\chi^0)$ and thus we conclude
\begin{align*}
|\chi^\ell - \chi^{\ell - 1}|_{\ell^1(X_N)} &\leq \frac{1}{2h} s_A(h) d_{A(h)}^2(\chi^\ell, \chi^{\ell - 1}) + 2 s_A(h) (E_{A(h)}(\chi^\ell) + E_{A(h)}(\chi^{\ell-1}))\\
&\leq 4 s_A(h) E_{A(h)}(\chi^0). \qedhere
\end{align*}
\end{proof}
\begin{rem}\label{rem:direct_bound}
If $A(h)$ is positive definite, one can also directly estimate the $\ell^1$-norm against the distance term:
\begin{align*}
|\chi^\ell - \chi^{\ell - 1}|_{\ell^1(X_N)} &= \frac{1}{N} \sum_{x \in X_N} |\chi^\ell(x) - \chi^{\ell-1}(x)|^2 \\
&\leq \frac{1}{\lambda_{min} N} \sum_{x \in X_N} (\chi^\ell(x) - \chi^{\ell-1}(x)) A(h)(\chi^\ell(x) - \chi^{\ell-1}(x))\\
& = \frac{s_A(h)}{\lambda_{min}} \frac{1}{2h} d_A(h)^2(\chi^\ell, \chi^{\ell - 1}).
\end{align*}
For $A(h) = e^{- h \Delta_N}$ one has with the correct scaling (see \eqref{def:graph_laplace}) of the Laplacian that 
\begin{align*}
\lim_{N \rightarrow \infty} \lambda_{min}(A(h)) = 0
\end{align*}
such that for $N$ big the estimate degenerates. Our statement in Lemma \ref{the:l1estimate} is more robust. 
\end{rem}

\subsection{Different Diffusion Kernels}\label{sec:empiric}
The first kernel we want to have a look at is the best possible approximation of $e^{-h \Delta_N}$ with rank $K = \log N$. We will see that from a theoretical viewpoint this matrix has a lot of strong properties and leads to a run time of $\mathcal{O}(N \log N)$ in the computation of the diffusion step \eqref{item:matrix_product}.

Denote the eigenvalues and eigenvectors of the graph Laplacian $\Delta_N$ by $\lambda_1 \leq \dots \leq \lambda_N$ and $v_1 ,\dots, v_N$, respectively. Then the heat operator applied to some clustering $\chi$ is given by
\begin{align*}
e^{-h \Delta_N} \chi_i(x) = \sum_{k = 1}^N e^{-h \lambda_i} \langle v_k, \chi_i \rangle_{\ell^2(X_N)} v_k(x) \quad \text{ for all } x \in X_N.
\end{align*}
As a cheaper approximation we take 
\begin{align*}
e^{-h \Delta_N}_K \chi_i(x) = \sum_{k = 1}^K e^{-h \lambda_k} \langle v_k, \chi_i \rangle_{\ell^2(X_N)} v_k(x) \quad \text{ for all } x \in X_N
\end{align*}
with $K \approx \log(N)$. The matrix $e^{-h \Delta_N}_K$ is the best rank-$K$ approximation of $e^{-h \Delta_N}$ which means that it is the matrix minimizing the distance to $e^{-h \Delta_N}$ in the Frobenius norm under all matrices with rank $K$. We have three motivations for choosing $K \approx \log(N)$:
\begin{itemize}
\item[1.] Lelmi and the second author proved in \cite{laux2023large} the convergence of the two phase MBO scheme to mean curvature flow viscosity solutions if $K \geq \log^q(N)$ for an explicit exponent $q >0$. 
\item[2.] The computation of $e^{-h \Delta_N}_K \chi$ requires $\mathcal{O}(N K) = \mathcal{O}(N\log(N))$ time, which matches the asymptotic running time of our Algorithms \ref{alg:median} and \ref{alg:lower_upper}. Thus the diffusion step \eqref{item:matrix_product} and thresholding step \eqref{item:solve_opt} are balanced in worst case running time. However, for small time steps $h$ we expect the thresholding step to be in $\mathcal{O}(\sqrt{h} N \log N)$ while the diffusion still takes $\mathcal{O}(N \log N)$.
\item[3.] The Johnson-Lindenstrauss lemma \cite{johnson1986extensions, larsen2017optimality} states that one can almost isometrically embed $N$ points into $\log(N)$ dimensions. More precisely, given $N$ points in $\R^N$, there exist $N$ points of dimension $\mathcal{O}\left(\frac{\log(N)}{\delta^2} \right)$ such that the distances between the points only changes by a factor of $(1 \pm \delta)$. The lemma also tells that the scaling of $\frac{\log(N)}{\delta^2}$ for the dimension (up to constants) is necessary to ensure this ``almost isometry'' property \cite{larsen2017optimality}. In our case, the graph Laplacian is an $(N \times N)$-matrix that gets compressed to the $\log(N)$-dimensional subspace spanned by the first $K$ eigenvectors.
\end{itemize}
Note that the eigenvalues and eigenvectors only need to be computed once for a dataset. In our tests, the computational time for that was comparable to the assembly of the weight matrix when working with $k$-nearest-neighbor-graphs.

The only downside of the rank-$K$ approximation is, that the sparsity of $\chi^\ell - \chi^{\ell -1 }$ of Theorem \ref{the:l1estimate} doesn't improve the running time of the matrix vector product as $e^{-h \Delta_N}_K$ is only of low rank but not sparse. 

Thus, we also pursue a second approach to get sparser matrices. Using the Taylor series of the matrix exponential one writes
\begin{equation}
e^{-h \Delta_N} = \sum_{j = 0}^\infty \frac{(-h \Delta_N)^j}{j!}.
\end{equation}
For simplicity consider the random walk graph Laplacian $\Delta_N = I - D^{-1} W$ with $D$ the diagonal matrix with entries $d(x):= \sum_{y \in X_N} w(x,y)$ and $W$ the weight matrix with entries $w(x,y)$. The graph Laplacian $ \Delta_N$ is sparse when using $k$-nearest neighbor graphs. To be more precise it only has $N k$ non-zero entries with $k$ non-zero entries in every row. By our manifold assumption, Assumption \ref{ass:manifold}, one also expects that every column has $\mathcal{O}(k)$ non zero elements in every column. The number of neighbors $k$ should be chosen at least by $k = \mathcal{O}(\log N)$ as otherwise the resulting graph would be disconnected with high probability. Also in the case of random geometric graphs as in our Assumption \ref{ass:manifold}, the typical number of neighbors of a point is $\log N$. Thus, the approximation 
\begin{align}\label{kern:best-rank}
e^{-h \Delta_N}_{Tay,J} := \sum_{j = 0}^J \frac{(-h \Delta_N)^j}{j!}.
\end{align}
leads to a expected running time of $\mathcal{O}(\sqrt{h} N \log^J(N))$ in the matrix vector multiplication \eqref{item:matrix_product}. But, problematic with this choice of kernel for small $J$ is no matter which choice of $h \in (0, \infty)$ is taken, one either has problems with pinning or the kernel is not positive semi-definite. For example, regard the case $J = 1$, that is $A(h) = (1-h)I + h D^{-1}W$. On the one hand, for $h \leq 0.5$ the scheme is pinned, meaning that $\chi^\ell = \chi^{\ell - 1}$, so the scheme frozen at the initial condition. On the other hand, for $h > 0.5$ the matrix is not positive semi-definite such that the distance term \eqref{eq:distance_Ah} is not guaranteed to be non-negative anymore. Thus the energy \eqref{eq:energyAh}, which is in this case the for clustering very suited Graph Total Variation, doesn't need to be dissipated. For bigger $J$ the pinning problem gets only slightly better.

Therefore we propose another approach where we use that $e^{-h \Delta_N} = e^{-h}e^{hD^{-1}W}$ which leads to the approximation 
\begin{align}\label{kern:positive_series}
e^{-h \Delta_N}_{Pos, J} := \frac{1}{\sum_{j=0}^J \frac{h^j}{j!}} \sum_{j = 0}^J \frac{h^j \big( D^{-1}W \big)^j}{j!}.
\end{align}
Note the prefactor is chosen such that $e^{-h \Delta_N}_{Pos, J}\chi (x) \in [0,1]$ for $\chi:X_N \rightarrow \{0,1\}$. For even $J$ one can check that $e^{-h \Delta_N}_{Pos, J}$ is positive semi-definite and therefore we have unconditional stability.

We will also have a look at just the matrix $(D^{-1}W)^\intercal D^{-1}W$ which can be seen as the limit $h \rightarrow \infty$ of $e^{-h \Delta_N}_{Pos, 2}$ and was used in \cite{jacobs2018auction}. The matrix-vector multiplication takes $\mathcal{O}(\eps_N N \log^2(N))$ and can be sped up by \eqref{eq:sped_up_trick} as $\chi^{\ell} - \chi^{\ell-1}$ is sparse, which follows from Lemma \ref{the:l1estimate} and $s_N(h) = \eps_N$, and \cite{MR3458162}. In the spirit of Remark \ref{rem:direct_bound} on expects more sparsity in later iterations. Additionally also $((D^{-1}W)^\intercal D^{-1}W)^2 = \lim_{h \rightarrow \infty} e^{-h \Delta_N}_{Pos, 4}$ is tested as this matrix connects nodes further apart in the graph and thus spreads more information. The running time for the matrix vector multiplication is similar to before given by $\mathcal{O}(\eps_N N \log^4(N))$.

One can also expand the Taylor series around an other point than zero. Therefore one wants to use $e^{-h \Delta_N} = e^{-(h-r)I}e^{h D^{-1}W - rI}$. To get a positive semi-definite matrix, we use $ e^{-(h-r)I}e^{h (D^{-1}W)^\intercal D^{-1}W - r I}$ where $r = \lambda_1$, the smallest eigenvalue of $(D^{-1}W)^\intercal D^{-1}W$. This $r$ is the biggest choice such that $(D^{-1}W)^\intercal D^{-1}W- r I$ is positive semi-definite. Note that $\lambda_1$ can be $0$ but in practice even a $r > \lambda_1$ worked well in our tests. The evolution of $ e^{-(h-r)I}e^{h (D^{-1}W)^\intercal D^{-1}W - r I}$ yields
\begin{align}
\sum_{j = 0}^J \frac{h^j \big( (D^{-1}W)^\intercal D^{-1}W - r I\big)^j}{j!}.
\end{align}
The motivation behind this kernel is that by the subtraction of the identity one wants to prevent pinning. Mathematically speaking, a kernel with small but positive eigenvalues is ideal as it makes the distance- like term $\frac{1}{2h} d_{A(h)}^2(\cdot,\cdot)$ in the minimizing movement interpretation \eqref{eq:min_mov_A} small, which means that larger energy barriers can be overcome and solutions with lower energy can be found. We again take the limiting kernel 
\begin{align}\label{kern:sub_identity}
(D^{-1}W)^\intercal D^{-1}W - r I
\end{align}
for our tests such that the matrix-vector multiplication is expected to be in $\mathcal{O}(\eps_N N \log^2 N)$.
\subsection{Test Setup}
As initialization of our clusters we use a slight variation of Voronoi tessellation~\cite{jacobsvoronoi} which have been used in the auction dynamics \cite{jacobs2018auction}. Laguerre tesselations or sometimes called weighted tessellations have the advantage that they can be used to find an initial clustering that satisfies the volume constraints already. When the initial clustering doesn't fulfill the volume constraints and a quite local kernel like $(D^{-1}W)^\intercal D^{-1}W$ is used, it can happen that not enough nodes get information from a phase with not enough volume such that one is forced to add arbitrary points to it. This can lead to solutions with higher energy which is not favorable. To define Laguerre tesselations in the semi-supervised learning set up assume we have a fidelity set $Y \subset X_N$ whose labels are known. Thus, it is given that $\chi(y) = e_i$ for all $y \in Y_i$ with $Y_i$ the set of all points with label $i$ and $\dot{\cup}_{i = 1, \dots, P} Y_i = Y$. For our points $X_N$ with labeled data $Y$ the $i$-th Laguerre cell is
\begin{align}\label{eq:laguerre}
L_i(Y, m) := \Big\{x \in X_N: \dist(x, y_i) - m_i \leq \dist(x, y_j) - m_j \  \forall j \neq i, y_i \in Y_i, y_j \in Y_j\Big\}.
\end{align}
The distance to the points can be computed efficiently using Dijkstra's algorithm and the weights $m$ such that the volume constraints are satisfied can be computed with our Algorithm \ref{alg:median}.
We use this initialization for all the kernels except for the best rank-$K$ approximation \eqref{kern:best-rank}. For the kernel \eqref{kern:best-rank} we use the approximate heat kernel to get an initial clustering by setting 
\begin{align*}
\delta_Y(x) := \begin{cases}
e_i & \text{ if } x \in Y_i,\\
0 & \text{ if } x \notin Y,
\end{cases} 
\end{align*}
which is comparable to the union of discrete Dirac deltas of the points in $Y$. We then propose to take 
\begin{align}\label{eq:init_by_dif}
\{\chi^0_i = 1\} := \left\{\left(e^{-h \Delta_N}_K \delta_Y\right)_i - m_i \geq \left(e^{-h \Delta_N}_K \delta_Y\right)_j  - m_j\ \forall j\neq i \right\}.
\end{align}
as initial clustering in the spirit of Varadhan~\cite{MR208191}, see also Crane et al.~\cite{crane2017heat}. In the continuous, Euclidean setting with constant density this is equivalent to Laguerre tessellations as $e^{-h \Delta_N}_K \delta_x$ is radially symmetric around $x$. But on the discrete data the heat kernel weights and considers all possible paths between two points and not only the shortest one like the distance function does. In Table \ref{tab:comp_init} the accuracy of this initial clustering is compared to accuracy of the initial clustering by Voronoi regions as described in \cite{jacobsvoronoi}. A comparison between Laguerre and Voronoi cells can be found in Table \ref{tab:comp_init}. 

As stopping criterion we use the relative change in the thresholding energy $E_{N,h}$ of \eqref{eq:discrete_thresholding_energy}. Denoting by $\chi^\ell$ the clustering in the $\ell$-th iteration of the volume constrained MBO scheme, our algorithm is stopped if $\left(E_{N,h}(\chi^{\ell+1}) - E_{N,h}(\chi^\ell)\right)/ E_{N,h}(\chi^{\ell+1}) < \eps$ for  $\eps \approx 10^{-4}$. Note that by the minimizing movement interpretation of Lemma \ref{lem:lagrangeMulti_discrete} it is guaranteed that $\chi^\ell$ converges to a local minimum. We test two diffusion times $h = 1$ and $h = 100$.

We use three different datasets to test our kernels that have also been used in \cite{jacobs2018auction} for their tests.

\textbf{MNIST} is a dataset of grayscale images of size $28\times 28$ of handwritten digits. We combine the $60,000$ training images with the $10,000$ test images to one dataset of $70,000$ images. Otherwise no changes are done to the dataset. 
\textbf{Opt-Digits} is also a dataset of of handwritten digits but with size $8 \times 8$ pixels and every pixel has values between $0$ and $16$.
\textbf{Three Moons} is a synthetic dataset that is constructed by sampling out of three half-circles and adding normal distributed noise. The centers of the circles are at $(0,0), (3,0)$ and $(1.5, 0.4)$ with radii of $1, 1$ and $1.5$ respectively. The first two half-circles are open to the bottom and the third is opened at the top. The two dimensional points are then embedded into $\R^{100}$ and Gaussian noise with mean zero and standard deviation $0.14$ is added. From each half-circle we sampled $500$ points such that the dataset consists of $1500$ points in total. 

As weight matrix we take for easy comparison the weight matrix proposed in \cite{calder2020poisson}. This means we take the $K$-nearest neighbor graph with weights given by
\begin{align*}
w(x,y) = e^{-\frac{4|x-y|^2}{d_K^2(x)}}.
\end{align*}
Here, the euclidean norm was used and $d_K(x) = |x - x_K|$ is a local scaling parameter where $x_K$ denotes the $K$-th nearest neighbour of $x$. For all datasets we used $K = 10$ as in \cite{calder2020poisson}. For best comparison to \cite{calder2020poisson} on MNIST we used their pretrained variational autoencoder (for more information see \cite{calder2020poisson}) to embed the points into a lower dimensional feature space before computing the weights. On Opt-Digits and Three Moons such an embedding was not done. In the end the matrix $W' = (W^\intercal + W)/2$ is used to ensure symmetry.

On all three datasets we once test with one known label per class and once with five known labels per class. We also state the results when temperature is introduced as it was done in \cite{jacobs2018auction}. Temperature is a random perturbation of the diffused values before the thresholding step is applied; it helps to escape local minimas which leads to solutions with lower energy. If temperature is used we use the clustering with lowest energy over a fixed number of $50$ iterations. The volume constraints are chosen exact and it is assumed that the correct number of datapoints in every cluster is known. The results are given in Table \ref{tab:comp_MNIST} for MNIST, Table \ref{tab:comp_opt_digits} for Opt-Digits and Table \ref{tab:comp_tree_moons} for Three Moons. 

\subsection{Results and Comparison to Other Methods}
Overall, our results summerized in Tables \ref{tab:comp_MNIST}, \ref{tab:comp_opt_digits} and \ref{tab:comp_tree_moons} over the three datasets show that there is not a specific kernel that outperforms all other kernels. But some observations follow in order. First, subtracting a scaled identity matrix as done in \eqref{kern:sub_identity} seems to be beneficial to prevent pinning which yields better accuracy although the matrix is not guaranteed to be positive definite. The best rank-$K$ approximation \eqref{kern:best-rank} has its advantages on the Three Moons dataset when only one point per class is labeled. In comparison to the other kernels the rank-$K$ approximation is a global kernel meaning that every node gets information from every other node in the diffusion step. This is helpful to prevent local minimas described by cutting moons with a straight line into two pieces. The kernel has, for $h = 100$, also good accuracy when comparing to the other methods on MNIST dataset. The connection to spectral clustering for $h$ large, analyzed in the Introduction \ref{sec:intro_context}, explains the increase in accuracy when compared to $h=1$. Surprisingly, the kernel \eqref{kern:positive_series} of the positive Taylor expansion yields always worse results then simply using $(D^{-1} W)^\intercal D^{-1} W$. This is unexpected as the added $D^{-1} W$ term included in \eqref{kern:positive_series} should help to minimize the Graph Cut energy which is ideal for clustering. One explanation for this observation could be that $D^{-1} W$ is not positive definite and thus less energy is dissipated in every iteration compared to using the kernel $(D^{-1} W)^\intercal D^{-1} W$. On MNIST and Opt-Digits the $((D^{-1} W)^\intercal D^{-1} W)^2$ kernel yields the best results but is also the most expensive to compute (see previous section). Compared to the other graph based methods (see Table \ref{tab:comp_MNIST}) it also has the highest accuracy of $97.5\%$ when five points of every cluster are labeled. When only one point is labeled only the PoissonMBO \cite{calder2020poisson} and the rank $K$ approximation yield better results.

\subsection{Connection to Assignment Problem}
Although not designed for it, from a theoretical perspective our algorithm is also efficient for the much studied assignment problem. The problem of solving \eqref{alg:volumeMBO} can be reduced to solving the assignment problem as done in \cite{jacobs2018auction}. The other way around, we can also use \eqref{alg:volumeMBO} to solve the assignment problem that reads as follows. Assume you have $N$ workers $\{a_i\}_i$ and $N$ tasks $\{b_i\}_i$ given and you want to assign to each task exactly one worker. Also assume that it costs $c(a,b) \geq 0$ to assign worker $a$ to task $b$. The assignment problem asks to find an assignment $f:\{a_i\}_i \rightarrow \{b_i\}_i$ that minimizes the cost, i.e.
\begin{align*}
\argmin_{f:\{a_i\}_i \rightarrow \{b_i\}_i \text{ bijective}} \sum_{i = 1}^N c(a_i, f(a_i)).
\end{align*}
We can solve this problem by solving
\begin{align*}
\argmax_{\chi:\{a_i\}_i \rightarrow \{0,1\}^N} -\sum_{i = 1}^N \sum_{j = 1}^N \chi_j(a_i) c(a_i, b_j) \quad \text{s.t. } & \sum_{i = 1}^N \chi_i(a_j) = 1\ \forall j = 1, \dots, N,\\
& \sum_{j = 1}^N \chi_i(a_j) = 1\ \forall i = 1, \dots, N,
\end{align*}
and setting $f(a_i) := b_{j(i)}$ where $j(i)$ is the index with $\chi_{j(i)}(a_i) = 1$. Using Algorithm \ref{alg:median} with $P = N$ to solve this problem one gets a running time of $\mathcal{O}(N \log N P^2 + N P^3) = \mathcal{O}(N^4)$. One can even improve this by recognizing that only $P$ hyperplanes change in line $7$ of Algorithm \ref{alg:median}. Thus finding the closest point to the hyperplane in the moved direction can be done by taking the minimum of the previous hyperplanes and the $P$ changed hyperplanes. Using a priority queue to maintain the minimum yields a total running time of $\mathcal{O}(N \log N P^2 + N P^2 \log P) = \mathcal{O}(N^3 \log N)$. The best known algorithm for the assignment problem is the ``Hungarian method'' with running time $\mathcal{O}(N^3)$ \cite{korte, MR0075510, MR0093429}. So our algorithm is asymptotically only the factor $\mathcal{O}(\log(N))$ slower, which is surprising as the algorithm is designed for the case $P \ll N$.

\begin{table}
\begin{tabular}{ |c||c|c| }
\hline
 Method 				& after initialization 		& after MBO \\
\hline
\hline
 Diffusion \eqref{eq:init_by_dif} without volume constraints ($m = 0$)  	& 93.02\%    	&92.51\% 	\\
 \hline
  Diffusion \eqref{eq:init_by_dif} with volume constraints  &   94.62\%  	& 95.29\%   \\
 \hline
  Voronoi 	\cite{jacobsvoronoi}	&   85.92\%  	& 93.22\%   \\
 \hline
 Laguerre \eqref{eq:laguerre} &   87.32\%  	& 93.45\%   \\
 \hline
\end{tabular}
\caption{Comparison of the accuracy of different initialization strategies on the OptDigits dataset. First column is accuracy after the initialization; second column after the MBO scheme with kernel $(D^{-1}W)^\intercal D^{-1}W$, exact volume constraints and initial clustering given by the respective method.}
\label{tab:comp_init}
\end{table}

\begin{table}
\begin{tabular}{ |c||c|c| }
\hline
 Kernel & $1$ & $5$ \\
\hline
\hline
\multirow{2}{*}{$e^{-h \Delta_N}_{20\log N}\text{ c.f. } \eqref{kern:best-rank}, h = 1$}  	& $87.5 (7.2)$    	& 	 $95.0 (2.1)$\\
& $\mathbf{87.0 (6.9)}$   	& $\mathbf{95.2 (2.3)}$ \\
 \hline
 \multirow{2}{*}{$e^{-h \Delta_N}_{20\log N}\text{ c.f. } \eqref{kern:best-rank}, h = 100$}  	& $96.3 (4.6)$   	& 	 $97.1 (0.0) $	 \\
& $\mathbf{95.3 (3.9)}$   	& $\mathbf{97.0 (0.2)}$ \\
 \hline
\multirow{2}{*}{$e^{-h \Delta_N}_{Pos, 2} \text{ c.f. }\eqref{kern:positive_series}$	}	&   $60.7 (4.5)$  	& $79.8 (2.4)$   \\
& $\mathbf{67.12 (5.5)}$   	& $\mathbf{86.9 (2.0)}$ \\
 \hline
\multirow{2}{*}{ $D^{-1}W)^\intercal D^{-1}W - 0.1 I \text{ c.f. } \eqref{kern:sub_identity}$}&   $90.0 (6.5)$  	& $96.7 (0.8)$   	\\
& $\mathbf{91.3 (5.2)}$   	& $\mathbf{96.3 (2.4)}$ \\
 \hline
  $(D^{-1}W)^{\intercal} D^{-1}W$&   $80.3 (6.5)$  	& $95.2 (1.2)$   	\\
  (as in auction dynamics \cite{jacobs2018auction}) & $\mathbf{89.0 (5.9)}$   	& $\mathbf{96.8 (0.4)}$ \\
 \hline
 \multirow{2}{*}{ $\left((D^{-1}W)^{\intercal} D^{-1}W\right)^2$}&   $94.8 (5.6)$  	& $97.5 (0.2)$   	\\
 & $\mathbf{94.8 (4.7)}$   	& $\mathbf{97.2 (1.3)}$ \\
 \hline
 Laplace/ LP \cite{10.5555/3041838.3041953} & $16.1 (6.2)$ & $69.5 (12.2)$ \\
  \hline
 Nearest Neighbor & $55.8 (5.1)$ & $74.1 (2.4)$ \\
  \hline
 Random Walk \cite{10.1007/978-3-540-28649-3_29} & $66.4 (5.3) $ & $84.5 (2.0)$ \\
   \hline
 MBO \cite{6714564} & $19.4 (6.2)$ & $59.2 (6.0)$ \\   \hline
 WNLL \cite{shi2017weighted} & $55.8 (15.2) $ & $94.6 (1.1)$ \\  \hline
 Centered Kernel \cite{JMLR:v19:17-421} & $19.1 (1.9) $ & $35.6 (4.6)$ \\  \hline
 Sparse LP \cite{jung2017semisupervisedlearningsparselabel} & $14.0 (5.5) $ & $16.2 (4.2)$ \\  \hline
 p-Laplace \cite{FLORES202277} & $72.3 (9.1) $ & $91.9 (1.0)$ \\  \hline
 Poisson \cite{calder2020poisson} & $90.2 (4.0)$ & $95.3 (0.7)$ \\  \hline
 PoissonMBO \cite{calder2020poisson} & $96.5 (2.6)$ & $97.2 (0.1)$ \\  \hline
\end{tabular}
\caption{Comparison of the different kernels on MNIST dataset with $1$ and $5$ labels per class averaged over $100$ trials. Bold with temperature as in \cite{jacobs2018auction} and non-bold without temperature.}
\label{tab:comp_MNIST}
\end{table}
\begin{table}
\begin{tabular}{ |c||c|c| }
\hline
 Kernel & $1$ & $5$ \\
\hline
\hline
\multirow{2}{*}{$e^{-h \Delta_N}_{20\log N}\text{ c.f. } \eqref{kern:best-rank}$ }  	& $90.7 (6.7)$   	& 	 $97.1 (1.5) $	 \\
& $\mathbf{90.8 ( 6.5)}$   	& $\mathbf{96.9 (1.6)}$ \\
\hline
\multirow{2}{*}{$e^{-h \Delta_N}_{Pos, 2} \text{ c.f. }\eqref{kern:positive_series}$	}	&   $81.2 (4.7)$  	& $91.7 (1.6)$   \\
& $\mathbf{84.4 ( 4.4)}$   	& $\mathbf{94.4 (1.5)}$ \\
 \hline
\multirow{2}{*}{ $D^{-1}W)^\intercal D^{-1}W - 0.1 I \text{ c.f. } \eqref{kern:sub_identity}$}&   $90.6 (5.1)$  	& $97.2 (1.2)$   	\\
& $\mathbf{92.9 ( 4.0)}$   	& $\mathbf{97.0 (0.9)}$ \\
 \hline
  \multirow{2}{*}{$(D^{-1}W)^{\intercal} D^{-1}W$}&   $88.1 (5.2)$  	& $96.5 (1.4)$   	\\
  & $\mathbf{92.4 (4.0)}$   	& $\mathbf{97.5 (1.0)}$ \\
 \hline
 \multirow{2}{*}{ $\left((D^{-1}W)^{\intercal} D^{-1}W\right)^2$}&   $93.3 (5.1)$  	& $97.9 ( 1.1)$   	\\
 & $\mathbf{95.6 (3.7)}$   	& $\mathbf{98.2 (0.6)}$ \\
 \hline
\end{tabular}
\caption{Comparison of the different kernels on Opt-Digits dataset with $1$ and $5$ labels per class averaged over $100$ trials. Bold with temperature as in \cite{jacobs2018auction} and non-bold without temperature.}
\label{tab:comp_opt_digits}
\end{table}
\begin{table}
\begin{tabular}{ |c||c|c| }
\hline
 Kernel & $1$ & $5$ \\
\hline
\hline
\multirow{2}{*}{$e^{-h \Delta_N}_{\log N}\text{ c.f. } \eqref{kern:best-rank}$}  	& $91.6 (13.0)$   	& 	 $94.5 (0.1)$ \\
& $\mathbf{94.2 (0.6)}$   	& $\mathbf{94.4 (0.3)}$ \\
 \hline
\multirow{2}{*}{$e^{-h \Delta_N}_{Pos, 2} \text{ c.f. }\eqref{kern:positive_series}$	}	&   $79.1 (11.0)$  	& $92.4 (3.9)$   \\
& $\mathbf{82.4 (10.1)}$   	& $\mathbf{97.1 ( 2.3)}$ \\
 \hline
\multirow{2}{*}{ $D^{-1}W)^\intercal D^{-1}W - 0.1 I \text{ c.f. } \eqref{kern:sub_identity}$}&   $88.5 (15.5)$  	& $97.9 (2.8)$   	\\
& $\mathbf{87.2 (12.5)}$   	& $\mathbf{95.2 (0.3)}$ \\
 \hline
  \multirow{2}{*}{$(D^{-1}W)^{\intercal} D^{-1}W$}&   $87.2 (15.3)$  	& $98.2 (3.0)$   	\\
  & $\mathbf{87.8 (13.1)}$   	& $\mathbf{98.0 (0.2)}$ \\
 \hline
 \multirow{2}{*}{ $\left((D^{-1}W)^{\intercal} D^{-1}W\right)^2$}&   $90.0 (15.4)$  	& $97.1 (1.8)$   	\\
 & $\mathbf{91.8 ( 11.4)}$   	& $\mathbf{96.3 (0.3)}$ \\
 \hline
\end{tabular}
\caption{Comparison of the different kernels on Three Moons dataset with $1$ and $5$ labels per class averaged over $100$ trials. Bold with temperature as in \cite{jacobs2018auction} and non-bold without temperature.}
\label{tab:comp_tree_moons}
\end{table}

\section{Code Availability}
Code for Algorithm \ref{alg:median} and Algorithm \ref{alg:lower_upper} as well as the volume-preserving MBO scheme with its different diffusion kernels can be found at \url{https://github.com/fabiuskt/VolumeMBO_OS}. The weight matrices are constructed with the help of the Graph Learning Library \cite{graphlearning}.

%%%%%%%%%%%%%%%%%%%%%%%%%%%%%%%%%%%%%%%%%%%%%%%%%%%%%%%%%%%%%%%%%%%%%%%%%%%%%%%%%%%%%%%%%%%%%%%%%%%%%%%%%%%%%%%%
\section*{Acknowledgments}
This project has received funding from the Deutsche Forschungsgemeinschaft (DFG, German Research Foundation) 
under Germany's Excellence Strategy -- EXC-2047/1 -- 390685813 and the Research Training Group 2339 IntComSin -- Project-ID 321821685.

%%%%%%%%%%%%%%%%%%%%%%%%%%%%%%%%%%%%%%%%%%%%%%%%%%%%%%%%%%%%%%%%%%%%%%%%%%%%%%%%%%%%%%%%%%%%%%%%%%%%%%%%%%%%%%%%
\frenchspacing
\bibliographystyle{abbrv}
\bibliography{volumeMBO}
  \end{document}